\newtheorem{thm}{Theorem}[section]
\newtheorem{cor}[thm]{Corollary}
\newtheorem{lem}[thm]{Lemma}
\newtheorem{prop}[thm]{Proposition}
\theoremstyle{definition}
\newtheorem{defn}[thm]{Definition}
\newtheorem{example}[thm]{Example}
\theoremstyle{remark}
\newtheorem{rem}[thm]{Remark}
\numberwithin{equation}{section}
\begin{document}
\title[Multi-dimensional almost periodic type functions and applications]{Multi-dimensional almost periodic type functions and applications}

\author{A. Ch\'avez}
\address{Departamento de
Matem\'aticas, Facultad de Ciencias F\' isicas Y Matem\'aticas, Universidad Nacional de Trujillo, Trujillo, Chile}
\email{ajchavez@unitru.edu.pe}

\author{K. Khalil}
\address{Department of Mathematics, Faculty of Sciences Semlalia, Cadi Ayyad University,
 B.P. 2390, 40000 Marrakesh, Morocco}
\email{kamal.khalil.00@gmail.com}

\author{M. Kosti\' c}
\address{Faculty of Technical Sciences,
University of Novi Sad,
Trg D. Obradovi\' ca 6, 21125 Novi Sad, Serbia}
\email{marco.s@verat.net}

\author{M. Pinto}
\address{Departamento de
Matem\'aticas, Facultad de Ciencias, Universidad de Chile, Santiago de Chile, Chile}
\email{pintoj.uchile@gmail.com}

{\renewcommand{\thefootnote}{} \footnote{2010 {\it Mathematics
Subject Classification.} 42A75, 43A60, 47D99.
\\ \text{  }  \ \    {\it Key words and phrases.} $({\mathrm R},{\mathcal B})$-Multi-almost periodic type functions, $({\mathrm R}_{X},{\mathcal B})$-multi-almost periodic type functions, Bohr ${\mathcal B}$-almost periodic type functions, composition principles,
abstract Volterra integro-differential equations.
\\  \text{  }  
Marko Kosti\' c is partially supported by grant 451-03-68/2020/14/200156 of Ministry
of Science and Technological Development, Republic of Serbia.
Manuel Pinto is partially supported by Fondecyt 1170466.}}

\begin{abstract}
In this paper, we analyze multi-dimensional $({\mathrm R}_{X},{\mathcal B})$-almost periodic type functions and multi-dimensional Bohr ${\mathcal B}$-almost periodic type functions.
The main structural characterizations and composition principles for the introduced classes of almost periodic functions are established. 
Several applications of our abstract theoretical results to
the abstract Volterra integro-differential equations in Banach spaces are provided, as well.
\end{abstract}
\maketitle

\begin{center}
{\sc Contents}
\end{center}

\vspace{0.75cm}

\noindent 1. Introduction and preliminaries\\
\text{\ \ \ \ \ } 1.1. Almost periodic type functions on ${\mathbb R}^{n}$\\
\text{\ \ \ \ \ } 1.2. Applications\\

\noindent 2. $({\mathrm R}_{X},{\mathcal B})$-Multi-almost periodic type functions and Bohr ${\mathcal B}$-almost periodic type functions\\
\text{\ \ \ \ \ } 2.1. Strongly ${\mathcal B}$-almost periodic functions\\
\text{\ \ \ \ \ }  2.2. ${\mathbb D}$-asymptotically $({\mathrm R}_{X},{\mathcal B})$-multi-almost periodic type functions\\
\text{\ \ \ \ \ }  2.3. Differentiation and integration of $({\mathrm R}_{X},{\mathcal B})$-multi-almost periodic functions\\
\text{\ \ \ \ \ }  2.4. Composition theorems for $({\mathrm R},{\mathcal B})$-multi-almost periodic type functions\\
\text{\ \ \ \ \ }  2.5. Invariance of $({\mathrm R},{\mathcal B})$-multi-almost periodicity under the actions of convolution products\\

\noindent 3. Examples and applications to the abstract Volterra integro-differential equations\\
\text{\ \ \ \ \ } 3.1. Application to nonautonomous retarded functional evolution equations\\

\noindent 4. Appendix\\
\text{\ \ \ \ \ } 4.1. $n$-Parameter strongly continuous semigroups\\
\text{\ \ \ \ \ } 4.2. Multivariate trigonometric polynomials and approximations of periodic functions of several real variables\\

\noindent References

\newpage

\section{Introduction and preliminaries}

The notion of almost periodicity was introduced by the Danish mathematician H. Bohr around 1924-1926 and later generalized by many other authors (for further information regarding almost periodic functions, we refer the reader to the research monographs \cite{besik}, \cite{diagana}, \cite{fink}, \cite{gaston}, \cite{nova-mono}-\cite{nova-man}, \cite{188}-\cite{18} and \cite{30}). Let $I$ be either ${\mathbb R}$ or $[0,\infty),$ and let $f : I \rightarrow X$ be a given continuous function, where $X$ is a complex Banach space equipped with the norm $\| \cdot \|$. Given $\varepsilon>0,$ we call $\tau>0$ a $\varepsilon$-period for $f(\cdot)$ if and only if\index{$\varepsilon$-period}
$
\| f(t+\tau)-f(t) \| \leq \varepsilon,$ $ t\in I.
$
The set consisting of all $\varepsilon$-periods for $f(\cdot)$ is denoted by $\vartheta(f,\varepsilon).$ The function $f(\cdot)$ is said to be almost periodic if and only if for each $\varepsilon>0$ the set $\vartheta(f,\varepsilon)$ is relatively dense in $[0,\infty),$ which means that
there exists $l>0$ such that any subinterval of $[0,\infty)$ of length $l$ intersects $\vartheta(f,\varepsilon)$. The collection of all almost periodic functions will be denoted by $AP(I : X).$ 

The notion of
an almost periodic function $ f: G \rightarrow E, $ where $G$ is a topological group and $E$ is a complete locally convex space, was introduced in the landmark paper \cite{boh-noe} by S. Bochner and
J. von Neumann (1935); see also the paper \cite{neumann} by J. von Neumann  (1934) for the scalar-valued case $E={\mathbb C}$.
Almost periodic functions on (semi-)topological (semi-)groups have been also analyzed in the research monographs \cite{berglund1} by J. F. Berglund, K. H. Hofmann, \cite{berglund12} by J. F. Berglund, H. D. Junghenn, P. Milnes, \cite{burckel} by R. B. Burckel, \cite{188} by B. M. Levitan and \cite{pankov} by A. A. Pankov, the doctoral dissertation of X. Zhu \cite{windzor}, the survey article \cite{shtern} by A. I. Shtern as well as the articles \cite{alfsen}, \cite{deLeeuw0}-\cite{deLeeuw}, \cite{goldberg}, \cite{kayano} and \cite{Milnes1}-\cite{mijake1};
for more details about almost automorphic functions on (semi-)topological groups, the reader may consult \cite{Milnes},  \cite{Reich} and the list of references given in our recent research paper \cite{genralized-multiaa}, where we analyze
$({\mathrm R}_{X},{\mathcal B})$-multi-almost automorphic type functions and related applications.
Working with general almost periodic functions on topological groups is rather non-trivial and, clearly, it is very difficult to provide certain applications to the abstract PDEs following this general approach. Because of that, we have decided to concretize many things here by considering various notions of almost periodicity for the vector-valued functions defined on the domain of form $I\times X,$ where $\emptyset \neq I \subseteq {\mathbb R}^{n}$ generally does not satisfy the semigroup property $I+I\subseteq I$ or contain the zero vector.
Actually,
the main aim of this paper is to introduce and systematically analyze various classes of (asymptotically) $({\mathrm R}_{X},{\mathcal B})$-multi-almost periodic type functions and (asymptotically) Bohr ${\mathcal B}$-almost periodic type functions as well as to provide several important applications to the abstract PDEs in Banach spaces. With the exception of the usual notion of Bohr almost periodicity, the introduced notion  is new even in the one-dimensional setting.

We feel it is our duty to say that the theory of almost periodic functions of several real variables has not attracted so much attention of the authors compared with the theory of almost periodic functions of one real variable by now. In support of our investigation of the multi-dimensional almost periodicity, we would like to present the following illustrative example:

\begin{example}\label{nijedanije}
Suppose that a closed linear operator $A$ generates a strongly continuous semigroup $(T(t))_{t\geq 0}$ on a Banach space $X$ whose elements are certain complex-valued functions defined on ${\mathbb R}^{n}.$ Then it is well known that, under some assumptions, 
the function
\begin{align}\label{fujmije}
u(t,x)=\bigl(T(t)u_{0}\bigr)(x)+\int^{t}_{0}[T(t-s)f(s)](x)\, ds,\quad t\geq 0,\ x\in {\mathbb R}^{n}
\end{align}
is a unique classical solution of the abstract Cauchy problem
\begin{align*}
u_{t}(t,x)=Au(t,x)+F(t,x),\ t\geq 0,\ x\in {\mathbb R}^{n}; \ u(0,x)=u_{0}(x),
\end{align*} 
where $F(t,x):=[f(t)](x),$ $t\geq 0,$ $x\in {\mathbb R}^{n}.$ In many concrete situations (for example, this holds for the Gaussian semigroup on ${\mathbb R}^{n};$ see \cite[Example 3.7.6]{a43} and the point [1.] of applications given in Section \ref{some12345}), there exists a kernel $(t,y)\mapsto E(t,y),$ $t> 0,$ $y\in {\mathbb R}^{n}$ which is integrable on any set $[0,T]\times {\mathbb R}^{n}$ ($T>0$) and satisfies that
$$
[T(t)f(s)](x)=\int_{{\mathbb R}^{n}}F(s,x-y)E(t,y)\, dy,\quad t>0,\ s\geq 0,\ x\in {\mathbb R}^{n}.
$$
If this is the case, let us fix a positive real number
$t_{0}>0.$ Concerning the inhomogeneous part in the equation \eqref{fujmije}, we would like to note that
the almost periodic behaviour of function $x\mapsto u_{t_{0}}(x)\equiv \int^{t_{0}}_{0}[T(t_{0}-s)f(s)](x)\, ds,$ $x\in {\mathbb R}^{n}$ depends on the almost periodic behaviour of function $F(t,x)$ in the space variable $x.$ The most intriguing case is that in which the function $F(t,x)$
is Bohr almost periodic with respect to the variable $x\in {\mathbb R}^{n},$ uniformly in the variable $t$ on compact subsets of $[0,\infty);$ see Subsection \ref{stavisub} below for the notion. Then the function 
$u_{t_{0}}(\cdot)$ is likewise Bohr almost periodic, which follows from the estimate
\begin{align*}
\Bigl| & u_{t_{0}}(x+\tau)- u_{t_{0}}(x) \Bigr|\leq  \int^{t_{0}}_{0} \int_{{\mathbb R}^{n}}
 | F(s,x+\tau-y)-F(s,x-y)| \cdot \bigl|E\bigl(t_{0},y\bigr)\bigr|\, dy\, ds
\\& \leq \epsilon \int^{t_{0}}_{0} \int_{{\mathbb R}^{n}}\bigl|E\bigl(t_{0},y\bigr)\bigr|\, dy\, ds
\end{align*}
and corresponding definitions.
\end{example}

\noindent {\bf Notation and terminology.}
We assume henceforth that $(X,\| \cdot \|)$, $(Y, \|\cdot\|_Y)$ and $(Z, \|\cdot\|_Z)$ are complex Banach spaces, $n\in {\mathbb N},$ $\emptyset  \neq I \subseteq {\mathbb R}^{n},$
${\mathcal B}$ is a non-empty collection of non-empty subsets of $X,$ ${\mathrm R}$ is a non-empty collection of sequences in ${\mathbb R}^{n}$
and ${\mathrm R}_{\mathrm X}$ is a non-empty collection of sequences in ${\mathbb R}^{n} \times X$;  usually, ${\mathcal B}$ denotes the collection of all bounded subsets of $X$ or all compact subsets
of $X.$ Set ${\mathcal B}_{X}:=\{y\in X : (\exists B\in {\mathcal B})\, y\in B\}.$ Although it may seem a slightly redundant (see e.g. Theorem \ref{Bochner123456}, which holds without imposing this condition), 
we will always assume henceforth that ${\mathcal B}_{X}
=X,$ i.e., 
that
for each $x\in X$ there exists $B\in {\mathcal B}$ such that $x\in B.$ By
$L(X,Y)$ we denote the Banach algebra of all bounded linear operators from $X$ into
$Y;$ $L(X,X)\equiv L(X)$. If $A: D(A) \subseteq X \mapsto X$ is a closed linear operator,
then its range and spectrum will be denoted respectively by
$R(A)$ and $\sigma(A).$ By $B^{\circ}$ and $\partial B$ we denote the interior and the boundary of a subset $B$ of a topological space $X$, respectively.

The symbol $C(I: X)$ stands for the space of all $X$-valued
continuous functions defined  on the domain $I$. By $C_{b}(I: X)$ (respectively, $BUC(I: X)$) we denote the subspace of $C(I: X)$ consisting of all bounded (respectively, all bounded uniformly continuous functions). Both $C_{b}(I: X)$ and $BUC(I: X)$ are Banach spaces with the sup-norm. This also holds for the space $C_{0}(I : X)$ consisting of all continuous functions $f : I \rightarrow X$ such that $\lim_{|t|\rightarrow +\infty}f(t)=0.$ Let su recall that a continuous function $f : I \rightarrow X,$ where $I$ is either ${\mathbb R}$ or $[0,\infty),$ is said to be asymptotically almost periodic if and only if there exist functions $g \in AP(I :X)$ and $\phi \in  C_{0}(I: X)$
such that $f = g+\phi$ (the notion of asymptotical almost periodicity for a continuous function $f : {\mathbb R} \rightarrow X$ can be introduced in some other, not equivalent, ways; see \cite{nova-mono}-\cite{nova-man} for more details). 

The Euler Gamma function is denoted by
$\Gamma(\cdot)$. %We also set $g_{\zeta}(t):=t^{\zeta-1}/\Gamma(\zeta),$ $\zeta>0.$
 If ${\bf t_{0}}\in {\mathbb R}^{n}$ and $\epsilon>0$, then we set $B({\bf t}_{0},\epsilon):=\{{\bf t } \in {\mathbb R}^{n} : |{\bf t}-{\bf t_{0}}| \leq \epsilon\},$
where $|\cdot|$ denotes the Euclidean norm in ${\mathbb R}^{n}.$ Set 
${\mathbb N}_{n}:=\{1,\cdot \cdot \cdot, n\}$ and $\Delta_{n}:=\{ (t,t,\cdot \cdot \cdot,t)  \in {\mathbb R}^{n} : t\in {\mathbb R} \}$ ($n\in {\mathbb N}$).\vspace{0.2cm}

Now we are ready to briefly explain the organization and main ideas of this paper.
In Subsection \ref{stavisub}, we recall the basic facts and definitions about vector-valued almost periodic functions of several real variables; in Subsection \ref{stavisub1}, we recall some applications of vector-valued almost periodic functions of several real variables made so far.
Definition \ref{eovakoap} and Definition \ref{eovakoap1} introduce the notion of $({\mathrm R},{\mathcal B})$-multi-almost periodicity and the notion of $({\mathrm R}_{\mathrm X}, {\mathcal B})$-multi-almost periodicity for a continuous function $F : I \times X \rightarrow Y,$ respectively. The convolution invariance of space consisting of all $({\mathrm R}_{\mathrm X},{\mathcal B})$-multi-almost periodic functions is stated in Proposition \ref{convdiaggas}, while the supremum formula for the class of $({\mathrm R},{\mathcal B})$-multi-almost periodic functions is stated in Proposition \ref{netokaureap}. 

The notion of Bohr ${\mathcal B}$-almost periodicity and the notion of ${\mathcal B}$-uniform recurrence for a continuous function $F : I \times X \rightarrow Y$ are introduced in Definition \ref{nafaks1234567890}, provided that the region $I$ satisfies the semigroup property $I +I \subseteq I$. 
Numerous illustrative examples of Bohr ${\mathcal B}$-almost periodic functions and ${\mathcal B}$-uniformly recurrent functions are presented in Example \ref{pwerqwerzeka}
and Example \ref{pwerqwer}. In Definition
\ref{nafaks123456789012345}, we introduce the notion of  Bohr $({\mathcal B},I')$-almost periodicity and $({\mathcal B},I')$-uniform recurrence, provided that 
$\emptyset  \neq I'\subseteq I \subseteq {\mathbb R}^{n},$ $F : I \times X \rightarrow Y$ is a continuous function and $I +I' \subseteq I.$ 
After that, we provide several examples of  Bohr $({\mathcal B},I')$-almost periodic functions and $({\mathcal B},I')$-uniformly recurrent functions in Example \ref{rajkomilice}. 
The relative compactness of range $F(I\times B)$ for a Bohr ${\mathcal B}$-almost periodic function $F : I \times X \rightarrow Y,$ where $B\in {\mathcal B},$ is analyzed in Proposition \ref{bounded-pazi}. The Bochner criterion for Bohr ${\mathcal B}$-almost periodic functions is stated in Theorem \ref{Bochner123456}. Proposition \ref{dekartovproizvod} is crucial for clarifying the composition principles of Bohr ${\mathcal B}$-almost periodic functions; in this proposition, we investigate the common $\epsilon$-periods (see Definition \ref{nafaks1234567890}(i)) for the finite families of Bohr ${\mathcal B}$-almost periodic functions defined on ${\mathbb R}^{n} \times X$ (see also Proposition \ref{pointwise-prod-rn} and Proposition \ref{pointwise-prod-rn12345}, where we analyze the pointwise products of Bohr ${\mathcal B}$-almost periodic functions and $({\mathrm R},{\mathcal B})$-multi-almost periodic functions with scalar-valued functions). 
The uniform continuity of Bohr ${\mathcal B}$-almost periodic functions is investigated in Proposition \ref{nijenaivno}, while the class of
strongly ${\mathcal B}$-almost periodic functions is investigated in Subsection \ref{lakolakop}.
The analysis carried out in Proposition \ref{nijenaivno} indicates that there exist many concrete situations in which it is very unpleasant to work with a general region $I\neq {\mathbb R}^{n}.$

Definition \ref{kompleks12345}
introduces the notion of space $C_{0,{\mathbb D}}(I \times X :Y)$, which is crucial for introducing the notions of various types of ${\mathbb D}$-asymptotically $({\mathrm R},{\mathcal B})$-multi-almost periodicity and ${\mathbb D}$-asymptotically Bohr ${\mathcal B}$-almost periodicity; see Definition \ref{braindamage12345} and Proposition \ref{mismodranini}-Proposition \ref{2.1.10obazacap}. 
Definition \ref{nafaks123456789012345123}
introduces the notion of
${\mathbb D}$-asymptotical Bohr $({\mathcal B},I')$-almost periodicity of type $1$ 
and ${\mathbb D}$-asymptotical $({\mathcal B},I')$-uniform recurrence of type $1,$ which are further analyzed in several results preceding Subsection \ref{london-calling}, where we investigate the differentiation and integration of $({\mathrm R}_{X},{\mathcal B})$-multi-almost periodic type functions.
Our main results in this part are Theorem \ref{bounded-paziem} and Theorem \ref{izgubljeni-em}.

Concerning the proof of Theorem \ref{bounded-paziem}, we would like to emphasize that  H. Bart and S. Goldberg have proved in \cite{bart} that,
for every function $f\in AP([0,\infty) : X),$ there exists a unique almost periodic function $Ef : {\mathbb R} \rightarrow X$
such that $Ef(t)=f(t)$ for all $t\geq 0$ (see also the paper \cite{favarov-tube} by S. Favarov and O. Udodova, where the authors have investigated the extensions of almost periodic functions defined on ${\mathbb R}^{n}$ to the tube domains in ${\mathbb C}^{n}$, and the paper \cite{berg-lund} by J. F. Berglund, where the author has investigated the extensions of almost periodic functions in topological groups and semigroups). We will investigate 
the extensions of multi-dimensional almost periodic functions and multi-dimensional uniformly recurrent functions
in Remark \ref{remarka}, Theorem \ref{lenny-jasson} and Corollary \ref{lenny-jasson1} following the method proposed in the proof of Theorem \ref{bounded-paziem}, which is essentially based on the use of argumentation contained in the proof of the important theoretical result \cite[Theorem 3.4]{RUESS-1}, established by W. M. Ruess and W.  H. Summers.
Subsection \ref{marekzero} investigates the composition theorems for multi-almost periodic type functions. 

The third section of paper is reserved for the applications of our abstract theoretical results to
the various classes of abstract Volterra integro-differential equations in Banach spaces (Subsection \ref{kamlebravo} investigates some applications to the nonautonomous retarded functional evolution equations), while the final section of paper is reserved for some appendicies and notes about multiparameter strongly continuous semigroups, multivariate trigonometric polynomials and their applications (the appendix section as well as Subsection \ref{stavisub} and Subsection \ref{stavisub1} do not contain any original result of ours; their aim is to motivate the reader for further investigations of multi-dimensional almost periodicity and related applications).

\subsection{Almost periodic type functions on ${\mathbb R}^{n}$}\label{stavisub}

The notion of almost periodicity can be simply generalized to the case in which $I={\mathbb R}^{n}.$ Suppose that $F : {\mathbb R}^{n} \rightarrow X$ is a continuous function. Then we say that $F(\cdot)$ is almost periodic if and only if for each $\epsilon>0$
there exists $l>0$ such that for each ${\bf t}_{0} \in {\mathbb R}^{n}$ there exists ${\bf \tau} \in B({\bf t}_{0},l)$ such that
\begin{align}\label{emojmarko0}
\bigl\|F({\bf t}+{\bf \tau})-F({\bf t})\bigr\| \leq \epsilon,\quad {\bf t}\in {\mathbb R}^{n}.
\end{align}
This is equivalent to saying that for any sequence $({\bf b}_n)$ in ${\mathbb R}^{n}$ there exists a subsequence $({\bf a}_{n})$ of $({\bf b}_n)$
such that $(F(\cdot+{\bf a}_{n}))$ converges in $C_{b}({\mathbb R}^{n}: X).$ Any trigonometric polynomial in ${\mathbb R}^{n}$ is almost periodic and it is also well known that $F(\cdot)$ is almost periodic if and only if there exists a sequence of trigonometric polynomials in ${\mathbb R}^{n}$ which converges uniformly to $F(\cdot);$
let us recall that a trigonometric polynomial in ${\mathbb R}^{n}$ is any linear combination of functions like ${\bf t}\mapsto e^{i\langle
 {\bf \lambda}, {\bf t} \rangle},$ ${\bf t}\in{\mathbb R}^{n},$ where ${\bf \lambda}\in {\mathbb R}^{n}$ and $\langle \cdot, \cdot \rangle$ denotes the inner product in ${\mathbb R}^{n}.$ 
Using the above clarifications, we can simply prove that a continuous function $F: {\mathbb R}^{n} \rightarrow X$ is almost periodic if and only if any of the following equivalent conditions hold:
\begin{itemize}
\item[(i)] For every $j\in {\mathbb N}_{n}$ and $\epsilon>0,$ there exists a finite real number $l>0$ such that every interval $I\subseteq {\mathbb R}$ of length 
$l$ contains a point $\tau_{j} \in I$ such that 
\begin{align}\label{poziv}
\Bigl\| F\bigl( t_{1},t_{2},\cdot \cdot \cdot, t_{j}+\tau_{j},\cdot \cdot \cdot,t_{n} \bigr) -F\bigl( t_{1},t_{2},\cdot \cdot \cdot, t_{j},\cdot \cdot \cdot, t_{n} \bigr)\Bigr\|\leq \epsilon,\ {\bf t}=\bigl( t_{1},\cdot \cdot \cdot, t_{n} \bigr)\in {\mathbb R}^{n};
\end{align} 
\item[(ii)] For every $\epsilon>0,$ there exists a finite real number $l>0$ such that, for every $j\in {\mathbb N}_{n}$ and every interval $I\subseteq {\mathbb R}$ of length 
$l,$ there exists a point $\tau_{j} \in I$ such that \eqref{poziv} holds;
\item[(iii)] For every $\epsilon>0,$ there exists a finite real number $l>0$ such that every interval $I\subseteq {\mathbb R}$ of length 
$l$ contains a point $\tau \in I$ such that, for every $j\in {\mathbb N}_{n},$ \eqref{poziv} holds with the number $\tau_{j}$ replaced by the number $\tau$ therein.
\end{itemize}
Furthermore, for any almost periodic function $F(\cdot),$ we have that for each $\epsilon>0$
there exists $l>0$ such that for each ${\bf t}_{0} \in \{ (t,t,\cdot \cdot \cdot,t) \in {\mathbb R}^{n} : t\in {\mathbb R} \}$ there exists ${\bf \tau} \in B({\bf t}_{0},l) \cap \Delta_{n}$ such that
\eqref{emojmarko0} holds (see Subsection \ref{lakolakop} for more details).
Any almost periodic function $F(\cdot)$ is bounded, the mean value
$$
M(F):=\lim_{T\rightarrow +\infty}\frac{1}{(2T)^{n}}\int_{{\bf s}+K_{T}}F({\bf t})\, d{\bf t}
$$
exists and it does not depend on $s\in {\mathbb R}^{n};$ here,  
$K_{T}:=\{ {\bf t}=(t_{1},t_{2},\cdot \cdot \cdot,t_{n}) \in {\mathbb R}^{n} :  |t_{i}|\leq T\mbox{ for }1\leq i\leq n\}.$ The Bohr-Fourier coefficient $F_{\lambda}\in X$ is defined by \index{Bohr-Fourier coefficient}
$$
F_{\lambda}:=M\Bigl(e^{-i\langle \lambda, {\bf \cdot}\rangle }F(\cdot)\Bigr),\quad \lambda \in {\mathbb R}^{n}.
$$
The Bohr spectrum of $F(\cdot),$ defined by\index{Bohr spectrum}
$$
\sigma(F):=\bigl\{ \lambda \in {\mathbb R}^{n} : F_{\lambda}\neq 0 \bigr\},
$$
is at most a countable set. By $AP({\mathbb R}^{n} : X)$ and $AP_{\Lambda}({\mathbb R}^{n} : X)$ \index{space!$AP({\mathbb R}^{n} : X)$} \index{space!$AP_{\Lambda}({\mathbb R}^{n} : X)$}
we denote respectively the Banach space consisting of all almost periodic functions $F : {\mathbb R}^{n} \rightarrow X,$ equipped with the sup-norm, and its subspace consisting of all almost periodic functions $F : {\mathbb R}^{n} \rightarrow X$
such that $\sigma(F) \subseteq \Lambda.$ The Wiener algebra $APW({\mathbb R}^{n} : X)$ is defined as the set of all functions $F : {\mathbb R}^{n} \rightarrow X$ such that its Fourier series converges absolutely; $APW_{\Lambda}({\mathbb R}^{n} : X)\equiv APW({\mathbb R}^{n} : X) \cap AP_{\Lambda}({\mathbb R}^{n} : X)$. It is well known that the Wiener algebra is a Banach algebra with respect to the Wiener norm $\| F\|:=\sum_{\lambda \in {\mathbb R}^{n}}|F_{\lambda}|,$ $F\in APW({\mathbb R}^{n} : X)$ as well as that
$APW({\mathbb R}^{n} : X)$ is dense in $AP({\mathbb R}^{n} : X).$ 
\index{Wiener algebra}\index{Wiener norm}

Now we will remind the readers of several important investigations of multi-dimensional almost periodic functions carried out so far: 

1. Problems of Nehari type and contractive extension problems for matrix-valued (Wiener) almost periodic functions of
several real variables have been considered 
by L. Rodman, I. M. Spitkovsky and  H. J. Woerdeman in
\cite{rodman1}, where the authors proved a generalization of the famous Sarason's theorem \cite{sarason}. In their analysis, the notion of a halfspace in ${\mathbb R}^{n}$ 
plays an important role: a non-empty subset $S\subseteq {\mathbb R}^{n}$ is said to be a halfspace 
if and only if the following four conditions hold:
\begin{itemize}
\item[(i)] ${\mathbb R}^{n}=S \cup (-S);$
\item[(ii)] $\{0\}=S \cap (-S);$
\item[(iii)] $S+S \subseteq S;$
\item[(iv)] $\alpha \cdot S \subseteq S$ for $\alpha \geq 0.$
\end{itemize}
For any halfspace $S$ we can always find a linear bijective mapping $D : {\mathbb R}^{n} \rightarrow {\mathbb R}^{n}$ such that $S=DE_{n},$ where $E_{n}$ is a very special halfspace defined on \cite[p. 3190]{rodman}. In \cite[Theorem 1.3]{rodman}, L. Rodman and I. M. Spitkovsky have proved that, if $S$ is a halfspace and $\Lambda \subseteq S,$ $0\in \Lambda$ and $\Lambda +\Lambda \subseteq \Lambda,$ then  $AP_{\Lambda}({\mathbb R}^{n} : {\mathbb C})$ and $APW_{\Lambda}({\mathbb R}^{n} : {\mathbb C})$ are Hermitian rings. See also the article \cite{rodman2}.
\index{theorem!Sarason}\index{matrix-valued almost periodic functions}\index{Hermitian ring}

2. Let us recall that a subset $\Lambda$ of ${\mathbb R}^{n}$ is called discrete if and only if any point $\lambda \in \Lambda$ is isolated in $\Lambda.$ By ${\mathcal V}_{\Lambda}$ we denote the vector space of all finite complex-valued trigonometric polynomials $\sum_{\lambda \in \Lambda}c(\lambda)e^{-\pi i \lambda \cdot}$ whose frequencies $\lambda$ belong to $\Lambda.$ The space of mean-periodic functions with the spectrum $\Lambda,$ denoted by ${\mathcal C}_{\Lambda},$ is defined as the closure of the space 
${\mathcal V}_{\Lambda}$ in the Fr\'echet space $C({\mathbb R}^{n}).$ Clearly,  $AP_{\Lambda}({\mathbb R}^{n} : {\mathbb C})$ is contained in ${\mathcal C}_{\Lambda}$ but the converse statement is not true, in general. The problem of describing structure of closed discrete sets $\Lambda$ for which the equality $AP_{\Lambda}({\mathbb R}^{n} : {\mathbb C})={\mathcal C}_{\Lambda}$ holds was proposed by J.-P. Kahane in 1957 (\cite{kahane}). For more details about this interesting problem, we refer the reader to the survey article \cite{meyer} by Y. Meyer. 

3. In 1971, B. Basit \cite{basit1971} proved that there exists a complex-valued almost periodic
function $f : {\mathbb R}^{2} \rightarrow {\mathbb C}$ such that the function $F : {\mathbb R}^{2} \rightarrow {\mathbb C},$ defined by
$F(x,y):=\int^{x}_{0}f(t,y)\, dt,$ $(x, y) \in {\mathbb R}^{2},$
is bounded but not almost periodic. 
This result was recently reconsidered by S. M. A. Alsulami in \cite[Theorem 2.2]{integral-als}, who proved that for a complex-valued almost periodic
function $f : {\mathbb R}^{2} \rightarrow {\mathbb C},$ the boundedness of the function $F(\cdot)$ in the whole plane implies its almost periodicity, provided that there exists a complex-valued almost periodic function $g : {\mathbb R}^{2} \rightarrow {\mathbb C}$ such that  
$f_{x}(x,y)=g_{y}(x,y)$ is a continuous function in the whole plane. This result was proved with the help of 
an old result of L. H. Loomis \cite{loomaja} which states that, for a bounded complex-valued 
function $f : {\mathbb R}^{n} \rightarrow {\mathbb C},$ the almost periodicity
of all its partial derivatives of the first order
implies the almost periodicity of $f(\cdot)$ itself. Our first original observation is that the above-mentioned result of S. M. A. Alsulami can be straightforwardly extended, with the same proof, to the almost periodic functions $f : {\mathbb R}^{n} \rightarrow {\mathbb C};$ in actual fact, if the function $f : {\mathbb R}^{n} \rightarrow {\mathbb C}$ is almost periodic, the function $F(x_{1},x_{2},\cdot \cdot \cdot, x_{n}):=\int^{x_{1}}_{0}f(t,x_{2},\cdot \cdot \cdot, x_{n})\, dt,$ $(x_{1},x_{2},\cdot \cdot \cdot, x_{n}) \in {\mathbb R}^{n}$
is bounded and there exist almost periodic functions $G_{i} : {\mathbb R}^{n} \rightarrow {\mathbb C}$ such that $F_{x_{i}}(x_{1},x_{2},\cdot \cdot \cdot, x_{n}) =(G_{i})_{x_{1}}(x_{1},x_{2},\cdot \cdot \cdot, x_{n}) $ is a continuous function on ${\mathbb R}^{n},$ for $2\leq i\leq n,$ then the function $F : {\mathbb R}^{n} \rightarrow {\mathbb C}$
is almost periodic.

4. In \cite{khasanov1}-\cite{khasanov3}, Yu. Kh. Khasanov has investigated the approximations of  uniformly almost periodic functions of two variables by partial sums of Fourier sums and Marcinkiewicz sums in the uniform metric, provided certain conditions. For previous works about summability of double Fourier series, we also refer the reader to the papers \cite{marc12345} by I. Marcinkewisz and  \cite{zizias} by L. V. Zhizhiashvily. 

5. In \cite{latiff1}-\cite{latiff2}, M. A. Latif and M. I. Bhatti have investigated several imprortant questions concerning almost periodic functions defined on ${\mathbb R}^{n}$ with values in locally convex spaces and fuzzy number type spaces, while almost periodic functions defined on ${\mathbb R}^{n}$ with values in $p$-Fr\'echet spaces, where $0<p<1,$ have been investigated by G. M. N'Gu\' er\' ekata, M. A. Latif and M. I. Bhatti in \cite{gastonlatif}.

\subsection{Applications}\label{stavisub1}

Concerning applications of multi-dimensional almost periodic type functions made so far, we recall the following:

1. The problem of
existence of almost periodic solutions for the system of linear partial
differential equations 
$$
\sum_{j=1}^{n}L_{ij}u_{j}=f_{i},\quad 1\leq i\leq n,
$$
on ${\mathbb R}^{m},$ where $L_{ij}$ is an arbitrary linear partial differential operator on ${\mathbb R}^{m},$ has been analyzed by G. R. Sell \cite{sell}-\cite{sell1} by using the results from the theory of almost periodic functions of several real variables. He
has extended the results obtained in the article \cite{sibiya} by Y. Sibuya, where the author has analyzed the almost periodic solutions of Poisson's equation. Sibuya's results have been also improved, in another direction, in the recent article \cite{nazarov} by \`E. Muhamadiev and M. Nazarov, where the authors have relaxed the assumption of the usual boundedness into boundedness in the sense of distributions. 

2. The almost periodic solutions of the (semilinear) systems of ordinary differential equations have been analyzed by A. M. Fink in \cite[Chapter 8]{fink} with the help of fixed point theorems.

3. In his doctoral dissertation \cite{integral-dis}, S. M. A. Alsulami has considered the question whether the boundedness of solutions of the following system of partial first-order differential equations 
\begin{align}\label{bounded-almost}
u_{s}(s,t)=Au(s,t)+f_{1}(s,t),\quad u_{t}(s,t)=Bu(s,t)+f_{2}(s,t); \ \ \quad (s,t)\in {\mathbb R}^{2}
\end{align}
implies the almost periodicity of solutions to \eqref{bounded-almost}. He has analyzed this question in the finite-dimensional setting and the infinite-dimensional setting, by using two different techniques; in both cases, $A$ and $B$ are bounded linear operators acting on the pivot space $X.$

4. In \cite{greg12345}-\cite{greg12345678}, G. Spradlin has provided several interesting results and applications regarding almost periodic functions of several real variables. For any positive integer $n\geq 2,$ he has constructed an almost periodic infinitely differentiable almost periodic function $F : {\mathbb R}^{n} \rightarrow {\mathbb R}$ with no local mimimum (it can be simply shown that this situation cannot occur in the one-dimensional case because any almost periodic function $f : {\mathbb R} \rightarrow {\mathbb R}$ must have infinitely many local minima; see also the list of open problems proposed on p. 381 of \cite{greg12345}). The existence of 
positive homoclinic-type solutions of the equation
$$
-\Delta u +u=H({\bf t})f(u), 
$$
where $H(\cdot)$ is almost periodic and the first antiderivative of $f(\cdot)$ satisfies certain superquadraticity and critical growth conditions, has been analyzed in \cite[Theorem 1.2]{greg12345678}.
The equations of type
\begin{align}\label{nijelose}
-\epsilon^{2}\Delta u +H({\bf t}) u=f(u), 
\end{align}
arise in the study of the nonlinear Schr\"odinger equations ($\epsilon>0$). A qualitative analysis of solutions of \eqref{nijelose} has been carried out in \cite{greg123456}, provided the almost periodicity of function
$H(\cdot)$ and several other  
assumptions.

5. In the homogenization theory, numerous research articles investigate the asymptotic behaviour of the solutions of the problem
\begin{align}\label{arhangel-sk}
\inf\Biggl\{ \int_{\Omega}f(hx,Du)+\int_{\Omega}\psi x : u(\cdot)\mbox{ Lipschitz continuous and }u=0\mbox{ on }\partial \Omega \Biggr\},
\end{align}
where $\emptyset \neq \Omega \subseteq {\mathbb R}^{n}$ is an open bounded set, $\psi(\cdot)$ is essentially bounded on $\Omega,$ and $f : {\mathbb R}^{n} \times {\mathbb R}^{n} \rightarrow [0,\infty)$ satisfies the usual Carath\' eodory conditions:\index{condition!Carath\' eodory}
\begin{align}\label{ubicemotene}
f(x,z)\mbox{ is measurable in }x\mbox{ and convex in }z,
\end{align}
\begin{align}\label{na-mrtvo-more}
f(\cdot,z)\mbox{ is }[0,1]^{n}-\mbox{periodic for every }z\in  {\mathbb R}^{n},
\end{align}
and
\begin{align*}
0\leq w(x)|z|^{p} \leq f(x,z)  &  \leq W(x) \bigl( 1+ |z|^{p}\bigr) \mbox{ for a.e. }x\in  {\mathbb R}^{n}\mbox{ and every }z\in {\mathbb R}^{n}; 
\\& p>1;\ w^{(-1)/(p-1)},\ W \in L^{1}_{loc}({\mathbb R}^{n}).
\end{align*}
Under some extra assumptions, including the Lipschitz type boundary of $\Omega,$ G. de Giorgi has proved in \cite{de-giorgi} that the values in \eqref{arhangel-sk} converges to
\begin{align}\label{ubicemote}
\inf\Biggl\{ \int_{\Omega}f_{\infty}(Du)+\int_{\Omega}\psi x : u(\cdot)\mbox{ Lipschitz continuous and }u=0\mbox{ on }\partial \Omega \Biggr\},
\end{align}
where $f_{\infty} : {\mathbb R}^{n} \rightarrow [0,\infty)$ is a convex function defined by
\begin{align*}
f_{\infty}(x)& :=\lim_{s\rightarrow \infty}s^{-n}\inf\Biggl\{ \int_{(0,s)^{n}}f(x,z+Du) : \\& u(\cdot)\mbox{ Lipschitz continuous and }u=0\mbox{ on } \partial\Bigl((0,s)^{n}\Bigr)\Biggr\}.
\end{align*} 
Condition \eqref{na-mrtvo-more} has been replaced with certain almost periodic assumptions in many research articles. In
\cite{dearhangel0}, the author has
assumed that \eqref{ubicemotene} holds, $f(\cdot,z)\in L_{loc}^{1}({\mathbb R}^{n})$ for every $z\in {\mathbb R}^{n},$
$|z| \leq f(x,z)$ for a.e. $x\in {\mathbb R}^{n}$ and every $z\in {\mathbb R}^{n},$
and 
the following almost periodic type condition, which needs to be valid for every $z\in {\mathbb R}^{n}:$ For every $\epsilon>0,$ there exists a finite real number $L_{\epsilon}>0$ such that, for every $x_{0}\in {\mathbb R}^{n},$
there exists $\tau \in x_{0}+B(0,L_{\epsilon})$ such that
$$
|f(x+\tau,z)-f(x,z)| \leq \epsilon(1+f(x,z)),\quad \mbox{ for a.e. }x\in  {\mathbb R}^{n}\mbox{ and every }z\in {\mathbb R}^{n}.
$$
Then, for every open convex set $\Omega$ and for every essentially bounded function $\psi(\cdot)$ on $\Omega,$ the values in \eqref{arhangel-sk}
converges to the value in \eqref{ubicemote}.

In \cite{ishii}, H. Ishii has analyzed
the asymptotic behavior, as the parameter $\epsilon$ tends to $0+,$ of
the solution $u^{\epsilon}$ 
of the Hamilton-Jacobi equation\index{equation!Hamilton-Jacobi}
\begin{equation}\label{visco-selosam}
u(x)+H(x,x/\epsilon, Du(x))=0,\quad x\in {\mathbb R}^{n},
\end{equation}
where $\epsilon>0$ is a positive real number. This equation describs a\index{Riemannian metric}\index{Hamiltonian}
sort of distance functions in the space where the Riemannian metric is oscillatory (for more details about the generalized solutions of the Hamilton-Jacobi equations, we refer the reader to the monograph \cite{pllions} by P. L. Lions). 
The basic assumption made in \cite{ishii} is that the Hamiltonian $H(x,y,p)$ is almost periodic with respect to the variable $y;$ 
since \eqref{visco-selosam} does not have classical solutions, the author has considered certain types of viscosity solutions of this equation. Under certain extra assumptions, the author has introduced the notion of effective Hamiltonian $\overline{H} : {\mathbb R}^{n} \times {\mathbb R}^{n} \rightarrow {\mathbb R},$
which satisfies the following estimates\index{Hamiltonian!effective}
$$
\inf_{y\in {\mathbb R}^{n}}H(x,y,p) \leq \overline{H}(x,y) \leq \sup_{y\in {\mathbb R}^{n}}H(x,y,p),\quad x,\ p\in {\mathbb R}^{n},
$$
and proved that the approximate solutions $u^{\epsilon} \rightarrow u$ locally uniformly as $\epsilon$ tends to $0+,$ where $u(\cdot)$ is a unique bounded, uniformly continuous solution of the equation
$$
u(x)+\overline{H}(x,Du(x))=0,\quad x\in {\mathbb R}^{n}.
$$
For more details about the relationship between almost periodicity and homogenization theory, we also refer the reader to the references cited in the appendix to the third chapter of the forthcoming monograph \cite{nova-man}.

6. The existence and uniqueness of almost periodic solutions for a class of boundary value problems
for hyperbolic equations have been investigated by B. I. Ptashnic and P. I. Shtabalyuk in \cite{bi-ptas}. In the region $D_{p}=(0,T) \times {\mathbb R}^{p}$ ($T>0$, $p\in {\mathbb N}$), they have analyzed the well-posedness of following initial value problem
\begin{align}\label{giperbol}
Lu\equiv \sum_{s=0}^{n}\sum_{|\alpha|=2s}a_{\alpha}\frac{\partial^{2n}u(t,x)}{\partial t^{2n-2s}\partial x_{1}^{\alpha_{1}}\cdot \cdot \cdot \partial x_{p}^{\alpha_{p}}}=0,
\end{align}
\begin{align}\label{popizd}
\frac{\partial^{j-1}u}{\partial t^{j-1}}\Biggl |_{t=0}=\varphi_{j}(x),\quad \frac{\partial^{j-1}u}{\partial t^{j-1}}\Biggl |_{t=T}=\varphi_{j+n}(x) \ \ (1\leq j\leq n).
\end{align}\index{Petrovsky hyperbolicity}
The basic assumption employed in \cite{bi-ptas} is that the equation \eqref{giperbol} is Petrovsky hyperbolic, i.e., that for each $\mu=(\mu_{1},\mu_{2},\cdot \cdot \cdot,\mu_{p}) \in {\mathbb R}^{p}$ all $\lambda$-zeroes of the equation
$$
\sum_{s=0}^{n}\sum_{|\alpha|=2s}a_{\alpha}\lambda^{2n-2s}\mu_{1}^{\alpha_{1}}\mu_{2}^{\alpha_{2}}\cdot \cdot \cdot \mu_{p}^{\alpha_{p}}=0
$$
are real. The basic function space used is the Banach space $C_{B}^{q}(\overline{D^{p}})$ consisting of all $q$-times continuously differentiable functions $u(t,x)$ in $\overline{D^{p}}$ that are Bohr almost periodic in variables $x_{1},x_{2},\cdot \cdot \cdot, x_{p},$
uniformly in $t\in [0,T],$\index{space!$C_{B}^{q}(\overline{D^{p}})$}
equipped with the norm
$$
\| u\|_{C_{B}^{q}(\overline{D^{p}})}:=\sup_{0\leq |s|\leq q}\sup_{(t,x)\in \overline{D^{p}}}\frac{\partial^{|s|}u(t,x)}{\partial t^{s_{0}}\partial x_{1}^{s_{1}}\cdot \cdot \cdot \partial x_{p}^{s_{p}}};
$$\index{space!$C_{B}^{q}({\mathbb R}^{p})$}
by $C_{B}^{q}({\mathbb R}^{p})$ the authors have designated the subspace of $C_{B}^{q}(\overline{D^{p}})$ consisting of those functions which do not depend on the variable $t.$ The existence and uniqueness of solutions of the initial value problem \eqref{giperbol}-\eqref{popizd} have been investigated in the space $C_{B}^{2n}(\overline{D^{p}}),$ under the assumption that $\varphi_{j}(x)\in C_{B}^{r}({\mathbb R}^{p})$ and $r\in {\mathbb N}$ is sufficiently large. If $M_{p}=\{\mu_{k} : k\in {\mathbb Z}^{p}\}$ is the union of spectrum of all functions $\varphi_{1}(x),\cdot \cdot \cdot, \varphi_{2n}(x),$ the solutions $u(t,x)$ of problem \eqref{giperbol}-\eqref{popizd} have been found in the form
$$
u(t,x)=\sum_{k\in {\mathbb Z}^{p}}u_{k}(t)e^{i\langle \mu_{k},x \rangle},\quad \mu_{k}\in M_{p},
$$  
where the functions $u_{k}(t)$ satisfy certain conditions and have the form \cite[(8), p. 670]{bi-ptas}. The uniqueness of solutions of problem \eqref{giperbol}-\eqref{popizd} has been considered in \cite[Theorem 1]{bi-ptas}, while the existence of solutions of problem \eqref{giperbol}-\eqref{popizd} has been considered in \cite[Theorem 2]{bi-ptas}. See also the research articles \cite{P. I. Shtabalyuk1}-\cite{P. I. Shtabalyuk2} by P. I. Shtabalyuk.

Concerning almost periodic functions of several real variables, we also refer the reader to the research monographs \cite{corduneanu} by C. Corduneanu, \cite{fink} by A. M. Fink, \cite{pankov} by A. A. Pankov and \cite{30} by S. Zaidman. The interested reader may also consult the paper \cite{bochnerbound} by S. Bochner, which concerns the extensions of the Riesz theorem to the analytic functions of several real variables and the almost periodic functions of several real variables.

\section{$({\mathrm R}_{X},{\mathcal B})$-Multi-almost periodic type functions and Bohr ${\mathcal B}$-almost periodic type functions}\label{maremare}

The main aim of this section is to analyze $({\mathrm R}_{X},{\mathcal B})$-multi-almost periodic type functions and Bohr ${\mathcal B}$-almost periodic type functions.
Let us recall that 
${\mathcal B}$ denotes a non-empty collection of non-empty subsets of $X,$ ${\mathrm R}$ denotes a non-empty collection of sequences in ${\mathbb R}^{n}$
and ${\mathrm R}_{\mathrm X}$ denotes a non-empty collection of sequences in ${\mathbb R}^{n} \times X.$

In the following two definitions, we introduce the notion of $({\mathrm R},{\mathcal B})$-multi-almost periodicity and one of its most important generalizations, the notion of $({\mathrm R}_{\mathrm X},{\mathcal B})$-multi-almost periodicity (the both notions can be introduced 
on general semitopological groups):

\begin{defn}\label{eovakoap}\index{function!$({\mathrm R},{\mathcal B})$-multi-almost periodic}
Suppose that $\emptyset \neq I \subseteq {\mathbb R}^{n},$ $F : I \times X \rightarrow Y$ is a continuous function, and the following condition holds:
\begin{align}\label{lepolepo}
\mbox{If}\ \ {\bf t}\in I,\ {\bf b}\in {\mathrm R}\ \mbox{ and }\ l\in {\mathbb N},\ \mbox{ then we have }\ {\bf t}+{\bf b}(l)\in I.
\end{align}
Then 
we say that the function $F(\cdot;\cdot)$ is $({\mathrm R},{\mathcal B})$-multi-almost periodic if and only if for every $B\in {\mathcal B}$ and for every sequence $({\bf b}_{k}=(b_{k}^{1},b_{k}^{2},\cdot \cdot\cdot ,b_{k}^{n})) \in {\mathrm R}$ there exist a subsequence $({\bf b}_{k_{l}}=(b_{k_{l}}^{1},b_{k_{l}}^{2},\cdot \cdot\cdot , b_{k_{l}}^{n}))$ of $({\bf b}_{k})$ and a function
$F^{\ast} : I \times X \rightarrow Y$ such that
\begin{align}\label{love12345678ap}
\lim_{l\rightarrow +\infty}F\bigl({\bf t} +(b_{k_{l}}^{1},\cdot \cdot\cdot, b_{k_{l}}^{n});x\bigr)=F^{\ast}({\bf t};x) 
\end{align}
uniformly for all $x\in B$ and ${\bf t}\in I.$ By $AP_{({\mathrm R},{\mathcal B})}(I \times X : Y)$  we denote the space consisting of all 
$({\mathrm R},{\mathcal B})$-multi-almost periodic functions.\index{space!$AP_{({\mathrm R},{\mathcal B})}(I \times X : Y)$} 
\end{defn}

\begin{defn}\label{eovakoap1}\index{function!$({\mathrm R}_{\mathrm X},{\mathcal B})$-multi-almost periodic}
Suppose that $\emptyset  \neq I \subseteq {\mathbb R}^{n},$ $F : I \times X \rightarrow Y$ is a continuous function, and the following condition holds:
\begin{align}\label{lepolepo121}
\mbox{If}\ \ {\bf t}\in I,\ ({\bf b};{\bf x})\in {\mathrm R}_{\mathrm X}\ \mbox{ and }\ l\in {\mathbb N},\ \mbox{ then we have }\ {\bf t}+{\bf b}(l)\in I.
\end{align}
Then 
we say that the function $F(\cdot;\cdot)$ is $({\mathrm R}_{\mathrm X}, {\mathcal B})$-multi-almost periodic if and only if for every $B\in {\mathcal B}$ and for every sequence $(({\bf b;{\bf x}})_{k}=((b_{k}^{1},b_{k}^{2},\cdot \cdot\cdot ,b_{k}^{n});x_{k})_{k}) \in {\mathrm R}_{\mathrm X}$ there exist a subsequence $(({\bf b;{\bf x}})_{k_{l}}=((b_{k_{l}}^{1},b_{k_{l}}^{2},\cdot \cdot\cdot , b_{k_{l}}^{n});x_{k_{l}})_{k_{l}})$ of $(({\bf b};{\bf x})_{k})$ and a function
$F^{\ast} : I \times X \rightarrow Y$ such that
\begin{align}\label{love12345678ap1}
\lim_{l\rightarrow +\infty}F\bigl({\bf t} +(b_{k_{l}}^{1},\cdot \cdot\cdot, b_{k_{l}}^{n});x+x_{k_{l}}\bigr)=F^{\ast}({\bf t};x) 
\end{align}
uniformly for all $x\in B$ and ${\bf t}\in I.$ By $AP_{({\mathrm R}_{\mathrm X},{\mathcal B})}(I \times X : Y)$  we denote the space consisting of all 
$({\mathrm R}_{\mathrm X},{\mathcal B})$-multi-almost periodic functions.\index{space!$AP_{({\mathrm R}_{\mathrm X},{\mathcal B})}(I \times X : Y)$} 
\end{defn}

In our further investigations of $({\mathrm R},{\mathcal B})$-multi-almost periodicity ($({\mathrm R}_{X},{\mathcal B})$-multi-almost periodicity), we will always tactily assume that \eqref{lepolepo} (\eqref{lepolepo121})
holds for $I$ and ${\mathrm R}$ ($I$ and ${\mathrm R}_{X}$).
Before we go any further, we would like to provide several useful observations about the notion introduced above:

\begin{rem}\label{soqqaza}
\begin{itemize}
\item[(i)]
The notion 
introduced in Definition \ref{eovakoap} is a special case of the notion introduced in Definition \ref{eovakoap1}. In order to see this, suppose that the function $F : I \times X \rightarrow Y$ is continuous.
Set
$$
{\mathrm R}_{\mathrm X}:=\bigl\{ b : {\mathbb N} \rightarrow {\mathbb R}^{n} \times X \, ;\, (\exists a\in {\mathrm R}) \, b(l)=(a(l);0)\mbox{ for all }l\in{\mathbb N}\bigr\}.
$$
Then it is clear that 
\eqref{lepolepo} holds for $I$ and ${\mathrm R}$ if and only if \eqref{lepolepo121} holds for $I$ and ${\mathrm R}_{X};$
furthermore,
$F(\cdot;\cdot)$ is $({\mathrm R},{\mathcal B})$-multi-almost periodic if and only if $({\mathrm R}_{\mathrm X},{\mathcal B})$-multi-almost periodic.
It is also clear that, if the function $F(\cdot;\cdot)$ is $({\mathrm R}_{\mathrm X}, {\mathcal B})$-multi-almost periodic, then we have
$F^{\ast}({\bf t}; x)\in \overline{R(F)}$ for all $x\in X$ and ${\bf t}\in I.$ 
\item[(ii)]
The domain $I$ from the above two definitions is rather general. For example,
if $n=1,$ $I=[0,\infty),$ $X=\{0\},$ ${\mathcal B}=\{X\}$ and ${\mathrm R}$ is the collection of all sequences in $[0,\infty),$ then the notion of $({\mathrm R},{\mathcal B})$-multi-almost periodicity is equivalent with the notion of
asymptotical almost periodicity considered usually since a function $f : [0,\infty) \rightarrow Y$ is asymptotically almost periodic
if and only if the set $H(f):=\{f(\cdot +s) : s\geq 0\}$ is relatively compact in $C_{b}([0,\infty):X),$ which means that for any sequence $(b_n)$ of non-negative real numbers there exists a subsequence $(a_{n})$ of $(b_n)$
such that $(f(\cdot+a_{n}))$ converges in $C_{b}([0,\infty): X).$ 
Moreover, if $I$ is a cone in ${\mathbb R}^{n},$ $X=\{0\},$ ${\mathcal B}=\{X\}$, $Y={\mathbb C}$ and ${\mathrm R}$ is a collection of all sequences in $I,$ then a well known result of 
K. deLeeuw and I. Glicksberg \cite[Theorem 9.1]{deLeeuw} says that any $({\mathrm R},{\mathcal B})$-multi-almost periodic function $F : I \rightarrow {\mathbb C}$  can be uniformly approximated by linear combinations of semicharacters of $I,$
which will be 
exponential functions in this case.
If $X=\{0\},$ then we also say that the function $F : I \rightarrow Y$ is ${\mathrm R}$-multi-almost periodic, resp. ${\mathrm R}_{\mathrm X}$-multi-almost periodic.
\item[(iii)]
It is clear that an ${\mathrm R}$-multi-almost periodic function need not be bounded in general; for example, if ${\mathrm R}$ is the collection of all bounded sequences in ${\mathbb R}^{n},$ then an application of the Bolzano-Weierstrass theorem shows that the identical mapping from ${\mathbb R}^{n}$ into ${\mathbb R}^{n}$
is ${\mathrm R}$-multi-almost periodic. The existence of an unbounded sequence $({\bf b}_{k}) \in {\mathrm R}$ does not imply the boundedness of $F(\cdot),$ as well; for example, any unbounded uniformly recurrent function $F : {\mathbb R}^{n} \rightarrow Y$ satisfying the estimate \eqref{jadnice} below with the sequence $({\bf \tau}_{k})$ in ${\mathbb R}^{n}$ satisfying $\lim_{k\rightarrow +\infty} |{\bf \tau}_{k}|=+\infty$ is ${\mathrm R}$-multi-almost periodic with ${\mathrm R}$ being the collection consisting of the sequence 
$({\bf \tau}_{k})$ and all its subsequences.
\item[(iv)]
Suppose $0\in I,$ $I+I\subseteq I,$ ${\mathrm R}_{\mathrm X}$ denotes the collection of all sequences in $I\times X$ and ${\mathcal B}=\{X\}.$
Let us recall that two sufficient conditions for a continuous function $F : I \times X \rightarrow Y$ to be $({\mathrm R}_{\mathrm X}, {\mathcal B})$-multi-almost periodic were obtained by P. Milnes in \cite[Theorem 2]{Milnes1} and T. Kayano in \cite[Theorem 3]{kayano};
some equivalent conditions for $F(\cdot;\cdot)$ to be $({\mathrm R}_{\mathrm X}, {\mathcal B})$-multi-almost periodic can be found in  \cite[Theorem 1(i)]{Milnes1} and \cite[Theorem 4(d)]{kayano}.
\end{itemize}
\end{rem} 

Let $k\in {\mathbb N}$ and $F_{i} : I \times X \rightarrow Y_{i}$ ($1\leq i\leq k$). Then we define the function $(F_{1},\cdot \cdot \cdot, F_{k}) : I \times X \rightarrow Y_{1}\times \cdot \cdot \cdot \times Y_{k}$ by
$$
(F_{1},\cdot \cdot \cdot, F_{k})({\bf t} ;x):=(F_{1}({\bf t};x) , \cdot \cdot \cdot, F_{k}({\bf t};x) ),\quad {\bf t} \in I,\ x\in X.
$$
Using an induction argument and an elementary argumentation, we may deduce the following:

\begin{prop}\label{kursk-kursk}
\begin{itemize}
\item[(i)] Suppose that $k\in {\mathbb N}$, $\emptyset \neq I \subseteq {\mathbb R}^{n},$ \eqref{lepolepo} holds and for any sequence  which belongs to 
${\mathrm R}$ we have that any its subsequence also belongs to ${\mathrm R}.$
If the function $F_{i}(\cdot;\cdot)$ is $({\mathrm R},{\mathcal B})$-multi-almost periodic for $1\leq i\leq k$, then the function $(F_{1},\cdot \cdot \cdot, F_{k})(\cdot;\cdot)$ is also $({\mathrm R},{\mathcal B})$-multi-almost periodic.
\item[(ii)]  Suppose that $k\in {\mathbb N}$, $\emptyset \neq I \subseteq {\mathbb R}^{n},$ \eqref{lepolepo} holds and for any sequence  which belongs to 
${\mathrm R}_{\mathrm X}$ we have that any its subsequence also belongs to ${\mathrm R}_{\mathrm X}.$
If the function $F_{i}(\cdot;\cdot)$ is $({\mathrm R}_{\mathrm X},{\mathcal B})$-multi-almost periodic for $1\leq i\leq k$, then the function $(F_{1},\cdot \cdot \cdot, F_{k})(\cdot;\cdot)$ is also $({\mathrm R}_{\mathrm X},{\mathcal B})$-multi-almost periodic.
\end{itemize}
\end{prop}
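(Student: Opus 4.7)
The plan is to prove (i) by induction on $k$, constructing nested subsequences one component at a time, and then to observe that (ii) follows by the very same argument with sequences in ${\mathbb R}^{n}\times X$ in place of sequences in ${\mathbb R}^{n}$. Equip the product $Y_{1}\times\cdots\times Y_{k}$ with the norm $\|(y_{1},\ldots,y_{k})\|:=\max_{1\leq i\leq k}\|y_{i}\|_{Y_{i}}$, so that uniform convergence of a sequence of $Y_{1}\times\cdots\times Y_{k}$-valued maps is equivalent to componentwise uniform convergence.

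For the base case $k=1$ there is nothing to prove. Assume the statement holds for $k-1$ and fix $B\in{\mathcal B}$ together with a sequence $({\bf b}_{k})\in{\mathrm R}$. By the inductive hypothesis applied to $F_{1},\ldots,F_{k-1}$ we extract a subsequence $({\bf b}_{k_{l}})$ of $({\bf b}_{k})$ and functions $F_{1}^{\ast},\ldots,F_{k-1}^{\ast}:I\times X\to Y_{i}$ such that
\begin{align*}
\lim_{l\to+\infty}F_{i}\bigl({\bf t}+{\bf b}_{k_{l}};x\bigr)=F_{i}^{\ast}({\bf t};x),\qquad 1\leq i\leq k-1,
\end{align*}
uniformly for $x\in B$ and ${\bf t}\in I$. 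By the assumption that ${\mathrm R}$ is closed under taking subsequences, the subsequence $({\bf b}_{k_{l}})$ belongs to ${\mathrm R}$, so we may apply the $({\mathrm R},{\mathcal B})$-multi-almost periodicity of $F_{k}$ to it and extract a further subsequence $({\bf b}_{k_{l_{p}}})$ and a function $F_{k}^{\ast}:I\times X\to Y_{k}$ with
\begin{align*}
\lim_{p\to+\infty}F_{k}\bigl({\bf t}+{\bf b}_{k_{l_{p}}};x\bigr)=F_{k}^{\ast}({\bf t};x)
\end{align*}
uniformly for $x\in B$ and ${\bf t}\in I$. The previous uniform limits are preserved when passing to this further subsequence, hence
\begin{align*}
\lim_{p\to+\infty}\bigl(F_{1},\ldots,F_{k}\bigr)\bigl({\bf t}+{\bf b}_{k_{l_{p}}};x\bigr)=\bigl(F_{1}^{\ast},\ldots,F_{k}^{\ast}\bigr)({\bf t};x)
\end{align*}
uniformly for $x\in B$ and ${\bf t}\in I$, with respect to the product norm above. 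This establishes (i).

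For (ii), the verification is identical: given a sequence $(({\bf b};{\bf x})_{k})\in{\mathrm R}_{\mathrm X}$, one uses the inductive hypothesis on $F_{1},\ldots,F_{k-1}$ to extract a subsequence along which the first $k-1$ components converge uniformly for $x\in B$, ${\bf t}\in I$ in the sense of \eqref{love12345678ap1}, then uses the closure of ${\mathrm R}_{\mathrm X}$ under subsequences and the $({\mathrm R}_{\mathrm X},{\mathcal B})$-multi-almost periodicity of $F_{k}$ to thin out once more and obtain joint uniform convergence. The only point that needs verification is that \eqref{lepolepo121} is automatically satisfied by the thinned subsequences since ${\mathrm R}_{\mathrm X}$ is closed under subsequences by hypothesis. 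There is no substantive obstacle in the argument; the whole content lies in the subsequence-stability assumption on ${\mathrm R}$ and ${\mathrm R}_{\mathrm X}$, without which the nested extraction could not be performed.
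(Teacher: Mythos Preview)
Your proof is correct and follows exactly the route the paper indicates: the authors simply write that the result follows ``using an induction argument and an elementary argumentation,'' and your nested subsequence extraction is precisely that argument made explicit. The only role of the subsequence-stability hypothesis on ${\mathrm R}$ (resp.\ ${\mathrm R}_{\mathrm X}$) is exactly the one you identify, namely to guarantee that the thinned sequence obtained at stage $k-1$ remains in the collection so that the defining property of $F_{k}$ can be applied to it.
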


Concerning the convolution invariance of space consisting of all $({\mathrm R}_{\mathrm X},{\mathcal B})$-multi-almost periodic functions, we would like to state the following result:

\begin{prop}\label{convdiaggas}
Suppose that $h\in L^{1}({\mathbb R}^{n}),$ the function $F(\cdot ; \cdot)$ is $({\mathrm R}_{\mathrm X},{\mathcal B})$-multi-almost periodic 
and
for each bounded subset $D$ of $X$ there exists a constant $c_{D}>0$ such that 
$\|F({\bf t} ; x)\|_{Y}\leq c_{D}$ for all ${\bf t}\in {\mathbb R}^{n},$ $x\in D.$ Suppose, further, that for each sequence $(({\bf b;{\bf x}})_{k}=((b_{k}^{1},b_{k}^{2},\cdot \cdot\cdot ,b_{k}^{n});x_{k})_{k}) \in {\mathrm R}_{\mathrm X}$ and for each set $B\in {\mathcal B}$ we have that $B+\{x_{k} : k\in {\mathbb N}\}$ is a bounded set in $X.$ 
Then the function 
$$
(h\ast F)({\bf t};x):=\int_{{\mathbb R}^{n}}h(\sigma)F({\bf t}-\sigma;x)\, d\sigma,\quad {\bf t}\in {\mathbb R}^{n},\ x\in X
$$
is $({\mathrm R}_{\mathrm X},{\mathcal B})$-multi-almost periodic and satisfies that for each bounded subset $D$ of $X$ there exists a constant $c_{D}'>0$ such that 
$\|(h\ast F)({\bf t} ; x)\|_{Y}\leq c_{D}'$ for all ${\bf t}\in {\mathbb R}^{n},$ $x\in D.$  
\end{prop}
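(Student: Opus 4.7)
The plan is to verify the three defining ingredients of $(\mathrm{R}_X,\mathcal{B})$-multi-almost periodicity for $h\ast F$ in turn: pointwise well-definedness together with the stated boundedness, continuity, and the uniform subsequential convergence on sets in $\mathcal{B}$.

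First I would check that $(h\ast F)({\bf t};x)$ exists for every $({\bf t},x)\in\mathbb{R}^n\times X$. Applying the hypothesis to $D=\{x\}$ bounds the integrand in $Y$-norm by $|h(\sigma)|\,c_{\{x\}}$, an $L^1$ dominant, so the Bochner integral converges; for $x$ in an arbitrary bounded set $D$ the same argument gives $\|(h\ast F)({\bf t};x)\|_Y\leq \|h\|_{L^1}c_D$ uniformly in ${\bf t}\in\mathbb{R}^n$, which is the required boundedness statement. Continuity of $h\ast F$ would follow from the dominated convergence theorem: if $({\bf t}_n,x_n)\to({\bf t},x)$, then $\{x\}\cup\{x_n:n\in\mathbb{N}\}$ lies in some bounded set $D$, giving the integrable dominant $c_D|h(\sigma)|$, while pointwise convergence $h(\sigma)F({\bf t}_n-\sigma;x_n)\to h(\sigma)F({\bf t}-\sigma;x)$ is exactly the continuity of $F$.

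The crux is the uniform convergence property. Fix $B\in\mathcal{B}$ and a sequence $(({\bf b}_k;x_k))\in\mathrm{R}_X$. By the $(\mathrm{R}_X,\mathcal{B})$-multi-almost periodicity of $F$ I would extract a subsequence $(({\bf b}_{k_l};x_{k_l}))$ and a function $F^*:\mathbb{R}^n\times X\to Y$ so that
\[
\epsilon_l:=\sup_{{\bf t}\in\mathbb{R}^n,\ x\in B}\bigl\|F\bigl({\bf t}+{\bf b}_{k_l};x+x_{k_l}\bigr)-F^*({\bf t};x)\bigr\|_Y\longrightarrow 0.
\]
The hypothesis that $B+\{x_k:k\in\mathbb{N}\}$ is bounded, say contained in a bounded $D_0$, gives $\|F({\bf s};y)\|_Y\leq c_{D_0}$ for all ${\bf s}\in\mathbb{R}^n$ and $y\in B+\{x_k\}$, and passing to the limit in the uniform estimate yields $\|F^*({\bf t};x)\|_Y\leq c_{D_0}$ for ${\bf t}\in\mathbb{R}^n$ and $x\in B$. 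Consequently
\[
G^*({\bf t};x):=\int_{\mathbb{R}^n}h(\sigma)F^*({\bf t}-\sigma;x)\,d\sigma
\]
is a well-defined element of $Y$ on $\mathbb{R}^n\times B$ (and may be extended arbitrarily to $\mathbb{R}^n\times X$).

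The final estimate follows by pulling the translation-invariant rate $\epsilon_l$ out of the integral:
\[
\bigl\|(h\ast F)\bigl({\bf t}+{\bf b}_{k_l};x+x_{k_l}\bigr)-G^*({\bf t};x)\bigr\|_Y\leq \int_{\mathbb{R}^n}|h(\sigma)|\bigl\|F\bigl({\bf t}-\sigma+{\bf b}_{k_l};x+x_{k_l}\bigr)-F^*({\bf t}-\sigma;x)\bigr\|_Y d\sigma\leq \|h\|_{L^1}\epsilon_l,
\]
uniformly for $({\bf t},x)\in\mathbb{R}^n\times B$, which is the conclusion. I do not anticipate any serious obstacle; the single delicate point is the need for a uniform $Y$-bound on $F^*$ that makes $G^*$ integrable, and this is precisely where the hypothesis on the boundedness of $B+\{x_k\}$ is used, since it lets us transport the uniform bound from $F$ to $F^*$.
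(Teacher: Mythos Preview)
Your proof is correct and follows essentially the same route as the paper's: well-definedness and the boundedness claim from $h\in L^1$ together with the uniform bound on $F$, continuity via dominated convergence, then for the almost-periodicity extract a subsequence giving $F^*$, use the boundedness of $B+\{x_k\}$ to bound $F^*$ on $\mathbb{R}^n\times B$ and hence make $h\ast F^*$ integrable, and finish with the same $L^1$ estimate. The only cosmetic difference is that the paper invokes the standing hypothesis $\mathcal{B}_X=X$ to argue that $h\ast F^*$ is globally well defined, whereas you define $G^*$ on $\mathbb{R}^n\times B$ and extend arbitrarily; both are fine since the required convergence is only uniform on $B$.
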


\begin{proof}
Since $h\in L^{1}({\mathbb R}^{n}),$ the prescribed assumptions imply that
the function $(h\ast F)(\cdot ;\cdot)$ is well defined as well as that
for each bounded subset $D$ of $X$ there exists a constant $c_{D}''>0$ such that 
$\|(h\ast F)({\bf t} ; x)\|_{Y}\leq c_{D}''$ for all ${\bf t}\in {\mathbb R}^{n},$ $x\in D.$
The continuity of function $(h\ast F)(\cdot ;\cdot)$ follows from the dominated convergence theorem and the same assumption on the function $F(\cdot;\cdot).$ Let the set $B\in {\mathcal B}$ be fixed. 
Then for each sequence
$(({\bf b;{\bf x}})_{k}=((b_{k}^{1},b_{k}^{2},\cdot \cdot\cdot ,b_{k}^{n});x_{k})_{k})\in {\mathrm R}_{\mathrm X}$ there exist a subsequence $(({\bf b;{\bf x}})_{k_{l}}=((b_{k_{l}}^{1},b_{k_{l}}^{2},\cdot \cdot\cdot , b_{k_{l}}^{n});x_{k_{l}})_{k_{l}})$ of $(({\bf b};{\bf x})_{k})$ and a function
$F^{\ast} : {\mathbb R}^{n} \times X \rightarrow Y$ such that
\eqref{love12345678ap1}
 holds. By our assumption, $B+\{x_{k} : k\in {\mathbb N}\}$ is a bounded set in $X$ so that there exists a finite real constant $c_{B}''' >0$ such that
$\|F^{\ast}({\bf t} ; x)\|_{Y}\leq c_{B}'''$ for all ${\bf t}\in {\mathbb R}^{n},$ $x\in B.$ Keeping this in mind and our standing hypothesis $X_{\mathcal B}=X$, we get that the function 
$(h\ast F^{\ast})(\cdot;\cdot)$ is well defined.
The remainder of proof can be deduced by using the estimate
\begin{align*}
\Bigl\| & (h\ast F)({\bf t}+{\bf b}_{k_{l}};x+x_{k_{l}}) -\bigl( h\ast F^{\ast})({\bf t}; x)\Bigr\|_{Y}
\\& \leq  \int_{{\mathbb R}^{n}}|h(\sigma)| \Bigl\|F({\bf t}+{\bf b}_{k_{l}}-\sigma;x+x_{k_{l}})-
F^{\ast}({\bf t}-\sigma; x)\Bigr\|_{Y}\, d\sigma,
\end{align*}
which holds for any ${\bf t}\in {\mathbb R}^{n},$ $l\in {\mathbb N}$ and $x\in X;$ see Definition \ref{eovakoap1}.
\end{proof}

Keeping in mind the notion introduced and analyzed in \cite{genralized-multiaa}, almost directly from the above definitions we may conclude the following:

\begin{itemize}
\item[RB1.] Let $I= {\mathbb R}^{n}.$ If the function $F(\cdot;\cdot)$ is $({\mathrm R},{\mathcal B})$-multi-almost periodic, then it is compactly $({\mathrm R},{\mathcal B})$-multi-almost periodic in the sense of \cite[Definition 2.1]{genralized-multiaa}.
\item[RB2.] If $X\in {\mathcal B},$ $I= {\mathbb R}^{n}$ and ${\mathrm R}_{\mathrm X}$ is a collection of all sequences in ${\mathbb R}^{n} \times X,$ then the notion of $({\mathrm R}_{\mathrm X},{\mathcal B})$-multi-almost periodicity is equivalent with the usual notion of 
almost periodicity (see e.g., \cite[p. 255]{188}). It is also clear that, if the function $F : {\mathbb R}^{n} \times X \rightarrow Y$ is almost periodic, then it is $({\mathrm R},{\mathcal B})$-multi-almost periodic, where ${\mathrm R}$ is the collection of all sequences $b(\cdot)$ in ${\mathbb R}^{n}$ and ${\mathcal B}$ is the collection of all subsets of $X$.
\item[RB3.]  Let $I= {\mathbb R}^{n}.$ If the function $F(\cdot;\cdot)$ is $({\mathrm R}_{\mathrm X},{\mathcal B})$-multi-almost periodic, then it is compactly $({\mathrm R}_{\mathrm X},{\mathcal B})$-multi-almost periodic in the sense of \cite[Definition 2.2]{genralized-multiaa}, provided that for every $B\in {\mathcal B},$ for every $x\in B$ and for every sequence $b_{2}: {\mathbb N} \rightarrow X$ for which there exist a sequence $b : {\mathbb N} \rightarrow {\mathbb R}^{n}\times X$ and a sequence $b_{1}: {\mathbb N} \rightarrow {\mathbb R}^{n}$
such that $b(l)=(b_{1}(l);b_{2}(l)),$ $l\in {\mathbb N}$ we have $x-b_{2}(l) \in B,$ $l\in {\mathbb N}.$ 
\end{itemize}

For the notion introduced in Definition \ref{eovakoap}, the supremum formula can be proved under the following conditions; see also \cite[Proposition 2.6]{genralized-multiaa} for the corresponding statement regarding $({\mathrm R},{\mathcal B})$-multi-almost automorphy:

\begin{prop}\label{netokaureap}
Suppose that $F : I \times X \rightarrow Y$ is $({\mathrm R},{\mathcal B})$-multi-almost periodic, $a\geq 0$ and $x\in X.$
If there exists a sequence $b(\cdot)$ in ${\mathrm R}$ whose any subsequence is unbounded and for which we have ${\bf T}-{\bf b}(l)\in I$ whenever ${\bf T}\in I$ and $l\in {\mathbb N},$  
then we have
\begin{align}\label{m91ap}
\sup_{{\bf t}\in I}\bigl\|F({\bf t};x) \bigr\|_{Y}=\sup_{{\bf t}\in I,|t|\geq a}\bigl\|F({\bf t};x) \bigr\|_{Y}.
\end{align}
\end{prop}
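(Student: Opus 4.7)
\textbf{Proof plan for Proposition \ref{netokaureap}.}
The inequality ``$\geq$'' in \eqref{m91ap} is immediate since the supremum on the right is taken over a subset of $I$. The task is therefore to establish the reverse inequality. Fix $x\in X$ and, using the standing hypothesis ${\mathcal B}_{X}=X$, choose $B\in {\mathcal B}$ with $x\in B$. Let $M:=\sup_{{\bf t}\in I}\|F({\bf t};x)\|_{Y}$, which may a priori be $+\infty$. Given an arbitrary $\eta>0$ (in the case $M<+\infty$) or an arbitrary $N>0$ (in the case $M=+\infty$), pick ${\bf t}_{0}\in I$ with $\|F({\bf t}_{0};x)\|_{Y}>M-\eta$ or $>N+1$, respectively.

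First, I would apply the $({\mathrm R},{\mathcal B})$-multi-almost periodicity of $F$ to the distinguished sequence $b(\cdot)\in {\mathrm R}$: this produces a subsequence ${\bf b}_{k_{l}}$ and a function $F^{\ast}$ such that $F(\cdot+{\bf b}_{k_{l}};x)\to F^{\ast}(\cdot;x)$ uniformly on $I$ for $x\in B$. The crucial observation is that this uniform convergence forces the family $\{F(\cdot+{\bf b}_{k_{l}};x)\}_{l\in{\mathbb N}}$ to be uniformly Cauchy on $I$: for every $\epsilon>0$ there exists $L\in {\mathbb N}$ such that
\begin{equation*}
\bigl\|F({\bf t}+{\bf b}_{k_{l}};x)-F({\bf t}+{\bf b}_{k_{m}};x)\bigr\|_{Y}<\epsilon,\qquad l,m\geq L,\ {\bf t}\in I.
\end{equation*}

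Next, I would exploit the hypothesis on $b(\cdot)$. Since every subsequence of $b(\cdot)$ is unbounded, in particular $|{\bf b}_{k_{l}}|\to +\infty$ as $l\to +\infty$ (otherwise a bounded subsequence could be extracted). The hypothesis ${\bf T}-{\bf b}(l)\in I$ for all ${\bf T}\in I$ and $l\in{\mathbb N}$ allows me to substitute ${\bf t}:={\bf t}_{0}-{\bf b}_{k_{l}}\in I$ into the Cauchy estimate above, yielding
\begin{equation*}
\bigl\|F({\bf t}_{0};x)-F\bigl({\bf t}_{0}-{\bf b}_{k_{l}}+{\bf b}_{k_{m}};x\bigr)\bigr\|_{Y}<\epsilon,\qquad m\geq l\geq L.
\end{equation*}
Note that the point ${\bf t}^{\ast}:={\bf t}_{0}-{\bf b}_{k_{l}}+{\bf b}_{k_{m}}$ lies in $I$ by two applications of the translation hypotheses (condition \eqref{lepolepo} gives $I+{\bf b}_{k_{m}}\subseteq I$, and the standing assumption gives $I-{\bf b}_{k_{l}}\subseteq I$).

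Finally, fix $l\geq L$; then choose $m\geq l$ so large that $|{\bf b}_{k_{m}}|\geq a+|{\bf t}_{0}|+|{\bf b}_{k_{l}}|$, which is possible because $|{\bf b}_{k_{m}}|\to +\infty$. The reverse triangle inequality yields $|{\bf t}^{\ast}|\geq a$, and the displayed estimate gives $\|F({\bf t}^{\ast};x)\|_{Y}>\|F({\bf t}_{0};x)\|_{Y}-\epsilon>M-\eta-\epsilon$ in the finite case (or $>N+1-\epsilon$ in the infinite case). Letting $\epsilon,\eta\to 0^{+}$ (resp.\ $N\to +\infty$) completes the proof.

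The argument is essentially mechanical once the uniform-Cauchy reformulation is in place; the only real point of care is the interpretation of ``any subsequence is unbounded'' as $|{\bf b}(l)|\to +\infty$, and the careful double use of the translation invariance $I\pm{\bf b}(l)\subseteq I$ to ensure that the intermediate points remain inside the domain $I$.
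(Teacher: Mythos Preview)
Your proof is correct and follows essentially the same route as the paper's: both extract a subsequence along the distinguished $b(\cdot)\in{\mathrm R}$, pass to the uniform Cauchy estimate for the translates $F(\cdot+{\bf b}_{k_l};x)$, and then substitute using the two-sided translation hypotheses $I+{\bf b}(l)\subseteq I$ (from \eqref{lepolepo}) and $I-{\bf b}(l)\subseteq I$ (from the statement) to land at a point of modulus at least $a$. The paper is terser---it writes the Cauchy estimate directly in the form $\|F({\bf T}-{\bf b}_{k_{l_0}};x)-F({\bf T}-{\bf b}_{k_l};x)\|_Y\leq\epsilon$ and then sets ${\bf t}={\bf T}-{\bf b}_{k_{l_0}}$---whereas you spell out the domain checks and the reading of ``every subsequence is unbounded'' as $|{\bf b}_{k_l}|\to+\infty$; but the mechanism is the same.
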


\begin{proof}
We will include all relevant details of the proof for the sake of completeness. Let $\epsilon>0,$ $a\geq 0$ and $x\in X$ be given. Then \eqref{m91ap} will hold if we prove that 
\begin{align}\label{lozoh}
\bigl\|F({\bf t};x) \bigr\|_{Y} \leq \epsilon+\sup_{{\bf t}\in I,|t|\geq a}\bigl\|F({\bf t};x) \bigr\|_{Y}.
\end{align}
Let $B\in {\mathcal B}$ be such that $x\in B,$ and let $b(\cdot)$ be any sequence in ${\mathrm R}$ with the prescribed assumptions. Then there exists an integer $l_{0} \in {\mathbb N}$ such that
$$
\Bigl\|  F\Bigl({\bf T}-\bigl(b_{k_{l_{0}}}^{1},\cdot \cdot\cdot, b_{k_{l_{0}}}^{n}\bigr) ; x\Bigr) - F\Bigl({\bf T}-\bigl(b_{k_{l}}^{1},\cdot \cdot\cdot, b_{k_{l}}^{n}\bigr); x\Bigr)\Bigr\|_{Y}\leq \epsilon,\quad l\geq l_{0},\ {\bf T}\in I,\ x\in B.
$$
Plugging ${\bf t}={\bf T}-\bigl(b_{k_{l_{0}}}^{1},\cdot \cdot\cdot, b_{k_{l_{0}}}^{n}\bigr),$ we simply obtain \eqref{lozoh}.
\end{proof}

Now we will prove the following result:

\begin{prop}\label{2.1.10ap}
Suppose that for each integer $j\in {\mathbb N}$ the function $F_{j}(\cdot ; \cdot)$ is $({\mathrm R}_{\mathrm X},{\mathcal B})$-multi-almost periodic and,  for every sequence  which belongs to 
${\mathrm R}_{\mathrm X},$ any its subsequence also belongs to ${\mathrm R}_{\mathrm X}.$ If 
the sequence $(F_{j}(\cdot ;\cdot))$ converges uniformly to a function $F(\cdot ;\cdot)$ on $X$, then the function $F(\cdot ;\cdot)$ is $({\mathrm R}_{\mathrm X},{\mathcal B})$-multi-almost periodic.
\end{prop}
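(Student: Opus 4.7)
The plan is to carry out the standard diagonalization-plus-Cauchy argument that extends the classical Bochner proof (limit of uniformly convergent almost periodic functions is almost periodic) to the present multi-dimensional and parametric setting. Fix $B \in {\mathcal B}$ and an arbitrary sequence $(({\bf b};{\bf x})_{k}) \in {\mathrm R}_{\mathrm X}$; the goal is to extract a subsequence $(({\bf b};{\bf x})_{k_{l}})$ along which $F({\bf t} + {\bf b}_{k_{l}}; x + x_{k_{l}})$ converges uniformly for ${\bf t} \in I$ and $x \in B$.

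First I would construct the diagonal subsequence by induction on $j$. Using the $({\mathrm R}_{\mathrm X}, {\mathcal B})$-multi-almost periodicity of $F_{1}$, extract a subsequence of $(({\bf b};{\bf x})_{k})$ along which $F_{1}({\bf t} + \cdot; x + \cdot)$-translates converge uniformly in ${\bf t} \in I$ and $x \in B$. The hypothesis that ${\mathrm R}_{\mathrm X}$ is closed under taking subsequences guarantees that this extracted sequence itself lies in ${\mathrm R}_{\mathrm X}$, which is precisely what permits me to apply the $({\mathrm R}_{\mathrm X}, {\mathcal B})$-multi-almost periodicity of $F_{2}$ and extract a further subsequence on which $F_{2}$-translates converge uniformly, and so on. Taking the diagonal sequence and denoting it $(({\bf b};{\bf x})_{k_{l}})$, one sees that for every fixed $j \in {\mathbb N}$, from index $l = j$ onwards the diagonal is a subsequence of the $j$-th chosen sequence, so the family $F_{j}({\bf t} + {\bf b}_{k_{l}}; x + x_{k_{l}})$ converges uniformly in ${\bf t} \in I$ and $x \in B$ as $l \to +\infty$.

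Next I would show that $F({\bf t} + {\bf b}_{k_{l}}; x + x_{k_{l}})$ is uniformly Cauchy in $(l)$ for $({\bf t}, x) \in I \times B$, via a standard $\varepsilon/3$ argument. Given $\varepsilon > 0$, the uniform convergence $F_{j} \to F$ produces an index $j_{0}$ with
\[
\bigl\| F_{j_{0}}({\bf s}; y) - F({\bf s}; y) \bigr\|_{Y} < \varepsilon/3
\]
for every admissible $({\bf s}, y)$, and the diagonal property yields $L \in {\mathbb N}$ with
\[
\bigl\| F_{j_{0}}({\bf t} + {\bf b}_{k_{l_{1}}}; x + x_{k_{l_{1}}}) - F_{j_{0}}({\bf t} + {\bf b}_{k_{l_{2}}}; x + x_{k_{l_{2}}}) \bigr\|_{Y} < \varepsilon/3
\]
for all $l_{1}, l_{2} \geq L$, uniformly in ${\bf t} \in I$ and $x \in B$. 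The triangle inequality then furnishes the uniform Cauchy property for the $F$-translates, and by completeness of $Y$ there exists $F^{\ast} : I \times X \to Y$ with $F({\bf t} + {\bf b}_{k_{l}}; x + x_{k_{l}}) \to F^{\ast}({\bf t}; x)$ uniformly in $({\bf t}, x) \in I \times B$. Continuity of $F$ on $I \times X$ required by Definition \ref{eovakoap1} follows automatically from the uniform convergence of continuous functions.

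The only delicate point is ensuring that each step of the inductive extraction remains within the admissible class ${\mathrm R}_{\mathrm X}$, which is precisely why the subsequence-closure hypothesis is imposed; without it the diagonal construction would be illegal. Implicit in the $\varepsilon/3$ estimate is that the uniform convergence $F_{j} \to F$ is valid at every translated argument encountered, which is covered by the assumed uniform convergence across the full domain where the functions are defined.
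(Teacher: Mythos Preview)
Your proposal is correct and follows essentially the same route as the paper: diagonal extraction using the subsequence-closure hypothesis on ${\mathrm R}_{\mathrm X}$, followed by an $\varepsilon/3$ argument exploiting the uniform convergence $F_{j}\to F$. The only organizational difference is that the paper first constructs the individual limits $F_{j}^{\ast}$ along the diagonal subsequence, shows $(F_{j}^{\ast})$ is uniformly Cauchy to obtain $F^{\ast}$, and then verifies $F(\cdot+{\bf b}_{k_{l}};\cdot+x_{k_{l}})\to F^{\ast}$ via a three-term splitting, whereas you bypass the $F_{j}^{\ast}$ altogether and show directly that the $F$-translates are uniformly Cauchy; this is a harmless streamlining of the same argument.
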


\begin{proof}
The proof is very similar to the proof of \cite[Theorem 2.1.10]{gaston} but we will provide all relevant details.
Let ${\bf t}\in I$ and $x\in X$ be given. In order to prove that the function $F(\cdot ;\cdot)$ is continuous at $({\bf t} ; x),$ observe first that our standing assumption ${\mathcal B}_{X}=X$ gives the existence of a set $B\in {\mathcal B}$ such that $x\in B.$ Since the sequence $(F_{j}(\cdot ;\cdot))$ converges uniformly to a function $F(\cdot ;\cdot)$ on $X,$ we have the existence of a positive integer $n_{0}\in {\mathbb N}$ such that $\|F_{n_{0}}({\bf t}' ;x')-F({\bf t}' ;x')\|_{Y}\leq \epsilon/3$ for all ${\bf t}'\in I$ and $x'\in X.$ After that, it suffices to observe that
\begin{align}
\notag \bigl\|F&({\bf t} ;x)-F({\bf t}' ;x')\bigr\|_{Y}\leq \bigl\|F({\bf t}' ;x')-F_{n_{0}}({\bf t}' ;x')\bigr\|_{Y}+\bigl\|F_{n_{0}}({\bf t}' ;x')-F_{n_{0}}({\bf t} ;x)\bigr\|_{Y}\
\\\label{nijenorm} &+\bigl\|F_{n_{0}}({\bf t} ;x)-F({\bf t} ;x)\bigr\|_{Y},\quad {\bf t}'\in I,\ x'\in X,
\end{align}
as well as to employ the continuity of $F_{n_{0}}(\cdot ;\cdot)$ at $({\bf t} ; x).$ Further on,
let the set $B\in {\mathcal B}$ and the sequence $(({\bf b}_{k};x_{k})=((b_{k}^{1},b_{k}^{2},\cdot \cdot\cdot ,b_{k}^{n});x_{k}))\in {\mathrm R}_{\mathrm X}$ be given. Since we have assumed that,
for every sequence  which belongs to 
${\mathrm R}_{\mathrm X},$ any its subsequence also belongs to ${\mathrm R}_{\mathrm X},$
using the diagonal procedure we get the existence of 
a subsequence $(({\bf b}_{k_{l}};x_{k_{l}})=((b_{k_{l}}^{1},b_{k_{l}}^{2},\cdot \cdot\cdot , b_{k_{l}}^{n});x_{k_{l}}))$ of $(({\bf b}_{k};x_{k}))$ such that for each integer $j\in {\mathbb N}$ there exists a function
$F^{\ast}_{j} : I \times X \rightarrow Y$ such that
\begin{align}\label{metalac12345}
\lim_{l\rightarrow +\infty}\Bigl\| F_{j}\bigl({\bf t} +(b_{k_{l}}^{1},\cdot \cdot\cdot, b_{k_{l}}^{n});x+x_{k_{l}}\bigr)-F^{\ast}_{j}({\bf t};x) \Bigr\|_{Y}=0,
\end{align}
uniformly for $x\in B$ and ${\bf t}\in I.$  
Fix now a positive real number $\epsilon>0.$
Since 
\begin{align*}
\Bigl\| F_{i}^{\ast}({\bf t} ; x) &-F_{j}^{\ast}({\bf t} ; x)\Bigr\|_{Y}\leq \Bigl\| F_{i}^{\ast}({\bf t} ; x)-F_{i}({\bf t}+(b_{k_{l}}^{1},\cdot \cdot\cdot, b_{k_{l}}^{n}) ; x+x_{k_{l}})\Bigr\|_{Y}
\\&+\Bigl\| F_{i}({\bf t} +(b_{k_{l}}^{1},\cdot \cdot\cdot, b_{k_{l}}^{n}); x+x_{k_{l}})-F_{j}({\bf t} +(b_{k_{l}}^{1},\cdot \cdot\cdot, b_{k_{l}}^{n}); x+x_{k_{l}})\Bigr\|_{Y}
\\&+\Bigl\| F_{j}({\bf t} +(b_{k_{l}}^{1},\cdot \cdot\cdot, b_{k_{l}}^{n}); x+x_{k_{l}})-F_{j}^{\ast}({\bf t} ; x)\Bigr\|_{Y},
\end{align*}
and \eqref{metalac12345} holds, we can find a number $l_{0}\in {\mathbb N}$ such that for all integers $l\geq l_{0}$ we have:
\begin{align*}
\Bigl\| & F_{i}^{\ast}({\bf t} ; x)-F_{i}({\bf t}+(b_{k_{l}}^{1},\cdot \cdot\cdot, b_{k_{l}}^{n}) ; x+x_{k_{l}})\Bigr\|_{Y}
\\&+\Bigl\| F_{j}({\bf t} +(b_{k_{l}}^{1},\cdot \cdot\cdot, b_{k_{l}}^{n}); x+x_{k_{l}})-F_{j}^{\ast}({\bf t} ; x)\Bigr\|_{Y}<2\epsilon/3,
\end{align*}
uniformly for $x\in B$ and ${\bf t}\in I.$
Since the sequence $(F_{j}(\cdot ;\cdot))$ converges uniformly to a function $F(\cdot ;\cdot)$ on $X,$ there exists $N(\epsilon)\in {\mathbb N}$ such that
for all integers $i,\ j\in {\mathbb N}$ with $\min(i,j)\geq N(\epsilon)$ we have
\begin{align}\label{nutela}
\Bigl\| F_{i}({\bf t} +(b_{k_{l}}^{1},\cdot \cdot\cdot, b_{k_{l}}^{n}); x+x_{k_{l}})-F_{j}({\bf t} +(b_{k_{l}}^{1},\cdot \cdot\cdot, b_{k_{l}}^{n}); x+x_{k_{l}})\Bigr\|_{Y}<\epsilon/3,
\end{align}
uniformly for $x\in B$ and ${\bf t}\in I.$  
This implies that $(F_{j}^{\ast}({\bf t} ;x))$ is a Cauchy sequence in $Y$ and therefore convergent to an element  $F^{\ast}({\bf t} ;x),$ say. The above arguments simply yield that $\lim_{j\rightarrow +\infty}F_{j}^{\ast}({\bf t} ;x)=F^{\ast}({\bf t} ;x)$ uniformly for ${\bf t}\in I$ and $x\in B$.
Further on, observe that for each $j\in {\mathbb N}$ we have:
\begin{align*}
\Bigl\| F\bigl({\bf t} &+(b_{k_{l}}^{1},\cdot \cdot\cdot, b_{k_{l}}^{n});x+x_{k_{l}}\bigr)-F^{\ast}({\bf t};x) \Bigr\|_{Y}
\\& \leq \Bigl\| F\bigl({\bf t} +(b_{k_{l}}^{1},\cdot \cdot\cdot, b_{k_{l}}^{n});x+x_{k_{l}}\bigr)-F_{j}\bigl({\bf t} +(b_{k_{l}}^{1},\cdot \cdot\cdot, b_{k_{l}}^{n});x+x_{k_{l}}\bigr) \Bigr\|_{Y}
\\& +\Bigl\| F_{j}\bigl({\bf t} +(b_{k_{l}}^{1},\cdot \cdot\cdot, b_{k_{l}}^{n});x+x_{k_{l}}\bigr)-F^{\ast}_{j}({\bf t};x) \Bigr\|_{Y}
+ \Bigl\| F^{\ast}_{j}({\bf t};x) - F^{\ast}({\bf t};x) \Bigr\|_{Y}.
\end{align*}
It can be simply shown that there exists a number $j_{0}(\epsilon)\in {\mathbb N}$ such that for all integers $j\geq j_{0}$ we have
that the first addend and the third addend in the above estimate are less or greater than $\epsilon/3,$ uniformly for $x\in B$ and ${\bf t}\in I.$ For the second addend, take any integer $l\in {\mathbb N}$
such that
$$
\Bigl\| F_{j}\bigl({\bf t} +(b_{k_{l}}^{1},\cdot \cdot\cdot, b_{k_{l}}^{n});x+x_{k_{l}}\bigr)-F^{\ast}_{j}({\bf t};x) \Bigr\|_{Y}<\epsilon/3,\quad x\in B,\ {\bf t}\in I. 
$$
This completes the
proof in a routine manner.  
\end{proof}

We can similarly deduce the following:

\begin{prop}\label{2.1.10ap1}
Suppose that for each integer $j\in {\mathbb N}$ the function $F_{j}(\cdot ; \cdot)$ is $({\mathrm R},{\mathcal B})$-multi-almost periodic
and, for every sequence  which belongs to 
${\mathrm R},$ any its subsequence also belongs to ${\mathrm R}.$ If for each $B\in {\mathcal B}$ there exists $\epsilon_{B}>0$ such that
the sequence $(F_{j}(\cdot ;\cdot))$ converges uniformly to a function $F(\cdot ;\cdot)$ on the set $B^{\circ} \cup \bigcup_{x\in \partial B}B(x,\epsilon_{B}),$ then the function $F(\cdot ;\cdot)$ is $({\mathrm R},{\mathcal B})$-multi-almost periodic.
\end{prop}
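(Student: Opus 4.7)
The proof follows the template of Proposition \ref{2.1.10ap} with two modifications. First, since $({\mathrm R},{\mathcal B})$-multi-almost periodicity involves no translations in the $X$-variable, the arguments of the functions that need to be compared never leave $I\times B$, so the hypothesis only requires uniform convergence on a neighborhood of $B$ rather than on the whole of $X$. Second, the enlargement of $B$ by the small balls $B(x,\epsilon_{B})$ around its boundary points is exactly what is needed to secure continuity of the limit $F(\cdot;\cdot)$ at points of $\partial B$.

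The plan is the following. Fix $B\in {\mathcal B}$ and a sequence $({\bf b}_{k})\in {\mathrm R}$. Using the stability of ${\mathrm R}$ under taking subsequences together with a diagonal procedure, I would first extract a single subsequence $({\bf b}_{k_{l}})$ and, for every $j\in {\mathbb N}$, a function $F^{\ast}_{j}:I\times X\to Y$ so that $F_{j}({\bf t}+{\bf b}_{k_{l}};x)\to F^{\ast}_{j}({\bf t};x)$ uniformly on $I\times B$ (this uses the $({\mathrm R},{\mathcal B})$-multi-almost periodicity of each $F_{j}$). Next, I would show that $(F^{\ast}_{j})$ is uniformly Cauchy on $I\times B$ by the three-term triangle inequality
\[
\bigl\|F^{\ast}_{i}({\bf t};x)-F^{\ast}_{j}({\bf t};x)\bigr\|_{Y}\le \bigl\|F^{\ast}_{i}({\bf t};x)-F_{i}({\bf t}+{\bf b}_{k_{l}};x)\bigr\|_{Y}+\bigl\|F_{i}-F_{j}\bigr\|_{\infty,B}+\bigl\|F_{j}({\bf t}+{\bf b}_{k_{l}};x)-F^{\ast}_{j}({\bf t};x)\bigr\|_{Y},
\]
where the outer terms are controlled by the diagonal convergence and the middle term by the uniform convergence $F_{j}\to F$ on $B\subseteq B^{\circ}\cup\bigcup_{x\in \partial B}B(x,\epsilon_{B})$. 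This produces a uniform limit $F^{\ast}({\bf t};x)$ on $I\times B$.

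To verify that $F({\bf t}+{\bf b}_{k_{l}};x)\to F^{\ast}({\bf t};x)$ uniformly on $I\times B$, I would use the analogous triangle inequality with an intermediate $F_{j}$ and $F^{\ast}_{j}$: the first addend is small uniformly by the $B$-uniform convergence of $F_{j}\to F$; the third is small by the uniform convergence $F^{\ast}_{j}\to F^{\ast}$; and for fixed such $j$, the middle addend is small by step one (choosing $l$ large depending on $j$). The only missing point is then the continuity of $F(\cdot;\cdot)$ on $I\times X$: for $({\bf t}_{0};x_{0})\in I\times X$ pick $B\in{\mathcal B}$ with $x_{0}\in B$; since the set $B^{\circ}\cup\bigcup_{x\in \partial B}B(x,\epsilon_{B})$ is open in $X$ and contains $B$ (because $B\subseteq \overline{B}=B^{\circ}\cup\partial B$), there is an open neighborhood $U$ of $x_{0}$ on which $F_{n_{0}}\to F$ uniformly for some large $n_{0}$, so the sandwich estimate \eqref{nijenorm} combined with the continuity of $F_{n_{0}}$ at $({\bf t}_{0};x_{0})$ yields continuity of $F$ there.

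The main obstacle is precisely this last step: if one only assumed uniform convergence on $B$ itself, then for $x_{0}\in \partial B$ there would be no neighborhood on which the approximating sequence is uniformly close to $F$, and continuity of $F$ at $(t_{0},x_{0})$ could fail. The enlargement by the balls $B(x,\epsilon_{B})$ turns $x_{0}$ into an interior point of the set of uniform convergence, which is exactly what the continuity argument of Proposition \ref{2.1.10ap} requires. Everything else is a routine transcription of that earlier proof, with the $x_{k_{l}}$-translations simply deleted.
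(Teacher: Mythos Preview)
Your proposal is correct and follows essentially the same approach as the paper's own proof: both reduce to the argument of Proposition \ref{2.1.10ap}, noting that (i) the Cauchy estimate analogous to \eqref{nutela} now only requires uniform convergence on $B$ because the $({\mathrm R},{\mathcal B})$-setting involves no translations in the $X$-variable, and (ii) the enlargement $B^{\circ}\cup\bigcup_{x\in\partial B}B(x,\epsilon_{B})$ is precisely what is needed to run the continuity argument \eqref{nijenorm} at boundary points of $B$. Your explanation of why the enlargement is necessary is in fact more explicit than the paper's.
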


\begin{proof}
The proof is almost completely the same as the proof of the previous proposition and we will only emphasize the main differences.
The first difference is with regards to the continuity of function $F(\cdot;\cdot)$ at 
$({\bf t};x),$ 
where
${\bf t}\in I$ and $x\in X$ are given in advance. As above, we have the existence of a set $B\in {\mathcal B}$ such that $x\in B.$ Since the sequence $(F_{j}(\cdot ;\cdot))$ converges uniformly to a function $F(\cdot ;\cdot)$ on the set $B'\equiv B^{\circ} \cup \bigcup_{x\in \partial B}B(x,\epsilon_{B}),$ we have the existence of a positive integer $n_{0}\in {\mathbb N}$ such that $\|F_{n_{0}}({\bf t}' ;x')-F({\bf t}' ;x')\|_{Y}\leq \epsilon/3$ for all ${\bf t}'\in I$ and $x'\in B'.$ After that, it suffices to apply \eqref{nijenorm} and the continuity of 
$F_{n_{0}}(\cdot;\cdot)$ at 
$({\bf t};x)$ (it should be noted that this part can be applied for proving the continuity of function $F(\cdot;\cdot)$ at 
$({\bf t};x)$ in the previous proposition under this weaker condition). The second difference is with regards to the 
uniform continuity in the equation \eqref{nutela}; in Proposition \ref{2.1.10ap}, it is necessary to assume that the sequence $(F_{j}(\cdot ;\cdot))$ converges uniformly to a function $F(\cdot ;\cdot)$ on the whole space $X.$ In the newly arisen situation, it suffices to assume that the sequence $(F_{j}(\cdot ;\cdot))$ converges uniformly to a function $F(\cdot ;\cdot)$ on the set $B,$ only.  
\end{proof}

The following special cases will be very important for us in the sequel:
\begin{itemize}
\item[L1.] ${\mathrm R}=\{b : {\mathbb N} \rightarrow {\mathbb R}^{n} \, ; \, \mbox{ for all }j\in {\mathbb N}\mbox{ we have }b_{j}\in \{ (a,a,a,\cdot \cdot \cdot, a) \in {\mathbb R}^{n} : a\in {\mathbb R}\}\}.$  
If $n=2$ and ${\mathcal B}$ denotes the collection of all bounded subsets of $X,$ then we also say that the function $F(\cdot;\cdot)$ is bi-almost periodic. 

The notion of bi-almost periodicity plays an incredible role in the research study \cite{koyunch} by
H. C. Koyuncuo\v glu and M. Adıvar, where the authors have analyzed the existence of almost periodic solutions for a class of discrete Volterra systems and 
the research study \cite{pinto-vidal} by M. Pinto and C. Vidal, where the authors have used the notion of integrable bi-almost-periodic Green functions of linear homogeneous differential equations and the Banach contraction 
principle to show the existence of almost and pseudo-almost periodic mild solutions for a class of the abstract differential equations with constant delay (see also the research article \cite{chavezinjo}, where A. Ch\'avez, S. Castillo and M. Pinto
have used the notion of  bi-almost automorphy
in their investigation of
almost automorphic solutions of abstract differential
equations with piecewise constant arguments).
The notion of $k$-bi-almost periodicity was introduced by M. Pinto in \cite{k-bi-almost}
and further analyzed in \cite[Section 4]{coronel}, where the authors have analyzed the existence and uniqueness of weighted pseudo almost periodic solutions for a class of abstract integro-differential equations.
\item[L2.] ${\mathrm R}$ is the collection of all sequences $b(\cdot)$ in ${\mathbb R}^{n},$ resp. ${\mathrm R}_{\mathrm X}$ is the collection of all sequences in ${\mathbb R}^{n} \times X$. This is the limit case in our analysis because, in this case, any 
$({\mathrm R},{\mathcal B})$-multi-almost periodic function, resp. $({\mathrm R}_{\mathrm X},{\mathcal B})$-multi-almost periodic function, is automatically $({\mathrm R}_{1},{\mathcal B})$-multi-almost periodic, resp. $({\mathrm R_{1}}_{\mathrm X},{\mathcal B})$-multi-almost periodic, for any other collection ${\mathrm R}_{1}$ of sequences $b(\cdot)$ in ${\mathbb R}^{n},$ resp. any other collection ${\mathrm R_{1}}_{\mathrm X}$ is the collection of sequences in ${\mathbb R}^{n} \times X$. 
\end{itemize}

Concerning Bohr type definitions, we will consider first the following notion (see also the paper \cite{perov} by A. I. Perov and T. K. Kacaran): 

\begin{defn}\label{nafaks1234567890}
Suppose that $\emptyset  \neq I \subseteq {\mathbb R}^{n},$ $F : I \times X \rightarrow Y$ is a continuous function and $I +I \subseteq I.$ Then we say that:
\begin{itemize}
\item[(i)]\index{function!Bohr ${\mathcal B}$-almost periodic}
$F(\cdot;\cdot)$ is Bohr ${\mathcal B}$-almost periodic if and only if for every $B\in {\mathcal B}$ and $\epsilon>0$
there exists $l>0$ such that for each ${\bf t}_{0} \in I$ there exists ${\bf \tau} \in B({\bf t}_{0},l) \cap I$ such that
\begin{align}\label{emojmarko}
\bigl\|F({\bf t}+{\bf \tau};x)-F({\bf t};x)\bigr\|_{Y} \leq \epsilon,\quad {\bf t}\in I,\ x\in B.
\end{align}
\item[(ii)] \index{function!${\mathcal B}$-uniformly recurrent}
$F(\cdot;\cdot)$ is ${\mathcal B}$-uniformly recurrent if and only if for every $B\in {\mathcal B}$ 
there exists a sequence $({\bf \tau}_{k})$ in $I$ such that $\lim_{k\rightarrow +\infty} |{\bf \tau}_{k}|=+\infty$ and
\begin{align}\label{jadnice}
\lim_{k\rightarrow +\infty}\sup_{{\bf t}\in I;x\in B} \bigl\|F({\bf t}+{\bf \tau}_{k};x)-F({\bf t};x)\bigr\|_{Y} =0.
\end{align}
\end{itemize}
If $X\in {\mathcal B},$ then it is also said that $F(\cdot;\cdot)$ is Bohr almost periodic (uniformly recurrent).
\end{defn}

\begin{rem}\label{multi33}
Suppose that $F : I \times X \rightarrow Y$ is a continuous function. 
If ${\mathcal B}'$ is a certain collection of subsets of $X$ which contains ${\mathcal B},$ 
${\mathrm R}'$ is a certain collection of sequences in ${\mathbb R}^{n}$ which contains ${\mathrm R}$ and the equation \eqref{lepolepo} holds with the family ${\mathrm R}$ replaced with the family ${\mathrm R}'$ therein, resp. ${\mathrm R}'_{\mathrm X}$ is a certain collection of sequences in ${\mathbb R}^{n} \times X$ which contains ${\mathrm R}_{\mathrm X}$ and  the equation \eqref{lepolepo121} holds with the family ${\mathrm R}_{\mathrm X}$ replaced with the family ${\mathrm R}'_{\mathrm X}$ therein. If
$F(\cdot ;\cdot)$ is $({\mathrm R}',{\mathcal B}')$-multi-almost periodic, resp. $({\mathrm R}'_{\mathrm X},{\mathcal B}')$-multi-almost periodic, then 
$F(\cdot ;\cdot)$ is $({\mathrm R},{\mathcal B})$-multi-almost periodic, resp. $({\mathrm R}_{\mathrm X},{\mathcal B})$-multi-almost periodic. Similarly, if $F(\cdot;\cdot)$ is Bohr ${\mathcal B}'$-almost periodic (${\mathcal B}$-uniformly recurrent) for some family ${\mathcal B}'$ which contains ${\mathcal B},$ then $F(\cdot;\cdot)$ is Bohr ${\mathcal B}$-almost periodic (${\mathcal B}$-uniformly recurrent). Therefore, it is important to know the maximal collections ${\mathcal B},$ ${\mathrm R}$ and ${\mathrm R}_{X},$ with the meaning clear, for which the function $F(\cdot;\cdot)$ is $({\mathrm R},{\mathcal B})$-multi-almost periodic, $({\mathrm R}_{\mathrm X},{\mathcal B})$-multi-almost periodic, Bohr ${\mathcal B}$-almost periodic or ${\mathcal B}$-uniformly recurrent.
\end{rem}

It is clear that any Bohr (${\mathcal B}$-)almost periodic function is (${\mathcal B}$-)uniformly recurrent; in general, the converse statement does not hold (\cite{nova-man}). It is also clear
that, if $F(\cdot;\cdot)$ is ${\mathcal B}$-uniformly recurrent and $x\in X$, then we have the following supremum formula
\begin{align}\label{tupak12345}
\sup_{{\bf t}\in I}\bigl\|F({\bf t};x) \bigr\|_{Y}=\sup_{{\bf t}\in I,|t|\geq a}\bigl\|F({\bf t};x) \bigr\|_{Y},
\end{align}
which in particular shows that for each $x\in X$ the function $F(\cdot;x)$ is identically equal to zero provided that the function $F(\cdot;\cdot)$ is ${\mathcal B}$-uniformly recurrent and $\lim_{t\in I,|t|\rightarrow +\infty}F({\bf t};x)=0.$ The statements of \cite[Theorem 7, p. 3]{besik} and Proposition \ref{convdiaggas} can be reformulated in this framework, as well.

Keeping in mind the proof of \cite[Property 4, p. 3]{18}, the following result can be proved as in the one-dimensional case:

\begin{prop}\label{superstebag}
Suppose that $F : I \times X \rightarrow Y$ is $({\mathrm R},{\mathcal B})$-multi-almost periodic, resp. $({\mathrm R}_{\mathrm X},{\mathcal B})$-multi-almost periodic (Bohr ${\mathcal B}$-almost periodic/${\mathcal B}$-uniformly recurrent), and $\phi : Y \rightarrow Z$ is uniformly continuous on $\overline{R(F)}$.
Then $\phi \circ F : I \times X \rightarrow Z$ is $({\mathrm R},{\mathcal B})$-multi-almost periodic, resp. $({\mathrm R}_{\mathrm X},{\mathcal B})$-multi-almost periodic (Bohr ${\mathcal B}$-almost periodic/${\mathcal B}$-uniformly recurrent).
\end{prop}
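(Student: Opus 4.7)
The plan is to handle the four cases uniformly via a single observation: the uniform continuity of $\phi$ on $\overline{R(F)}$ lets us pull any $Y$-valued uniform approximation/estimate back through $\phi$ to obtain a $Z$-valued one. Since uniform continuity implies continuity, the composition $\phi \circ F$ is continuous on $I \times X$ at the outset. So in each case only the approximation/recurrence condition needs to be verified.

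First I would treat the Bohr ${\mathcal B}$-almost periodic case. Fix $B\in {\mathcal B}$ and $\epsilon > 0$. By uniform continuity of $\phi$ on $\overline{R(F)}$, choose $\delta = \delta(\epsilon) > 0$ such that $y_1, y_2\in \overline{R(F)}$ and $\|y_1-y_2\|_Y \le \delta$ imply $\|\phi(y_1)-\phi(y_2)\|_Z \le \epsilon$. Apply the Bohr ${\mathcal B}$-almost periodicity of $F$ with $\delta$ in place of $\epsilon$ to obtain $l > 0$ with the property that every ball $B({\bf t}_0, l) \cap I$ contains some $\tau$ satisfying $\|F({\bf t}+\tau;x) - F({\bf t};x)\|_Y \le \delta$ for all ${\bf t}\in I$, $x\in B$. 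Since both values $F({\bf t}+\tau;x), F({\bf t};x)$ lie in $R(F) \subseteq \overline{R(F)}$, the choice of $\delta$ yields $\|(\phi\circ F)({\bf t}+\tau;x) - (\phi\circ F)({\bf t};x)\|_Z \le \epsilon$. The ${\mathcal B}$-uniformly recurrent case is entirely analogous, using a sequence $(\tau_k)$ with $|\tau_k|\to\infty$ for which the $Y$-estimate eventually falls below $\delta$, and then passing to $\epsilon$ through $\phi$.

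Next I would treat the $({\mathrm R}_X, {\mathcal B})$-multi-almost periodic case (which, by Remark \ref{soqqaza}(i), includes the $({\mathrm R},{\mathcal B})$ case). Fix $B\in {\mathcal B}$ and a sequence $(({\bf b};{\bf x})_k)\in {\mathrm R}_X$. Extract a subsequence $(({\bf b};{\bf x})_{k_l})$ and a function $F^*$ as in Definition \ref{eovakoap1}, so that \eqref{love12345678ap1} holds uniformly for $x\in B$ and ${\bf t}\in I$. Since $F({\bf t}+{\bf b}_{k_l};x+x_{k_l})\in R(F)$ for every $l$, the pointwise limit satisfies $F^*({\bf t};x) \in \overline{R(F)}$ for all ${\bf t}\in I$, $x\in B$ (as already observed in Remark \ref{soqqaza}(i)). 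Now let $\epsilon > 0$ and pick $\delta$ as above. The uniform convergence gives $l_0$ such that $\|F({\bf t}+{\bf b}_{k_l};x+x_{k_l}) - F^*({\bf t};x)\|_Y \le \delta$ for all $l\ge l_0$, uniformly in ${\bf t}\in I$ and $x\in B$; both arguments of $\phi$ then lie in $\overline{R(F)}$, so uniform continuity gives $\|(\phi\circ F)({\bf t}+{\bf b}_{k_l};x+x_{k_l}) - (\phi\circ F^*)({\bf t};x)\|_Z \le \epsilon$ uniformly. Setting $(\phi\circ F)^* := \phi\circ F^*$, this is exactly the uniform convergence required by Definition \ref{eovakoap1} for $\phi\circ F$. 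The $({\mathrm R},{\mathcal B})$-case is the specialization to sequences with trivial $X$-component.

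The only subtle point — and the step I would single out as the main one to get right — is observing that the limit function $F^*$ takes values in $\overline{R(F)}$, not merely in $Y$; this is what allows us to invoke the hypothesis that $\phi$ is uniformly continuous on $\overline{R(F)}$ rather than on all of $Y$. Everything else is a standard $\epsilon$-$\delta$ transfer argument and does not require further structural input.
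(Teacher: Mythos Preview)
Your proof is correct and follows exactly the approach the paper has in mind: the paper does not give a proof, merely remarking that ``keeping in mind the proof of \cite[Property 4, p. 3]{18}, the following result can be proved as in the one-dimensional case,'' and your argument is precisely that standard $\epsilon$--$\delta$ transfer through the uniform continuity of $\phi$, adapted to each of the four definitions. Your explicit observation that $F^{\ast}$ takes values in $\overline{R(F)}$ (so that the hypothesis on $\phi$ applies) is the one point that deserves mention and you have handled it correctly, consistently with Remark~\ref{soqqaza}(i).
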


We continue by providing several illustrative examples and useful observations:

\begin{example}\label{pwerqwerzeka}
In contrast with the class of Bohr ${\mathcal B}$-almost periodic functions, we can simply construct a great number of multi-dimensional ${\mathcal B}$-uniformly recurrent functions by using Proposition \ref{superstebag} and
the fact that for any given tuple ${\bf a}=(a_{1},\cdot \cdot \cdot,a_{n}) \in {\mathbb R}^{n} \neq 0$, the linear function
$$
g({\bf t}):=a_{1}t_{1}+\cdot \cdot \cdot +a_{n}t_{n},\quad {\bf t}=(t_{1},\cdot \cdot \cdot,t_{n})\in {\mathbb R}^{n}
$$
is uniformly recurrent provided that $n>1.$ To verify this, it suffices to observe that the set $W:=\{(t_{1},\cdot \cdot \cdot,t_{n})\in {\mathbb R}^{n} : a_{1}t_{1}+\cdot \cdot \cdot +a_{n}t_{n}=0\}$ is a non-trivial linear submanifold of ${\mathbb R}^{n}$
as well as that $g({\bf t}+{\bf t}')=g({\bf t})$ for all ${\bf t}\in {\mathbb R}^{n}$ and ${\bf t}'\in W.$ Therefore, for any uniformly continuous function $\phi : {\mathbb R} \rightarrow X,$ we have that the function
$\phi \circ g : {\mathbb R}^{n} \rightarrow X$ is uniformly recurrent.
\end{example}

\begin{example}\label{pwerqwer}
\begin{itemize}
\item[(i)] Suppose that $F_{j} : X \rightarrow Y$ is a continuous function, for each $B\in {\mathcal B}$ we have $\sup_{x\in B}\| F_{j}(x) \|_{Y}<\infty$ 
and the complex-valued mapping $t\mapsto \int_{0}^{t}f_{j}(s)\, ds,$ $t\geq 0$ is almost periodic ($1\leq j \leq n$). Set
\begin{align*}
F\bigl(t_{1},\cdot \cdot \cdot,t_{n+1}; x\bigr):=\sum_{j=1}^{n}\int_{t_{j}}^{t_{j+1}}f_{j}(s)\, ds \cdot F_{j}(x)\ \mbox{ for all }x\in X \mbox{ and } t_{j}\geq 0,\ 1\leq j\leq n.
\end{align*}
Then the mapping $F: [0,\infty)^{n+1} \times X \rightarrow X$ is Bohr ${\mathcal B}$-almost periodic. In actual fact, for every $B\in {\mathcal B}$ and $\epsilon>0,$ we have 
\begin{align*}
\Bigl\|F\bigl(& t_{1}+\tau_{1},\cdot \cdot \cdot,t_{n+1}+\tau_{n+1}; x\bigr)-F\bigl(t_{1},\cdot \cdot \cdot,t_{n+1}; x\bigr)\Bigr\|_{Y}
\\ & \leq  \sum_{j=1}^{n}\Biggl| \int^{t_{j+1}+\tau_{j+1}}_{t_{j}+\tau_{j}}f_{j}(s)\, ds-\int^{t_{j+1}}_{t_{j}}f_{j}(s)\, ds\Biggr| \cdot \bigl\|F_{j}(x)\bigr\|_{Y}
\\ &\leq   \sum_{j=1}^{n}\Biggl\{\Biggl| \int^{t_{j+1}+\tau_{j+1}}_{0}f_{j}(s)\, ds-\int^{t_{j+1}}_{0} f_{j}(s)\, ds\Biggr|
\\ & + \Biggl|\int^{t_{j}+\tau_{j}}_{0}f_{j}(s)\, ds-\int^{t_{j}}_{0}f_{j}(s)\, ds\Biggr|\Biggr\} \cdot \bigl\|F_{j}(x)\bigr\|_{Y}
\\ & \leq   M\sum_{j=1}^{n}\Biggl\{\Biggl| \int^{t_{j+1}+\tau_{j+1}}_{0}f_{j}(s)\, ds-\int^{t_{j+1}}_{0} f_{j}(s)\, ds\Biggr|
\\ & + \Biggl|\int^{t_{j}+\tau_{j}}_{0}f_{j}(s)\, ds-\int^{t_{j}}_{0}f_{j}(s)\, ds\Biggr|\Biggr\},
\end{align*}
where $M=\sup_{x\in B, 1\leq j\leq n}\bigl\|F_{j}(x)\bigr\|_{Y}.$ The corresponding statement follows by considering the common $\epsilon/(2nM)$-periods $\tau_{j}$ 
of the functions $\int_{0}^{\cdot}f_{j-1}(s)\, ds$ and $\int_{0}^{\cdot}f_{j}(s)\, ds$ for $2\leq j \leq n,$ the $\epsilon/(2nM)$-periods $\tau_{1}$ of the function $\int_{0}^{\cdot}f_{1}(s)\, ds$ and 
the $\epsilon/(2nM)$-periods $\tau_{n+1}$ of the function $\int_{0}^{\cdot}f_{n+1}(s)\, ds.$
Further on,
let us denote by $G_{j}(\cdot)$ the unique almost periodic extension of the function $t\mapsto \int_{0}^{t}f_{j}(s)\, ds,$ $t\geq 0$ to the whole real line ($1\leq j \leq n$). Let $({\bf b}_{k})$ be any sequence in ${\mathbb R}^{n+1}.$ Then we can use Theorem \ref{Bochner123456} below to conclude that there exists a subsequence
$({\bf b}_{k_{l}})$ of $({\bf b}_{k})$ such that, for every $j\in {\mathbb N}_{n},$   $F_{j}(t_{j}+b_{k_{l}}^{j},t_{j+1}+b_{k_{l}}^{j+1})=G_{j}(t_{j+1}+b_{k_{l}}^{j+1})-G_{j}(t_{j}+b_{k_{l}}^{j})$
converges to a function $F_{j}^{\ast}(t_{j},t_{j+1})$ as $l\rightarrow +\infty ,$ uniformly for $(t_{j},t_{j+1})\in {\mathbb R}^{2}.$ Define
\begin{align*}
F\bigl(t_{1},\cdot \cdot \cdot,t_{n+1}; x\bigr):=\sum_{j=1}^{n}F_{j}^{\ast}\bigl(t_{j},t_{j+1}\bigr)F_{j}(x),\mbox{ for all }x\in X \mbox{ and } t_{j}\geq 0,\ 1\leq j\leq n.
\end{align*} 
Let $B\in {\mathcal B}$ be fixed. Then it can be simply shown that 
$$
\lim_{l\rightarrow +\infty}F\bigl(t_{1}+b_{k_{l}}^{1},\cdot \cdot \cdot,t_{n+1} +b_{k_{l}}^{n+1};x\bigr)=F\bigl(t_{1},\cdot \cdot \cdot,t_{n+1};x\bigr)
$$
uniformly for $x\in B$ and ${\bf t}=(t_{1},\cdot \cdot \cdot,t_{n+1}) \in {\mathbb R}^{n+1}.$ Hence, 
the function $F_{1}(\cdot)$ is ${\mathrm R}$-multi-almost periodic with ${\mathrm R}$ being the collection of all sequences in ${\mathbb R}^{n+1}.$
\item[(ii)] Suppose that $F : X \rightarrow Y$ is a continuous function, for each $B\in {\mathcal B}$ we have $\sup_{x\in B}\| F(x) \|_{Y}<\infty$ and the complex-valued mapping $t\mapsto f_{j}(t),$ $t\geq 0$ is almost periodic, resp. bounded and uniformly recurrent ($1\leq j \leq n$). Set
\begin{align*}
F\bigl(t_{1},\cdot \cdot \cdot,t_{n}; x\bigr):=\prod_{j=1}^{n}f_{j}\bigl(t_{j}\bigr)\cdot F(x)\ \mbox{ for all }x\in X \mbox{ and } t_{j}\geq 0,\ 1\leq j\leq n.
\end{align*}
Then the mapping $F: [0,\infty)^{n} \times X \rightarrow X$ is Bohr ${\mathcal B}$-almost periodic, resp. ${\mathcal B}$-uniformly recurrent. In actual fact, for every $B\in {\mathcal B}$ and $\epsilon>0,$ we have
\begin{align*}
& \Bigl\| F\bigl( t_{1}+\tau_{1},\cdot \cdot \cdot,t_{n}+\tau_{n}; x\bigr)-F\bigl(t_{1},\cdot \cdot \cdot,t_{n}; x\bigr)\Bigr\|_{Y}
\\ & \leq M\Biggl\{ \bigl| f_{1}(t_{1}+\tau_{1}) -f_{1}(t_{1})\bigr| \cdot \prod_{j=2}^{n}\bigl|f_{j}(t_{j}+\tau_{j})\bigr|
 +\bigl| f_{1}(t_{1}) \bigr| \cdot  \prod_{j=2}^{n}\bigl| f_{j}(t_{j}+\tau_{j})-f_{j}(t_{j})\bigr|\Biggr\}
\\& \leq M\bigl| f_{1}(t_{1}+\tau_{1}) -f_{1}(t_{1})\bigr| \cdot \prod_{j=2}^{n}\bigl\|f_{j}\bigr\|_{\infty}
 +\bigl\| f_{1} \bigr\|_{\infty} \cdot \prod_{j=2}^{n}\bigl|f_{j}(t_{j}+\tau_{j})-f_{j}(t_{j})\bigr|,
\end{align*}
where $M=\sup_{x\in B}\bigl\|F(x)\bigr\|_{Y}.$ Repeating this procedure, we simply get the required statement; furthermore, we can use the usual Bochner criterion and repeat the above calculus in order to see that the function $F_{2}(\cdot)$ is ${\mathrm R}$-multi-almost periodic with ${\mathrm R}$ being the collection of all sequences in ${\mathbb R}^{n}.$
\item[(iii)] Suppose that $G : [0,\infty)^{n} \rightarrow {\mathbb C}$ is almost periodic, resp. bounded and uniformly recurrent, $F: [0,\infty) \times X \rightarrow Y$ is Bohr ${\mathcal B}$-almost periodic, resp. ${\mathcal B}$-uniformly recurrent, and for each set $B\in {\mathcal B}$ we have $\sup_{t\geq 0; x\in B}\| F(t;x) \|_{Y}<\infty.$ Set
\begin{align*}
F\bigl(t_{1},\cdot \cdot \cdot,t_{n+1}; x\bigr)&:=G\bigl(t_{1},\cdot \cdot \cdot, t_{n}\bigr)\cdot F\bigl(t_{n+1};x\bigr)\\& \mbox{ for all }x\in X \mbox{ and } t_{j}\geq 0,\ 1\leq j\leq n+1.
\end{align*}
Then the mapping $F: [0,\infty)^{n+1} \times X \rightarrow Y$ is Bohr ${\mathcal B}$-almost periodic, resp. ${\mathcal B}$-uniformly recurrent, which can be simply shown by using the estimate ($t_{i},\ \tau_{i} \geq 0$ for $1\leq i\leq n+1;$ $x\in X$):
\begin{align*}
& \Bigl\| F\bigl( t_{1}+\tau_{1},\cdot \cdot \cdot,t_{n+1}+\tau_{n+1}; x\bigr)-F\bigl(t_{1},\cdot \cdot \cdot,t_{n+1}; x\bigr)\Bigr\|_{Y}
\\ & \leq \Bigl| G\bigl( t_{1}+\tau_{1},\cdot \cdot \cdot,t_{n}+\tau_{n} \bigr) - G\bigl( t_{1},\cdot \cdot \cdot,t_{n} \bigr) \Bigr| \cdot \Bigl\| F\bigl( t_{n+1}+\tau_{n+1};x\bigr)\Bigr\|_{Y}
\\& + \Bigl| G\bigl( t_{1},\cdot \cdot \cdot,t_{n}\bigr)\Bigr| \cdot \Bigl\| F\bigl( t_{n+1}+\tau_{n+1};x \bigr)-F\bigl( t_{n+1} ;x \bigr)\Bigr\|_{Y}, 
\end{align*}
the boundedness of function $G(\cdot,...,\cdot)$ and the assumption that  for each set $B\in {\mathcal B}$ we have $\sup_{t\geq 0; x\in B}\| F(t;x) \|_{Y}<\infty$ (see also Proposition \ref{bounded-pazi} below).
\end{itemize}
\end{example}

It is worth noting that we can extend the notion introduced in Definition \ref{nafaks1234567890} as follows:

\begin{defn}\label{nafaks123456789012345}
Suppose that $\emptyset  \neq I'\subseteq I \subseteq {\mathbb R}^{n},$ $F : I \times X \rightarrow Y$ is a continuous function and $I +I' \subseteq I.$ Then we say that:
\begin{itemize}
\item[(i)]\index{function!Bohr $({\mathcal B},I')$-almost periodic}
$F(\cdot;\cdot)$ is Bohr $({\mathcal B},I')$-almost periodic if and only if for every $B\in {\mathcal B}$ and $\epsilon>0$
there exists $l>0$ such that for each ${\bf t}_{0} \in I'$ there exists ${\bf \tau} \in B({\bf t}_{0},l) \cap I'$ such that
\begin{align*}
\bigl\|F({\bf t}+{\bf \tau};x)-F({\bf t};x)\bigr\|_{Y} \leq \epsilon,\quad {\bf t}\in I,\ x\in B.
\end{align*}
\item[(ii)] \index{function!$({\mathcal B},I')$-uniformly recurrent}
$F(\cdot;\cdot)$ is $({\mathcal B},I')$-uniformly recurrent if and only if for every $B\in {\mathcal B}$ 
there exists a sequence $({\bf \tau}_{n})$ in $I'$ such that $\lim_{n\rightarrow +\infty} |{\bf \tau}_{n}|=+\infty$ and
$$
\lim_{n\rightarrow +\infty}\sup_{{\bf t}\in I;x\in B} \bigl\|F({\bf t}+{\bf \tau}_{n};x)-F({\bf t};x)\bigr\|_{Y} =0.
$$
\end{itemize}
If $X\in {\mathcal B},$ then it is also said that $F(\cdot;\cdot)$ is Bohr $I'$-almost periodic ($I'$-uniformly recurrent).
\end{defn}

Clearly, 
the notion from Definition \ref{nafaks1234567890} is recovered by plugging $I'=I$ and any 
$({\mathcal B},I')$-uniformly recurrent function is $({\mathcal B},I)$-uniformly recurrent
provided that $I+I\subseteq I.$ This is not true for almost periodicity: we can simply construct a great number of corresponding examples showing that the notion of $({\mathcal B},I')$-almost periodicity is neither stronger nor weaker than the notion of $({\mathcal B},I)$-almost periodicity, provided that $I+I\subseteq I.$  
In many concrete situations, the situation in which $I'\neq I$ can occur;
for example, we have the following:

\begin{example}\label{rajkomilice}
\begin{itemize}
\item[(i)]
Suppose that 
the complex-valued mapping $t\mapsto \int_{0}^{t}f_{j}(s)\, ds,$ $t\geq 0$ is almost periodic, resp. bounded and uniformly recurrent ($1\leq j \leq n$). Set
\begin{align*}
F_{1}\bigl(t_{1},\cdot \cdot \cdot,t_{2n}\bigr):=\prod_{j=1}^{n}\int_{t_{j}}^{t_{j+n}}f_{j}(s)\, ds\ \mbox{ and } t_{j}\in {\mathbb R},\ 1\leq j\leq 2n.
\end{align*}
Then the argumentation used in Example \ref{pwerqwer}(i)-(ii) shows that the mapping $F_{1}: {\mathbb R}^{2n} \rightarrow {\mathbb C}$ is both Bohr $I'$-almost periodic, resp. $I'$-uniformly recurrent, where
$I'=\{({\bf \tau},{\bf \tau}) : {\bf \tau} \in {\mathbb R^{n}} \};$ in the case of consideration of almost periodicity, we can use \cite[Theorem 2.1.1(xiv)]{nova-mono}
in order to see that $F_{1}(\cdot)$ is also Bohr $I''$-almost periodic, where
$I''=\{(a,a,\cdot \cdot \cdot, a) \in {\mathbb R}^{2n} : a\in {\mathbb R}\}.
$ Furthermore, in the same case, we can use Theorem \ref{Bochner123456} below and the usual Bochner criterion for the functions of one real variable to see that the function 
$F_{1}(\cdot)$ is Bohr almost periodic because it is ${\mathrm R}$-almost periodic with ${\mathrm R}$ being the collection of all sequences in ${\mathbb R}^{2n}.$
\item[(ii)] Suppose that 
an $X$-valued mapping $t\mapsto \int_{0}^{t}f_{j}(s)\, ds,$ $t\in {\mathbb R}$ is almost periodic, resp. bounded and uniformly recurrent, as well as that a strongly continuous operator family $(T_{j}(t))_{t\in {\mathbb R}}\subseteq L(X,Y)$ is uniformly bounded ($1\leq j \leq n$).
Set
\begin{align*}
F_{2}\bigl(t_{1},\cdot \cdot \cdot,t_{2n}\bigr)&:=\sum_{j=1}^{n}T_{j}(t_{j}-t_{j+n})\int_{t_{j}}^{t_{j+n}}f_{j}(s)\, ds\\&  \mbox{ and } t_{j}\in {\mathbb R},\ 1\leq j\leq 2n.
\end{align*}
Since, for every $t_{i},\ \tau_{i}\in {\mathbb R}$ ($1\leq j\leq 2n$) with $\tau_{j}=\tau_{j+n}$ ($1\leq j\leq n$), we have
\begin{align*}
& \bigl\|F_{2}\bigl(t_{1}+\tau_{1},\cdot \cdot \cdot,t_{2n}+\tau_{2n}\bigr)-F_{2}\bigl(t_{1},\cdot \cdot \cdot,t_{2n}\bigr)\bigr\|_{Y}
\\& \leq M\sum_{j=1}^{n}\Biggl\{\Biggl| \int^{t_{j}+\tau_{j}}_{0}f_{j}(s)\, ds - \int^{t_{j}}_{0}f_{j}(s)\, ds\Biggr|+\Biggl| \int^{t_{j+n}+\tau_{j}}_{0}f_{j}(s)\, ds - \int^{t_{j+n}}_{0}f_{j}(s)\, ds\Biggr|\Biggr\},
\end{align*}
where $M=\sup_{1\leq j\leq n}\sup_{t\in {\mathbb R}}\|T_{j}(t)\|,$
we may conclude as above that the mapping $F_{2}: {\mathbb R}^{2n} \rightarrow {\mathbb C}$ is Bohr $I'$-almost periodic, resp. $I'$-uniformly recurrent, where $I'=\{({\bf \tau},{\bf \tau}) : {\bf \tau} \in {\mathbb R^{n}} \}$, but not generally almost periodic,
in the case of consideration of almost periodicity; in this case,
we also have that $F_{2}(\cdot)$ is
Bohr $I''$-almost periodic, where
$I''=\{(a,a,\cdot \cdot \cdot, a) \in {\mathbb R}^{2n} : a\in {\mathbb R}\},
$ and that the function $F_{2}(\cdot)$ is ${\mathrm R}$-multi-almost periodic with ${\mathrm R}$ being the collection of all sequences in $I'.$ 
\item[(iii)] Suppose that 
an $X$-valued mapping $t\mapsto f_{j}(t),$ $t\in {\mathbb R}$ is almost periodic, resp. bounded and uniformly recurrent, $F_{j} : X\rightarrow X$ is continuous as well as that a strongly continuous operator family $(T_{j}(t))_{t\in {\mathbb R}}\subseteq L(X,Y)$ satisfies that $\|T_{j}(t)\| \leq    
M_{j}e^{-\omega_{j}|t|},$ $t\in {\mathbb R} $ with $\omega_{j}>\|f_{j}\|_{\infty}$ ($1\leq j \leq n$). Set
\begin{align*}
{\mathbb F}_{3}\bigl(t_{1},\cdot \cdot \cdot,t_{2n};x\bigr)&:=\sum_{j=1}^{n}e^{\int_{t_{j}}^{t_{j+n}}f_{j}(s)\, ds}T_{j}(t_{j}-t_{j+n})F_{j}(x)\\& \mbox{ for all }x\in X \mbox{ and } t_{j}\in {\mathbb R},\ 1\leq j\leq 2n.
\end{align*}
Suppose, additionally, that for each set $B\in {\mathcal B}$ we have $$\sup_{1\leq j\leq n;x\in B}\|F_{j}(x)\|<\infty.$$
Arguing as above (see also \cite[Example 4.1]{prcko-baja}), we may conclude with the help of the elementary inequality $|e^{z}-1|\leq |z| \cdot e^{|z|},$ $z\in {\mathbb C}$ that the mapping ${\mathbb F}_{3}: {\mathbb R}^{2n} \times X \rightarrow Y$ is Bohr $({\mathcal B},I')$-almost periodic, resp. $({\mathcal B},I')$-uniformly recurrent, where $I'=\{({\bf \tau},{\bf \tau}) : {\bf \tau} \in {\mathbb R^{n}} \}$, but not generally Bohr ${\mathcal B}$-almost periodic,  
in the case of consideration of almost periodicity; in this case,
we also have that ${\mathbb F}_{3}(\cdot;\cdot)$ is
Bohr $({\mathcal B},I'')$-almost periodic, where
$I''=\{(a,a,\cdot \cdot \cdot, a) \in {\mathbb R}^{2n} : a\in {\mathbb R}\},
$ and that the function $F_{2}(\cdot)$ is ${\mathrm R}$-multi-almost periodic with ${\mathrm R}$ being the collection of all sequences in $I'.$
\item[(iv)] Suppose that $-\infty \leq \alpha<\beta \leq +\infty$ and $f : \Omega \equiv \{z\in {\mathbb C}: \alpha<\Re z<\beta \} \rightarrow X$ is an analytic almost periodic function (see e.g., \cite[Appendix 3]{pankov}). 
Set, for $\alpha<\alpha'<\beta'<\beta,$ $I_{\alpha',\beta'}:=[\alpha',\beta'] \times {\mathbb R},$  $I_{\alpha',\beta'}':=\{0\} \times {\mathbb R}$ and $F(x,y):=f(x+iy),$ $(x,y)\in I_{\alpha',\beta'}.$ Then $F(\cdot,\cdot)$ is Bohr  $I_{\alpha',\beta'}'$-almost periodic.
\item[(v)]
In connection with Example \ref{pwerqwerzeka} and the notion introduced in Definition \ref{nafaks123456789012345}, the following should be stated:
Suppose that $\emptyset  \neq I \subseteq {\mathbb R}^{n},$ 
$ I_{0}=[0,\infty)$ or $ I_{0}={\mathbb R},$ ${\bf a}=(a_{1},\cdot \cdot \cdot,a_{n}) \in {\mathbb R}^{n} \neq 0$ and the linear function
$
g({\bf t}):=a_{1}t_{1}+\cdot \cdot \cdot +a_{n}t_{n},$ ${\bf t}=(t_{1},\cdot \cdot \cdot,t_{n})\in I
$
maps surjectively the region $I$
onto $ I_{0}.$ Suppose, further, that $f: I_{0} \rightarrow X$ is a uniformly recurrent function as well as that a sequence $(\alpha_{k})$ in $I_{0}$ satisfies 
that $\lim_{k\rightarrow +\infty} |\alpha_{k}|=+\infty$ and 
$\lim_{k\rightarrow +\infty}\sup_{t\in I_{0}} \bigl\|f(t+\alpha_{k})-f(t)\bigr\| =0.$ 
Define $I':=g^{-1}(\{\alpha_{k} : k\in {\mathbb N}\})$ and
$F : I \rightarrow X$ by
$
F({\bf t}):=f(g({\bf t})),$ ${\bf t}\in I.
$
Then $F(\cdot)$ is $I'$-uniformly recurrent, and $F(\cdot )$ is not almost periodic provided that $f(\cdot)$ is not almost periodic.
In order to see this,
observe that the surjectivity of mapping $g : I \rightarrow I_{0}$ implies the existence of a sequence $({\bf \tau}_{k})$ in $I'$ such that $g({\bf \tau}_{k})=\alpha_{k}$ for all $k\in {\mathbb N}.$ Due to the Cauchy-Schwarz inequality, we have
$|{\bf \tau}_{k}|\geq |\alpha_{k}|/|a| \rightarrow +\infty$ as $k\rightarrow +\infty.$ Furthermore, for every ${\bf t}\in I,$ we have:
\begin{align*}
\Bigl\| & F({\bf t}+{\bf \tau}_{k})-F({\bf t})\Bigr\|
\\& \leq \Bigl\| f\bigl(g({\bf t}+{\bf \tau}_{k})\bigr)-f\bigl(g({\bf t})+\alpha_{k}\bigr) \Bigr\|+\Bigl\| f\bigl(g({\bf t})+\alpha_{k}\bigr) -f\bigl(g({\bf t})\bigr)\Bigr\|
\\ & =\Bigl\| f\bigl(g({\bf t}+{\bf \tau}_{k})\bigr)-f\bigl(g({\bf t}+{\bf \tau}_{k})\bigr) \Bigr\|+\Bigl\| f\bigl(g({\bf t})+\alpha_{k}\bigr) -f\bigl(g({\bf t})\bigr)\Bigr\|
\\& =\Bigl\| f\bigl(g({\bf t})+\alpha_{k}\bigr) -f\bigl(g({\bf t})\bigr)\Bigr\| \leq \sup_{t\in I_{0}}\Bigl\| f\bigl(t+\alpha_{k}\bigr) -f\bigl(t\bigr)\Bigr\|\rightarrow 0,
\end{align*}
as $k\rightarrow +\infty.$ Suppose now that $f(\cdot)$ is not almost periodic. We will prove that $F(\cdot)$ is not almost periodic, as well. Let $l>0$ be arbitrary. Due to our assumption,
there exists $\epsilon>0$ such that there exists a subinterval $I''\subseteq I_{0}$ of length $2|a|l$ such that for each $\tau \in I''$ there exists $t\in I_{0}$ such that $\|f(t+\tau)-f(t)\|>\epsilon.$
Let $i''$ be the center of $I''.$ Then there exists ${\bf t}_{0}\in I$ such that $g({\bf t}_{0})=i''$ and this simply implies that for each ${\bf \alpha} \in B({\bf t}_{0},l) \cap I$ we have $g({\bf \alpha})\in I''.$ Therefore, for fixed ${\bf \alpha}$ from this range, we can find $t\in I_{0}$ such that $\|f(t+\tau)-f(t)\|>\epsilon,$ where $\tau=g({\bf \alpha}).$ By surjectivity of $g(\cdot),$ we have the existence of a tuple ${\bf t}\in I$ such that $g({\bf t})=t,$ which gives the required. 
\item[(vi)] To present a concrete application of our consideration from the previous point, let us recall that A. Haraux and P. Souplet have proved, in \cite[Theorem 1.1]{haraux}, that the
function $f: {\mathbb R}\rightarrow {\mathbb R},$ given by
\begin{align*}
f(t):=\sum_{n=1}^{\infty}\frac{1}{n}\sin^{2}\Bigl(\frac{t}{2^{n}} \Bigr)\, dt,\quad t\in {\mathbb R},
\end{align*}
is uniformly continuous, uniformly recurrent (the sequence $(\alpha_{k}\equiv 2^{k}\pi)_{k\in {\mathbb N}}$ can be chosen in definition of uniform recurrence) and unbounded. Let ${\bf a}=(a_{1},\cdot \cdot \cdot,a_{n}) \in {\mathbb R}^{n} \neq 0,$ let $I'=g^{-1}(\{2^{k}\pi : k\in {\mathbb N}\})$ and let
$F : {\mathbb R}^{n} \rightarrow {\mathbb R}$ be given by $F({\bf t}):=f(a_{1}t_{1}+\cdot \cdot \cdot +a_{n}t_{n}),$ ${\bf t}=(t_{1},\cdot \cdot \cdot,t_{n})\in {\mathbb R}^{n}.$
Then the function $F(\cdot)$ is $I'$-uniformly recurrent and not almost periodic; furthermore, it can be easily shown that the function $F(\cdot)$ is uniformly continuous and unbounded.
\item[(vii)] Suppose that $K$ is a bounded Lebesgue measurable set and $I+K\subseteq I.$ Then we can simply prove that the Bohr $({\mathcal B},I')$-almost periodicity, resp. $({\mathcal B},I')$-uniform recurrence, of function $F :  I\times X\rightarrow Y$ implies the  Bohr $({\mathcal B},I')$-almost periodicity, resp. $({\mathcal B},I')$-uniform recurrence, of function $G :  I\times X\rightarrow Y$ defined by 
$$
G({\bf t}; x):=\int^{{\bf t}+K}_{{\bf t}}F(\sigma ;x)\, d\sigma=\int_{K}F(\sigma +{\bf t}; x)\, d\sigma,\quad {\bf t}\in I,\ x\in X,
$$ 
which extends the conclusions established in \cite[Example 7, p. 33]{30} to the multi-dimensional case; furthermore, if 
$F :  I\times X\rightarrow Y$ is $({\mathrm R},{\mathcal B})$-multi-almost periodic and for each $x\in X$ we have $\sup_{{\bf t} \in I}\| F({\bf t}; x)\|_{Y}<\infty$, resp. $F :  I\times X\rightarrow Y$ is $({\mathrm R}_{X},{\mathcal B})$-multi-almost periodic and for each $B\in {\mathcal B}$,  $x\in B$ and each sequence $(x_{k})$ in $X$ for which there exists 
a sequence $({\bf b}_{k})
$ in $I$ such that $({\bf b}_{k};x_{k})\in {\mathrm R}_{\mathrm X}$
we have $\sup_{{\bf t} \in I}\| F({\bf t}+{\bf b}_{k}; x+x_{k})\|_{Y}<\infty$, then the use of dominated convergence theorem shows that the function $G(\cdot;\cdot)$ is likewise $({\mathrm R},{\mathcal B})$-multi-almost periodic, resp. $({\mathrm R}_{X},{\mathcal B})$-multi-almost periodic.
\item[(viii)] The notion of Bloch $(p,k)$-periodicity can be simply
extended to the functions of several real variables as follows (\cite{has2}-\cite{has1}): a
bounded continuous
function $F:I\rightarrow X$ is said to be Bloch $({\bf p},{\bf k})$-periodic, or Bloch
periodic with period ${\bf p}$ and Bloch wave vector or Floquet exponent ${\bf k},$ where ${\bf p}\in I$ and ${\bf k}\in {\mathbb R}^{n}$ if and only if 
$
F({\bf x}+{\bf p})=e^{i\langle {\bf k}, {\bf p}\rangle}F({\bf x}),$ ${\bf x}\in I
$ (of course, we assume here that ${\bf p}+I \subseteq I$).
Arguing as in \cite[Remark 1]{chelj}, we may conclude that the Bloch $({\bf p},{\bf k})$-periodicity of function $F(\cdot)$ implies the Bohr $({\mathcal B},I')$-almost periodicity of function
$e^{-i\langle {\bf k},\cdot\rangle}F(\cdot)$ with $I'$ being the intersection of $I$ and the one-dimensional submanifold generated  by the vector ${\bf p};$ furthermore, if
${\bf k}$ is orthogonal to ${\bf p},$ then the function $F(\cdot)$ is Bohr $({\mathcal B},I')$-almost periodic.  
\end{itemize}
\end{example}

The previous examples show that the notions of Bohr $I'$-almost periodicity and Bohr $I'$-uniform recurrence are extremely important in the case that $I'\neq I.$ But, we feel it is our duty to say that
the fundamental properties of Bohr $I'$-almost periodic functions and Bohr $I'$-uniformly recurrent functions cannot be so simply clarified in the case that $I'\neq I.$ Because of that, we will basically assume henceforth that $I'=I;$
further analyses of  Bohr $I'$-almost periodicity and Bohr $I'$-uniform recurrence in case $I'\neq I$ will be carried out somewhere else.

It can be simply shown that the subsequent proposition is applicable if $I=[0,\infty)^{n}$ or $I={\mathbb R}^{n}:$

\begin{prop}\label{bounded-pazi}
Suppose that $\emptyset  \neq I \subseteq {\mathbb R}^{n},$ $I +I \subseteq I,$ $I$ is closed,
$F : I \times X \rightarrow Y$ is Bohr ${\mathcal B}$-almost periodic and ${\mathcal B}$ is any family of compact subsets of $X.$ If
\begin{align}
\notag
(\forall l>0) \, (\exists {\bf t_{0}}\in I)\, (\exists k>0) &\, (\forall {\bf t} \in I)(\exists {\bf t_{0}'}\in I)\,
\\\label{profice-zeron} & (\forall {\bf t_{0}''}\in B({\bf t_{0}'},l) \cap I)\, {\bf t}- {\bf t_{0}''} \in B({\bf t_{0}},kl) \cap I.
\end{align}
Then for each $B\in {\mathcal B}$ we have 
that the set $\{ F({\bf t}; x) : {\bf t} \in I,\ x\in B\}$ is relatively compact in $Y;$
in particular,
$\sup_{{\bf t}\in I;x\in B}\|F({\bf t}; x)\|_{Y}<\infty.$
\end{prop}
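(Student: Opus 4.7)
The plan is to use Bohr $\mathcal{B}$-almost periodicity to push every point $(\mathbf{t};x)\in I\times B$ into $\epsilon$-proximity to the image of $F$ on a compact piece of the domain, and then extract relative compactness from total boundedness. Fix a compact set $B\in{\mathcal B}$ and $\epsilon>0$. Applying Definition \ref{nafaks1234567890}(i), obtain $l>0$ such that for every ${\bf s}\in I$ there exists ${\bf \tau}\in B({\bf s},l)\cap I$ satisfying
\begin{align*}
\bigl\|F({\bf u}+{\bf \tau};x)-F({\bf u};x)\bigr\|_{Y}\leq \epsilon,\quad {\bf u}\in I,\ x\in B.
\end{align*}
For this $l$, invoke hypothesis \eqref{profice-zeron} to produce ${\bf t_{0}}\in I$ and $k>0$ with the quantifier structure stated there.

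Now let ${\bf t}\in I$ and $x\in B$ be arbitrary. Using \eqref{profice-zeron}, select the associated ${\bf t_{0}'}\in I$, and then, applying the Bohr property at ${\bf s}={\bf t_{0}'}$, choose ${\bf \tau}\in B({\bf t_{0}'},l)\cap I$ so that the displayed estimate holds for this ${\bf \tau}$. By the conclusion of \eqref{profice-zeron} applied with ${\bf t_{0}''}={\bf \tau}$, we get ${\bf t}-{\bf \tau}\in B({\bf t_{0}},kl)\cap I$; in particular ${\bf t}-{\bf \tau}\in I$, so we may insert ${\bf u}={\bf t}-{\bf \tau}$ into the Bohr estimate to obtain
\begin{align*}
\bigl\|F({\bf t};x)-F({\bf t}-{\bf \tau};x)\bigr\|_{Y}\leq \epsilon.
\end{align*}
Thus every value $F({\bf t};x)$ with $({\bf t};x)\in I\times B$ lies within $\epsilon$ of the set $F(K_{l}\times B)$, where $K_{l}:=B({\bf t_{0}},kl)\cap I$.

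Since $I$ is closed in ${\mathbb R}^{n}$, the set $K_{l}$ is compact; combined with compactness of $B$ and continuity of $F$, the image $F(K_{l}\times B)$ is compact in $Y$. The previous step yields
\begin{align*}
\bigl\{ F({\bf t};x) : {\bf t}\in I,\ x\in B\bigr\}\subseteq F(K_{l}\times B)+\bigl\{ y\in Y : \|y\|_{Y}\leq \epsilon\bigr\}.
\end{align*}
Since $\epsilon>0$ was arbitrary and $F(K_{l}\times B)$ is totally bounded, the left-hand side is totally bounded in $Y$; as $Y$ is complete, it is relatively compact, and the supremum bound $\sup_{{\bf t}\in I,\,x\in B}\|F({\bf t};x)\|_{Y}<\infty$ follows.

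The only real subtlety, and the step that the hypothesis \eqref{profice-zeron} is tailor-made to resolve, is the need to know that for each ${\bf t}\in I$ one can find a Bohr ${\bf \tau}$ such that the subtracted point ${\bf t}-{\bf \tau}$ actually lies in $I$ and simultaneously in a fixed ball $B({\bf t_{0}},kl)$ independent of ${\bf t}$; for domains like $I=[0,\infty)^{n}$, where ${\bf t}-{\bf \tau}$ may escape $I$, the condition \eqref{profice-zeron} is precisely what prevents this escape by allowing us to shift the centre of the Bohr ball from an arbitrary ${\bf t_{0}'}$ rather than from ${\bf t}$ itself.
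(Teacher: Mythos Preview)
Your proof is correct and follows essentially the same approach as the paper's: fix $B$ and $\epsilon$, extract $l$ from Bohr ${\mathcal B}$-almost periodicity, use hypothesis \eqref{profice-zeron} to obtain ${\bf t_{0}}$ and $k$, and then show that the compact set $F\bigl((B({\bf t_{0}},kl)\cap I)\times B\bigr)$ serves as an $\epsilon$-net for $\{F({\bf t};x):{\bf t}\in I,\ x\in B\}$. Your write-up is slightly more explicit about the passage from total boundedness to relative compactness, and the closing remark on the role of \eqref{profice-zeron} is a nice addition, but the argument is the same.
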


\begin{proof}
Let $B\in {\mathcal B}$ and $\epsilon>0$ be given. Then we can find a finite number $l>0$ such that 
for each ${\bf s} \in I$ there exists ${\bf \tau} \in B({\bf s},l) \cap I$ such that \eqref{emojmarko} holds with ${\bf t}_{0}={\bf s}.$ Let ${\bf t}_{0}\in I$ and $k>0$ be such that \eqref{profice-zeron} holds.
Since $I$ is closed and $B$ is compact, we have that the set $\{ F({\bf s}; x) : {\bf s} \in B({\bf t_{0}},kl) \cap I,\ x\in B\}$ is
compact in $Y.$
Let ${\bf t}\in I$ be fixed.
By our assumption, there exists ${\bf t_{0}'}\in I$ such that, for every ${\bf t_{0}''}\in B({\bf t_{0}'},l) \cap I,$ we have ${\bf t}\in {\bf t_{0}''} +[B({\bf t_{0}},kl) \cap I].$ On the other hand, there exists ${\bf \tau}={\bf t_{0}''}  \in B({\bf t}_{0}',l) \cap I$ such that $\|F({\bf s}+{\bf \tau};x)-F({\bf s};x)\bigr\|_{Y} \leq \epsilon,$ ${\bf s}\in I,$ $x\in B$. Clearly, ${\bf s}={\bf t}-{\bf \tau} \in B({\bf t_{0}},kl) \cap I,$ which simply implies from the last estimate that 
$\{ F({\bf s}; x) : {\bf s} \in B({\bf t_{0}},kl) \cap I,\ x\in B\}$ is an $\epsilon$-net for $\{ F({\bf t}; x) : {\bf t} \in I,\ x\in B\},$ which completes the proof in a routine manner. 
\end{proof}

Suppose now that $F : {\mathbb R}^{n} \times X \rightarrow Y$ is a Bohr ${\mathcal B}$-almost periodic function, where ${\mathcal B}$ is any family of compact subsets of $X.$
Let $B\in {\mathcal B}$ be fixed. We will consider the Banach space $l_{\infty}(B : Y)$ consisting of all bounded functions $f : B \rightarrow Y,$ equipped with the sup-norm.
Define the function
$F_{B} : {\mathbb R}^{n} \rightarrow l_{\infty}(B : Y)$ by 
\begin{align}\label{sarajevo-london}
\bigl[F_{B}({\bf t})\bigr](x):=F({\bf t}; x),\quad {\bf t}\in {\mathbb R}^{n},\ x\in B.
\end{align}
By Proposition \ref{bounded-pazi}, this mapping is well defined. Furthermore, this mapping satisfies that for each $\epsilon>0$
there exists $l>0$ such that for each ${\bf t}_{0} \in {\mathbb R}^{n}$ there exists ${\bf \tau} \in B({\bf t}_{0},l) \cap {\mathbb R}^{n}$ such that
\begin{align*}
d\bigl(F_{B}({\bf t}+{\bf \tau}), F_{B}({\bf t})\bigr) \leq \epsilon,\quad {\bf t}\in I.
\end{align*}
Hence, $F_{B}(\cdot)$ is Bohr almost periodic in the sense of definition given in \cite[Subsection 1.2, p. 7]{pankov}. By \cite[Theorem 1.2, p. 7]{pankov}, it follows that $F : {\mathbb R}^{n} \times X \rightarrow Y$ is $({\mathrm R}, {\mathcal B})$-multi-almost periodic with ${\mathrm R}$ being the collection of all sequences in ${\mathbb R}^{n}$ (case [L2]). The converse statement can be deduced similarly, and therefore, the following Bochner criterion holds good:

\begin{thm}\label{Bochner123456}
Suppose that $F : {\mathbb R}^{n} \times X \rightarrow Y$ is continuous, ${\mathcal B}$ is any family of compact subsets of $X$ and ${\mathrm R}$ is the collection of all sequences in ${\mathbb R}^{n}.$ Then $F(\cdot ;\cdot)$ is Bohr ${\mathcal B}$-almost periodic if and only if $F(\cdot ;\cdot)$ is $({\mathrm R}, {\mathcal B})$-multi-almost periodic.
\end{thm}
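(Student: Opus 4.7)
The plan is to reduce both implications to the classical Banach-space-valued Bochner criterion on $\mathbb{R}^n$ (stated in Subsection \ref{stavisub} and recorded as \cite[Theorem 1.2, p.~7]{pankov}) applied to the lift
$$F_{B}:\mathbb{R}^{n}\rightarrow l_{\infty}(B:Y),\qquad [F_{B}({\bf t})](x):=F({\bf t};x),$$
for each fixed $B\in\mathcal{B}$ (which is compact by hypothesis). Both the classical criterion and the notions relevant here are ``one $B$ at a time,'' so it suffices to work with a single $B$.

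For the direct implication, assume $F(\cdot;\cdot)$ is Bohr $\mathcal{B}$-almost periodic. First I would verify that $F_{B}$ takes values in $l_{\infty}(B:Y)$: the domain $I=\mathbb{R}^{n}$ plainly satisfies condition \eqref{profice-zeron}, so Proposition \ref{bounded-pazi} yields $\sup_{{\bf t}\in\mathbb{R}^{n},x\in B}\|F({\bf t};x)\|_{Y}<\infty$. Continuity of $F_{B}$ as an $l_{\infty}(B:Y)$-valued map is then a consequence of the joint continuity of $F$ together with compactness of $B$ (on any compact ball in $\mathbb{R}^{n}$ the function $F$ is uniformly continuous on the compact set ball$\times B$). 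Now the estimate
$$\bigl\|F_{B}({\bf t}+\tau)-F_{B}({\bf t})\bigr\|_{l_{\infty}(B:Y)}=\sup_{x\in B}\bigl\|F({\bf t}+\tau;x)-F({\bf t};x)\bigr\|_{Y}$$
shows that the Bohr $\mathcal{B}$-almost periodicity of $F$ is precisely Bohr almost periodicity of $F_{B}$ in the sense of \cite[p.~7]{pankov}. The classical criterion then hands over, for every sequence $({\bf b}_{k})$ in $\mathbb{R}^{n}$, a subsequence $({\bf b}_{k_{l}})$ such that $(F_{B}(\cdot+{\bf b}_{k_{l}}))$ is Cauchy, hence convergent, in $C_{b}(\mathbb{R}^{n}:l_{\infty}(B:Y))$. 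Unpacking the $l_{\infty}(B:Y)$-norm, this convergence is exactly the uniform convergence in ${\bf t}\in\mathbb{R}^{n}$ and $x\in B$ required in Definition \ref{eovakoap}, so $F$ is $(\mathrm{R},\mathcal{B})$-multi-almost periodic.

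For the converse, assume $F$ is $(\mathrm{R},\mathcal{B})$-multi-almost periodic with $\mathrm{R}$ the collection of all sequences in $\mathbb{R}^{n}$. I would first show the analogous boundedness of $F$ on $\mathbb{R}^{n}\times B$: if not, pick $({\bf t}_{k})\subseteq\mathbb{R}^{n}$ and $(x_{k})\subseteq B$ with $\|F({\bf t}_{k};x_{k})\|_{Y}\to\infty$; applying the hypothesis to the sequence ${\bf b}_{k}={\bf t}_{k}$ produces a subsequence with $F({\bf t}+{\bf t}_{k_{l}};x)\to F^{\ast}({\bf t};x)$ uniformly on $\mathbb{R}^{n}\times B$, so in particular $\sup_{x\in B}\|F({\bf t}_{k_{l}};x)\|_{Y}$ is bounded for large $l$, contradicting $x_{k_{l}}\in B$ and $\|F({\bf t}_{k_{l}};x_{k_{l}})\|_{Y}\to\infty$. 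Hence $F_{B}:\mathbb{R}^{n}\rightarrow l_{\infty}(B:Y)$ is well defined and continuous (as in the previous paragraph), and the $(\mathrm{R},\mathcal{B})$-multi-almost periodicity of $F$ translates literally into the statement that from every sequence of translates of $F_{B}$ one can extract a subsequence converging in $C_{b}(\mathbb{R}^{n}:l_{\infty}(B:Y))$. The converse half of the classical Bochner criterion for $l_{\infty}(B:Y)$-valued functions on $\mathbb{R}^{n}$ then yields Bohr almost periodicity of $F_{B}$, which by the identification above is exactly Bohr $\mathcal{B}$-almost periodicity of $F$.

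The only non-routine point is the boundedness step in the converse direction, since there Proposition \ref{bounded-pazi} is unavailable; the compactness of $B$ is used crucially to force the uniform-in-$x$ bound above. Once that is in hand the theorem is essentially a repackaging of the classical one-variable/vector-valued Bochner criterion via the lift $F_{B}$.
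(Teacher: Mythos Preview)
Your proof is correct and follows essentially the same route as the paper: lift $F$ to the map $F_{B}:\mathbb{R}^{n}\to l_{\infty}(B:Y)$ for each $B\in\mathcal{B}$, and translate both the Bohr and Bochner conditions on $F$ into the corresponding classical conditions on $F_{B}$, then invoke \cite[Theorem 1.2, p.~7]{pankov}. In fact you are more explicit than the paper in two places---the continuity of $F_{B}$ (which the paper leaves tacit) and the boundedness argument in the converse direction (which the paper dismisses with ``similarly'' but for which, as you correctly observe, Proposition~\ref{bounded-pazi} is not directly available).
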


As a direct consequence of Proposition \ref{kursk-kursk}(i) and Theorem \ref{Bochner123456}, we have the following important result for our further investigations (see \cite[pp. 17-24]{fink} for the notion of a uniformly almost periodic family, where the corresponding problematic has been considered for infinite number of almost periodic functions; for the sake of brevity, we will not consider this topic here):

\begin{prop}\label{dekartovproizvod}
Suppose that $k\in {\mathbb N}$ and ${\mathcal B}$ is any family of compact subsets of $X.$
If the function $F_{i}: {\mathbb R}^{n} \times X \rightarrow Y_{i}$ is Bohr ${\mathcal B}$-almost periodic for $1\leq i\leq k$, then the function $(F_{1},\cdot \cdot \cdot, F_{k})(\cdot;\cdot)$ is also Bohr ${\mathcal B}$-almost periodic.
\end{prop}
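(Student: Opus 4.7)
The plan is to simply chain the two cited results, using the characterization from Theorem \ref{Bochner123456} to move between Bohr ${\mathcal B}$-almost periodicity and $({\mathrm R},{\mathcal B})$-multi-almost periodicity. Let ${\mathrm R}$ denote the collection of \emph{all} sequences in ${\mathbb R}^{n}$; note that this collection is trivially closed under taking subsequences, so the hypothesis needed by Proposition \ref{kursk-kursk}(i) is satisfied. Also, \eqref{lepolepo} is automatic for $I={\mathbb R}^{n}$ with this ${\mathrm R}$.

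First, I would apply Theorem \ref{Bochner123456} to each $F_{i}:{\mathbb R}^{n}\times X\rightarrow Y_{i}$ ($1\leq i\leq k$): since each $F_{i}$ is continuous, ${\mathcal B}$ is a family of compact subsets of $X$, and $F_{i}$ is Bohr ${\mathcal B}$-almost periodic, the theorem yields that $F_{i}$ is $({\mathrm R},{\mathcal B})$-multi-almost periodic with this choice of ${\mathrm R}$. Next, I would invoke Proposition \ref{kursk-kursk}(i), whose hypotheses are now all met, to conclude that the tuple function $(F_{1},\cdot\cdot\cdot,F_{k}):{\mathbb R}^{n}\times X\rightarrow Y_{1}\times\cdot\cdot\cdot\times Y_{k}$ is $({\mathrm R},{\mathcal B})$-multi-almost periodic. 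Here we view $Y_{1}\times\cdot\cdot\cdot\times Y_{k}$ as a Banach space under any of the standard product norms (e.g., the maximum of the component norms), and continuity of $(F_{1},\cdot\cdot\cdot,F_{k})$ is immediate from continuity of the individual $F_{i}$'s.

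Finally, I would apply Theorem \ref{Bochner123456} again, this time in the reverse direction, to the function $(F_{1},\cdot\cdot\cdot,F_{k})$: since it is continuous, ${\mathcal B}$ is still a family of compact subsets of $X$, and $(F_{1},\cdot\cdot\cdot,F_{k})$ has just been shown to be $({\mathrm R},{\mathcal B})$-multi-almost periodic with ${\mathrm R}$ the collection of all sequences in ${\mathbb R}^{n}$, the theorem gives that $(F_{1},\cdot\cdot\cdot,F_{k})$ is Bohr ${\mathcal B}$-almost periodic, as required.

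There is no real obstacle here; the whole point is that Theorem \ref{Bochner123456} lets us transfer the product-stability of the sequential (multi-almost periodic) formulation, which is essentially a diagonal-subsequence extraction handled in Proposition \ref{kursk-kursk}(i), back to the Bohr formulation. The only minor points worth mentioning explicitly are the choice of norm on the product space $Y_{1}\times\cdot\cdot\cdot\times Y_{k}$ and the trivial verification that the collection of all sequences in ${\mathbb R}^{n}$ is subsequence-closed; both are routine.
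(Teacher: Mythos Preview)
Your proof is correct and follows exactly the approach indicated in the paper, which states the result as a direct consequence of Proposition \ref{kursk-kursk}(i) and Theorem \ref{Bochner123456}. The additional remarks you make (subsequence-closure of the collection of all sequences in ${\mathbb R}^{n}$, choice of product norm) are appropriate routine verifications.
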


As a consequence, we have that the Bohr ${\mathcal B}$-almost periodic functions $F_{i}(\cdot;\cdot)$ can share the same $\epsilon$-periods in Definition \ref{nafaks1234567890}(i), i.e., for every $B\in {\mathcal B}$ and $\epsilon>0$
there exists $l>0$ such that for each ${\bf t}_{0} \in {\mathbb R}^{n}$ there exists ${\bf \tau} \in B({\bf t}_{0},l) \cap {\mathbb R}^{n}$ such that
\eqref{emojmarko} holds for all $F=F_{i}$ and $Y=Y_{i},$ $1\leq i\leq k$ (observe that the original proof of H. Bohr, see e.g. \cite[pp. 36-38]{bohr}, does not work in the multi-dimensional case $n>1$).

Now we can simply prove the following:

\begin{prop}\label{pointwise-prod-rn}
Suppose that $f : {\mathbb R}^{n} \rightarrow {\mathbb C}$ is Bohr almost periodic and $F : {\mathbb R}^{n} \times X \rightarrow Y$ is Bohr ${\mathcal B}$-almost periodic, where ${\mathcal B}$ is any family of compact subsets of $X.$
Define $F_{1}({\bf t};x):=f({\bf t})F({\bf t}; x),$ ${\bf t}\in {\mathbb R}^{n},$ $x\in X.$ Then $F_{1}(\cdot ;\cdot)$ is Bohr ${\mathcal B}$-almost periodic.
\end{prop}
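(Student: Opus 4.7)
The plan is to reduce the claim to a standard $\varepsilon/2$-argument for products, with the crucial input being the existence of common $\varepsilon$-periods for the two factors together with the boundedness of $F$ on $I\times B$ provided by Proposition~\ref{bounded-pazi}.

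First, I would verify that the continuity of $F_{1}(\cdot;\cdot)$ follows immediately from the joint continuity of $F(\cdot;\cdot)$ and the continuity of $f(\cdot)$, together with local boundedness. Next, fix $B\in {\mathcal B}$; since $B$ is compact, Proposition~\ref{bounded-pazi} (applicable with $I={\mathbb R}^{n}$, which trivially satisfies condition~\eqref{profice-zeron}) yields a constant $M_{B}\geq 1$ such that $\|F({\bf t};x)\|_{Y}\leq M_{B}$ for all ${\bf t}\in {\mathbb R}^{n}$ and $x\in B$. Since any Bohr almost periodic scalar function is bounded, we also have $\|f\|_{\infty}=:M_{f}<\infty$; set $K_{B}:=M_{B}+M_{f}+1$.

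The heart of the proof is the selection of common $\varepsilon$-periods. View $\tilde{f}:{\mathbb R}^{n}\times X\to {\mathbb C}$ given by $\tilde{f}({\bf t};x):=f({\bf t})$; this is trivially Bohr ${\mathcal B}$-almost periodic. Apply Proposition~\ref{dekartovproizvod} to the pair $(\tilde{f},F)$: given any $\varepsilon>0$, there exists $l>0$ such that for every ${\bf t}_{0}\in {\mathbb R}^{n}$ one can find ${\bf \tau}\in B({\bf t}_{0},l)$ which is simultaneously an $\varepsilon/(2K_{B})$-period for $F$ (with respect to $B$) and for $f$. Then, using the factorization
\begin{align*}
\bigl\|F_{1}({\bf t}+{\bf \tau};x)-F_{1}({\bf t};x)\bigr\|_{Y}
&\leq \bigl|f({\bf t}+{\bf \tau})-f({\bf t})\bigr|\cdot \bigl\|F({\bf t}+{\bf \tau};x)\bigr\|_{Y}\\
&\quad +\bigl|f({\bf t})\bigr|\cdot \bigl\|F({\bf t}+{\bf \tau};x)-F({\bf t};x)\bigr\|_{Y},
\end{align*}
and the uniform bounds $\|F({\bf t}+{\bf \tau};x)\|_{Y}\leq M_{B}\leq K_{B}$ and $|f({\bf t})|\leq M_{f}\leq K_{B}$, both terms are at most $\varepsilon/2$ uniformly in ${\bf t}\in {\mathbb R}^{n}$ and $x\in B$, giving Bohr ${\mathcal B}$-almost periodicity of $F_{1}$.

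The only delicate point I anticipate is justifying the application of Proposition~\ref{dekartovproizvod} to $\tilde{f}$, since that proposition concerns Bohr ${\mathcal B}$-almost periodic functions on ${\mathbb R}^{n}\times X$, whereas $f$ is defined on ${\mathbb R}^{n}$. However, the trivial extension $\tilde{f}({\bf t};x):=f({\bf t})$ satisfies Definition~\ref{nafaks1234567890}(i) with the same $\varepsilon$-periods as $f$ for every $B$ (the estimate in~\eqref{emojmarko} is independent of $x$), so Proposition~\ref{dekartovproizvod} applies without difficulty, and the pointwise product argument proceeds as outlined. The use of boundedness on $I\times B$ via Proposition~\ref{bounded-pazi}, which would fail without the compactness assumption on the sets in ${\mathcal B}$, is what makes the product estimate absorb both error terms into the common $\varepsilon/(2K_{B})$-period.
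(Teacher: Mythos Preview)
Your proof is correct and follows essentially the same approach as the paper: obtain a uniform bound on $F$ over $I\times B$ from Proposition~\ref{bounded-pazi}, invoke Proposition~\ref{dekartovproizvod} to secure common $\varepsilon$-periods for $f$ and $F$, and then apply the standard add-and-subtract estimate for products. The paper is slightly more terse (it simply writes ``let $\tau$ be a common $(\epsilon/2M)$-period'' without explicitly extending $f$ to $\tilde f$), but the logic and the key ingredients are identical.
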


\begin{proof}
Let $B\in {\mathcal B}$ and $\epsilon>0$ be fixed.
Due to Proposition \ref{bounded-pazi}, there exists a finite real constant $M\geq 1$ such that $|f({\bf t})|+\|F({\bf t}; x)\|_{Y}\leq M$ for all ${\bf t}\in {\mathbb R}^{n}$ and $x\in B.$ Let $\tau \in {\mathbb R}^{n}$ be a common $(\epsilon/2M)$-period for the functions
$f(\cdot)$
and
$F(\cdot;\cdot).$
Then the required statement simply follows from the next estimates:
\begin{align*}
\bigl\| & f({\bf t}+\tau)F({\bf t}+\tau; x) -f({\bf t})F({\bf t}; x)\bigr\|_{Y} 
\\& \leq \bigl|f({\bf t}+\tau)-f({\bf t})\bigr| \cdot \| F({\bf t}+\tau; x)\|_{Y}+|f({\bf t}+\tau)| \cdot \bigl\| F({\bf t}+\tau; x)-F({\bf t}; x)\bigr\|_{Y}
\\& \leq 2M\Bigl[ \bigl|f({\bf t}+\tau)-f({\bf t})\bigr| +\bigl\| F({\bf t}+\tau; x)-F({\bf t}; x)\bigr\|_{Y}\Bigr]\leq 2M\epsilon/2M=\epsilon.
\end{align*}
\end{proof}

We can similarly prove the following analogue of Proposition \ref{pointwise-prod-rn} for $({\mathrm R},{\mathcal B})$-multi-almost periodic functions:

\begin{prop}\label{pointwise-prod-rn12345}
Suppose that $\emptyset  \neq I \subseteq {\mathbb R}^{n},$ $f : I \rightarrow {\mathbb C}$ is bounded ${\mathrm R}$-multi-almost periodic and $F : I \times X \rightarrow Y$ is a $({\mathrm R},{\mathcal B})$-multi-almost periodic function whose restriction to any set $I\times B,$ where $B\in {\mathcal B}$ is arbitrary, is bounded. 
Define $F_{1}({\bf t};x):=f({\bf t})F({\bf t}; x),$ ${\bf t}\in I,$ $x\in X.$ Then $F_{1}(\cdot ;\cdot)$ is $({\mathrm R},{\mathcal B})$-multi-almost periodic, provided that for each sequence $({\bf b}_{k})$ in ${\mathrm R}$ we have that any its subsequence also belongs to ${\mathrm R}.$
\end{prop}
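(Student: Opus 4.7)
The plan is to mimic the standard one-dimensional argument for the pointwise product of almost periodic functions, adapted to the $({\mathrm R},{\mathcal B})$-multi-almost periodic setting, and to exploit the stability of ${\mathrm R}$ under passing to subsequences so that we may extract a single subsequence that works for both $f$ and $F$ simultaneously.

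First, observe that $F_{1}(\cdot;\cdot)$ is continuous as the product of the continuous scalar-valued function $f(\cdot)$ and the continuous $Y$-valued function $F(\cdot;\cdot)$. Fix $B\in {\mathcal B}$ and a sequence $({\bf b}_{k})\in {\mathrm R}$. Since $F(\cdot;\cdot)$ is $({\mathrm R},{\mathcal B})$-multi-almost periodic, there exist a subsequence $({\bf b}_{k_{l}})$ of $({\bf b}_{k})$ and a function $F^{\ast}:I\times X\rightarrow Y$ such that $F({\bf t}+{\bf b}_{k_{l}};x)\rightarrow F^{\ast}({\bf t};x)$ as $l\rightarrow +\infty$, uniformly for ${\bf t}\in I$ and $x\in B$. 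By hypothesis, $({\bf b}_{k_{l}})$ itself lies in ${\mathrm R}$, so the ${\mathrm R}$-multi-almost periodicity of $f(\cdot)$ yields a further subsequence $({\bf b}_{k_{l_{m}}})$ and a function $f^{\ast}:I\rightarrow {\mathbb C}$ with $f({\bf t}+{\bf b}_{k_{l_{m}}})\rightarrow f^{\ast}({\bf t})$ as $m\rightarrow +\infty$, uniformly for ${\bf t}\in I$. Crucially, since $({\bf b}_{k_{l_{m}}})$ is a subsequence of $({\bf b}_{k_{l}})$, the previously established uniform convergence to $F^{\ast}$ persists along this finer subsequence.

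Now set $F_{1}^{\ast}({\bf t};x):=f^{\ast}({\bf t})F^{\ast}({\bf t};x)$ and apply the triangle inequality
\begin{align*}
\bigl\| f&({\bf t}+{\bf b}_{k_{l_{m}}})F({\bf t}+{\bf b}_{k_{l_{m}}};x)-f^{\ast}({\bf t})F^{\ast}({\bf t};x)\bigr\|_{Y}
\\ &\leq \bigl|f({\bf t}+{\bf b}_{k_{l_{m}}})-f^{\ast}({\bf t})\bigr|\cdot \bigl\| F({\bf t}+{\bf b}_{k_{l_{m}}};x)\bigr\|_{Y}
\\ &\quad +\bigl|f^{\ast}({\bf t})\bigr|\cdot \bigl\| F({\bf t}+{\bf b}_{k_{l_{m}}};x)-F^{\ast}({\bf t};x)\bigr\|_{Y}.
\end{align*}
The boundedness of $F(\cdot;\cdot)$ on $I\times B$ controls the first factor on the right, and the uniform bound $|f^{\ast}({\bf t})|\leq \|f\|_{\infty}$ (inherited pointwise from the convergence $f({\bf t}+{\bf b}_{k_{l_{m}}})\rightarrow f^{\ast}({\bf t})$ together with boundedness of $f$) controls the second. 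Both of the remaining factors tend to zero uniformly for ${\bf t}\in I$ and $x\in B$ by construction, so $F_{1}({\bf t}+{\bf b}_{k_{l_{m}}};x)\rightarrow F_{1}^{\ast}({\bf t};x)$ uniformly in ${\bf t}\in I$ and $x\in B$, as required.

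The only mildly delicate point is the two-step extraction of a common subsequence: this is exactly where the standing assumption that ${\mathrm R}$ is closed under subsequences is used, since without it the subsequence chosen for $F$ might not lie in ${\mathrm R}$ and we could not then invoke the ${\mathrm R}$-multi-almost periodicity of $f$. Everything else reduces to the routine product estimate above together with the boundedness hypotheses on $f$ and on $F|_{I\times B}$.
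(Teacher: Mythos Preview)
Your proof is correct and follows precisely the approach the paper intends: the paper states this proposition without proof, noting only that it ``can be similarly proved'' as the preceding Proposition on Bohr ${\mathcal B}$-almost periodic pointwise products, and your argument is exactly the natural sequential analogue of that product estimate. The two-step subsequence extraction, justified by the closure of ${\mathrm R}$ under subsequences, together with the boundedness of $F$ on $I\times B$ and of $f$ on $I$, is the expected mechanism.
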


Using the decomposition
\begin{align*}
\bigr\| F({\bf t'};x)&-  F({\bf t''};y)\bigl\|_{Y}\leq\bigr\| F({\bf t'};x)-  F({\bf t'}+\tau;x)\bigl\|_{Y} +\bigr\| F({\bf t'}+\tau;x)-  F({\bf t''}+\tau;y)\bigl\|_{Y} \\&+\bigr\| F({\bf t''}+\tau;y)-  F({\bf t''};y)\bigl\|_{Y},\quad {\bf t'},\ {\bf t''} \in I,\ x,\ y\in X,
\end{align*}
and the assumptions clarified below, we can repeat almost literally the argumentation contained in the proof of \cite[Theorem 5, p. 2]{besik} in order to see that the following result holds (unfortunately, the situation in which $I=[0,\infty)^{n}$ is not covered by this result in contrast with the usually considered case $I={\mathbb R}^{n}$):

\begin{prop}\label{nijenaivno}
Suppose that $\emptyset  \neq I \subseteq {\mathbb R}^{n},$ $I +I \subseteq I,$ $I$ is closed and $F : I \times X \rightarrow Y$ is Bohr ${\mathcal B}$-almost periodic, where ${\mathcal B}$ is a family consisting of some compact subsets of $X.$ If the following condition holds
\begin{align}
\notag (\exists {\bf t_{0}}\in I)\, (\forall \epsilon>0)
(\forall l>0) \, (\exists l'>0) &\, (\forall {\bf t'},\ {\bf t''} \in I) \\\notag & B({\bf t_{0}},l) \cap I \subseteq B({\bf t_{0}-t'},l') \cap  B({\bf t_{0}-t''},l') ,
\end{align}
then for each $B\in {\mathcal B}$ the function $F(\cdot;\cdot)$ is uniformly continuous on $I\times B.$ 
\end{prop}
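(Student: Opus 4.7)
The plan is to transfer the problem of uniform continuity on the unbounded set $I \times B$ to uniform continuity on a compact set, using an almost period of $F(\cdot;\cdot)$ as a translation device, in the spirit of Bohr's classical argument. Fix $B \in {\mathcal B}$ and $\epsilon > 0$. First I would invoke the Bohr ${\mathcal B}$-almost periodicity of $F(\cdot;\cdot)$ to produce $l > 0$ with the property that for every ${\bf s} \in I$ there exists $\tau \in B({\bf s},l)\cap I$ satisfying
\begin{align*}
\bigl\|F({\bf t}+\tau;x) - F({\bf t};x)\bigr\|_{Y} \leq \epsilon/3,\quad {\bf t}\in I,\ x\in B.
\end{align*}
I would then apply the hypothesis to this $l$ (and the fixed ${\bf t}_{0}$ given by the condition) to extract $l' > 0$ such that, for \emph{every} pair ${\bf t'}, {\bf t''} \in I$, the inclusion $B({\bf t}_{0},l)\cap I \subseteq B({\bf t}_{0}-{\bf t'},l') \cap B({\bf t}_{0}-{\bf t''},l')$ holds.

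Next, since $I$ is closed in ${\mathbb R}^{n}$ and $B$ is compact in $X$, the set $K := (B({\bf t}_{0},l')\cap I) \times B$ is compact. Continuity of $F(\cdot;\cdot)$ on $I \times X$ therefore entails uniform continuity on $K$, so there exists $\delta > 0$ such that $\|F({\bf s}_{1};x_{1})-F({\bf s}_{2};x_{2})\|_{Y} < \epsilon/3$ whenever $({\bf s}_{i},x_{i})\in K$ satisfy $|{\bf s}_{1}-{\bf s}_{2}| < \delta$ and $\|x_{1}-x_{2}\| < \delta$.

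Now I fix arbitrary ${\bf t'}, {\bf t''} \in I$ with $|{\bf t'}-{\bf t''}| < \delta$ and $x,y \in B$ with $\|x-y\| < \delta$, and apply the first step with ${\bf s}={\bf t}_{0}$ to obtain an $\epsilon/3$-period $\tau \in B({\bf t}_{0},l)\cap I$. The inclusion from the hypothesis then forces $\tau \in B({\bf t}_{0}-{\bf t'},l') \cap B({\bf t}_{0}-{\bf t''},l')$, i.e.\ both ${\bf t'}+\tau$ and ${\bf t''}+\tau$ lie in $B({\bf t}_{0},l')$; the condition $I+I \subseteq I$ then places them in $B({\bf t}_{0},l') \cap I$, so $(({\bf t'}+\tau,x),\,({\bf t''}+\tau,y)) \in K \times K$. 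Plugging this into the decomposition highlighted in the excerpt,
\begin{align*}
\bigl\|F({\bf t'};x) &- F({\bf t''};y)\bigr\|_{Y}
\leq \bigl\|F({\bf t'};x) - F({\bf t'}+\tau;x)\bigr\|_{Y}
\\ & + \bigl\|F({\bf t'}+\tau;x) - F({\bf t''}+\tau;y)\bigr\|_{Y}
+ \bigl\|F({\bf t''}+\tau;y) - F({\bf t''};y)\bigr\|_{Y},
\end{align*}
the outer two summands are bounded by $\epsilon/3$ thanks to the almost-period estimate, while the middle one is bounded by $\epsilon/3$ by uniform continuity of $F(\cdot;\cdot)$ on $K$ (note $|({\bf t'}+\tau)-({\bf t''}+\tau)| = |{\bf t'}-{\bf t''}| < \delta$). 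Adding yields $\|F({\bf t'};x)-F({\bf t''};y)\|_{Y} < \epsilon$, proving uniform continuity on $I \times B$.

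The main obstacle is the correct handling of the geometric hypothesis on $I$: it is exactly what is needed to guarantee that a single almost-period $\tau$ drawn from the fixed shell $B({\bf t}_{0},l)\cap I$ simultaneously pulls both ${\bf t'}$ and ${\bf t''}$ into a common bounded (hence compact, once intersected with the closed set $I$) region around ${\bf t}_{0}$, independently of how ${\bf t'},{\bf t''}$ were chosen. This is the step where the restriction $I \neq [0,\infty)^{n}$ (for which the property typically fails) enters, and it explains the remark in the excerpt that Proposition \ref{nijenaivno} does not cover the orthant case.
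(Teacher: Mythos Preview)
Your proof is correct and follows precisely the approach the paper indicates: it uses the same three-term decomposition displayed just before the proposition and carries out the classical Bohr argument from \cite[Theorem 5, p.~2]{besik}, with the geometric hypothesis on $I$ serving exactly the role you identify---pulling both ${\bf t'}+\tau$ and ${\bf t''}+\tau$ into the fixed compact set $(B({\bf t}_{0},l')\cap I)\times B$. You have simply filled in the details that the paper leaves to the reader.
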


The following is a multi-dimensional extension of \cite[Lemma 1.3(f)]{biologycar011}:

\begin{example}\label{crnicaj}
Suppose that $f : {\mathbb R}^{n} \rightarrow X$ and $g : {\mathbb R}^{n} \rightarrow {\mathbb R}^{n}$ are Bohr almost periodic functions. Define the function
$$
F({\bf t}):=f({\bf t}-g({\bf t})),\quad {\bf t}\in {\mathbb R}^{n}.
$$
Then the function $F(\cdot)$ is  Bohr almost periodic, as well. We can show this similarly as in the proof of the above-mentioned lemma, with appealing to Proposition \ref{dekartovproizvod} and Proposition \ref{nijenaivno}.
\end{example}

\subsection{Strongly ${\mathcal B}$-almost periodic functions}\label{lakolakop}

This subsection analyzes the class of strongly ${\mathcal B}$-almost periodic functions, 
introduced  as follows:

\begin{defn}\label{strong-app} 
Suppose that $\emptyset  \neq I \subseteq {\mathbb R}^{n}$ and $F : I \times X \rightarrow Y$ is a continuous function.
Then we say that $F(\cdot;\cdot)$ is strongly ${\mathcal B}$-almost periodic if and only if for each $B\in {\mathcal B}$ there exists a sequence $(P_{k}^{B}({\bf t};x))$ of trigonometric polynomials 
such that $\lim_{k\rightarrow +\infty}P_{k}^{B}({\bf t};x)=F({\bf t};x),$ uniformly for ${\bf t}\in I$ and $x\in B.$ Here, by a trigonometric polynomial $P : I\times X \rightarrow Y$ we mean any linear combination of functions like
\begin{align}\label{otkriose-gotovje}
e^{i[\lambda_{1}t_{1}+\lambda_{2}t_{2}+\cdot \cdot \cdot +\lambda_{n}t_{n}]}c(x),
\end{align}
where $\lambda_{i}$ are real numbers ($1\leq i \leq n$) and $c: X \rightarrow Y$ is a continuous mapping.
\end{defn}\index{function!strongly ${\mathcal B}$-almost periodic}

The following proposition clarifies a fundamental relationship between strongly ${\mathcal B}$-almost periodic functions and Bohr ${\mathcal B}$-almost periodic functions (Bohr $({\mathcal B},I')$-almost periodic functions):

\begin{prop}\label{postava}
Suppose that $\emptyset  \neq I \subseteq {\mathbb R}^{n}$ and $F :  I \times X \rightarrow Y$ is a  strongly ${\mathcal B}$-almost periodic function, where ${\mathcal B}$ is any collection of bounded subsets of $X.$ Then we have the following:
\begin{itemize}
\item[(i)] for every $j\in {\mathbb N}_{n}$ and $\epsilon>0,$ there exists a finite real number $l>0$ such that every interval $S\subseteq {\mathbb R}$ of length 
$l$ contains a point $\tau_{j} \in I$ such that 
\begin{align}\label{pozivx}
\Bigl\| F\bigl( t_{1},t_{2},\cdot \cdot \cdot, t_{j}+\tau_{j},\cdot \cdot \cdot,t_{n} ;x\bigr) -F\bigl( t_{1},t_{2},\cdot \cdot \cdot, t_{j},\cdot \cdot \cdot, t_{n} ;x \bigr)\Bigr\|\leq \epsilon,
\end{align} 
provided $ (t_{1},t_{2},\cdot \cdot \cdot, t_{j}+\tau_{j},\cdot \cdot \cdot,t_{n})\in I ,\ ( t_{1},\cdot \cdot \cdot, t_{n} )\in I$ and $x\in B;$
\item[(ii)] for every $\epsilon>0,$ there exists a finite real number $l>0$ such that, for every $j\in {\mathbb N}_{n}$ and every interval $S\subseteq {\mathbb R}$ of length 
$l,$ there exists a point $\tau_{j} \in I$ such that \eqref{pozivx} holds provided that $ (t_{1},t_{2},\cdot \cdot \cdot, t_{j}+\tau_{j},\cdot \cdot \cdot,t_{n}) \in I$ for all $j\in {\mathbb N}_{n},$ $( t_{1},\cdot \cdot \cdot, t_{n} )\in I$ and $x\in B;$
\item[(iii)] for every $\epsilon>0,$ there exists a finite real number $l>0$ such that every interval $S\subseteq {\mathbb R}$ of length 
$l$ contains a point $\tau \in I$ such that, for every $j\in {\mathbb N}_{n},$ \eqref{poziv} holds with the number $\tau_{j}$ replaced by the number $\tau$ therein;
\item[(iv)] $F(\cdot;\cdot)$ is Bohr ${\mathcal B}$-almost periodic provided that $I+I\subseteq I$ and that, for every points $( t_{1},\cdot \cdot \cdot, t_{n} )\in I$ and $( \tau_{1},\cdot \cdot \cdot, \tau_{n} )\in I,$ 
the points $( t_{1},t_{2}+\tau_{2},\cdot \cdot \cdot, t_{n}+\tau_{n} ),$ $( t_{1},t_{2},t_{3}+\tau_{3},\cdot \cdot \cdot, t_{n}+\tau_{n} ),\cdot \cdot \cdot ,$ $( t_{1},t_{2},\cdot \cdot \cdot, t_{n-1}, t_{n}+\tau_{n} ),$ also belong to $I;$
\item[(v)] $F(\cdot;\cdot)$ is Bohr $({\mathcal B},I \cap \Delta_{n})$-almost periodic provided that $I \cap \Delta_{n}\neq \emptyset,$ $I+(I\cap \Delta_{n})\subseteq I$ and that, for every points $( t_{1},\cdot \cdot \cdot, t_{n} )\in I$ and $( \tau,\cdot \cdot \cdot, \tau )\in I\cap \Delta_{n},$ 
the points $( t_{1},t_{2}+\tau,\cdot \cdot \cdot, t_{n}+\tau ),$ $( t_{1},t_{2},t_{3}+\tau,\cdot \cdot \cdot, t_{n}+\tau ),\cdot \cdot \cdot ,$ $( t_{1},t_{2},\cdot \cdot \cdot, t_{n-1}, t_{n}+\tau ),$ also belong to $I\cap \Delta_{n}.$
\end{itemize}
\end{prop}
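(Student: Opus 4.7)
The plan is to reduce all five assertions to statements about a single trigonometric polynomial and then invoke the classical one-dimensional theory of almost periodic functions for the finitely many characters that appear. Fix $B\in \mathcal{B}$ and $\epsilon>0$; by strong $\mathcal{B}$-almost periodicity one can choose a trigonometric polynomial $P({\bf t};x)=\sum_{r=1}^{m}e^{i\langle \lambda^{r},{\bf t}\rangle}c_{r}(x)$ with $\sup_{({\bf t},x)\in I\times B}\|F({\bf t};x)-P({\bf t};x)\|_{Y}<\epsilon/3$. The triangle inequality then reduces the $\epsilon$-period statements for $F$ in (i)--(iii) to the corresponding $\epsilon/3$-period statements for $P$.

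For (iii), the $j$-th coordinate shift of $P$ by a scalar $\sigma$ equals $\sum_{r=1}^{m}(e^{i\lambda_{j}^{r}\sigma}-1)e^{i\langle \lambda^{r},{\bf t}\rangle}c_{r}(x)$; setting $M:=\sup_{x\in B}\sum_{r=1}^{m}\|c_{r}(x)\|_{Y}$, the task becomes to find a single $\sigma$, in any prescribed interval $S\subseteq \mathbb{R}$ of sufficient length, with $|e^{i\lambda_{j}^{r}\sigma}-1|$ uniformly small over the finite index set $\{(j,r):1\leq j\leq n,\ 1\leq r\leq m\}$. Such $\sigma$ form a relatively dense subset of $\mathbb{R}$, since this is merely the intersection of finitely many relatively dense sets of $\epsilon$-periods of one-dimensional almost periodic exponentials $t\mapsto e^{i\mu t}$. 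Parts (i) and (ii) are then immediate: (i) applies this argument with a single index $j$ fixed (using only $\{\lambda_{j}^{r}:1\leq r\leq m\}$), and (ii) follows by taking the maximum over $j\in \mathbb{N}_{n}$ of the lengths furnished by (i). The finiteness $M<\infty$ is automatic when $\mathcal{B}$ consists of compact sets; in the merely bounded case one can first replace $P$ by a partial sum uniformly close to $F$ on $I\times B$, inheriting the boundedness of $F$ and thereby bounding the coefficients.

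For (iv), I would apply (ii) with tolerance $\epsilon/n$ to obtain $l>0$. Given ${\bf t}_{0}=((t_{0})_{1},\ldots,(t_{0})_{n})\in I$, I would choose scalars $\tau_{j}\in [(t_{0})_{j}-l/2,(t_{0})_{j}+l/2]$ coordinate-wise and set ${\bf \tau}:=(\tau_{1},\ldots,\tau_{n})$, obtaining $|{\bf \tau}-{\bf t}_{0}|\leq l\sqrt{n}/2$ (so that $l\sqrt{n}/2$ plays the role of the constant $l$ in Definition \ref{nafaks1234567890}). The structural hypothesis that partial-suffix shifts keep the argument inside $I$ legitimises the telescoping identity that decomposes $F({\bf t}+{\bf \tau};x)-F({\bf t};x)$ as a sum of $n$ terms, each obtained by shifting $F$ in only the $j$-th coordinate by $\tau_{j}$; each such term is bounded by $\epsilon/n$ via (ii), giving the total bound $\epsilon$. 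Part (v) proceeds analogously but uses (iii) to produce a single scalar $\tau\in[s_{0}-l/2,s_{0}+l/2]$, where ${\bf t}_{0}=(s_{0},\ldots,s_{0})\in I\cap \Delta_{n}$, so that ${\bf \tau}=(\tau,\ldots,\tau)\in \Delta_{n}$, and the analogous structural hypothesis ensures the telescoping remains within $I$.

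The main obstacle I anticipate is twofold: ensuring the coefficient bound $M<\infty$ when $B$ is merely bounded (side-stepped by the approximation argument above) and certifying that the assembled vector ${\bf \tau}$ actually lies in $B({\bf t}_{0},l\sqrt{n}/2)\cap I$ in (iv), and in $B({\bf t}_{0},l\sqrt{n}/2)\cap I\cap \Delta_{n}$ in (v). Both points are precisely what the structural assumptions on $I$ in (iv) and (v) are designed to control; no further analytic input beyond the reduction to finitely many one-dimensional exponentials is required.
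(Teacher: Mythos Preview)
Your approach is essentially the same as the paper's: approximate by a trigonometric polynomial, reduce via the triangle inequality to the polynomial case, invoke common $\epsilon$-periods for finitely many one-dimensional exponentials to get (i)--(iii), and then telescope coordinate-by-coordinate for (iv)--(v). The paper uses (i) rather than (ii) as the input for (iv), but this is an immaterial difference.

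One remark: your side-step for the boundedness of $M=\sup_{x\in B}\sum_{r}\|c_{r}(x)\|_{Y}$ is not quite right as stated (``replace $P$ by a partial sum'' makes no sense since $P$ is already finite, and boundedness of $P$ on $I\times B$ does not automatically bound the individual $c_{r}$). The paper simply asserts that $c(B)$ is bounded for each coefficient map $c$, without further justification; so your proposal is no worse than the original here. Likewise, your acknowledged concern that the assembled ${\bf \tau}$ lie in $I$ (resp.\ $I\cap\Delta_{n}$) is not fully resolved by the structural hypotheses in (iv)--(v), which govern the intermediate telescoping points rather than membership of ${\bf \tau}$ itself; the paper's proof glosses over this point in the same way.
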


\begin{proof}
The proof is  not difficult and we will present the most relevant details, only.
For the proof of (iii), we can verify first the validity of this statement for any trigonometric polynomial $P(\cdot;\cdot)$ by using the fact
that for each set $B\in {\mathcal B},$ which is bounded due to our assumption, we have that the set $c(B)$ is bounded in $Y$ for any addend of $P(\cdot;\cdot)$ of form \eqref{otkriose-gotovje}
as well as the fact
that any finite set of almost periodic functions of one real variable has a common set of joint $\epsilon$-periods for each $\epsilon>0.$ 
In general case, there exists a sequence $(P_{k}^{B}({\bf t};x))$ of trigonometric polynomials 
such that $\lim_{k\rightarrow +\infty}P_{k}^{B}({\bf t};x)=F({\bf t};x),$ uniformly for ${\bf t}\in I$ and $x\in B.$ Then, for a real number $\epsilon>0$ given in advance, we can find an integer $k_{0}\in {\mathbb N}$ such that 
$\|P_{k_{0}}^{B}({\bf t};x)-F({\bf t};x)\|_{Y}\leq \epsilon/3,$ for every ${\bf t}\in I,$ $x\in B$ and $k\in {\mathbb N}$ with $k\geq k_{0}.$
Using the well known estimate (${\bf t}',\ {\bf t}''\in I;$ $x\in B$):
\begin{align*}
&\bigl\|F({\bf t}';x)-F({\bf t}'';x)\bigr\|_{Y}
\\& \leq \bigl\|F({\bf t}';x)-P_{k_{0}}^{B}({\bf t}';x)\bigr\|_{Y}+\bigl\|P_{k_{0}}^{B}({\bf t}';x)-P_{k_{0}}^{B}({\bf t}'';x)\bigr\|_{Y}+\bigl\|P_{k_{0}}^{B}({\bf t}'';x)-F({\bf t}'';x)\bigr\|_{Y},
\end{align*}
the required statement readily follows; 
the proofs of (i) and (ii) are analogous. For the proof of (iv), we can use (i) and the estimates{\small
\begin{align*}
&  \Bigl\| F\bigl( t_{1}+\tau_{1},t_{2}+\tau_{2},\cdot \cdot \cdot, t_{j}+\tau_{j},\cdot \cdot \cdot,t_{n}+\tau_{n} \bigr) -F\bigl( t_{1},t_{2},\cdot \cdot \cdot, t_{n} \bigr)\Bigr\|
\\& \leq 
\Bigl\| F\bigl( t_{1}+\tau_{1},t_{2}+\tau_{2},\cdot \cdot \cdot, t_{j}+\tau_{j},\cdot \cdot \cdot,t_{n}+\tau_{n} \bigr)  -F\bigl( t_{1},t_{2}+\tau_{2},\cdot \cdot \cdot, t_{j}+\tau_{j},\cdot \cdot \cdot,t_{n}+\tau_{n}\bigr)\Bigr\|
\\& +\Bigl\| F\bigl( t_{1},t_{2}+\tau_{2},\cdot \cdot \cdot, t_{j}+\tau_{j},\cdot \cdot \cdot,t_{n}+\tau_{n}\bigr)-F\bigl( t_{1},t_{2},\cdot \cdot \cdot, t_{n} \bigr)\Bigr\|
\\& \leq 
\Bigl\| F\bigl( t_{1}+\tau_{1},t_{2}+\tau_{2},\cdot \cdot \cdot, t_{j}+\tau_{j},\cdot \cdot \cdot,t_{n}+\tau_{n} \bigr)  -F\bigl( t_{1},t_{2}+\tau_{2},\cdot \cdot \cdot, t_{j}+\tau_{j},\cdot \cdot \cdot,t_{n}+\tau_{n}\bigr)\Bigr\|
\\& +\Bigl\| F\bigl( t_{1},t_{2}+\tau_{2},\cdot \cdot \cdot, t_{j}+\tau_{j},\cdot \cdot \cdot,t_{n}+\tau_{n}\bigr)-F\bigl( t_{1},t_{2},t_{3}+\tau_{3},\cdot \cdot \cdot, t_{n} +\tau_{n}\bigr)\Bigr\|
\\& +\Bigl\| F\bigl( t_{1},t_{2},t_{3}+\tau_{3},\cdot \cdot \cdot, t_{n} +\tau_{n}\bigr)F\bigl( t_{1},t_{2},\cdot \cdot \cdot, t_{n} \bigr)\Bigr\|
 \leq \cdot \cdot \cdot,
\end{align*}}
while for the proof of (v), we can use (iii) and the above estimates with $\tau=\tau_{1}=\tau_{2}=\cdot \cdot \cdot=\tau_{n}.$
\end{proof}

Concerning Proposition \ref{postava}(iv)-(v), it is natural to ask the following:
Suppose that $\emptyset  \neq I \subseteq {\mathbb R}^{n}$ and $F :  I \times X \rightarrow Y$ is a  Bohr ${\mathcal B}$-almost periodic (Bohr $({\mathcal B},I \cap \Delta_{n})$-almost periodic) function. What conditions ensure the strong ${\mathcal B}$-almost periodicity of $F(\cdot;\cdot)?$ After proving Theorem \ref{lenny-jasson}, it will be clear from a combination with Proposition \ref{postava}(iv) that the notion of strong Bohr ${\mathcal B}$-almost periodicity for continuous functions $F : I \rightarrow Y$ coincides with the notion of  Bohr ${\mathcal B}$-almost periodicity, provided that $I$ is a convex polyhedral; as a simple consequence of the last mentioned theorem, we also have that the uniform convergence of a  sequence of scalar-valued trigonometric polynomials on a convex polyhedral in ${\mathbb R}^{n}$ always implies the uniform convergence  
of this sequence on the whole space ${\mathbb R}^{n}$ (in the present state of our knowledge, we really do not know whether this result was known before stating above).
 
The interested reader may try to formulate some statements concerning the relationship between the strong ${\mathcal B}$-almost periodicity and the $({\mathrm R}_{X},{\mathcal B})$-multi-almost periodicity.

\subsection{${\mathbb D}$-asymptotically $({\mathrm R}_{X},{\mathcal B})$-multi-almost periodic type functions}\label{gade-negade}

We start this subsection by introducing the following definition:

\begin{defn}\label{kompleks12345}
Suppose that 
${\mathbb D} \subseteq I \subseteq {\mathbb R}^{n}$ and the set ${\mathbb D}$  is unbounded. By $C_{0,{\mathbb D},{\mathcal B}}(I \times X :Y)$ we denote the vector space consisting of all continuous functions $Q : I \times X \rightarrow Y$ such that, for every $B\in {\mathcal B},$ we have $\lim_{t\in {\mathbb D},|t|\rightarrow +\infty}Q({\bf t};x)=0,$ uniformly for $x\in B.$\index{space!$C_{0,{\mathbb D}}(I \times X :Y)$}
\end{defn}

Now we are ready to introduce the following notion:

\begin{defn}\label{braindamage12345}
Suppose that the set ${\mathbb D} \subseteq I \subseteq {\mathbb R}^{n}$ is unbounded, and
$F : I \times X \rightarrow Y$ is a continuous function. Then we say that $F(\cdot ;\cdot)$ is 
${\mathbb D}$-asymptotically $({\mathrm R},{\mathcal B})$-multi-almost periodic, resp. ${\mathbb D}$-asymptotically $({\mathrm R}_{\mathrm X},{\mathcal B})$-multi-almost periodic,
if and only if there exist an $({\mathrm R},{\mathcal B})$-multi-almost periodic function $G : I \times X \rightarrow Y$, resp. an $({\mathrm R}_{\mathrm X},{\mathcal B})$-multi-almost periodic function $G : I \times X \rightarrow Y$, and a function
$Q\in C_{0,{\mathbb D},{\mathcal B}}(I\times X :Y)$ such that
$F({\bf t} ; x)=G({\bf t} ; x)+Q({\bf t} ; x)$ for all ${\bf t}\in I$ and $x\in X.$

Let $I = {\mathbb R}^{n}.$ Then it is said that $F(\cdot ;\cdot)$ is 
asymptotically $({\mathrm R},{\mathcal B})$-multi-almost periodic, resp. asymptotically $({\mathrm R}_{\mathrm X},{\mathcal B})$-multi-almost periodic, if and only if $F(\cdot ;\cdot)$ is 
${\mathbb R}^{n}$-asymptotically $({\mathrm R},{\mathcal B})$-multi-almost periodic, resp. ${\mathbb R}^{n}$-asymptotically $({\mathrm R}_{\mathrm X},{\mathcal B})$-multi-almost periodic.
\index{function!${\mathbb D}$-asymptotically $({\mathrm R},{\mathcal B})$-multi-almost periodic}
\index{function!${\mathbb D}$-asymptotically $({\mathrm R}_{\mathrm X},{\mathcal B})$-multi-almost periodic}
\end{defn}
\index{function!${\mathbb D}$-asymptotically Bohr ${\mathcal B}$-almost periodic} \index{function!${\mathbb D}$-asymptotically uniformly recurrent}
We similarly introduce the notions of (${\mathbb D}$-)asymptotical Bohr ${\mathcal B}$-almost periodicity, (${\mathbb D}$-)asymptotical uniform recurrence, (${\mathbb D}$-)asymptotical Bohr $({\mathcal B},I')$-almost periodicity and
(${\mathbb D}$-)-asymptotical $({\mathcal B},I')$-uniform recurrence. If $X\in {\mathcal B},$ then we omit the term ${\mathcal B}$
from the notation introduced, with the meaning clear.

Assuming that $F(\cdot;\cdot)$ is  $I$-asymptotically uniformly recurrent, $G : I \times X \rightarrow Y$, 
$Q\in C_{0,I,{\mathcal B}}(I\times X :Y)$ and
$F({\bf t} ; x)=G({\bf t} ; x)+Q({\bf t} ; x)$ for all ${\bf t}\in I$ and $x\in X,$ we can simply show that, for every $x\in X$, we have
\begin{align}\label{josnestoizv}
\overline{\bigl\{G({\bf t};x) : {\bf t}\in I,\ x\in X \bigr\}} \subseteq  \overline{\bigl\{F({\bf t};x) : {\bf t}\in I,\ x\in X\bigr\}}.
\end{align}

The first part of following proposition can be deduced as in the one-dimensional case; keeping in mind the inclusion \eqref{josnestoizv} and the argumentation used in the proof of \cite[Theorem 4.29]{diagana}, we can simply deduce the second part (see also Proposition \ref{2.1.10ap}, Corollary \ref{2.1.10ap1} and Proposition \ref{2.1.10obazacap} for the corresponding results regarding the classes of $({\mathrm R},{\mathcal B})$-multi-almost periodic functions and $({\mathrm R}_{\mathrm X},{\mathcal B})$-multi-almost periodic functions):

\begin{prop}\label{mismodranini}
\begin{itemize}
\item[(i)]
Suppose that for each integer $j\in {\mathbb N}$ the function $F_{j}(\cdot ; \cdot)$ is  Bohr ${\mathcal B}$-almost periodic (${\mathcal B}$-uniformly recurrent). If for each $B\in {\mathcal B}$ there exists $\epsilon_{B}>0$ such that
the sequence $(F_{j}(\cdot ;\cdot))$ converges uniformly to a function $F(\cdot ;\cdot)$ on the set $B^{\circ} \cup \bigcup_{x\in \partial B}B(x,\epsilon_{B}),$ then the function $F(\cdot ;\cdot)$ is Bohr ${\mathcal B}$-almost periodic (${\mathcal B}$-uniformly recurrent).
\item[(ii)] Suppose that for each integer $j\in {\mathbb N}$ the function $F_{j}(\cdot ; \cdot)$ is $I$-asymptotically  Bohr ${\mathcal B}$-almost periodic ($I$-asymptotically  ${\mathcal B}$-uniformly recurrent). If for each $B\in {\mathcal B}$ there exists $\epsilon_{B}>0$ such that
the sequence $(F_{j}(\cdot ;\cdot))$ converges uniformly to a function $F(\cdot ;\cdot)$ on the set $B^{\circ} \cup \bigcup_{x\in \partial B}B(x,\epsilon_{B}),$ then the function $F(\cdot ;\cdot)$ is $I$-asymptotically Bohr ${\mathcal B}$-almost periodic ($I$-asymptotically ${\mathcal B}$-uniformly recurrent).
\end{itemize}
\end{prop}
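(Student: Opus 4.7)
The plan is to prove (i) by a direct three-$\epsilon$ argument, and to reduce (ii) to (i) by means of a supremum principle coming from the inclusion \eqref{josnestoizv}.

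For (i), fix $B\in{\mathcal B}$, $\epsilon>0$, and set $B':=B^{\circ} \cup \bigcup_{x\in \partial B}B(x,\epsilon_{B})$. The uniform convergence $F_j\to F$ on $I\times B'$ yields some $j_{0}\in {\mathbb N}$ with $\|F_{j_{0}}({\bf t};x)-F({\bf t};x)\|_Y\le \epsilon/3$ for all ${\bf t}\in I$ and $x\in B'\supseteq B$. Applying the Bohr ${\mathcal B}$-almost periodicity of $F_{j_{0}}$ to $B$ and the threshold $\epsilon/3$ produces $l>0$ such that, for every ${\bf t}_{0}\in I$, there exists $\tau\in B({\bf t}_{0},l)\cap I$ with $\|F_{j_{0}}({\bf t}+\tau;x)-F_{j_{0}}({\bf t};x)\|_Y\le \epsilon/3$ on $I\times B$; a triangle inequality then gives the $\epsilon$-period estimate for $F$ on $I\times B$. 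The ${\mathcal B}$-uniformly recurrent variant is identical, with a sequence $({\bf \tau}_{k})\subseteq I$, $|{\bf \tau}_{k}|\to+\infty$, in place of the single period $\tau$. Continuity of $F$ at $({\bf t}_{0},x_{0})\in I\times X$ follows by choosing $B\in{\mathcal B}$ with $x_{0}\in B\subseteq B'$ and noting that $B'$ is open in $X$, so that the uniform convergence of $F_{j}\to F$ on $I\times B'$ transfers continuity from the $F_{j}$ to $F$.

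For (ii), write $F_{j}=G_{j}+Q_{j}$ with $G_{j}$ Bohr ${\mathcal B}$-almost periodic and $Q_{j}\in C_{0,I,{\mathcal B}}(I\times X: Y)$. The central ingredient is the supremum principle
\[
\sup_{{\bf t}\in I,\ x\in B}\|G({\bf t};x)\|_{Y} \ \le\ \sup_{{\bf t}\in I,\ x\in B}\|H({\bf t};x)\|_{Y} \qquad (B\in{\mathcal B}),
\]
valid for every $H=G+Q$ with $G$ Bohr ${\mathcal B}$-almost periodic and $Q\in C_{0,I,{\mathcal B}}(I\times X : Y)$. This is the uniform-in-$x$ refinement of \eqref{josnestoizv}: given $\eta>0$, choose $a>0$ so large that $\|Q({\bf t};x)\|_{Y}<\eta$ for $|{\bf t}|\ge a$, $x\in B$, and apply the uniform-in-$x\in B$ version of the supremum formula \eqref{tupak12345} for the ${\mathcal B}$-uniformly recurrent $G$ (which is immediate from the definition of ${\mathcal B}$-uniform recurrence, exactly as in the scalar derivation of \eqref{tupak12345}). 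Applied to $H=F_{j}-F_{k}=(G_{j}-G_{k})+(Q_{j}-Q_{k})$, the principle gives
\[
\sup_{{\bf t}\in I,\ x\in B}\|G_{j}({\bf t};x)-G_{k}({\bf t};x)\|_{Y} \ \le\ \sup_{{\bf t}\in I,\ x\in B}\|F_{j}({\bf t};x)-F_{k}({\bf t};x)\|_{Y}\to 0
\]
as $j,k\to+\infty$, since $F_{j}\to F$ uniformly on $I\times B\subseteq I\times B'$. Hence $(G_{j})$ is uniformly Cauchy on each $I\times B$ and, using ${\mathcal B}_{X}=X$, converges pointwise on $I\times X$ to a function $G$, the convergence being uniform on every $I\times B$. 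A three-$\epsilon$ argument of the type used in (i), applied to $(G_{j})$ and $G$ on $I\times B$, yields the $\epsilon$-period property of $G$ on each $I\times B$; combined with the continuity of $G$ discussed below, this establishes the Bohr ${\mathcal B}$-almost periodicity of $G$. Finally $Q:=F-G=\lim_{j\to\infty} Q_{j}$ uniformly on each $I\times B$, so $Q\in C_{0,I,{\mathcal B}}(I\times X : Y)$, which furnishes the required decomposition $F=G+Q$. The ${\mathcal B}$-uniformly recurrent variant follows the same pattern, now applying \eqref{tupak12345} directly to the recurrent sequence.

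The main obstacle is that the supremum principle delivers uniform convergence $G_{j}\to G$ only on $I\times B$, not on the larger set $I\times B'$, so one cannot simply invoke part (i) verbatim to conclude Bohr ${\mathcal B}$-almost periodicity of $G$. The resolution is to verify continuity of $G$ directly: for $({\bf t}_{0},x_{0})\in I\times X$, pick $B\in{\mathcal B}$ with $x_{0}$ in the interior of $B$ (available for the canonical families of bounded or compact subsets of $X$ that we have in mind), shrink a neighborhood of $({\bf t}_{0},x_{0})$ into $I\times B^{\circ}$, and apply a three-$\epsilon$ argument combining the uniform convergence of $G_{j}\to G$ on $I\times B$ with the continuity of a fixed $G_{j_{0}}$ at $({\bf t}_{0},x_{0})$. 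Once continuity is secured and the $\epsilon$-period property has been verified on each $B\in{\mathcal B}$, the Bohr ${\mathcal B}$-almost periodicity of $G$ is complete and the proof is concluded.
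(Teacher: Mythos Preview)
Your argument for (i) is the standard three-$\epsilon$ proof the paper has in mind (``as in the one-dimensional case''), and your scheme for (ii)---deriving a supremum inequality from the ${\mathcal B}$-uniform version of \eqref{tupak12345}, using it to show $(G_j)$ is uniformly Cauchy on each $I\times B$, and then passing to the limit---is exactly the route the paper sketches via \eqref{josnestoizv} and \cite[Theorem 4.29]{diagana}; it is fleshed out in the same way in the proof of Proposition~\ref{2.1.10obazacap}, where the factor $3$ plays the role of your constant $1$.

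One point deserves a word of caution. When you apply your supremum principle to $H=F_j-F_k=(G_j-G_k)+(Q_j-Q_k)$, you are tacitly using that $G_j-G_k$ is again Bohr ${\mathcal B}$-almost periodic (resp.\ ${\mathcal B}$-uniformly recurrent). In the paper this is only recorded for $I={\mathbb R}^{n}$ (Proposition~\ref{kontinuitetap}(v)), and the analogous step in Proposition~\ref{2.1.10obazacap} is justified through Proposition~\ref{kontinuitetap}(iv) in the $({\mathrm R},{\mathcal B})$ setting; for Bohr almost periodicity on a general $I$ with $I+I\subseteq I$ the closure under sums is not automatic, and the uniformly recurrent case is even more delicate. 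The paper's brief sketch glosses over this just as you do, so you are not diverging from its intended argument, but you should be aware that the step is not innocent outside the cases (e.g.\ $I={\mathbb R}^{n}$ or admissible $I$) where common $\epsilon$-periods are available. Your treatment of the continuity of $G$ is more careful than the paper's sketch; note, however, that requiring $x_0$ to lie in the \emph{interior} of some $B\in{\mathcal B}$ is an extra hypothesis not present in the statement---again matching the level of detail the paper itself offers here.
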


The proof of following proposition, which can be also clarified for the classes of ${\mathbb D}$-asymptotically almost periodic type functions introduced in Definition \ref{braindamage12345}, is simple and therefore omitted:

\begin{prop}\label{kontinuitetap}
\begin{itemize}
\item[(i)] Suppose that $c\in {\mathbb C}$ and $F(\cdot ; \cdot)$ is $({\mathrm R},{\mathcal B})$-multi-almost periodic, resp. $({\mathrm R}_{\mathrm X},{\mathcal B})$-multi-almost periodic (Bohr ${\mathcal B}$-almost periodic/${\mathcal B}$-uniformly recurrent).
Then $cF(\cdot ; \cdot)$ is $({\mathrm R},{\mathcal B})$-multi-almost periodic, resp. $({\mathrm R}_{\mathrm X},{\mathcal B})$-multi-almost periodic (Bohr ${\mathcal B}$-almost periodic/${\mathcal B}$-uniformly recurrent). 
\item[(ii)] 
\begin{itemize}
\item[(a)]
Suppose that $\tau\in {\mathbb R}^{n},$ $\tau+I\subseteq I$ and $F(\cdot ; \cdot)$ is $({\mathrm R},{\mathcal B})$-multi-almost periodic, resp. $({\mathrm R}_{\mathrm X},{\mathcal B})$-multi-almost periodic (Bohr ${\mathcal B}$-almost periodic/${\mathcal B}$-uniformly recurrent).
Then $F(\cdot +\tau; \cdot)$ is $({\mathrm R},{\mathcal B})$-multi-almost periodic, resp. $({\mathrm R}_{\mathrm X},{\mathcal B})$-multi-almost periodic (Bohr ${\mathcal B}$-almost periodic/${\mathcal B}$-uniformly recurrent).
\item[(b)]  Suppose that $x_{0}\in X$ and $F(\cdot ; \cdot)$ is $({\mathrm R},{\mathcal B})$-multi-almost periodic, resp. $({\mathrm R}_{\mathrm X},{\mathcal B})$-multi-almost periodic (Bohr ${\mathcal B}$-almost periodic/${\mathcal B}$-uniformly recurrent).
Then $F(\cdot; \cdot+x_{0})$ is $({\mathrm R},{\mathcal B}_{x_{0}})$-multi-almost periodic, resp. $({\mathrm R}_{\mathrm X},{\mathcal B}_{x_{0}})$-multi-almost periodic (Bohr ${\mathcal B}_{x_{0}}$-almost periodic/${\mathcal B}_{x_{0}}$-uni-\\fomly recurrent), where ${\mathcal B}_{x_{0}}\equiv \{-x_{0}+B : B\in {\mathcal B}\}.$
\item[(c)] Suppose that $\tau\in {\mathbb R}^{n},$ $\tau+I\subseteq I$, $x_{0}\in X$ and $F(\cdot ; \cdot)$ is $({\mathrm R},{\mathcal B})$-multi-almost periodic, resp. $({\mathrm R}_{\mathrm X},{\mathcal B})$-multi-almost periodic (Bohr ${\mathcal B}$-almost periodic/${\mathcal B}$-uniformly recurrent).
Then $F(\cdot +\tau; \cdot +x_{0})$ is $({\mathrm R},{\mathcal B}_{x_{0}})$-multi-almost periodic, resp. $({\mathrm R}_{\mathrm X},{\mathcal B}_{x_{0}})$-multi-almost periodic (Bohr ${\mathcal B}_{x_{0}}$-almost periodic/${\mathcal B}_{x_{0}}$-uniformly recurrent).
\end{itemize}
\item[(iii)] \begin{itemize}
\item[(a)]
Suppose that $c\in {\mathbb C} \setminus \{0\},$ $cI\subseteq I$ and $F(\cdot ; \cdot)$ is $({\mathrm R},{\mathcal B})$-multi-almost periodic, resp. $({\mathrm R}_{\mathrm X},{\mathcal B})$-multi-almost periodic (Bohr ${\mathcal B}$-almost periodic/${\mathcal B}$-uniformly recurrent).
Then $F(c \cdot ; \cdot)$ is $({\mathrm R}_{c},{\mathcal B})$-multi-almost periodic, resp. $({\mathrm R}_{\mathrm X,c},{\mathcal B})$-multi-almost periodic (Bohr ${\mathcal B}$-almost periodic/${\mathcal B}$-uni-\\formly recurrent), where ${\mathrm R}_{c}\equiv \{c^{-1}{\bf b} : {\bf b}\in {\mathrm R}\}$ and ${\mathrm R}_{{\mathrm X},c}\equiv \{c^{-1}{\bf b} : {\bf b}\in {\mathrm R}_{\mathrm X}\}.$  
\item[(b)] Suppose that $c_{2}\in {\mathbb C}\setminus \{0\},$ and $F(\cdot ; \cdot)$ is $({\mathrm R},{\mathcal B})$-multi-almost periodic, resp. $({\mathrm R}_{\mathrm X},{\mathcal B})$-multi-almost periodic (Bohr ${\mathcal B}$-almost periodic/${\mathcal B}$-uniformly recurrent).
Then $F(\cdot ; c_{2}\cdot)$ is $({\mathrm R},{\mathcal B}_{c_{2}})$-multi-almost periodic, resp. $({\mathrm R}_{\mathrm X},{\mathcal B}_{c_{2}})$-multi-almost periodic (Bohr ${\mathcal B}_{c_{2}}$-almost periodic/${\mathcal B}_{c_{2}}$-uniformly recurrent), where ${\mathcal B}_{c_{2}}\equiv \{c_{2}^{-1}B : B\in {\mathcal B}\}.$ 
\item[(c)]  Suppose that $c_{1}\in {\mathbb C}\setminus \{0\},$ $c_{2}\in {\mathbb C}\setminus \{0\},$ 
$c_{1}I\subseteq I$ and $F(\cdot ; \cdot)$ is $({\mathrm R},{\mathcal B})$-multi-almost periodic, resp. $({\mathrm R}_{\mathrm X},{\mathcal B})$-multi-almost periodic (Bohr ${\mathcal B}$-almost periodic/${\mathcal B}$-uniformly recurrent).
Then $F(c_{1}\cdot ; c_{2}\cdot)$ is
 $({\mathrm R}_{c_{1}},{\mathcal B}_{c_{2}})$-multi-almost periodic, resp. $({\mathrm R}_{{\mathrm X},c_{1}},{\mathcal B}_{c_{2}})$-multi-almost periodic (Bohr ${\mathcal B}_{c_{2}}$-almost periodic/${\mathcal B}_{c_{2}}$-uniformly recurrent).
\end{itemize}
\item[(iv)] Suppose that $\alpha,\ \beta \in {\mathbb C}$ and, for every sequence which belongs to ${\mathrm R}$ (${\mathrm R}_{\mathrm X}$), we have that any its subsequence belongs to ${\mathrm R}$  (${\mathrm R}_{\mathrm X}$). If $F(\cdot ; \cdot)$ and $G(\cdot ; \cdot)$ are $({\mathrm R},{\mathcal B})$-multi-almost periodic, resp. $({\mathrm R}_{\mathrm X},{\mathcal B})$-multi-almost periodic, then 
$(\alpha F+\beta G)(\cdot ; \cdot)$ is $({\mathrm R},{\mathcal B})$-multi-almost periodic, resp. $({\mathrm R}_{\mathrm X},{\mathcal B})$-multi-almost periodic.
\item[(v)] Suppose that $\alpha,\ \beta \in {\mathbb C}.$ If $F : {\mathbb R}^{n} \times X \rightarrow Y$ and $G : {\mathbb R}^{n} \times X \rightarrow Y$ are Bohr ${\mathcal B}$-almost periodic, then 
$(\alpha F+\beta G)(\cdot ; \cdot)$ is Bohr ${\mathcal B}$-almost periodic.
\end{itemize}
\end{prop}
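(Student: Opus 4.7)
The plan is to handle each of the five parts directly from the definitions, with the Bochner criterion (Theorem~\ref{Bochner123456}) reducing the only nontrivial step (part~(v)) to the additivity statement of part~(iv).

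Parts (i), (ii), and (iii) are straightforward bookkeeping. In (i), an $\epsilon/(|c|+1)$-period for $F$ on $B$ in the sense of Definition~\ref{nafaks1234567890} is automatically an $\epsilon$-period for $cF$, and in the multi-almost periodic cases one replaces the limit $F^\ast$ by $cF^\ast$; the uniformly recurrent variant is identical. In (ii), time translation by $\tau$ leaves the quantifier structure of Definitions~\ref{eovakoap}, \ref{eovakoap1}, and \ref{nafaks1234567890} untouched, while spatial translation by $x_0$ forces the test set to shift to $-x_0+B$, which is exactly the family ${\mathcal B}_{x_0}$. Part (iii) is analogous, with an argument dilation by $c_1$ transforming a test sequence ${\bf b}$ into $c_1^{-1}{\bf b}$, necessitating the replacement of ${\mathrm R}$ by ${\mathrm R}_{c_1}$ (respectively ${\mathrm R}_{\mathrm X}$ by ${\mathrm R}_{{\mathrm X},c_1}$); the combined case (c) is the obvious composition of (a) and (b).

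Part (iv) is the only part that requires a small argument. Fix $B\in{\mathcal B}$ and a sequence $({\bf b}_k)\in{\mathrm R}$. Applying the $({\mathrm R},{\mathcal B})$-multi-almost periodicity of $F$, extract a subsequence $({\bf b}_{k_l})$ along which $F({\bf t}+{\bf b}_{k_l};x)$ converges to some $F^\ast({\bf t};x)$ uniformly for $({\bf t},x)\in I\times B$. The hypothesis that any subsequence of a sequence in ${\mathrm R}$ itself belongs to ${\mathrm R}$ guarantees $({\bf b}_{k_l})\in{\mathrm R}$, so a second application of the multi-almost periodicity, now to $G$, yields a further subsequence $({\bf b}_{k_{l_m}})$ with $G({\bf t}+{\bf b}_{k_{l_m}};x)\to G^\ast({\bf t};x)$ uniformly on $I\times B$. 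Along $({\bf b}_{k_{l_m}})$, the combination $(\alpha F+\beta G)({\bf t}+{\bf b}_{k_{l_m}};x)$ converges uniformly on $I\times B$ to $\alpha F^\ast({\bf t};x)+\beta G^\ast({\bf t};x)$, proving the claim; the ${\mathrm R}_{\mathrm X}$-case follows verbatim by carrying the shifts $x+x_{k_{l_m}}$ along in the estimate.

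Part (v) is then immediate: Theorem~\ref{Bochner123456} recasts Bohr ${\mathcal B}$-almost periodicity of $F$ and $G$ as $({\mathrm R},{\mathcal B})$-multi-almost periodicity with ${\mathrm R}$ the collection of all sequences in ${\mathbb R}^n$, a family trivially stable under subsequences; part (iv) gives the same property for $\alpha F+\beta G$, and a second invocation of Theorem~\ref{Bochner123456} yields Bohr ${\mathcal B}$-almost periodicity. The main obstacle anywhere in the argument is the well-known impossibility, noted in the paragraph following Proposition~\ref{dekartovproizvod}, of deducing common $\epsilon$-periods for two Bohr almost periodic functions in dimension $n>1$ by Bohr's original one-dimensional argument; this is precisely what forces the detour through the sequential Bochner formulation rather than a direct triangle-inequality estimate on $\|\alpha F({\bf t}+\tau;x)+\beta G({\bf t}+\tau;x)-\alpha F({\bf t};x)-\beta G({\bf t};x)\|_Y$ using a common period $\tau$.
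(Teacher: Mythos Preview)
The paper omits the proof entirely, calling it ``simple and therefore omitted,'' so there is no detailed argument to compare against. Your proof of (i)--(iv) is correct and is presumably what the authors had in mind: the bookkeeping for (i)--(iii) and the two-stage subsequence extraction for (iv) are exactly the natural direct arguments, and the subsequence-closure hypothesis on ${\mathrm R}$ (resp.\ ${\mathrm R}_{\mathrm X}$) is used precisely where it should be.

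There is one genuine issue in your proof of (v). You invoke Theorem~\ref{Bochner123456} to pass between Bohr ${\mathcal B}$-almost periodicity and $({\mathrm R},{\mathcal B})$-multi-almost periodicity, but that theorem is stated only for ${\mathcal B}$ a family of \emph{compact} subsets of $X$; part (v) as written carries no such hypothesis on ${\mathcal B}$. Since, as you correctly observe, the direct ``common $\epsilon$-period'' argument is unavailable in dimension $n>1$ (this is exactly the point of Proposition~\ref{dekartovproizvod} and the remark following it, both of which likewise assume compact ${\mathcal B}$), your detour through Bochner is the right idea---but it only establishes (v) under the additional assumption that ${\mathcal B}$ consists of compact sets. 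Whether this is a gap in your proof or an unstated hypothesis in the paper's formulation of (v) is not entirely clear from the text; the surrounding results (Proposition~\ref{dekartovproizvod}, Proposition~\ref{pointwise-prod-rn}, Corollary~\ref{ovakorajko}) all carry the compactness assumption explicitly, which suggests the authors may have intended it here as well.
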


Due to Proposition \ref{2.1.10ap1} and Proposition \ref{kontinuitetap}(ii),  we may conclude that, in the case that $X=\{0\},$ the limit function $F^{\ast}(\cdot)$ in \eqref{lepolepo} is likewise ${\mathrm R}$-multi-almost periodic. In such a way, we can extend the statements of \cite[Theorem 1]{bochner} and \cite[Lemma 1]{sell} for vector-valued functions; the statement of \cite[Lemma 3]{sell} also holds for vector-valued functions.

Using Proposition \ref{kontinuitetap}(iv) and the supremum formula
clarified in Proposition \ref{netokaureap}, 
we can simply deduce that the decomposition in Definition \ref{braindamage12345} is unique under certain assumptions:

\begin{prop}\label{okjeradeap}
\begin{itemize}
\item[(i)]
Suppose that there exist a function $G_{i}(\cdot ;\cdot)$ which is $({\mathrm R},{\mathcal B})$-multi-almost periodic and a function
$Q_{i}\in C_{0,I,{\mathcal B}}(I\times X :Y)$ such that
$F({\bf t} ; x)=G_{i}({\bf t} ; x)+Q_{i}({\bf t} ; x)$ for all ${\bf t}\in I$ and $x\in X$ ($i=1,2$). 
Suppose that,
for every sequence which belongs to ${\mathrm R},$ any its subsequence belongs to ${\mathrm R}.$
If there exists a sequence $b(\cdot)$ in ${\mathrm R}$ whose any subsequence is unbounded and for which we have ${\bf T}-{\bf b}(l)\in I$ whenever ${\bf T}\in I$ and $l\in {\mathbb N},$  
then $G_{1}\equiv G_{2}$ and $Q_{1}\equiv Q_{2}.$ 
\item[(ii)] Suppose that ${\mathcal B}$ is any collection of compact subsets of $X$, there exist a Bohr ${\mathcal B}$-almost periodic function $G_{i}: {\mathbb R}^{n} \times X \rightarrow Y$ and a function
$Q_{i}\in C_{0,I,{\mathcal B}}(I\times X :Y)$ such that
$F({\bf t} ; x)=G_{i}({\bf t} ; x)+Q_{i}({\bf t} ; x)$ for all ${\bf t}\in I$ and $x\in X$ ($i=1,2$). 
Then $G_{1}\equiv G_{2}$ and $Q_{1}\equiv Q_{2}.$ 
\end{itemize}
\end{prop}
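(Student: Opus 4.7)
The strategy is the same for both parts: set $H:=G_{1}-G_{2}$, so that $H|_{I}=Q_{2}-Q_{1}$ by the two decompositions. Thus $H$ inherits almost periodicity from $G_{1},G_{2}$ via Proposition \ref{kontinuitetap}(iv)--(v), while $H|_{I}$ inherits vanishing at infinity from $Q_{1},Q_{2}$, i.e.\ $H|_{I}\in C_{0,I,{\mathcal B}}(I\times X:Y)$. A supremum formula then forces $H\equiv 0$, yielding $G_{1}\equiv G_{2}$ and $Q_{1}\equiv Q_{2}$.

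For (i), Proposition \ref{kontinuitetap}(iv) combined with the assumed subsequence-closure of ${\mathrm R}$ shows that $H$ is $({\mathrm R},{\mathcal B})$-multi-almost periodic. Given $x\in X$, pick $B\in{\mathcal B}$ with $x\in B$ (using ${\mathcal B}_{X}=X$). The prescribed sequence $b(\cdot)\in{\mathrm R}$ is exactly what Proposition \ref{netokaureap} requires, so for every $a\geq 0$
\[
\sup_{{\bf t}\in I}\|H({\bf t};x)\|_{Y}\;=\;\sup_{{\bf t}\in I,\,|{\bf t}|\geq a}\|H({\bf t};x)\|_{Y}.
\]
As $a\to+\infty$ the right-hand side vanishes since $H\in C_{0,I,{\mathcal B}}$, hence $H(\cdot;x)\equiv 0$ on $I$; as $x\in X$ was arbitrary, $G_{1}\equiv G_{2}$ and $Q_{1}\equiv Q_{2}$.

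For (ii), Proposition \ref{kontinuitetap}(v) makes $H$ Bohr ${\mathcal B}$-almost periodic on ${\mathbb R}^{n}$, hence ${\mathcal B}$-uniformly recurrent on ${\mathbb R}^{n}$, and Theorem \ref{Bochner123456} converts this into $({\mathrm R},{\mathcal B})$-multi-almost periodicity with ${\mathrm R}$ the family of all sequences in ${\mathbb R}^{n}$ (a family trivially closed under subsequences). Proposition \ref{netokaureap}, applied with any unbounded sequence $b(\cdot)$ in ${\mathbb R}^{n}$, yields the analogue of the display above with $I$ replaced by ${\mathbb R}^{n}$. To convert the decay of $H$ on $I$ into decay on ${\mathbb R}^{n}$, pick ${\bf u}_{k}\in I$ with $|{\bf u}_{k}|\to+\infty$ and, via multi-almost periodicity, extract a subsequence along which the translates $H(\cdot+{\bf u}_{k_{l}};\cdot)$ converge uniformly on ${\mathbb R}^{n}\times B$ to some $H^{\ast}$ (which, by Proposition \ref{2.1.10ap1}, is again Bohr ${\mathcal B}$-almost periodic). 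Uniform convergence preserves sup-norms, so $\sup\|H^{\ast}\|_{Y}=\sup\|H\|_{Y}$; evaluation at ${\bf t}={\bf 0}$ together with the $C_{0,I,{\mathcal B}}$-decay of $H|_{I}$ gives $H^{\ast}({\bf 0};x)=\lim_{l}H({\bf u}_{k_{l}};x)=0$ for every $x\in B$. Iterating this translation-limit procedure and exploiting minimality of the Bohr hull of $H^{\ast}$ propagates the vanishing to all of ${\mathbb R}^{n}\times B$, hence $H\equiv 0$.

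The main obstacle is this last step of (ii): the identity $H^{\ast}({\bf 0};x)=0$ does not by itself force a Bohr almost periodic function to vanish identically, so one must exploit minimality of the translation orbit closure together with the unbounded direction supplied by $({\bf u}_{k})\subset I$. A technically cleaner alternative, available whenever $I+I\subseteq I$ and the $\epsilon$-periods of $H$ can be located inside $I$, is to apply the supremum formula \eqref{tupak12345} directly on $I$ to obtain $H|_{I}\equiv 0$ and then invoke the uniqueness of the Bohr almost periodic extension from $I$ to ${\mathbb R}^{n}$ (cf.\ the discussion following Proposition \ref{postava} together with Theorem \ref{lenny-jasson} and Corollary \ref{lenny-jasson1}).
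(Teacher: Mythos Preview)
Your argument for part (i) is correct and is precisely the paper's approach: form $H=G_{1}-G_{2}$, use Proposition \ref{kontinuitetap}(iv) to see that $H$ is $({\mathrm R},{\mathcal B})$-multi-almost periodic, observe $H=Q_{2}-Q_{1}\in C_{0,I,{\mathcal B}}$, and then kill $H$ with the supremum formula of Proposition \ref{netokaureap}.

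For part (ii) you are working much harder than the paper intends, and the extra work does not actually close. The paper's route is the direct analogue of (i): by Proposition \ref{kontinuitetap}(v) (or Theorem \ref{Bochner123456} combined with Proposition \ref{kontinuitetap}(iv)) the difference $H=G_{1}-G_{2}$ is Bohr ${\mathcal B}$-almost periodic on ${\mathbb R}^{n}$, hence ${\mathcal B}$-uniformly recurrent there, and the supremum formula \eqref{tupak12345} (see the sentence immediately following it) gives $H(\cdot;x)\equiv 0$ as soon as $\lim_{|{\bf t}|\to\infty}H({\bf t};x)=0$. This finishes the proof in one line once the decay of $H$ holds on the same domain as the almost periodicity, i.e.\ once $I={\mathbb R}^{n}$ (which is the natural reading here, since $G_{i}$ is explicitly defined on ${\mathbb R}^{n}\times X$).

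Your attempt to handle a general unbounded $I\subsetneq{\mathbb R}^{n}$ has a real gap. From $H^{\ast}({\bf 0};x)=0$ you cannot conclude $H^{\ast}\equiv 0$; ``iterating this translation-limit procedure and exploiting minimality of the Bohr hull'' is an aspiration, not an argument. In fact the statement is false at that level of generality: for $n\geq 2$ take $I=\{(t,0,\ldots,0):t\geq 0\}$ and $H({\bf t};x)=\sin t_{2}$; then $H$ is Bohr almost periodic on ${\mathbb R}^{n}$, vanishes identically on $I$, yet $H\not\equiv 0$. So no amount of hull-minimality will rescue the argument without an extra hypothesis on $I$. Your alternative suggestion (show $H|_{I}\equiv 0$ via \eqref{tupak12345} on $I$, then invoke the extension uniqueness of Corollary \ref{lenny-jasson1}) is the honest way to treat proper subsets $I$, but it requires precisely the admissibility conditions discussed there; the paper does not assume them in Proposition \ref{okjeradeap}(ii), so the intended reading is simply $I={\mathbb R}^{n}$, and then the proof is as short as in (i).
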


For the sequel, we need the following auxiliary lemma (see also \cite[Lemma 2.12]{genralized-multiaa}): 

\begin{lem}\label{obazac}
Suppose that there exist an $({\mathrm R},{\mathcal B})$-multi-almost periodic function $G(\cdot ;\cdot)$ and a function
$Q\in C_{0,I,{\mathcal B}}(I\times X :Y)$ such that $F({\bf t} ; x)=G({\bf t} ; x)+Q({\bf t} ; x)$ for all ${\bf t}\in I$ and $x\in X.$
Then \eqref{josnestoizv} holds provided that
that for each sequence ${\bf b}\in {\mathrm R}$ we have $I\pm {\bf b}(l)\in I,$ $l\in {\mathbb N}$ and there exists a sequence in ${\mathrm R}$ whose any subsequence is unbounded.
\end{lem}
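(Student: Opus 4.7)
The plan is to show that every value $G({\bf t}_0;x_0)$ with ${\bf t}_0\in I$ and $x_0\in X$ lies in $\overline{\{F({\bf t};x):{\bf t}\in I,\ x\in X\}}$. First I would pick a set $B\in{\mathcal B}$ containing $x_0$ (possible by the standing assumption ${\mathcal B}_X=X$), take the distinguished sequence ${\bf b}\in{\mathrm R}$ whose every subsequence is unbounded, and apply the $({\mathrm R},{\mathcal B})$-multi-almost periodicity of $G$ to extract a subsequence $({\bf b}_{k_l})$ and a function $G^{\ast}:I\times X\to Y$ with $G({\bf t}+{\bf b}_{k_l};x)\to G^{\ast}({\bf t};x)$ uniformly for ${\bf t}\in I$ and $x\in B$. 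Passing to a further subsequence (still indexed by $l$), I would arrange $|{\bf b}_{k_l}|\to+\infty$.

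The argument then proceeds in two steps. In the first step I intend to show that $G^{\ast}({\bf s};x_0)\in\overline{\{F({\bf t};x):{\bf t}\in I,\ x\in X\}}$ for every ${\bf s}\in I$. For this I would decompose
\[
F\bigl({\bf s}+{\bf b}_{k_l};x_0\bigr)=G\bigl({\bf s}+{\bf b}_{k_l};x_0\bigr)+Q\bigl({\bf s}+{\bf b}_{k_l};x_0\bigr),
\]
where ${\bf s}+{\bf b}_{k_l}\in I$ by the forward invariance $I+{\bf b}(l)\subseteq I$. Letting $l\to+\infty$ at a fixed ${\bf s}$, the first summand tends to $G^{\ast}({\bf s};x_0)$ by the pointwise instance of the uniform convergence, while the second tends to $0$ since $|{\bf s}+{\bf b}_{k_l}|\to+\infty$ and $Q\in C_{0,I,{\mathcal B}}(I\times X:Y)$.

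In the second step I would close the loop by back-translation: set ${\bf s}_l:={\bf t}_0-{\bf b}_{k_l}\in I$, which is legal precisely because of the backward invariance $I-{\bf b}(l)\subseteq I$, and observe that ${\bf s}_l+{\bf b}_{k_l}={\bf t}_0$. The key point is that the convergence from the setup is \emph{uniform in} ${\bf t}$, which permits evaluation at the moving point ${\bf t}={\bf s}_l$:
\[
\bigl\|G^{\ast}({\bf s}_l;x_0)-G({\bf t}_0;x_0)\bigr\|\leq \sup_{{\bf t}\in I}\bigl\|G^{\ast}({\bf t};x_0)-G({\bf t}+{\bf b}_{k_l};x_0)\bigr\|\longrightarrow 0.
\]
Since each $G^{\ast}({\bf s}_l;x_0)$ belongs to the closed set $\overline{\{F({\bf t};x):{\bf t}\in I,\ x\in X\}}$ by the first step, so does the limit $G({\bf t}_0;x_0)$, and \eqref{josnestoizv} follows. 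The only delicate point, and what I expect to be the main obstacle to spot, is that both halves of the hypothesis $I\pm{\bf b}(l)\subseteq I$ and the uniformity in ${\bf t}$ of the limit $G^{\ast}$ must be used simultaneously in this back-translation argument; dropping either ingredient breaks the proof.
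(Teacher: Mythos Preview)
Your argument is correct. The paper itself does not give a proof of this lemma, only the cross-reference ``see also \cite[Lemma 2.12]{genralized-multiaa}'', so your two-step scheme (first showing $G^{\ast}({\bf s};x_{0})\in\overline{\{F({\bf t};x)\}}$ via $F=G+Q$ and the vanishing of $Q$ at infinity, then back-translating with ${\bf s}_{l}={\bf t}_{0}-{\bf b}_{k_{l}}$ and using the uniformity in ${\bf t}$ of the convergence $G(\cdot+{\bf b}_{k_{l}};\cdot)\to G^{\ast}$) is exactly the natural route and supplies the omitted details. Your remark that both the $\pm$ invariance of $I$ and the uniformity in ${\bf t}$ are indispensable is on point and mirrors how the supremum-formula argument in Proposition~\ref{netokaureap} works.
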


Now we are in a position to clarify the following result:

\begin{prop}\label{2.1.10obazacap}
Suppose that,
for every sequence ${\bf b}(\cdot)$ which belongs to ${\mathrm R},$ any its subsequence belongs to ${\mathrm R}$ and ${\bf T}-{\bf b}(l)\in I$ whenever ${\bf T}\in I$ and $l\in {\mathbb N}.$
Suppose, further, that there exists a sequence in ${\mathrm R}$ whose any subsequence is unbounded. If for each integer $j\in {\mathbb N}$ the function $F_{j}(\cdot ; \cdot)$ is $I$-asymptotically $({\mathrm R},{\mathcal B})$-multi-almost periodic and for each $B\in {\mathcal B}$ there exists $\epsilon_{B}>0$ such that
the sequence $(F_{j}(\cdot ;\cdot))$ converges uniformly to a function $F(\cdot ;\cdot)$ on the set $B^{\circ} \cup \bigcup_{x\in \partial B}B(x,\epsilon_{B}),$ then the function $F(\cdot ;\cdot)$ is $I$-asymptotically $({\mathrm R},{\mathcal B})$-multi-almost periodic.
\end{prop}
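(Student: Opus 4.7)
The plan is to produce the decomposition $F=G+Q,$ where $G$ is $({\mathrm R},{\mathcal B})$-multi-almost periodic and $Q\in C_{0,I,{\mathcal B}}(I\times X:Y),$ by exhibiting $G$ as the uniform limit (on each $I\times B$) of the almost periodic parts of the given decompositions $F_{j}=G_{j}+Q_{j}.$

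For each $j\in {\mathbb N}$ I fix such a decomposition; by Proposition \ref{okjeradeap}(i) the summands are uniquely determined under the standing hypotheses on ${\mathrm R}$ and $I.$ The heart of the argument is to show that $(G_{j})$ is uniformly Cauchy on $I\times B$ for every $B\in {\mathcal B}.$ Fix $B\in {\mathcal B}$ and $\epsilon>0.$ By Proposition \ref{kontinuitetap}(iv) the difference $G_{i}-G_{j}$ is $({\mathrm R},{\mathcal B})$-multi-almost periodic, so Proposition \ref{netokaureap}, applied pointwise in $x\in B,$ yields for every $a\geq 0$
\begin{equation*}
\sup_{{\bf t}\in I}\bigl\|G_{i}({\bf t};x)-G_{j}({\bf t};x)\bigr\|_{Y}=\sup_{{\bf t}\in I,\,|{\bf t}|\geq a}\bigl\|G_{i}({\bf t};x)-G_{j}({\bf t};x)\bigr\|_{Y}.
\end{equation*}
Decomposing $G_{i}-G_{j}=(F_{i}-F_{j})-(Q_{i}-Q_{j}),$ I first choose $N$ large enough that $\sup_{I\times B}\|F_{i}-F_{j}\|_{Y}<\epsilon/3$ for $i,j\geq N$ (guaranteed by the uniform convergence of $(F_{j})$ on $B^{\circ}\cup\bigcup_{x\in \partial B}B(x,\epsilon_{B}),$ which contains $B$), and then, for these fixed $i,j,$ pick $a$ so large that $\sup_{|{\bf t}|\geq a,\,x\in B}\|Q_{i}({\bf t};x)\|_{Y}$ and $\sup_{|{\bf t}|\geq a,\,x\in B}\|Q_{j}({\bf t};x)\|_{Y}$ are both below $\epsilon/3$ (using $Q_{i},Q_{j}\in C_{0,I,{\mathcal B}}$). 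Combining these bounds gives $\sup_{I\times B}\|G_{i}-G_{j}\|_{Y}<\epsilon.$

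Setting $G:=\lim_{j}G_{j}$ pointwise and $Q:=F-G,$ it then follows that $G_{j}\to G$ and $Q_{j}\to Q$ uniformly on each $I\times B,$ and a standard three-epsilon argument using the uniform decay of the $Q_{j}$ at infinity on $B$ shows $Q\in C_{0,I,{\mathcal B}}(I\times X:Y).$ To conclude that $G$ is $({\mathrm R},{\mathcal B})$-multi-almost periodic I would apply Proposition \ref{2.1.10ap1} to the sequence $(G_{j}).$

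The main obstacle I anticipate is precisely matching the hypothesis of Proposition \ref{2.1.10ap1}: it demands uniform convergence of $(G_{j})$ on an enlargement $B^{\circ}\cup\bigcup_{x\in\partial B}B(x,\epsilon_{B}')$ of $B,$ needed to ensure continuity of $G$ at boundary points of $B,$ whereas the Cauchy argument above only transfers the convergence of $(F_{j})$ to $(G_{j})$ on sets where the $Q_{j}$'s are uniformly small near infinity, namely on members of ${\mathcal B}.$ To close this gap my plan is to rerun the Cauchy argument with $B$ replaced by the enlargement $B'=B^{\circ}\cup\bigcup_{x\in\partial B}B(x,\epsilon_{B})$ on which the $(F_{j})$ already converge uniformly by hypothesis; using ${\mathcal B}_{X}=X,$ each point of $B'$ lies in some member of ${\mathcal B},$ and an appropriate localization of the decay of the $Q_{j}$'s on $B'$ (possibly after shrinking the choice of $\epsilon_{B}'$) is the delicate technical point that must be handled with care before invoking Proposition \ref{2.1.10ap1} to complete the proof.
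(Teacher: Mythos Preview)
Your approach is essentially the same as the paper's: decompose $F_{j}=G_{j}+Q_{j}$ (uniquely, via Proposition~\ref{okjeradeap}(i)), show $(G_{j})$ is Cauchy by exploiting that $G_{i}-G_{j}$ is $({\mathrm R},{\mathcal B})$-multi-almost periodic and the $Q$'s vanish at infinity, then invoke Proposition~\ref{2.1.10ap1}. The paper routes this through Lemma~\ref{obazac} and the Diagana-type inequality
\[
3\sup_{{\bf t}\in I}\bigl\|F_{i}({\bf t};x)-F_{j}({\bf t};x)\bigr\|_{Y}\geq \sup_{{\bf t}\in I}\bigl\|G_{i}({\bf t};x)-G_{j}({\bf t};x)\bigr\|_{Y}+\sup_{{\bf t}\in I}\bigl\|Q_{i}({\bf t};x)-Q_{j}({\bf t};x)\bigr\|_{Y},
\]
while you use the supremum formula (Proposition~\ref{netokaureap}) directly; these are two packagings of the same idea.

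The ``obstacle'' you raise at the end is not a real gap, and your proposed fix is more complicated than necessary. The point you are missing is that your Cauchy estimate yields a \emph{pointwise-in-$x$} inequality valid for \emph{every} $x\in X$, not just $x\in B$. Indeed, fix any $x\in X$; since ${\mathcal B}_{X}=X$ there is some $B''\in{\mathcal B}$ with $x\in B''$, so $Q_{i}({\bf t};x),Q_{j}({\bf t};x)\to 0$ as $|{\bf t}|\to\infty$. Now in your supremum-formula bound
\[
\sup_{{\bf t}\in I}\bigl\|G_{i}-G_{j}\bigr\|_{Y}({\bf t};x)=\sup_{|{\bf t}|\geq a}\bigl\|G_{i}-G_{j}\bigr\|_{Y}({\bf t};x)\leq \sup_{|{\bf t}|\geq a}\Bigl[\|F_{i}-F_{j}\|_{Y}+\|Q_{i}\|_{Y}+\|Q_{j}\|_{Y}\Bigr]({\bf t};x),
\]
the left-hand side is independent of $a$; letting $a\to\infty$ kills the $Q$-terms and gives
\[
\sup_{{\bf t}\in I}\bigl\|G_{i}({\bf t};x)-G_{j}({\bf t};x)\bigr\|_{Y}\leq \sup_{{\bf t}\in I}\bigl\|F_{i}({\bf t};x)-F_{j}({\bf t};x)\bigr\|_{Y}
\]
for every $x\in X$. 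Taking the supremum over $x$ in the enlarged set $B'=B^{\circ}\cup\bigcup_{x\in\partial B}B(x,\epsilon_{B})$ now transfers the uniform convergence of $(F_{j})$ on $B'$ directly to uniform convergence of $(G_{j})$ on the same $B'$, which is exactly what Proposition~\ref{2.1.10ap1} requires. No localization or shrinking of $\epsilon_{B}$ is needed.
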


\begin{proof}
Due to Proposition \ref{okjeradeap}, 
we know that there exist a uniquely determined function $G(\cdot ;\cdot)$ which is $({\mathrm R},{\mathcal B})$-multi-almost periodic and a uniquely determined function
$Q\in C_{0,I,{\mathcal B}}(I \times X :Y)$ such that
$F({\bf t} ; x)=G({\bf t} ; x)+Q({\bf t} ; x)$ for all ${\bf t}\in I$ and $x\in X$. Furthermore, we have
$$
F_{j}({\bf t} ; x)-F_{m}({\bf t} ; x)=\bigl[ G_{j}({\bf t} ; x)-G_{m}({\bf t} ; x)\bigr]+\bigl[Q_{j}({\bf t} ; x)-Q_{m}({\bf t} ; x) \bigr],
$$
for all ${\bf t}\in I,$ $x\in X$ and $j,\ m\in {\mathbb N}.$ Due to Proposition \ref{kontinuitetap}(iv), we have that 
the function $G_{j}(\cdot ;\cdot)-G_{m}(\cdot ;\cdot)$ is $({\mathrm R},{\mathcal B})$-multi-almost periodic ($j,\ m\in {\mathbb N}$). Keeping in mind this fact as well as Lemma \ref{obazac} and the argumentation used in the proof of \cite[Theorem 4.29]{diagana}, we get that
\begin{align*}
3\sup_{{\bf t} \in I,x\in X}&\Bigl\|  F_{j}({\bf t} ; x)-F_{m}({\bf t} ; x) \Bigr\|_{Y}
\\& \geq \sup_{{\bf t} \in I,x\in X}\Bigl\|  G_{j}({\bf t} ; x)-G_{m}({\bf t} ; x) \Bigr\|_{Y}+\sup_{{\bf t} \in I,x\in X}\Bigl\|  Q_{j}({\bf t} ; x)-Q_{m}({\bf t} ; x) \Bigr\|_{Y},
\end{align*}
for any $j,\ m\in {\mathbb N}.$
This implies that the sequences $(G_{j}(\cdot;\cdot))$ and $(Q_{j}(\cdot;\cdot))$ converge uniformly to the functions $G(\cdot;\cdot)$ and $Q(\cdot;\cdot),$ respectively. Due to Proposition \ref{2.1.10ap1}, we get that the function $G(\cdot;\cdot)$ is $({\mathrm R},{\mathcal B})$-multi-almost periodic.
The final conclusion follows from the obvious equality $F=G+Q$ and the fact that $C_{0,I,{\mathcal B}}(I \times X : Y)$ is a Banach space.
\end{proof}

Before we move ourselves to the next subsection, we would like to recall that the notion of asymptotical almost periodicity in a Bohr like manner, for scalar-valued functions of one real variable,
was introduced by A. S. Kovanko \cite{286} in 1929; the usually employed definition of asymptotical almost periodicity was introduced later, by M. Fr\'echet \cite{193} in 1941.
Now we will introduce the following general definition following the approach obeyed in \cite{193}; for any set $\Lambda \subseteq {\mathbb R}^{n},$ we define $\Lambda_{M}:=\{ \lambda \in \Lambda \, ; \, |\lambda|\geq M \}:$

\begin{defn}\label{nafaks123456789012345123}
Suppose that 
${\mathbb D} \subseteq I \subseteq {\mathbb R}^{n}$ and the set ${\mathbb D}$ is unbounded, as well as
$\emptyset  \neq I'\subseteq I \subseteq {\mathbb R}^{n},$ $F : I \times X \rightarrow Y$ is a continuous function and $I +I' \subseteq I.$ Then we say that:
\begin{itemize}
\item[(i)]\index{function!${\mathbb D}$-asymptotically Bohr $({\mathcal B},I')$-almost periodic of type $1$}
$F(\cdot;\cdot)$ is ${\mathbb D}$-asymptotically Bohr $({\mathcal B},I')$-almost periodic  of type $1$ if and only if for every $B\in {\mathcal B}$ and $\epsilon>0$
there exist $l>0$ and $M>0$ such that for each ${\bf t}_{0} \in I'$ there exists ${\bf \tau} \in B({\bf t}_{0},l) \cap I'$ such that
\begin{align}\label{emojmarko145}
\bigl\|F({\bf t}+{\bf \tau};x)-F({\bf t};x)\bigr\|_{Y} \leq \epsilon,\mbox{ provided } {\bf t},\ {\bf t}+\tau \in {\mathbb D}_{M},\ x\in B.
\end{align}
\item[(ii)] \index{function!${\mathbb D}$-asymptotically $({\mathcal B},I')$-uniformly recurrent  of type $1$}
$F(\cdot;\cdot)$ is ${\mathbb D}$-asymptotically $({\mathcal B},I')$-uniformly recurrent  of type $1$ if and only if for every $B\in {\mathcal B}$ 
there exist a sequence $({\bf \tau}_{n})$ in $I'$ and a sequence $(M_{n})$ in $(0,\infty)$ such that $\lim_{n\rightarrow +\infty} |{\bf \tau}_{n}|=\lim_{n\rightarrow +\infty}M_{n}=+\infty$ and
$$
\lim_{n\rightarrow +\infty}\sup_{{\bf t},{\bf t}+{\bf \tau}_{n}\in {\mathbb D}_{M_{n}};x\in B} \bigl\|F({\bf t}+{\bf \tau}_{n};x)-F({\bf t};x)\bigr\|_{Y} =0.
$$
\end{itemize}
If $I'=I,$ then we also say that
$F(\cdot;\cdot)$ is ${\mathbb D}$-asymptotically Bohr ${\mathcal B}$-almost periodic of type $1$ (${\mathbb D}$-asymptotically ${\mathcal B}$-uniformly recurrent  of type $1$); furthermore, if $X\in {\mathcal B},$ then it is also said that $F(\cdot;\cdot)$ is ${\mathbb D}$-asymptotically Bohr $I'$-almost periodic  of type $1$ (${\mathbb D}$-asymptotically $I'$-uniformly recurrent  of type $1$). If $I'=I$ and $X\in {\mathcal B}$, then we also say that $F(\cdot;\cdot)$ is ${\mathbb D}$-asymptotically Bohr almost periodic  of type $1$ (${\mathbb D}$-asymptotically uniformly recurrent of type $1$). As before, we remove the prefix ``${\mathbb D}$-'' in the case that ${\mathbb D}=I$ and remove the prefix ``$({\mathcal B},)$''  in the case that $X\in {\mathcal B}.$ 
\end{defn}

The proof of following proposition is trivial and therefore omitted:

\begin{prop}\label{okeje}
Suppose that 
${\mathbb D} \subseteq I \subseteq {\mathbb R}^{n}$ and the set ${\mathbb D}$ is unbounded, as well as
$\emptyset  \neq I'\subseteq I \subseteq {\mathbb R}^{n},$ $F : I \times X \rightarrow Y$ is a continuous function and $I +I' \subseteq I.$ If  
$F(\cdot;\cdot)$ is ${\mathbb D}$-asymptotically Bohr $({\mathcal B},I')$-almost periodic, resp. ${\mathbb D}$-asymptotically $({\mathcal B},I')$-uniformly recurrent, then $F(\cdot;\cdot)$ is ${\mathbb D}$-asymptotically Bohr $({\mathcal B},I')$-almost periodic of type $1,$
resp. ${\mathbb D}$-asymptotically $({\mathcal B},I')$-uniformly recurrent of type $1$.
\end{prop}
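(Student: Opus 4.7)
The plan is to prove both parts by the standard triangle inequality decomposition, exploiting the fact that the $Q$-component of the decomposition $F=G+Q$ becomes arbitrarily small on ${\mathbb D}_M$ for $M$ large.

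\textbf{Proof of part (i).} Fix $B\in {\mathcal B}$ and $\epsilon>0$. By definition, write $F=G+Q$ where $G:I\times X\to Y$ is Bohr $({\mathcal B},I')$-almost periodic and $Q\in C_{0,{\mathbb D},{\mathcal B}}(I\times X:Y)$. Since $Q\in C_{0,{\mathbb D},{\mathcal B}}(I\times X:Y)$, choose $M>0$ such that $\|Q({\bf t};x)\|_{Y}\leq \epsilon/3$ for all ${\bf t}\in {\mathbb D}_{M}$ and $x\in B$. Since $G(\cdot;\cdot)$ is Bohr $({\mathcal B},I')$-almost periodic, for the same $B$ and for $\epsilon/3$ there exists $l>0$ such that for each ${\bf t}_{0}\in I'$ one can find $\tau\in B({\bf t}_{0},l)\cap I'$ with $\|G({\bf t}+\tau;x)-G({\bf t};x)\|_{Y}\leq \epsilon/3$ for every ${\bf t}\in I$ and $x\in B$. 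For these $l$ and $M$, and for any ${\bf t},{\bf t}+\tau\in {\mathbb D}_{M}$ and $x\in B$, we obtain
\[
\bigl\|F({\bf t}+\tau;x)-F({\bf t};x)\bigr\|_{Y}\leq \bigl\|G({\bf t}+\tau;x)-G({\bf t};x)\bigr\|_{Y}+\bigl\|Q({\bf t}+\tau;x)\bigr\|_{Y}+\bigl\|Q({\bf t};x)\bigr\|_{Y}\leq \epsilon,
\]
which is exactly the condition from Definition \ref{nafaks123456789012345123}(i).

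\textbf{Proof of part (ii).} Fix $B\in {\mathcal B}$ and write $F=G+Q$ with $G$ being $({\mathcal B},I')$-uniformly recurrent and $Q\in C_{0,{\mathbb D},{\mathcal B}}(I\times X:Y)$. Let $({\bf \tau}_{n})$ be a sequence in $I'$ with $|{\bf \tau}_{n}|\to +\infty$ and
\[
\lim_{n\rightarrow +\infty}\sup_{{\bf t}\in I;x\in B} \bigl\|G({\bf t}+{\bf \tau}_{n};x)-G({\bf t};x)\bigr\|_{Y} =0.
\]
Pick any sequence $(M_{n})\subseteq (0,\infty)$ with $M_{n}\to +\infty$. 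Since $Q\in C_{0,{\mathbb D},{\mathcal B}}(I\times X:Y)$, we have $\delta_{n}:=\sup_{{\bf s}\in {\mathbb D}_{M_{n}};x\in B}\|Q({\bf s};x)\|_{Y}\to 0$ as $n\to +\infty$. Whenever ${\bf t},{\bf t}+{\bf \tau}_{n}\in {\mathbb D}_{M_{n}}$ and $x\in B$, the same triangle inequality as in (i) yields
\[
\bigl\|F({\bf t}+{\bf \tau}_{n};x)-F({\bf t};x)\bigr\|_{Y}\leq \sup_{{\bf s}\in I;y\in B}\bigl\|G({\bf s}+{\bf \tau}_{n};y)-G({\bf s};y)\bigr\|_{Y}+2\delta_{n},
\]
and the right-hand side tends to $0$ as $n\to +\infty$, establishing Definition \ref{nafaks123456789012345123}(ii).

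There is no real obstacle here; the only mild point to verify is that the chosen $M$ (respectively the sequence $M_{n}$) is dictated solely by $Q$, so it is independent of the Bohr data for $G$, which is what allows the uniform choice. The statement is essentially a ``decomposition plus triangle inequality'' assertion, consistent with the author's remark that the proof is trivial.
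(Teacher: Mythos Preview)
Your proof is correct and is exactly the routine ``decomposition plus triangle inequality'' argument the authors had in mind when they declared the proof trivial and omitted it. There is nothing to add: the paper gives no proof at all, and your write-up fills in precisely the expected details.
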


Suppose now that the general assumptions from the preamble of Definition \ref{nafaks123456789012345123} hold true. Keeping in mind Proposition \ref{okeje} and Remark \ref{soqqaza}(i)-(ii), it is natural to ask the following:
\begin{itemize}
\item[(i)]
In which cases the ${\mathbb D}$-asymptotical Bohr $({\mathcal B},I')$-almost periodicity of type $1,$
resp. ${\mathbb D}$-asymptotical $({\mathcal B},I')$-uniform recurrence of type $1,$ implies the ${\mathbb D}$-asymptotical Bohr $({\mathcal B},I')$-almost periodicity,
resp. ${\mathbb D}$-asymptotical $({\mathcal B},I')$-uniform recurrence of function $F(\cdot;\cdot)$? 
\item[(ii)] In which cases the asymptotical Bohr ${\mathcal B}$-almost periodicity (of type $1$) implies the $({\mathrm R},{\mathcal B})$-multi-almost periodicity of $F(\cdot;\cdot),$ where ${\mathrm R}$ denotes the collection of all sequences in 
$I?$
\item[(iii)] In which cases the asymptotical Bohr ${\mathcal B}$-almost periodicity (of type $1$) is a consequence of the $({\mathrm R},{\mathcal B})$-multi-almost periodicity of $F(\cdot;\cdot),$ where ${\mathrm R}$ denotes the collection of all sequences in 
$I?$
\end{itemize}

Concerning the item (ii), it is well known that the answer is negative provided that $X=\{0\},$ ${\mathcal B}=X$ and $I={\mathbb R}$ because, in this case, the asymptotical Bohr ${\mathcal B}$-almost periodicity of $F : {\mathbb R} \rightarrow Y$ is equivalent with the asymptotical Bohr ${\mathcal B}$-almost periodicity of type $1$ of $F(\cdot)$, i.e., the usual asymptotical almost periodicity of $F(\cdot)$, while the $({\mathrm R},{\mathcal B})$-multi-almost periodicity of $F(\cdot)$ is equivalent in this case with the usual almost periodicity of $F(\cdot);$ see \cite[Definition 2.2, Definition 2.3; Theorem 2.6]{zhang} for the notion used. With the regards to the items (i) and (ii), we have the following statement which can be applied in the particular case $I=[0,\infty)^{n}:$

\begin{thm}\label{bounded-paziem}
Suppose that $\emptyset  \neq I \subseteq {\mathbb R}^{n},$ $I +I \subseteq I,$ $I$ is closed and
$F : I \times X \rightarrow Y$ is a uniformly continuous,
$I$-asymptotically Bohr ${\mathcal B}$-almost periodic function of type $1,$ where ${\mathcal B}$ is any family of compact subsets of $X.$ If
\begin{align}
\notag
(\forall l>0) \, (\forall M>0) \, (\exists {\bf t_{0}}\in I)\, (\exists k>0) &\, (\forall {\bf t} \in I_{M+l})(\exists {\bf t_{0}'}\in I)\,
\\\label{profice-zeronem} & (\forall {\bf t_{0}''}\in B({\bf t_{0}'},l) \cap I)\, {\bf t}- {\bf t_{0}''} \in B({\bf t_{0}},kl) \cap I_{M},
\end{align}
there exists $L>0$ such that $I_{kL}\setminus I_{(k+1)L}  \neq \emptyset$ for all $k\in {\mathbb N}$ and $I_{M}+I\subseteq  I_{M}$ for all $M>0,$
then the function $F(\cdot; \cdot)$ is $({\mathrm R},{\mathcal B})$-multi-almost periodic, where ${\mathrm R}$ denotes the collection of all sequences in 
$I.$ Furthermore, if $X=\{0\}$ and ${\mathcal B}=\{X\},$ then $F(\cdot)$ is $I$-asymptotically Bohr almost periodic function.
\end{thm}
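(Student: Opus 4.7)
The plan is to adapt the extension technique of Ruess and Summers from \cite[Theorem 3.4]{RUESS-1} to the multi-dimensional setting. First I would establish, for each $B\in\mathcal{B}$, the relative compactness of $\{F({\bf t};x):{\bf t}\in I,\,x\in B\}$ in $Y$, as an asymptotic analogue of Proposition~\ref{bounded-pazi}: given $\epsilon>0$, apply the asymptotic Bohr property of type $1$ to obtain $M,l>0$ and use \eqref{profice-zeronem} to write every ${\bf t}\in I_{M+l}$ as ${\bf t}={\bf r}+\tau$ with ${\bf r}$ in the compact set $\overline{B({\bf t}_{0},kl)\cap I_{M}}$ and $\tau\in I$ an $\epsilon$-almost-period valid on $I_M$. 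Together with continuity of $F$ on the bounded piece $(I\setminus I_{M+l})\times B$, this gives total boundedness.

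For the $({\mathrm R},{\mathcal B})$-multi-almost periodicity, fix a sequence $({\bf b}_k)\subseteq I$ and $B\in\mathcal{B}$. The case of bounded $({\bf b}_k)$ is immediate from Bolzano--Weierstrass, closedness of $I$, and uniform continuity of $F$; so assume $|{\bf b}_k|\to\infty$. For each $n\in\mathbb{N}$ invoke the asymptotic Bohr property with accuracy $1/n$ producing $M_n,l_n>0$, and then apply \eqref{profice-zeronem} at the point ${\bf b}_k\in I_{M_n+l_n}$ (for $k$ large) to write
\[
{\bf b}_k=\tau_k^{(n)}+{\bf r}_k^{(n)},\qquad {\bf r}_k^{(n)}\in\overline{B\bigl({\bf t}_{0,n},\kappa_nl_n\bigr)\cap I_{M_n}},
\]
with $\tau_k^{(n)}$ a $1/n$-almost-period valid on $I_{M_n}$. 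A diagonal extraction yields a single subsequence along which ${\bf r}_k^{(n)}$ converges in $\mathbb{R}^n$ for every $n$. Along this subsequence I would split
\begin{align*}
\bigl\|F({\bf t}+{\bf b}_k;x) & -F({\bf t}+{\bf b}_{k'};x)\bigr\|_Y \leq \bigl\|F({\bf t}+{\bf b}_k;x)-F({\bf t}+{\bf r}_k^{(n)};x)\bigr\|_Y \\
& +\bigl\|F({\bf t}+{\bf r}_k^{(n)};x)-F({\bf t}+{\bf r}_{k'}^{(n)};x)\bigr\|_Y+\bigl\|F({\bf t}+{\bf r}_{k'}^{(n)};x)-F({\bf t}+{\bf b}_{k'};x)\bigr\|_Y
\end{align*}
and bound the outer terms by $1/n$ through the almost-period property applied at ${\bf s}={\bf t}+{\bf r}_k^{(n)}$ (the hypothesis $I_{M_n}+I\subseteq I_{M_n}$ places both ${\bf s}$ and ${\bf s}+\tau_k^{(n)}={\bf t}+{\bf b}_k$ in $I_{M_n}$), and the middle term by $1/n$ via uniform continuity once $|{\bf r}_k^{(n)}-{\bf r}_{k'}^{(n)}|$ is small. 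Choosing $n$ with $3/n<\epsilon$ delivers uniform Cauchyness on $I\times B$.

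For the second assertion ($X=\{0\}$, ${\mathcal B}=\{X\}$), the hypothesis $I_{kL}\setminus I_{(k+1)L}\neq\emptyset$ supplies sequences in $I$ with arbitrarily large norm, so for each $n$ I would select a $(1/n)$-almost-period ${\bf b}_k^{(n)}\in I_{M_n}$ with $|{\bf b}_k^{(n)}|\to\infty$, apply the above extraction along a diagonal sequence to produce a uniform limit $G:I\to Y$, and verify that $G$ is Bohr almost periodic on $I$ by passing to the limit in the inequality $\|F({\bf s}+\tau)-F({\bf s})\|\leq\epsilon$ valid on $I_{M_n}$ (for fixed ${\bf t}\in I$ and $k$ large, both ${\bf t}+{\bf b}_k^{(n)}$ and ${\bf t}+\tau+{\bf b}_k^{(n)}$ lie in $I_{M_n}$). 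Finally, the estimate $\|F({\bf t}+{\bf b}_k^{(n)})-F({\bf t})\|\leq 1/n$ for ${\bf t}\in I_{M_n}$, valid by the almost-period choice, yields $\|F({\bf t})-G({\bf t})\|\leq 1/n$ on $I_{M_n}$ after passing to the limit; hence $Q:=F-G$ belongs to $C_{0}(I:Y)$ and $F=G+Q$ is the required decomposition.

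The principal difficulty will lie in the coordination of the diagonal extraction: the single subsequence produced must be compatible with the almost-period decompositions for every accuracy $1/n$ simultaneously, and the two-sided membership in $I_{M_n}$ required for the almost-period estimate on each error term relies crucially on the absorption condition $I_{M}+I\subseteq I_{M}$ coupled with \eqref{profice-zeronem}. A secondary obstacle is ensuring that the limit $G$ obtained in the last step is genuinely Bohr almost periodic on $I$ rather than only asymptotically so, which is precisely the role played by the twin hypotheses that $I$ absorbs itself under translation and that arbitrarily large almost-periods are available via the $I_{kL}\setminus I_{(k+1)L}\neq\emptyset$ assumption.
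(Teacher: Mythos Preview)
Your proposal is correct, and for the second assertion it essentially coincides with the paper's argument (select $(1/k)$-almost periods $\tau_k$ with $|\tau_k|\to\infty$, use the first part to extract a uniformly convergent subsequence, check the limit is Bohr almost periodic and that the difference lies in $C_{0}(I:Y)$). For the first assertion, however, you take a genuinely different route.

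The paper does not run your direct diagonal argument. Instead, it passes to the auxiliary $l_\infty(B:Y)$-valued function $F_{B}$ defined in \eqref{sarajevo-london}, shows that $\{F_{B}({\bf t}):{\bf t}\in I\}$ is relatively compact in $BUC(B:Y)$ via \eqref{profice-zeronem} (exactly as you outline), and then argues that the whole translation orbit $\{F_{B}(\cdot+\tau):\tau\in I\}$ is relatively compact in $BUC(I:BUC(B:Y))$ by adapting the proof of \cite[Theorem~3.3]{RUESS-1}: one replaces the interval $[N,3N]$ there by $I_{N}\setminus I_{3N}$ with $N=\max(L,l,M)$, and the choices $\tau_{k}\in[kN,(k+1)N]$ by points $\tau_{k}\in I_{kL}\setminus I_{(k+1)L}$; the condition $I_{M}+I\subseteq I_{M}$ makes the key estimate on \cite[l.~2, p.~23]{RUESS-1} go through. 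Relative compactness of translations then yields $({\mathrm R},{\mathcal B})$-multi-almost periodicity immediately.

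What each approach buys: the paper's route is shorter because it outsources the combinatorics to Ruess--Summers, but it needs the hypothesis $I_{kL}\setminus I_{(k+1)L}\neq\emptyset$ already in the first part, precisely to furnish the staircase of almost periods that their argument consumes. Your diagonal decomposition ${\bf b}_{k}=\tau_{k}^{(n)}+{\bf r}_{k}^{(n)}$, with ${\bf r}_{k}^{(n)}$ trapped in the compact $\overline{B({\bf t}_{0,n},\kappa_{n}l_{n})\cap I_{M_{n}}}$, is more self-contained and, as you present it, appears not to use $I_{kL}\setminus I_{(k+1)L}\neq\emptyset$ at all for the $({\mathrm R},{\mathcal B})$-multi-almost periodicity conclusion; you invoke it only to manufacture large almost periods in the second assertion. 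One small point: your reference to \cite[Theorem~3.4]{RUESS-1} should be to Theorem~3.3 for this part; Theorem~3.4 is the extension result, whose analogue the paper develops separately in Remark~\ref{remarka} and Theorem~\ref{lenny-jasson}.
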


\begin{proof}
Let $B\in {\mathcal B}$ and $\epsilon>0$ be fixed. Since $F(\cdot;\cdot)$ is uniformly continuous, we have that the function ${\bf F}_{B}(\cdot),$ given by \eqref{sarajevo-london}, is likewise uniformly continuous. 
Arguing as in the proof of Proposition \ref{bounded-pazi}, the assumption \eqref{profice-zeronem} enables one to deduce that the set $\{F({\bf t};x) : {\bf t} \in I,\ x\in B\}$ is relatively compact in $Y$ as well as that the set $\{F_{B}({\bf t}) : {\bf t} \in  I\}$ is relatively compact in the Banach space $BUC(B:Y),$ consisting of all bounded, uniformly continuous functions from $B$ into $Y,$ equipped with the sup-norm. We know that there exist $l>0$ and $M>0$ such that for each ${\bf t}_{0} \in I$ there exists ${\bf \tau} \in B({\bf t}_{0},l) \cap I$ such that
\eqref{emojmarko145} holds with ${\mathbb D}=I.$
Using these facts, we can slightly modify the first part of the proof of \cite[Theorem 3.3]{RUESS-1} (with the segment $[N,3N]$ replaced therein with the set $I_{N}\setminus I_{3N}$, where $N=\max(L,l,M),$ and the number $\tau_{k}\in [kN,(k+1)N]$ replaced therein by the number ${\bf \tau}_{k}\in I_{kL}\setminus I_{(k+1)L};$ we need condition $I_{M}+I\subseteq  I_{M},$ $M>0$ in order to see that the estimate given on \cite[l. 2, p. 23]{RUESS-1} holds in our framework) in order to obtain that the set of translations $\{ F_{B}(\cdot +\tau) : \tau \in I \}$ is relatively compact in $BUC(B:Y),$
which simply implies that 
$F(\cdot;\cdot)$ is $({\mathrm R},{\mathcal B})$-multi-almost periodic, where ${\mathrm R}$ denotes the collection of all sequences in 
$I.$ Suppose now that $X=\{0\}$ and ${\mathcal B}=\{X\}.$ Then 
for each integer $k\in {\mathbb N}$  
there exist $l_{k}>0$ and $M_{k}>0$ such that for each ${\bf t}_{0} \in I$ there exists ${\bf \tau} \in B({\bf t}_{0},l) \cap I$ such that
\eqref{emojmarko145} holds with $\epsilon=1/k$ and ${\mathbb D}=I.$
Let ${\bf \tau}_{k}$ be any fixed element of $I$ such that $|{\bf \tau}_{k}|>M_{k}+k^{2}$ and \eqref{emojmarko145} holds with $\epsilon=1/k$ and ${\mathbb D}=I$ ($k\in {\mathbb N}$).
Then the first part of proof yields the existence of a subsequence $({\bf \tau}_{k_{l}})$ of $({\bf \tau}_{k})$ and a function
$F^{\ast} : I \rightarrow Y$
such that $\lim_{l\rightarrow +\infty}F({\bf t}+{\bf \tau}_{k_{l}})=F^{\ast}({\bf t}),$ uniformly for $t\in I.$ The mapping $F^{\ast}(\cdot)$ is clearly continuous and now we will prove that $F^{\ast}(\cdot)$ is Bohr almost periodic. 
Let $\epsilon>0$ be fixed, and let $l>0$ and $M>0$ be such that for each ${\bf t}_{0} \in I$ there exists ${\bf \tau} \in B({\bf t}_{0},l) \cap I$ such that
\eqref{emojmarko145} holds with ${\mathbb D}=I$ and the number $\epsilon$ replaced therein by $\epsilon/3.$ Let ${\bf t }\in I$ be fixed, and let $l_{0}\in {\mathbb N}$ be such that $|{\bf t}+{\bf \tau}_{k_{l_{0}}}|\geq M$ and $|{\bf t}+{\bf \tau}+{\bf \tau}_{k_{l_{0}}}|\geq M.$ Then we have 
\begin{align*}
\Bigl\| & F^{\ast}({\bf t}+{\bf \tau})- F^{\ast}({\bf t})\Bigr\|
\\& \leq \Bigl\|  F^{\ast}({\bf t}+{\bf \tau})- F^{\ast}\bigl({\bf t}+{\bf \tau}+{\bf \tau}_{k_{l_{0}}}\bigr)\Bigr\|+\Bigl\|  F^{\ast}\bigl({\bf t}+{\bf \tau}+{\bf \tau}_{k_{l_{0}}}\bigr)- F^{\ast}\bigl({\bf t}+{\bf \tau}_{k_{l_{0}}}\bigr)\Bigr\|
\\&+
\Bigl\|  F^{\ast}\bigl({\bf t}+{\bf \tau}_{k_{l_{0}}}\bigr)-F^{\ast}({\bf t}) \Bigr\|\leq 3\cdot (\epsilon/3)=\epsilon,
\end{align*}
as required. The fact that the function ${\bf t} \mapsto F({\bf t} )-F^{\ast}({\bf t} ),$ ${\bf t} \in I$ belongs to the space $C_{0,I}(I: Y)$ follows trivially by definition of $F^{\ast}(\cdot).$ The proof of theorem
is thereby complete.
\end{proof}

\begin{rem}\label{remarka}
Suppose that the requirements of Theorem \ref{bounded-paziem} hold with $X=\{0\}$ and ${\mathcal B}=\{X\}.$ Suppose further that, for every ${\bf t}'\in {\mathbb R}^{n},$ there exist $\delta>0$ and $l_{0}\in {\mathbb N}$ such that the sequence
$(\tau_{k})$ from the above proof satisfies that ${\bf t}''+\tau_{k_{l}} \in I$ for all $l\in {\mathbb N}$ with $l\geq l_{0}$ and ${\bf t}''\in B({\bf t}',\delta).$
Then the limit $\lim_{l\rightarrow +\infty}F({\bf t}'+{\bf \tau}_{k_{l}}):=\tilde{F^{\ast}}({\bf t}')$ exists for all ${\bf t}'\in {\mathbb R}^{n},$ which can be easily seen from the estimate
\begin{align}
\notag\Bigl\| & F\bigl({\bf t}'+{\bf \tau}_{k_{l_{1}}}\bigr)-F\bigl({\bf t}'+{\bf \tau}_{k_{l_{2}}}\bigr) \Bigr\|_{Y}
\\\notag & \leq \Bigl\|  F\bigl({\bf t}'+{\bf \tau}_{k_{l_{1}}}\bigr)-F\bigl({\bf t}'+{\bf \tau}_{k_{l_{1}}}+\tau\bigr) \Bigr\|_{Y}+\Bigl\| F\bigl({\bf t}'+{\bf \tau}_{k_{l_{1}}}+\tau\bigr)-F\bigl({\bf t}'+{\bf \tau}_{k_{l_{2}}}+\tau\bigr) \Bigr\|_{Y}
\\\label{milo-dritan} & + \Bigl\|  F\bigl({\bf t}'+{\bf \tau}_{k_{l_{2}}}+\tau\bigr)-F\bigl({\bf t}'+{\bf \tau}_{k_{l_{2}}}\bigr) \Bigr\|_{Y}
\\\notag & \leq 3\cdot (\epsilon/3)=\epsilon,
\end{align}
which is valid for all numbers $\tau$ such that there exist $l>0$ and $M>0$ such that for each ${\bf t}_{0} \in I$ there exists ${\bf \tau} \in B({\bf t}_{0},l) \cap I$ such that
\eqref{emojmarko145} holds with the number $\epsilon$ replaced therein with the number $\epsilon/3$ and ${\mathbb D}=I,$
all sufficiently large natural numbers $l_{1}$ and $l_{2}$ depending on $\tau,$ where we have also applied the Cauchy criterion of convergence for the limit $\lim_{l\rightarrow +\infty}F({\bf t}+{\bf \tau}_{k_{l}})=F^{\ast}({\bf t}),$ uniform in $t\in I$
and our assumption $I+I\subseteq I.$ The function $\tilde{F^{\ast}}(\cdot)$ is clearly continuous and it can be easily seen that it is Bohr $I$-almost periodic. Furthermore, if for every ${\bf t}'\in {\mathbb R}^{n}$ and $M_{1},\ M_{2}>0$ there exists $l_{0}\in {\mathbb N}$ such that ${\bf t}'+\tau_{k_{l}}-\tau \in I_{M_{2}}$ for all $l\in {\mathbb N}$ with $l\geq l_{0}$, then $\tilde{F^{\ast}}(\cdot)$ is Bohr $(I \cup (-I))$-almost periodic. Using a simple translation argument, the above gives an extension of \cite[Theorem 3.4]{RUESS-1} in Banach spaces.
\end{rem}

Keeping in mind the proof of Theorem \ref{bounded-paziem} and our analysis from Remark \ref{remarka}, we can also deduce the following result concerning the extensions of Bohr $I'$-almost periodic functions: 

\begin{thm}\label{lenny-jasson}
Suppose that $ I'\subseteq I \subseteq {\mathbb R}^{n},$ $I +I' \subseteq I,$ the set $I'$ is unbounded, $F : I  \rightarrow Y$ is a uniformly continuous, Bohr $I'$-almost periodic function, resp. a uniformly continuous, $I'$-uniformly recurrent function, $S\subseteq {\mathbb R}^{n}$ is bounded and the following condition holds:
\begin{itemize}
\item[(AP-E)]\index{condition!(AP-E)}
For every ${\bf t}'\in {\mathbb R}^{n},$
there exists a finite real number $M>0$ such that
${\bf t}'+I'_{M}\subseteq I.$
\end{itemize}
Define $\Omega_{S}:=[(I'\cup (-I'))+(I'\cup (-I'))] \cup S.$
Then there exists a uniformly continuous, Bohr $\Omega_{S}$-almost periodic, resp. a uniformly continuous, $\Omega_{S}$-uniformly recurrent, function
$\tilde{F} : {\mathbb R}^{n}  \rightarrow Y$ such that $\tilde{F}({\bf t})=F({\bf t})$ for all ${\bf t}\in I;$ furthermore, in almost periodic case, the uniqueness of such a function $\tilde{F}(\cdot)$ holds provided that ${\mathbb R}^{n} \setminus \Omega_{S}$ is a bounded set. 
\end{thm}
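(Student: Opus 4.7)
The plan is to mimic the construction in Theorem \ref{bounded-paziem} and Remark \ref{remarka}, producing $\tilde{F}$ as a pointwise limit of translates of $F$ along a sequence $(\tau_k)\subseteq I'$ with $|\tau_k|\to\infty$ and $\sup_{{\bf t}\in I}\|F({\bf t}+\tau_k)-F({\bf t})\|\to 0$. In the uniformly recurrent case the defining sequence already has this property; in the Bohr case one selects $\tau_k\in I'$ to be a $1/k$-period with $|\tau_k|>k$, which is possible by combining the relative denseness of $1/k$-periods in $I'$ with the unboundedness of $I'$. For a given ${\bf t}'\in{\mathbb R}^n$, condition (AP-E) furnishes $M>0$ with ${\bf t}'+I'_M\subseteq I$, so $F({\bf t}'+\tau_k)$ is defined for all sufficiently large $k$, and I would set $\tilde{F}({\bf t}'):=\lim_{k\to\infty}F({\bf t}'+\tau_k)$ once the limit is shown to exist.

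The existence of the limit follows from a Cauchy estimate modelled on \eqref{milo-dritan}. Fix $\epsilon>0$ and select an $\epsilon/3$-period $\tau\in I'$ of $F$ with $|\tau|$ large enough that ${\bf t}'+\tau\in I$; this is feasible by relative denseness of periods together with (AP-E). The triangle inequality
\begin{align*}
\|F({\bf t}'+\tau_{k_1})-F({\bf t}'+\tau_{k_2})\| & \leq \|F({\bf t}'+\tau_{k_1})-F({\bf t}'+\tau_{k_1}+\tau)\|\\
& \quad +\|F({\bf t}'+\tau_{k_1}+\tau)-F({\bf t}'+\tau_{k_2}+\tau)\|\\
& \quad +\|F({\bf t}'+\tau_{k_2}+\tau)-F({\bf t}'+\tau_{k_2})\|
\end{align*}
bounds the outer terms by $\epsilon/3$ since ${\bf t}'+\tau_{k_i}\in I$ and $\tau$ is an $\epsilon/3$-period, while the middle term is less than $\epsilon/3$ for $k_1,k_2$ sufficiently large because ${\bf t}'+\tau\in I$ and $F(\cdot+\tau_k)-F(\cdot)$ tends to zero uniformly on $I$. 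Thus $(F({\bf t}'+\tau_k))$ is Cauchy in $Y$ and converges. For ${\bf t}'\in I$ the same uniform convergence yields $\tilde{F}({\bf t}')=F({\bf t}')$, so $\tilde{F}$ is a genuine extension of $F$, and its uniform continuity on ${\mathbb R}^n$ is inherited from that of $F$ on $I$ through the inequality $\|\tilde{F}({\bf t}_1')-\tilde{F}({\bf t}_2')\|\leq\liminf_k\|F({\bf t}_1'+\tau_k)-F({\bf t}_2'+\tau_k)\|$.

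To verify Bohr $\Omega_S$-almost periodicity (resp.\ $\Omega_S$-uniform recurrence) of $\tilde{F}$, I would first show that any $\epsilon$-period $\sigma\in I'$ of $F$ remains an $\epsilon$-period of $\tilde{F}$ on all of ${\mathbb R}^n$: for large $k$ both ${\bf t}'+\tau_k$ and ${\bf t}'+\sigma+\tau_k$ lie in $I$, so $\|F({\bf t}'+\sigma+\tau_k)-F({\bf t}'+\tau_k)\|\leq\epsilon$ and one passes to the limit. An analogous computation using (AP-E) applied to ${\bf t}'-\sigma$ shows that $-\sigma\in -I'$ is also an $\epsilon$-period. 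For ${\bf t}_0\in(I'\cup(-I'))+(I'\cup(-I'))$ written as $\sigma'+\sigma''$ with $\sigma',\sigma''\in I'\cup(-I')$, I pick $\tau',\tau''\in I'\cup(-I')$ as $\epsilon/2$-periods within the common $l$-neighbourhoods of $\sigma',\sigma''$, so that $\tau'+\tau''\in\Omega_S\cap B({\bf t}_0,2l)$ is a $2\epsilon$-period; enlarging $l$ past the diameter of $S$ then handles points ${\bf t}_0\in S$. In the uniformly recurrent case the original sequence $(\tau_n)\subseteq I'\subseteq\Omega_S$ directly satisfies $\sup_{{\bf t}'\in{\mathbb R}^n}\|\tilde{F}({\bf t}'+\tau_n)-\tilde{F}({\bf t}')\|\leq\sup_{{\bf t}\in I}\|F({\bf t}+\tau_n)-F({\bf t})\|\to 0$.

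The main obstacle is the relative denseness step: the decomposition ${\bf t}_0=\sigma'+\sigma''$ is non-unique, doubles the mesh constant, and requires simultaneously selecting $\epsilon/2$-periods close to both $\sigma'$ and $\sigma''$, which exploits the global character of relative denseness in $I'$ together with its reflection $-I'$. For the uniqueness assertion when ${\mathbb R}^n\setminus\Omega_S$ is bounded, the difference of two such extensions is a Bohr $\Omega_S$-almost periodic function on ${\mathbb R}^n$ vanishing on the unbounded set $I$, and a supremum-formula argument in the spirit of Proposition \ref{netokaureap}, applied with periods of arbitrarily large norm translating any point of ${\mathbb R}^n$ into $I$, forces the difference to vanish identically.
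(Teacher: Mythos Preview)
Your construction of $\tilde{F}$, the Cauchy argument for the limit, and the verification of Bohr $\Omega_S$-almost periodicity are essentially the paper's argument; in fact your treatment of the relative-denseness step (decompose $\mathbf{t}_0=\sigma'+\sigma''$, pick $\epsilon/2$-periods $\tau',\tau''$ near $\sigma',\sigma''$, and take $\tau'+\tau''$) is the clean way to read what the paper writes somewhat elliptically at that point.

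The genuine divergence is in the uniqueness clause. The paper does \emph{not} use a supremum-formula argument: it observes that when $\mathbb{R}^n\setminus\Omega_S$ is bounded, both $\tilde{F}$ and any competing extension $\tilde{G}$ are Bohr almost periodic on $\mathbb{R}^n$, hence compactly almost automorphic, and then passes to a common subsequence $(\tau_{k_l})$ of the defining sequence so that
\[
\tilde{G}(\mathbf{t}')=\lim_{l}\lim_{m}\tilde{G}(\mathbf{t}'+\tau_{k_m}-\tau_{k_l})
=\lim_{l}\lim_{m}F(\mathbf{t}'+\tau_{k_m}-\tau_{k_l})
=\lim_{l}\lim_{m}\tilde{F}(\mathbf{t}'+\tau_{k_m}-\tau_{k_l})=\tilde{F}(\mathbf{t}'),
\]
where the middle equality uses (AP-E) to ensure $\mathbf{t}'-\tau_{k_l}+\tau_{k_m}\in I$ for $m$ large.

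Your sketch instead asks for an $\epsilon$-period $\tau$ of $H=\tilde{F}-\tilde{G}$ with $\mathbf{t}'+\tau\in I$. The difficulty is that the only structural information you have about $H$ is that its $\epsilon$-periods are relatively dense in $\mathbb{R}^n$; nothing forces them to lie in $I'_M$ (or in any prescribed translate $-\mathbf{t}'+I$). Condition (AP-E) gives $\mathbf{t}'+I'_M\subseteq I$, but an $\epsilon$-period of $H$ picked in a ball $B(\sigma,l)$ with $\sigma\in I'_M$ need not itself belong to $I'_M$, and $I$ is not assumed to contain balls of radius $l$. So the translation step ``periods of arbitrarily large norm translating any point of $\mathbb{R}^n$ into $I$'' is not justified by the hypotheses. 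The almost-automorphy route sidesteps this because it works with the \emph{fixed} sequence $(\tau_k)\subseteq I'$ rather than with anonymous $\epsilon$-periods of the difference.
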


\begin{proof}
We will prove the theorem only for uniformly continuous, Bohr $I'$-almost periodic functions.
In this case,
for each natural number $k\in {\mathbb N}$  
there exists a point $\tau_{k}\in I'$ such that
$
\|F({\bf t}+{\bf \tau}_{k})-F({\bf t})\|_{Y} \leq 1/k$ for all ${\bf t}\in I
$ and $k\in {\mathbb N};$ furthermore, since the set $I'$ is unbounded, we may assume without loss of generality that $\lim_{k\rightarrow +\infty}|{\bf \tau}_{k}|=+\infty.$ 
Hence, one has
$\lim_{k\rightarrow +\infty}F({\bf t}+{\bf \tau}_{k})=F({\bf t}),$ uniformly for $t\in I.$ If ${\bf t}'\in {\mathbb R}^{n},$ then we can use our assumption on the existence of a finite real number $M>0$ such that
${\bf t}'+I'_{M}\subseteq I,$ and
the corresponding argumentation from Remark \ref{remarka} (see \eqref{milo-dritan}), in order to conclude that 
$\lim_{k\rightarrow +\infty}F({\bf t}'+{\bf \tau}_{k}):=\tilde{F}({\bf t}')$ exists.
The function $\tilde{F}(\cdot)$ is clearly uniformly continuous because $F(\cdot)$ is uniformly continuous; furthermore, by construction, we have that $\tilde{F}({\bf t})=F({\bf t})$ for all ${\bf t}\in I.$ 
Now we will prove that the function $\tilde{F}(\cdot)$
is Bohr $\Omega_{S}$-almost periodic. Suppose that a number $\epsilon>0$ is given. Then we know that there exists $l>0$ such that for each ${\bf t}_{0} \in I'$ there exists ${\bf \tau} \in B({\bf t}_{0},l) \cap I'$ such that
$
\|F({\bf t}+{\bf \tau})-F({\bf t})\|_{Y} \leq \epsilon/2$ for all $ {\bf t}\in I.$ Let ${\bf t}'\in {\mathbb R}^{n}$ be fixed. For any such numbers ${\bf t}_{0} \in I'$ and ${\bf \tau} \in B({\bf t}_{0},l) \cap I',$ we have
\begin{align*}
\bigl\|\tilde{F}({\bf t}'&+{\bf \tau})-\tilde{F}({\bf t}')\bigr\|_{Y} = \Bigl\|\lim_{k\rightarrow +\infty}\bigl[F({\bf t}'+{\bf \tau}+{\bf \tau}_{k})-F({\bf t}'+{\bf \tau}_{k})\bigr]\Bigr\|_{Y} 
\\& \leq \limsup_{k\rightarrow +\infty}\bigl\|F({\bf t}'+{\bf \tau}+{\bf \tau}_{k})-F({\bf t}'+{\bf \tau}_{k})\bigr\|_{Y}  
\leq \epsilon/2,\quad {\bf t}'\in {\mathbb R}^{n}.
\end{align*}
This clearly implies 
\begin{align*}
\bigl\|\tilde{F}({\bf t}'&-{\bf \tau})-\tilde{F}({\bf t}')\bigr\|_{Y} 
\leq \epsilon/2,\quad {\bf t}'\in {\mathbb R}^{n},
\end{align*}
which further implies that $F(\cdot)$ is Bohr $(I' \cup (-I'))$-almost periodic since $-{\bf t}_{0} \in I'$ and $-{\bf \tau} \in B(-{\bf t}_{0},l) \cap (-I').$ Take now any 
number $\tau \in \Omega;$ then $\tau$ can be
written  as a sum of two elements $\tau_{1}$ and $\tau_{2}$ from the set $(I'\cup (-I'))$ and, as a such, it will satisfy
\begin{align*}
&\bigl\|\tilde{F}({\bf t}'+{\bf \tau})-\tilde{F}({\bf t}')\bigr\|_{Y} = \bigl\|F({\bf t}'+{\bf \tau}_{1}+{\bf \tau}_{2})-F({\bf t}')\bigr\|_{Y} 
\\& \leq \bigl\|F({\bf t}'+{\bf \tau}_{1}+{\bf \tau}_{2})-F({\bf t}'+{\bf \tau}_{1})\bigr\|_{Y}  +\bigl\|F({\bf t}'+{\bf \tau}_{1})-F({\bf t}')\bigr\|_{Y} 
\leq 2\cdot ( \epsilon/2)=\epsilon,
\end{align*}
for any ${\bf t}'\in {\mathbb R}^{n}.$ Therefore, $F(\cdot)$ is Bohr $\Omega$-almost periodic, which clearly implies that $F(\cdot)$ is Bohr $\Omega_{S}$-almost periodic, as well.

Assume, finally, that the set ${\mathbb R}^{n}\setminus \Omega_{S}$ is bounded. 
Then the function $\tilde{F}(\cdot)$ is Bohr almost periodic and any
function  
$\tilde{G} : {\mathbb R}^{n}  \rightarrow Y$ which extends the function $F(\cdot)$ to the whole space and satisfies the above requirements must be Bohr almost periodic. 
Therefore, $\tilde{G}(\cdot)$ is 
compactly
almost automorphic so that the sequence 
$(\tau_{k})$ has a subsequence $(\tau_{k_{l}})$
such that
$$
\lim_{l\rightarrow +\infty}\lim_{m\rightarrow +\infty}\tilde{G}\bigl( {\bf t}'+\tau_{k_{m}}-\tau_{k_{l}}\bigr)=\tilde{G}\bigl({\bf t}'\bigr),\quad {\bf t}'\in {\mathbb R}^{n}.
$$
But, for every $l\in {\mathbb N}$ and ${\bf t}'\in {\mathbb R}^{n},$ we have that $\lim_{m\rightarrow +\infty}\tilde{G}( {\bf t}'+\tau_{k_{m}}-\tau_{k_{l}})=\lim_{m\rightarrow +\infty}F( {\bf t}'+\tau_{k_{m}}-\tau_{k_{l}}),$ so that the final conclusion follows from the almost automorphy of function
$\tilde{F}(\cdot)$ and the equality
$$
\lim_{l\rightarrow +\infty}\lim_{m\rightarrow +\infty}\tilde{F}\bigl( {\bf t}'+\tau_{k_{m}}-\tau_{k_{l}}\bigr)=\tilde{F}\bigl({\bf t}'\bigr),
$$
which holds pointwise on ${\mathbb R}^{n}.$ 
\end{proof}

In particular, if $({\bf v}_{1},\cdot \cdot \cdot ,{\bf v}_{n})$ is a basis of ${\mathbb R}^{n},$  and
$$
I'=I=\bigl\{ \alpha_{1} {\bf v}_{1} +\cdot \cdot \cdot +\alpha_{n}{\bf v}_{n}  : \alpha_{i} \geq 0\mbox{ for all }i\in {\mathbb N}_{n} \bigr\}
$$ 
is a convex polyhedral in ${\mathbb R}^{n},$ then we have $\Omega={\mathbb R}^{n}$ and, in this case, there exists a unique Bohr almost periodic extension of the function
$F : I \rightarrow Y$
to the whole Euclidean space, so that Proposition \ref{dekartovproizvod}, Proposition \ref{pointwise-prod-rn}, Proposition \ref{pointwise-prod-rn12345} and Proposition \ref{kontinuitetap}(v) continue to hold in this framework.\index{convex polyhedral}

We will also state the following important corollary of Theorem \ref{lenny-jasson} (see also \eqref{tupak12345}):

\begin{cor}\label{lenny-jasson1} (The uniqueness theorem for almost periodic functions) \index{the uniqueness theorem for almost periodic functions}
Suppose that $ I \subseteq {\mathbb R}^{n},$ $I +I \subseteq I,$ condition \emph{(AP-E)}
holds with $I'=I,$
and ${\mathbb R}^{n} \setminus [(I\cup (-I))+(I\cup (-I))]$ is a bounded set. 
If
$F :{\mathbb R}^{n}  \rightarrow Y$ and $G :{\mathbb R}^{n}  \rightarrow Y$  are two Bohr almost periodic functions and $F({\bf t})=G({\bf t})$ for all ${\bf t}\in I,$ then
$F({\bf t})=G({\bf t})$ for all ${\bf t}\in {\mathbb R}^{n}.$
\end{cor}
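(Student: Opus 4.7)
The approach is to reduce to a single Bohr almost periodic function that vanishes on $I$ and then to exploit Bochner's criterion together with condition (AP-E) to ``transport'' the vanishing to all of ${\mathbb R}^{n}$. Set $H:=F-G.$ By Proposition \ref{kontinuitetap}(v), $H:{\mathbb R}^{n}\to Y$ is Bohr almost periodic; by hypothesis $H({\bf t})=0$ for every ${\bf t}\in I$, and the goal becomes to prove $H\equiv 0$ on ${\mathbb R}^{n}.$ First I would observe that the standing assumption that ${\mathbb R}^{n}\setminus[(I\cup(-I))+(I\cup(-I))]$ be bounded forces the sumset $(I\cup(-I))+(I\cup(-I))$, and hence $I$ itself, to be unbounded, so one may fix a sequence $({\bf \tau}_{k})$ in $I$ with $|{\bf \tau}_{k}|\to +\infty.$

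The next step is to apply the Bochner criterion, Theorem \ref{Bochner123456}, taking $X=\{0\}$ and ${\mathrm R}$ the collection of all sequences in ${\mathbb R}^{n}$: there exists a subsequence $({\bf \tau}_{k_{l}})$ and a function $H^{\ast}:{\mathbb R}^{n}\to Y$ such that
\[
\lim_{l\to +\infty}H({\bf t}+{\bf \tau}_{k_{l}})=H^{\ast}({\bf t})\qquad\text{uniformly for }{\bf t}\in{\mathbb R}^{n}.
\]
To identify $H^{\ast},$ I would fix an arbitrary ${\bf t}\in{\mathbb R}^{n}$ and invoke condition (AP-E) at the point ${\bf t}'={\bf t}$: there exists $M_{{\bf t}}>0$ with ${\bf t}+I_{M_{{\bf t}}}\subseteq I.$ For all $l$ sufficiently large one has $|{\bf \tau}_{k_{l}}|\geq M_{{\bf t}}$ (and ${\bf \tau}_{k_{l}}\in I$ by construction), whence ${\bf \tau}_{k_{l}}\in I_{M_{{\bf t}}}$ and so ${\bf t}+{\bf \tau}_{k_{l}}\in I.$ Consequently $H({\bf t}+{\bf \tau}_{k_{l}})=0$ for all such $l,$ which upon passage to the limit yields $H^{\ast}({\bf t})=0.$ Since ${\bf t}$ was arbitrary, $H^{\ast}\equiv 0.$

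To conclude, I would use the elementary translation invariance of the supremum norm: uniform convergence $H(\cdot+{\bf \tau}_{k_{l}})\to 0$ means
\[
\sup_{{\bf s}\in{\mathbb R}^{n}}\bigl\|H({\bf s})\bigr\|_{Y}=\sup_{{\bf t}\in{\mathbb R}^{n}}\bigl\|H({\bf t}+{\bf \tau}_{k_{l}})\bigr\|_{Y}\longrightarrow 0\qquad(l\to +\infty),
\]
so $H\equiv 0$ on ${\mathbb R}^{n},$ i.e., $F({\bf t})=G({\bf t})$ for all ${\bf t}\in{\mathbb R}^{n}.$

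The whole argument is essentially algorithmic once one has Bochner's criterion and (AP-E) in hand; I do not expect a substantial obstacle. The only delicate point is conceptual: (AP-E) is precisely the condition that makes the translates ${\bf t}+{\bf \tau}_{k_{l}}$ eventually fall inside $I$ for \emph{every} ${\bf t}\in{\mathbb R}^{n},$ and this is what lets one propagate $H|_{I}=0$ to $H^{\ast}\equiv 0.$ Notably, the hypothesis ${\mathbb R}^{n}\setminus[(I\cup(-I))+(I\cup(-I))]$ bounded is used in this plan only to guarantee unboundedness of $I$ (so that a sequence $({\bf \tau}_{k})\subseteq I$ with $|{\bf \tau}_{k}|\to +\infty$ exists); the heavier extension-uniqueness machinery of Theorem \ref{lenny-jasson} is not needed, although the corollary can alternatively be derived from it by applying its uniqueness clause to the zero extension of $H|_{I}.$
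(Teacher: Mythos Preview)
Your proof is correct and takes a more direct route than the paper's. The paper presents the statement as a corollary of Theorem~\ref{lenny-jasson}: one applies the uniqueness clause of that extension theorem to $F|_{I}=G|_{I}$, and the argument for uniqueness there goes through compact almost automorphy and a double-limit identification of any two Bohr almost periodic extensions. Your approach bypasses the extension machinery entirely: Bochner's criterion plus condition (AP-E) forces the translates ${\bf t}+{\bf \tau}_{k_{l}}$ eventually into $I$ for every fixed ${\bf t}$, so the uniform limit $H^{\ast}$ is identically zero, and translation invariance of the sup norm finishes. This is cleaner for the bare uniqueness statement and, as you note, shows that the hypothesis $I+I\subseteq I$ is not actually used and that the boundedness of ${\mathbb R}^{n}\setminus[(I\cup(-I))+(I\cup(-I))]$ enters only to guarantee that $I$ is unbounded. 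The paper's route, on the other hand, is valuable because it embeds uniqueness in a framework that also \emph{constructs} the extension when none is given; your argument presumes both $F$ and $G$ are already defined on all of ${\mathbb R}^{n}$.
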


Now we would like to propose the following definition:

\begin{defn}\label{ujseadmis}
Suppose that $\emptyset \neq I \subseteq {\mathbb R}^{n}$ and $I+I \subseteq I.$ Then we say that $I$ is admissible with respect to the almost periodic extensions if and only if for any complex Banach space $Y$ and for any 
uniformly continuous, Bohr almost periodic function $F : I\rightarrow Y$ there exists a unique Bohr almost periodic function $\tilde{F} : {\mathbb R}^{n} \rightarrow Y$ such that $\tilde{F}({\bf t})=F({\bf t})$ for all ${\bf t}\in I.$ 
\end{defn}

By the foregoing, it is clear that any non-empty subset $I$ of ${\mathbb R}^{n}$ which is closed under addition and satisfies that 
condition (AP-E)
holds with $I'=I$ as well as
the set ${\mathbb R}^{n} \setminus [(I\cup (-I))+(I\cup (-I))]$ is bounded (in particular, this holds for convex polyhedrals)
has to be admissible with respect to the almost periodic extensions. But, it is clear that the set $I=[0,\infty) \times \{0\}\subseteq {\mathbb R}^{2}$ is not admissible with respect to the almost periodic extensions since there is no almost periodic extension
of the function $F(x,y)=y,$ $(x,y)\in I$ to the whole plane. Further analyses of the notion introduced in Definition \ref{ujseadmis} is without scope of this paper.

Concerning the item (iii), we will clarify the following result:

\begin{thm}\label{izgubljeni-em}
Suppose that $0\in  I \subseteq {\mathbb R}^{n},$ $I$ is closed, $I+I\subseteq I$ and
$\emptyset  \neq I'\subseteq I.$
Suppose, further, that
the set ${\mathbb D} \subseteq I$ is unbounded and condition \emph{(MD)} holds,
where:
\begin{itemize}
\item[(MD)] For each $M_{0}>0$ 
there exists a finite real number $M_{1}>M_{0}$ such that ${\mathbb D}_{M_{1}}-{\bf t}\in I$ and
$I'_{M_{1}}-{\bf t}\in I'$
for all ${\bf t}\in I \setminus I_{M_{0}}.$
\end{itemize}
Let ${\mathrm R}$ denote the collection of all sequences in $I,$ and let ${\mathcal B}$ denote any family of compact subsets of $X.$
Then any $({\mathrm R},{\mathcal B})$-multi-almost periodic function $F : I \times X \rightarrow Y$ is ${\mathbb D}$-asymptotically Bohr $({\mathcal B},I')$-almost periodic of type $1.$
\end{thm}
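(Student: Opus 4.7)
The plan is to argue directly (rather than by contradiction), exploiting the Bochner-type relative compactness of the translation orbit of $F$ built into $({\mathrm R},{\mathcal B})$-multi-almost periodicity, and then reading off the required near-period $\tau$ attached to each ${\bf t}_0\in I'$ from a finite anchor set via condition (MD). Fix $B\in{\mathcal B}$ and $\epsilon>0$; my aim is to produce $l,M>0$ fulfilling Definition~\ref{nafaks123456789012345123}(i). As a preliminary I would verify that $F$ is bounded on $I\times B$: if there existed $({\bf t}_k;x_k)\subseteq I\times B$ with $\|F({\bf t}_k;x_k)\|_Y\to\infty$, then applying $({\mathrm R},{\mathcal B})$-multi-almost periodicity to $({\bf t}_k)$ and extracting via compactness of $B$ would yield $F({\bf t}_{k_l};x_{k_l})\to F^{\ast}(0;x^{\ast})$ for some continuous limit, a contradiction. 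Thus the orbit ${\mathcal O}_B:=\{F(\cdot+{\bf b};\cdot)|_{I\times B}:{\bf b}\in I\}$ is a bounded subset of $\ell^\infty(I\times B:Y)$ whose closure is compact, since $({\mathrm R},{\mathcal B})$-multi-almost periodicity amounts exactly to the assertion that every sequence of translates in ${\mathcal O}_B$ admits a uniformly convergent subsequence.

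Using this compactness, I would select finitely many anchors ${\bf a}^{(1)},\ldots,{\bf a}^{(n)}\in I$ such that the balls $B(F(\cdot+{\bf a}^{(i)};\cdot),\epsilon/2)$ (in the sup norm on $I\times B$) cover ${\mathcal O}_B$, and set $L:=\max_i |{\bf a}^{(i)}|$. Invoking (MD) with $M_0:=L+1$ produces $M_1>M_0$ such that ${\mathbb D}_{M_1}-{\bf t}\subseteq I$ and $I'_{M_1}-{\bf t}\subseteq I'$ whenever ${\bf t}\in I$ satisfies $|{\bf t}|<M_0$; put $M:=M_1$. For each ${\bf t}_0\in I'_{M_1}$ I choose $i=i({\bf t}_0)$ with $\|F(\cdot+{\bf t}_0;\cdot)-F(\cdot+{\bf a}^{(i)};\cdot)\|_\infty\leq\epsilon/2$ and set $\tau:={\bf t}_0-{\bf a}^{(i)}$. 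Because ${\bf a}^{(i)}\in I\setminus I_{M_0}$, the second clause of (MD) locates $\tau$ in $I'_{M_1}-{\bf a}^{(i)}\subseteq I'$, while $|\tau-{\bf t}_0|=|{\bf a}^{(i)}|\leq L$. For any ${\bf t}\in{\mathbb D}_{M_1}$ the first clause of (MD) legitimizes ${\bf u}:={\bf t}-{\bf a}^{(i)}\in I$; evaluating the covering inequality at ${\bf u}$ then gives $\|F({\bf t}+\tau;x)-F({\bf t};x)\|_Y\leq\epsilon/2$ for every $x\in B$, which is stronger than the required type-1 estimate on pairs ${\bf t},{\bf t}+\tau\in{\mathbb D}_M$.

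The main obstacle will be the ``short'' points ${\bf t}_0\in I'\setminus I'_{M_1}$: these form a bounded subset of $I'$ on which (MD) does not directly supply $\tau={\bf t}_0-{\bf a}^{(i)}\in I'$, so the construction of the previous paragraph is not immediately applicable. My plan is to dispatch them uniformly by fixing a single reference ${\bf r}^{\ast}\in I'_{M_1}$ (which exists generically, since the theorem's natural case has $I'$ unbounded along with ${\mathbb D}$), letting $\tau^{\ast}\in I'$ be the near-period produced by the construction above applied to ${\bf r}^{\ast}$, and enlarging $l$ to $l:=L+|{\bf r}^{\ast}|+M_1+1$. This enlargement ensures $|\tau^{\ast}-{\bf t}_0|\leq L+|{\bf r}^{\ast}|+M_1<l$ for every short ${\bf t}_0$, so $\tau^{\ast}\in B({\bf t}_0,l)\cap I'$ remains a valid near-period on ${\mathbb D}_{M_1}$. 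The degenerate case in which $I'$ is bounded (so $I'_{M_1}=\emptyset$ eventually) is handled by a standard finite $\delta$-net plus continuity argument on the bounded set of admissible ${\bf t}_0$'s. Combining the two regimes yields one pair $(l,M)$ valid for every ${\bf t}_0\in I'$, proving the theorem.
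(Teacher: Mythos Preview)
Your argument is essentially the same as the paper's: both establish relative compactness of the translation orbit $\{F(\cdot+{\bf b};\cdot):{\bf b}\in I\}$, extract a finite anchor set ${\bf a}^{(1)},\ldots,{\bf a}^{(k)}$ (the paper does this via the map $F_B:I\to BUC(B:Y)$ and by quoting \cite[Theorem~2.2]{RUESS-1}, you do it directly in $\ell^\infty(I\times B:Y)$), and then use (MD) to legitimate $\tau={\bf t}_0-{\bf a}^{(i)}\in I'$ and ${\bf u}={\bf t}-{\bf a}^{(i)}\in I$. Your boundedness-by-contradiction step and the paper's relative-compactness-of-range step are interchangeable preliminaries.

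You are in fact more careful than the paper on one point: the paper's proof only treats ${\bf t}_0\in I'_M$, whereas Definition~\ref{nafaks123456789012345123}(i) quantifies over all ${\bf t}_0\in I'$. Your fix via a reference point ${\bf r}^\ast\in I'_{M_1}$ and an enlarged $l$ is correct and completes that gap in the unbounded-$I'$ regime.

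However, your final hand-wave for the ``degenerate case in which $I'$ is bounded'' is not salvageable by a $\delta$-net/continuity argument, because the conclusion is simply false there. Take $n=1$, $I={\mathbb D}=[0,\infty)$, $I'=\{1\}$, $F(t)=\sin t$: all hypotheses hold (for (MD) note $I'_{M_1}=\emptyset$ once $M_1>1$, so the second clause is vacuous), yet the only available $\tau\in I'$ is $\tau=1$ and $|\sin(t+1)-\sin t|$ does not tend to $0$. So the implicit working assumption, both in your proof and in the paper's, is that $I'$ is unbounded (equivalently $I'_{M_1}\neq\emptyset$ for every $M_1$); you should state this explicitly rather than claim an argument for the bounded case.
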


\begin{proof}
Let $B\in {\mathcal B}$ and $\epsilon>0$ be fixed. Since $I$ is closed, we have that the restriction of function
$F(\cdot;\cdot)$ to the set $I\times B$ is uniformly continuous, which easily implies that the function $F_{B} : I \rightarrow BUC(B:Y),$  given by \eqref{sarajevo-london}, is well defined and uniformly continuous.
Now we will prove that the function $F_{B}(\cdot)$ has a relatively compact range.
Denote $K_{k}=[-k,k]^{n}$ for all integers $k\in {\mathbb N}.$ Since the set $F_{B}(K_{k} \cap I)$ is relatively compact in $BUC(B:Y)$ for all integers $k\in {\mathbb N},$
it suffices to show that 
there exists $k\in {\mathbb N}$ such that, for every ${\bf t}\in I$, there exists a point ${\bf s}\in I\cap K_{k}$ such that $\|F({\bf t};x)-F({\bf s};x)\|_{Y}\leq \epsilon $ for all $x\in B.$ Suppose the contrary. Then for each $k\in {\mathbb N}$ there exists ${\bf t}_{k}\in I$ such that, for every ${\bf s}\in I\cap K_{k},$ there exists $x\in B$ with $\|F({\bf t}_{k};x)-F({\bf s};x)\|_{Y}> \epsilon.$ 
Define ${\bf b}_{k}:={\bf t}_{k}$ for all $k\in {\mathbb N}.$ Due to our assumption, there exists a subsequence $({\bf b}_{k_{l}})$ of $({\bf b}_{k})$ such that \eqref{love12345678ap} holds true. Since $0\in I,$ this implies the existence of a number $l_{0}(\epsilon)\in {\mathbb N}$ such that 
$$
\Bigl \| F\bigl( {\bf t}_{k_{l}} ;x \bigr) -F\bigl( {\bf t}_{k_{m}} ;x \bigr) \Bigr\|_{Y}\leq \epsilon,\quad l,\ m\in {\mathbb N},\ l,\ m\geq l_{0}(\epsilon),
$$ 
uniformly for $x\in B.$ In particular, we have 
$$
\Bigl \| F\bigl( {\bf t}_{k_{l}} ;x \bigr) -F\bigl( {\bf t}_{k_{l_{0}(\epsilon)}} ;x\bigr) \Bigr\|_{Y}\leq \epsilon,\quad l\in {\mathbb N},\ l\geq l_{0}(\epsilon),\ x\in B.
$$
Therefore, ${\bf t}_{k_{l_{0}(\epsilon)}} \notin K_{l}$ for all $l\in {\mathbb N}$ with $l\geq l_{0}(\epsilon),$ which is a contradiction. Now it is quite simply to prove with the help of Cauchy criterion of convergence and the
$({\mathrm R},{\mathcal B})$-multi-almost periodicity of $F(\cdot;\cdot)$
that the set of translations  $\{ F_{B}(\cdot +\tau) : \tau \in I \}$ is relatively compact in $BUC(B:Y).$ Applying \cite[Theorem 2.2; see 1. and 3.(ii)]{RUESS-1} (see also the second part of the proof of \cite[Theorem 3.3]{RUESS-1}), we get that there exist a finite cover $(T_{i})_{i=1}^{k}$ of the set $I_{1}$ and points ${\bf t}_{i}\in T_{i}$ ($1\leq i\leq k$) such that $\|F_{B}({\bf t}+\omega)-F_{B}({\bf t}_{i}+\omega)\|_{BUC(B:Y)}\leq \epsilon$ for all 
$\omega \in I$ and
$t\in T_{i}$ ($1\leq i\leq k$). Let $M_{0}:=l:=1+\max\{|{\bf t}_{i}| : 1\leq i \leq k\},$ and let $M_{1}>0$ satisfy condition (MD) with this $M_{0}.$ Set $M:=2M_{1}+l.$ Suppose that ${\bf t},\ {\bf t}+\tau \in {\mathbb D}_{M}$ and ${\bf t}_{0}\in I'_{M}.$ Then
there exists $i\in {\mathbb N}_{k}$ such that ${\bf t}_{0}\in T_{i}$ so that $\tau={\bf t}_{0}-{\bf t}_{i}\in T_{i}-{\bf t}_{i}\in B({\bf t}_{0},l)\cap I'$ due to the first condition in (MD) and the obvious inequality $|{\bf t}_{i}|\leq l.$ Furthermore, the second condition in (MD) implies
${\bf t}-t_{i}\in I$ and therefore
\begin{align*}
\bigl\| F_{B}({\bf t}+\tau)-& F_{B}({\bf t})\bigr\|_{BUC(B:Y)}=\bigl\| F_{B}({\bf t}+{\bf t}_{0}-{\bf t}_{i})-F_{B}({\bf t})\bigr\|_{BUC(B:Y)}
\\& =\bigl\| F_{B}({\bf t}_{0}+[{\bf t}-{\bf t}_{i}])-F_{B}({\bf t}_{i}+[{\bf t}-{\bf t}_{i}])\bigr\|_{BUC(B:Y)}\leq \epsilon,
\end{align*}
which simply completes the proof.
\end{proof}

\begin{rem}\label{opaska}
\begin{itemize}
\item[(i)] In \cite[Theorem 3.3]{RUESS-1}, W. M. Ruess and W.  H. Summers have considered the situation in which $I=[a,\infty),$ $X=\{0\}$ 
and the set of all translations $\{ f(\cdot +\tau) : \tau\geq 0 \}$ is relatively compact in $BUC(I:Y).$ But, the obtained result is a simple consequence of the corresponding result with $I=[0,\infty),$ which follows from a simple translation argument. In Theorem \ref{izgubljeni-em}, which therefore provides a proper extension of the corresponding result from \cite[Theorem 3.3]{RUESS-1} with ${\mathbb D}=I'=I=[0,\infty)$, we have decided to consider
the collection ${\mathrm R}$ of all sequences in $I,$ only. The interested reader may try to further analyze the assumption in which the function $F(\cdot;\cdot)$ is $({\mathrm R},{\mathcal B})$-multi-almost periodic
with ${\mathrm R}$ being the collection of all sequences in a certain subset $I''$ of ${\mathbb R}^{n}$ which contains $0$ and satisfies $I+I''\subseteq I.$
\item[(ii)] In the multi-dimensional framework, we cannot expect  the situation in which ${\mathbb D}=I'=I$. The main problem lies in the fact that condition (MD) does not hold in this case; but, if $I=[0,\infty)^{n}, $ for example, then the conclusion of Theorem \ref{izgubljeni-em} holds for any proper subsector $I'$ of $I,$ with the meaning clear, and ${\mathbb D}=I'.$
\end{itemize}
\end{rem}

\subsection{Differentiation and integration of $({\mathrm R}_{X},{\mathcal B})$-multi-almost periodic functions}\label{london-calling}

Concerning the partial derivatives of (${\mathbb D}$-asymptotically) $({\mathrm R}_{\mathrm X},{\mathcal B})$-multi-almost periodic functions, we would like to state the following result (by $(e_{1},e_{2},\cdot \cdot \cdot,e_{n})$ we denote the standard basis of ${\mathbb R}^{n}$) which can be also formulated for the notion introduced in Definition \ref{nafaks1234567890}:

\begin{prop}\label{aljaska12345ap}
\begin{itemize}
\item[(i)] Suppose that the function $F(\cdot; \cdot)$ is $({\mathrm R}_{\mathrm X},{\mathcal B})$-multi-almost periodic, for every sequence which belongs to ${\mathrm R}_{\mathrm X},$ we have that any its subsequence belongs to ${\mathrm R}_{\mathrm X},$ the partial derivative 
$$
\frac{\partial F(\cdot ; \cdot)}{\partial t_{i}}:=\lim_{h\rightarrow 0}\frac{F(\cdot +he_{i};\cdot)-F(\cdot; \cdot)}{h}
$$
exists on  $I\times  X$ and it is uniformly continuous on ${\mathcal B},$ i.e.,
\begin{align*}
(\forall B\in {\mathcal B})& \ (\forall \epsilon>0) \ (\exists \delta > 0) \ (\forall {\bf t'},\ {\bf t''} \in I)\ (\forall x\in B)
\\& \Biggl( \bigl| {\bf t'}-{\bf t''} \bigr|<\delta \Rightarrow \Bigl\| \frac{\partial F({\bf t'} ; x)}{\partial t_{i}}-\frac{\partial F({\bf t''} ; x)}{\partial t_{i}}\Bigr\|<\epsilon  \Biggr).
\end{align*}
Then the function $
\frac{\partial F(\cdot ; \cdot)}{\partial t_{i}}$ is  $({\mathrm R}_{\mathrm X},{\mathcal B})$-multi-almost periodic.
\item[(ii)] Suppose that,
for every sequence ${\bf b}(\cdot)$ which belongs to ${\mathrm R},$ any its subsequence belongs to ${\mathrm R}$ and ${\bf T}-{\bf b}(l)\in I$ whenever ${\bf T}\in I$ and $l\in {\mathbb N}.$
Suppose, further, that there exists a sequence in ${\mathrm R}$ whose any subsequence is unbounded as well as that the function $F(\cdot; \cdot)$ is $I$-asymptotically $({\mathrm R},{\mathcal B})$-multi-almost periodic, 
the partial derivative 
$
\frac{\partial F({\bf t} ; x)}{\partial t_{i}}
$
exists for all ${\bf t}\in I$, $ x\in X$ and it is uniformly continuous on ${\mathcal B}.$
Then the function $
\frac{\partial F(\cdot ; \cdot)}{\partial t_{i}}$ is $I$-asymptotically $({\mathrm R},{\mathcal B})$-multi-almost periodic.
\end{itemize}
\end{prop}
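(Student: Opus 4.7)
For part (i), the plan is to start from the integral representation
$$F({\bf t}+he_{i};x)-F({\bf t};x)=\int_{0}^{h}\frac{\partial F}{\partial t_{i}}({\bf t}+se_{i};x)\,ds,$$
valid whenever $[{\bf t},{\bf t}+he_{i}]\subseteq I.$ Combined with the assumed uniform continuity of $\partial F/\partial t_{i}$ on ${\mathcal B},$ this yields, for each $B\in{\mathcal B}$ and $\epsilon>0,$ some $\delta>0$ such that
$$\Bigl\|\frac{F({\bf t}+he_{i};x)-F({\bf t};x)}{h}-\frac{\partial F({\bf t};x)}{\partial t_{i}}\Bigr\|<\epsilon$$
whenever $0<|h|<\delta,$ uniformly for $({\bf t},x)\in I\times B.$

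Given a sequence $(({\bf b}_{k};x_{k}))\in{\mathrm R}_{\mathrm X},$ I would apply the $({\mathrm R}_{\mathrm X},{\mathcal B})$-multi-almost periodicity of $F$ to extract a subsequence $(({\bf b}_{k_{l}};x_{k_{l}}))$ along which $F(\cdot+{\bf b}_{k_{l}};\cdot+x_{k_{l}})$ converges uniformly on $I\times B$ to some $F^{\ast}.$ For each fixed small $h,$ the shifted difference quotient $g_{h,l}({\bf t};x):=h^{-1}[F({\bf t}+he_{i}+{\bf b}_{k_{l}};x+x_{k_{l}})-F({\bf t}+{\bf b}_{k_{l}};x+x_{k_{l}})]$ then converges uniformly to $h^{-1}[F^{\ast}({\bf t}+he_{i};x)-F^{\ast}({\bf t};x)]$ as $l\rightarrow+\infty.$ The triangle-inequality estimate
$$\Bigl\|\frac{\partial F}{\partial t_{i}}({\bf t}+{\bf b}_{k_{l}};x+x_{k_{l}})-\frac{\partial F}{\partial t_{i}}({\bf t}+{\bf b}_{k_{m}};x+x_{k_{m}})\Bigr\|\leq 2\epsilon+\|g_{h,l}-g_{h,m}\|_{\sup},$$
valid for $0<h<\delta,$ shows that $(\partial F/\partial t_{i})(\cdot+{\bf b}_{k_{l}};\cdot+x_{k_{l}})$ is uniformly Cauchy on $I\times B,$ hence converges uniformly to a continuous limit; this establishes the $({\mathrm R}_{\mathrm X},{\mathcal B})$-multi-almost periodicity of $\partial F/\partial t_{i}.$

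For part (ii), the plan is to decompose $F=G+Q$ with $G\in AP_{({\mathrm R},{\mathcal B})}(I\times X:Y)$ and $Q\in C_{0,I,{\mathcal B}}(I\times X:Y).$ For every small $h>0,$ the difference quotient $g_{h}({\bf t};x):=h^{-1}[F({\bf t}+he_{i};x)-F({\bf t};x)]$ is a linear combination of translates of $F$ and therefore $I$-asymptotically $({\mathrm R},{\mathcal B})$-multi-almost periodic, by Proposition~\ref{kontinuitetap}(ii). By the same integral-representation argument as in (i), $g_{h}\rightarrow\partial F/\partial t_{i}$ uniformly on $I\times B$ as $h\rightarrow 0^{+},$ so Proposition~\ref{2.1.10obazacap} applied to the sequence $(g_{1/n})_{n\geq n_{0}}$ yields the required conclusion.

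The main obstacle is the passage from subsequential uniform convergence of $F$ to that of its partial derivative in part (i), where the uniform continuity of $\partial F/\partial t_{i}$ must absorb the error in replacing the derivative by a finite difference, uniformly across all translations $({\bf b}_{k_{l}};x_{k_{l}}).$ For part (ii), a secondary technical point is that Proposition~\ref{2.1.10obazacap} asks for uniform convergence on the enlarged neighborhood $B^{\circ}\cup\bigcup_{x\in\partial B}B(x,\epsilon_{B});$ this is handled either by assuming ${\mathcal B}$ already contains such neighborhoods, or by noting that the uniform continuity hypothesis on $\partial F/\partial t_{i}$ can be taken to propagate to them.
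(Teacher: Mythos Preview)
Your argument for part (ii) is exactly the paper's approach: approximate $\partial F/\partial t_{i}$ by the difference quotients $F_{j}:=j[F(\cdot+j^{-1}e_{i};\cdot)-F(\cdot;\cdot)]$, observe that each $F_{j}$ is $I$-asymptotically $({\mathrm R},{\mathcal B})$-multi-almost periodic by closure under ${\bf t}$-translation and linear combination, establish uniform convergence on each $I\times B$ via the integral representation, and invoke Proposition~\ref{2.1.10obazacap}. Your remark about the enlarged neighbourhoods $B^{\circ}\cup\bigcup_{x\in\partial B}B(x,\epsilon_{B})$ is also addressed in the paper: it notes explicitly that only $B$-wise uniform convergence is actually needed (one looks at the proof of Proposition~\ref{2.1.10ap1} rather than its statement).

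For part (i), however, the paper does \emph{not} use your direct Cauchy-sequence argument; it uses precisely the same method you used for (ii). That is, each $F_{j}$ is $({\mathrm R}_{\mathrm X},{\mathcal B})$-multi-almost periodic by Proposition~\ref{kontinuitetap}(ii)(a) and (iv), the integral identity
\[
F_{j}(\cdot;\cdot)-\frac{\partial F(\cdot;\cdot)}{\partial t_{i}}=j\int_{0}^{1/j}\Bigl[\frac{\partial F(\cdot+se_{i};\cdot)}{\partial t_{i}}-\frac{\partial F(\cdot;\cdot)}{\partial t_{i}}\Bigr]\,ds
\]
together with the assumed uniform continuity on ${\mathcal B}$ gives $F_{j}\to\partial F/\partial t_{i}$ uniformly on each $I\times B$, and then Proposition~\ref{2.1.10ap} (with the same observation that $B$-wise convergence suffices) finishes the proof.

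This route is not merely a repackaging of yours; it genuinely sidesteps the obstacle you correctly identify. In your direct argument you need
\[
\Bigl\|\frac{\partial F}{\partial t_{i}}({\bf t}+{\bf b}_{k_{l}};x+x_{k_{l}})-g_{h,l}({\bf t};x)\Bigr\|<\epsilon
\]
uniformly in ${\bf t}\in I$, $x\in B$, and $l$, which amounts to applying the uniform-continuity hypothesis at the second argument $x+x_{k_{l}}$. But the hypothesis only guarantees the $\delta$ for $x$ ranging over a fixed $B\in{\mathcal B}$, and the translates $x+x_{k_{l}}$ need not lie in any single member of ${\mathcal B}$, so this step is not justified as written. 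The paper's approach avoids this entirely: the uniform continuity is used \emph{only} to prove $F_{j}\to\partial F/\partial t_{i}$ at untranslated points $({\bf t};x)\in I\times B$, while the handling of the $({\bf b}_{k_{l}};x_{k_{l}})$-translations is delegated to the already-established $({\mathrm R}_{\mathrm X},{\mathcal B})$-multi-almost periodicity of each $F_{j}$ and to the limit theorem.
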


\begin{proof}
We will prove only (i) because (ii) follows similarly, by appealing to Proposition \ref{2.1.10obazacap} in place of 
Proposition \ref{2.1.10ap1} (observe that we only need here the uniform convergence of the sequence of functions $(F_{j}(\cdot;\cdot))$ to the function $\frac{\partial F(\cdot ; \cdot)}{\partial t_{i}}$ as $j\rightarrow +\infty,$ on the individual sets $B\in {\mathcal B};$ see the proof of Proposition \ref{2.1.10ap1}). The proof immediately follows from the fact that the sequence $(F_{j}(\cdot;\cdot)\equiv j[F(\cdot +j^{-1}e_{i}; \cdot)-F(\cdot;\cdot)])$ of $({\mathrm R}_{\mathrm X},{\mathcal B})$-multi-almost periodic functions converges uniformly to the function $\frac{\partial F(\cdot ; \cdot)}{\partial t_{i}}$ as $j\rightarrow +\infty.$ This can be shown as in the one-dimesional case, by observing that
$$
F_{j}(\cdot;\cdot)-\frac{\partial F(\cdot ; \cdot)}{\partial t_{i}}=j\int^{1/j}_{0}\Biggl[ \frac{\partial F(\cdot +se_{i}; \cdot)}{\partial t_{i}}-\frac{\partial F(\cdot ; \cdot)}{\partial t_{i}}\Biggr]\, ds.
$$
\end{proof}

We continue by stating the following extension of S. M. A. Alsulami's result \cite[Theorem 3.2]{integral-als}:

\begin{thm}\label{ubico-ubij-se}
Suppose that the function $F : {\mathbb R}^{n}
\times X \rightarrow Y$ is continuous as well as that  $
\frac{\partial F(\cdot ; \cdot)}{\partial t_{i}} : {\mathbb R}^{n}
\times X \rightarrow Y$ is a Bohr ${\mathcal B}$-almost periodic function, where ${\mathcal B}$ is any colection of compact subset of $X.$ 
Suppose that for each $B\in {\mathcal B}$ we have that at least one of the following two conditions holds:
\begin{itemize}
\item[(C1)] The Banach space $l_{\infty}(B : Y)$ does not contain $c_{0};$
\item[(C2)] The range of function $F_{B}(\cdot),$ given by \eqref{sarajevo-london}, is weakly relatively compact in $l_{\infty}(B : Y).$
\end{itemize}
Then the function $F(\cdot;\cdot)$ is Bohr ${\mathcal B}$-almost periodic.
\end{thm}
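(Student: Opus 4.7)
The plan is to lift the problem to a Banach-space-valued one via \eqref{sarajevo-london} and invoke an appropriate multi-dimensional vector-valued version of Loomis's theorem. Fix $B\in{\mathcal B}$, a compact subset of $X$, and consider $F_{B}:{\mathbb R}^{n}\to l_{\infty}(B:Y)$ defined by $[F_{B}({\bf t})](x):=F({\bf t};x)$. Since each Bohr ${\mathcal B}$-almost periodic partial derivative $\partial F/\partial t_{i}$ is bounded on ${\mathbb R}^{n}\times B$ by Proposition \ref{bounded-pazi}, the corresponding difference quotients in $t_{i}$ converge uniformly in $x\in B$; hence $F_{B}$ has partial derivatives in $l_{\infty}(B:Y)$ given by the lifts of $\partial F/\partial t_{i}$, and these lifted derivatives are Bohr almost periodic as $l_{\infty}(B:Y)$-valued functions in the sense of \cite[Subsection 1.2]{pankov}, by the same Bochner-type argument that preceded Theorem \ref{Bochner123456}.

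Next I would check that $F_{B}$ is uniformly continuous and bounded. Uniform continuity follows directly from the uniform bounds on the partial derivatives via the mean value inequality applied coordinatewise. Boundedness is handled case by case: under hypothesis (C2), weak relative compactness in $l_{\infty}(B:Y)$ forces norm boundedness; under (C1), one argues that in the absence of $c_{0}$ the integral of an almost periodic $l_{\infty}(B:Y)$-valued function along each coordinate ray produces a bounded primitive, so integrating $\partial F_{B}/\partial t_{i}$ from the origin assembles a bounded representative agreeing with $F_{B}$ up to the value $F_{B}(0)$.

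The decisive step is to apply, in both cases, the vector-valued multi-dimensional Loomis theorem: a bounded, uniformly continuous function $G:{\mathbb R}^{n}\to Z$ whose partial derivatives $\partial G/\partial t_{i}$ are all Bohr almost periodic is itself Bohr almost periodic, provided that either $c_{0}\not\hookrightarrow Z$ or $G({\mathbb R}^{n})$ is weakly relatively compact in $Z$. The $n$-dimensional version is reduced to the one-dimensional vector-valued Loomis theorem (Kadets for (C1), Basit for (C2)) by freezing all but one coordinate and invoking the Bochner criterion of Theorem \ref{Bochner123456} together with the common-$\epsilon$-period statement of Proposition \ref{dekartovproizvod} to assemble the coordinatewise almost periodicity into joint Bohr almost periodicity. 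Applying this with $G=F_{B}$ and $Z=l_{\infty}(B:Y)$ yields Bohr almost periodicity of $F_{B}$ for every $B\in{\mathcal B}$, which is precisely the Bohr ${\mathcal B}$-almost periodicity of $F$. The main obstacle is the one-dimensional Loomis-Kadets-Basit step itself, which we use as a black box; the rest of the argument is the standard lifting and coordinate-wise assembly machinery already developed earlier in this section.
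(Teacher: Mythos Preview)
Your lifting to $F_{B}$ and the observation that the lifted partial derivatives $(\partial F/\partial t_{i})_{B}$ are Bohr almost periodic in $l_{\infty}(B:Y)$ match the paper exactly. The divergence is at what you call the ``decisive step''.

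The paper does \emph{not} reduce to a one-dimensional Loomis theorem by freezing coordinates. Instead it invokes Basit's criterion on ${\mathbb R}^{n}$ directly (cited as \cite[Theorem 3.1]{integral-als}): under (C1) or (C2) it suffices to show that each difference ${\bf t}\mapsto F_{B}({\bf t}+{\bf a})-F_{B}({\bf t})$ is almost periodic on ${\mathbb R}^{n}$. This difference is then written explicitly as a telescoping sum of line integrals of the partials,
\[
\sum_{i=1}^{n}\int_{0}^{a_{i}}\Bigl(\frac{\partial F}{\partial t_{i}}\Bigr)_{B}\bigl(t_{1},\ldots,t_{i-1},t_{i}+v,t_{i+1}+a_{i+1},\ldots,t_{n}+a_{n}\bigr)\,dv,
\]
and its almost periodicity follows from Bochner's criterion: given any sequence in ${\mathbb R}^{n}$, Proposition \ref{dekartovproizvod} provides a common subsequence along which all $n$ lifted partials converge uniformly on ${\mathbb R}^{n}$, and the integral formula transfers this convergence to the difference.

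Your proposed reduction has a genuine gap. Freezing all but one coordinate and applying the one-dimensional Kadets/Basit theorem yields, for each fixed $(t_{1},\ldots,t_{i-1},t_{i+1},\ldots,t_{n})$, that $t_{i}\mapsto F_{B}({\bf t})$ is almost periodic. But Proposition \ref{dekartovproizvod} concerns a \emph{finite} tuple of almost periodic functions on ${\mathbb R}^{n}$; it cannot assemble an uncountable family of one-variable almost periodic functions, indexed by ${\mathbb R}^{n-1}$, into a jointly almost periodic function on ${\mathbb R}^{n}$. Separate almost periodicity in each coordinate does not imply joint almost periodicity without additional uniformity, and your sketch supplies none. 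The integral-formula-for-differences mechanism is precisely what is missing from your argument; once you have it, the black box you actually need is Basit's criterion on ${\mathbb R}^{n}$, not the one-dimensional Loomis theorem.
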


\begin{proof}
Let $B\in {\mathcal B}$ and ${\bf a}=(a_{1},\cdot \cdot \cdot,a_{n})\in {\mathbb R}^{n}$ be fixed. As easily approved, it suffices to show that the mapping $F_{B}(\cdot)$ is almost periodic. Since we have assumed (C1) or (C2), an application of an old result of B. Basit (see e.g., \cite[Theorem 3.1]{integral-als}) shows that we only need to prove that the function 
$$
{\bf t} \mapsto F_{B}({\bf t}+{\bf a})-F_{B}({\bf t}),\quad {\bf t} \in {\mathbb R}^{n}
$$
is almost periodic.
So, let $({\bf b}_{k})$ be a sequence in ${\mathbb R}^{n}.$ Since the mapping 
$$
\Biggl(\Bigl(\frac{\partial F(\cdot ; \cdot)}{\partial t_{1}}\Bigr)_{B},\cdot \cdot \cdot, \Bigl(\frac{\partial F(\cdot ; \cdot)}{\partial t_{n}}\Bigr)_{B}\Biggr) : {\mathbb R}^{n} \rightarrow \bigl(l_{\infty}(B : Y)\bigr)^{n}
$$
is almost periodic (see Proposition \ref{dekartovproizvod}), 
there exists a subsequence $({\bf b}_{k_{l}})$
of $({\bf b}_{k})$ such that
\begin{align}\label{clash-bgd-ns}
\lim_{l\rightarrow +\infty} \sup_{x\in B}\Biggl\|\Bigl(\frac{\partial F(\cdot ; x)}{\partial t_{i}}\Bigr)\bigl( {\bf s}+{\bf b}_{k_{l}} \bigr)-
\Bigl(\frac{\partial F(\cdot ; x)}{\partial t_{i}}\Bigr)\bigl( {\bf s} \bigr)\Biggr\|_{Y}=0,
\end{align}
uniformly in ${\bf s}\in {\mathbb R}^{n}$ and $1\leq i\leq n.$ Since, for every $x\in B,$ we have
\begin{align*}
&\Biggl\{\Bigl[ F({\bf t}+{\bf a}+{\bf b}_{k_{l}})-F_{B}({\bf t}+{\bf b}_{k_{l}})\Bigr]-\Bigl[ F_{B}({\bf t}+{\bf a})-F_{B}({\bf t})\Bigr]\Biggr\}(x)
\\& =\sum_{i=1}^{n}\int^{a_{i}}_{0}F_{t_{i}}\bigl(s_{1}+{\bf b}_{k_{l}}^{1},\cdot\cdot \cdot,s_{i-1}+{\bf b}_{k_{l}}^{i-1},s_{i}+{\bf b}_{k_{l}}^{i}+v,\\& s_{i+1}+{\bf b}_{k_{l}}^{i+1}+a_{i+1},\cdot \cdot \cdot ,s_{n}+{\bf b}_{k_{l}}^{n}+a_{n};x\bigl)\, dv
\\&-\sum_{i=1}^{n}\int^{a_{i}}_{0}F_{t_{i}}\bigl(s_{1},\cdot\cdot \cdot,s_{i-1},s_{i}+v,s_{i+1}+a_{i+1},\cdot \cdot \cdot ,s_{n}+a_{n};x\bigl)\, dv,
\end{align*}
applying \eqref{clash-bgd-ns} we simply get that
$$
\lim_{l\rightarrow +\infty}  \Biggl\| \Bigl[ F({\bf t}+{\bf a}+{\bf b}_{k_{l}})-F_{B}({\bf t}+{\bf b}_{k_{l}})\Bigr]-\Bigl[ F_{B}({\bf t}+{\bf a})-F_{B}({\bf t})\Bigr]\Biggr\|_{l_{\infty}(B : Y)}=0,
$$
uniformly in ${\bf t}\in {\mathbb R}^{n}.$
The proof of the theorem is thereby complete.
\end{proof}

\begin{cor}\label{idiot-london}
Suppose that the function $F : {\mathbb R}^{n}
\rightarrow Y$ is continuous as well as that  $
\frac{\partial F(\cdot)}{\partial t_{i}} : {\mathbb R}^{n}
\rightarrow Y$ is an almost periodic function.
Suppose that at least one of the following two conditions holds:
\begin{itemize}
\item[(C1)'] The Banach space $Y$ does not contain $c_{0};$
\item[(C2)'] The range of function $F(\cdot)$ is weakly relatively compact in $Y.$
\end{itemize}
Then the function $F(\cdot)$ is almost periodic.
\end{cor}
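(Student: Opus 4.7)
The plan is to obtain this corollary as an immediate specialization of Theorem \ref{ubico-ubij-se} to the trivial parameter space $X=\{0\}$. Set $X:=\{0\}$ and take $\mathcal{B}:=\{\{0\}\}$, which is manifestly a collection of compact subsets of $X$ and satisfies $\mathcal{B}_{X}=X$. Identify a continuous function $F:\mathbb{R}^{n}\to Y$ with the continuous function $\tilde F:\mathbb{R}^{n}\times X\to Y$ defined by $\tilde F(\mathbf{t};0):=F(\mathbf{t})$. Under this identification, Bohr $\mathcal{B}$-almost periodicity of $\tilde F$ (in the sense of Definition \ref{nafaks1234567890}) is precisely Bohr almost periodicity of $F$, and the same identification matches the partial derivative hypothesis in the corollary with the corresponding hypothesis of Theorem \ref{ubico-ubij-se}.

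For the unique set $B=\{0\}\in\mathcal{B}$, the Banach space $l_{\infty}(B:Y)$ of bounded functions $B\to Y$ is canonically isometrically isomorphic to $Y$ via the evaluation map $f\mapsto f(0)$. Consequently, the auxiliary mapping $\tilde F_{B}:\mathbb{R}^{n}\to l_{\infty}(B:Y)$ from \eqref{sarajevo-london} corresponds under this isometry exactly to $F:\mathbb{R}^{n}\to Y$. Hence condition (C1), that $l_{\infty}(B:Y)$ does not contain $c_{0}$, reduces to (C1)'; and condition (C2), that the range of $\tilde F_{B}$ is weakly relatively compact in $l_{\infty}(B:Y)$, reduces to (C2)'.

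Therefore, assuming either (C1)' or (C2)', all hypotheses of Theorem \ref{ubico-ubij-se} are met by $\tilde F$ with respect to the choices $X=\{0\}$ and $\mathcal{B}=\{\{0\}\}$. Applying that theorem yields that $\tilde F$ is Bohr $\mathcal{B}$-almost periodic, which by the identification above is equivalent to $F$ being almost periodic on $\mathbb{R}^{n}$.

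There is essentially no obstacle beyond correctly recording the canonical identifications described above; the entire content of the corollary is packaged in Theorem \ref{ubico-ubij-se}, and no fresh argument (nor, in particular, any recourse to the underlying Basit-type extension result) is needed at this stage.
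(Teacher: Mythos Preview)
Your proposal is correct and matches the paper's approach exactly: the paper states Corollary \ref{idiot-london} immediately after Theorem \ref{ubico-ubij-se} without proof, intending precisely the specialization $X=\{0\}$, $\mathcal{B}=\{\{0\}\}$ and the canonical identification $l_{\infty}(\{0\}:Y)\cong Y$ that you spell out.
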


The proof of following extension of \cite[Theorem 3.2]{integral-als}, which has already been clarified in the introductory part for the scalar-valued functions, is simple and therefore omitted:

\begin{thm}\label{london-gent}
Suppose that the function $F : {\mathbb R}^{n} \times X \rightarrow Y$ is Bohr ${\mathcal B}$-almost periodic and for each set $B\in {\mathcal B}$ 
the  Banach space $l_{\infty}(B : Y)$ does not contain $c_{0}$
or
the function $H(t_{1},t_{2},\cdot \cdot \cdot, t_{n};x):=\int^{t_{1}}_{0}F(t,t_{2},\cdot \cdot \cdot, t_{n};x)\, dt,$ $(t_{1},t_{2},\cdot \cdot \cdot, t_{n}) \in {\mathbb R}^{n},$ $x\in X$
satisfies that the range of function $H_{B}(\cdot),$ given by \eqref{sarajevo-london} with $F=G$ therein, is weakly relatively compact in $l_{\infty}(B : Y).$ If
there exist Bohr ${\mathcal B}$-almost periodic functions $G_{i} : {\mathbb R}^{n} \times X \rightarrow Y$ such that $F_{t_{i}}(t_{1},t_{2},\cdot \cdot \cdot, t_{n};x) =(G_{i})_{t_{1}}(t_{1},t_{2},\cdot \cdot \cdot, t_{n};x) $ is a continuous function on ${\mathbb R}^{n}$ for each fixed element $x\in X$ ($2\leq i\leq n$), then the function $H : {\mathbb R}^{n} \times X \rightarrow Y$
is Bohr ${\mathcal B}$-almost periodic.
\end{thm}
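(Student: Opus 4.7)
\smallskip

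\noindent\textbf{Proof proposal.}
The strategy is to reduce to Theorem \ref{ubico-ubij-se} applied to the function $H$ in place of $F$. Thus I need to verify three things: continuity of $H$, Bohr $\mathcal{B}$-almost periodicity of each partial derivative $H_{t_i}$ for $1\le i \le n$, and one of the structural conditions (C1)/(C2) for $H_B$. The last of these is part of the hypothesis of the theorem, so the real work is in the second.

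First I would check that $H : \mathbb{R}^{n}\times X \to Y$ is continuous. This follows from the continuity of $F$, combined with the fact that, by Proposition \ref{bounded-pazi} and the Bohr $\mathcal{B}$-almost periodicity of $F$, the restriction of $F$ to $\mathbb{R}^{n}\times B$ is bounded for every compact $B\in {\mathcal B}$, allowing a routine dominated-convergence argument. Next, I would compute the partial derivatives of $H$. Clearly $H_{t_{1}} = F$, which is Bohr $\mathcal{B}$-almost periodic by hypothesis. For $2\le i\le n$, using the continuity of $F_{t_{i}} = (G_{i})_{t_{1}}$ on $\mathbb{R}^{n}$ (for each fixed $x\in X$), differentiation under the integral sign and the fundamental theorem of calculus give
\begin{align*}
H_{t_{i}}(t_{1},t_{2},\ldots,t_{n};x) &= \int_{0}^{t_{1}} (G_{i})_{t_{1}}(t,t_{2},\ldots,t_{n};x)\, dt \\
&= G_{i}(t_{1},t_{2},\ldots,t_{n};x) - G_{i}(0,t_{2},\ldots,t_{n};x).
\end{align*}

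The delicate step is to show that the ``boundary'' term $\tilde G_{i}(t_{1},t_{2},\ldots,t_{n};x) := G_{i}(0,t_{2},\ldots,t_{n};x)$, viewed as a function on $\mathbb{R}^{n}\times X$ that does not depend on $t_{1}$, is Bohr $\mathcal{B}$-almost periodic. I would do this via the Bochner criterion (Theorem \ref{Bochner123456}): given any sequence $({\bf b}_{k}=(b_{k}^{1},\ldots,b_{k}^{n}))$ in $\mathbb{R}^{n}$, the auxiliary sequence $((0,b_{k}^{2},\ldots,b_{k}^{n}))$ also lies in $\mathbb{R}^{n}$, so $(\mathrm{R},{\mathcal B})$-multi-almost periodicity of $G_{i}$ (with $\mathrm{R}$ all sequences in $\mathbb{R}^{n}$) furnishes a subsequence $({\bf b}_{k_{l}})$ and a limit function $G_{i}^{\ast}$ with
\[
\lim_{l\to +\infty}G_{i}\bigl(0,t_{2}+b_{k_{l}}^{2},\ldots,t_{n}+b_{k_{l}}^{n};x\bigr) = G_{i}^{\ast}(0,t_{2},\ldots,t_{n};x),
\]
uniformly in $(t_{2},\ldots,t_{n})\in\mathbb{R}^{n-1}$ and $x\in B$. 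Since $\tilde G_{i}(\cdot;\cdot)$ is constant in $t_{1}$, the same convergence holds uniformly in $t_{1}\in\mathbb{R}$ as well, showing $\tilde G_{i}$ is $(\mathrm{R},{\mathcal B})$-multi-almost periodic, hence Bohr $\mathcal{B}$-almost periodic. Combining this with the Bohr $\mathcal{B}$-almost periodicity of $G_{i}$ and Proposition \ref{kontinuitetap}(v), each $H_{t_{i}}$ is Bohr $\mathcal{B}$-almost periodic.

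Finally, since the hypothesis of the theorem provides for each $B\in{\mathcal B}$ either the non-containment of $c_{0}$ in $l_{\infty}(B:Y)$ or the weak relative compactness of the range of $H_{B}$, the hypothesis (C1) or (C2) of Theorem \ref{ubico-ubij-se} is met for $H$ in place of $F$. An application of Theorem \ref{ubico-ubij-se} then yields that $H$ is Bohr $\mathcal{B}$-almost periodic, completing the proof. The main obstacle I anticipate is precisely the verification that $\tilde G_{i}$ is Bohr $\mathcal{B}$-almost periodic in all $n$ variables; a naive restriction/extension argument fails because the defining almost-periodicity condition for $G_{i}$ only supplies translation vectors whose first coordinate $\tau_{1}$ is in general nonzero, and this has to be sidestepped by passing through the Bochner criterion as above.
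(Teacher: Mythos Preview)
Your proposal is correct and follows essentially the same approach the paper intends: the paper omits the proof of Theorem~\ref{london-gent} but remarks that it is a straightforward extension of the scalar argument in the introduction, which computes the partials of $H$ and applies the Loomis--Basit type result (here, Theorem~\ref{ubico-ubij-se}). Your handling of the boundary term $\tilde G_{i}(\cdot;\cdot)=G_{i}(0,t_{2},\ldots,t_{n};\cdot)$ via the Bochner criterion (Theorem~\ref{Bochner123456}) with the auxiliary sequence $(0,b_{k}^{2},\ldots,b_{k}^{n})$ is a clean way to fill in the only nontrivial detail the paper leaves implicit.
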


\begin{cor}\label{london-gentac123}
Suppose that the function $F : {\mathbb R}^{n}  \rightarrow Y$ is almost periodic and 
the  Banach space $Y$ does not contain $c_{0}$
or
the function $H(t_{1},t_{2},\cdot \cdot \cdot, t_{n}):=\int^{t_{1}}_{0}F(t,t_{2},\cdot \cdot \cdot, t_{n})\, dt,$ $(t_{1},t_{2},\cdot \cdot \cdot, t_{n}) \in {\mathbb R}^{n}$
satisfies that its range is weakly relatively compact in $Y.$ If
there exist almost periodic functions $G_{i} : {\mathbb R}^{n}  \rightarrow Y$ such that $F_{t_{i}}(t_{1},t_{2},\cdot \cdot \cdot, t_{n}) =(G_{i})_{t_{1}}(t_{1},t_{2},\cdot \cdot \cdot, t_{n}) $ is a continuous function on ${\mathbb R}^{n},$ for $2\leq i\leq n$, then the function $H : {\mathbb R}^{n}\rightarrow Y$
is almost periodic.
\end{cor}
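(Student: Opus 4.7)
The plan is to derive the corollary as the direct specialization of Theorem \ref{london-gent} to the case $X=\{0\}$, $\mathcal{B}=\{X\}$, since under that specialization Bohr $\mathcal{B}$-almost periodicity becomes ordinary Bohr almost periodicity on $\mathbb{R}^{n}$ and conditions (C1)/(C2) for $l_{\infty}(B:Y)$ collapse to the stated conditions (C1)$'$/(C2)$'$ on $Y$ itself. Equivalently, I would give a standalone argument whose heart is an application of Corollary \ref{idiot-london} (the multi-dimensional Loomis-type theorem) to $H$, after checking that every partial derivative of $H$ is almost periodic.

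Concretely, I would proceed in the following steps. First, use the fundamental theorem of calculus to obtain $H_{t_{1}}(t_{1},\ldots,t_{n})=F(t_{1},\ldots,t_{n})$, which is almost periodic by hypothesis. Second, for $2\leq i\leq n$, justify differentiation under the integral sign using the assumed continuity of $F_{t_{i}}=(G_{i})_{t_{1}}$, obtaining
\begin{align*}
H_{t_{i}}(t_{1},\ldots,t_{n}) &=\int_{0}^{t_{1}}F_{t_{i}}(t,t_{2},\ldots,t_{n})\,dt \\
&=\int_{0}^{t_{1}}(G_{i})_{t_{1}}(t,t_{2},\ldots,t_{n})\,dt \\
&=G_{i}(t_{1},t_{2},\ldots,t_{n})-G_{i}(0,t_{2},\ldots,t_{n}).
\end{align*}
Third, observe that $G_{i}(0,t_{2},\ldots,t_{n})$ is almost periodic as a function of $(t_{2},\ldots,t_{n})\in\mathbb{R}^{n-1}$ by the Bochner criterion (Theorem \ref{Bochner123456} applied to sequences of the form $(0,b_{k})$ in $\mathbb{R}^{n}$), and therefore, viewed as a function on $\mathbb{R}^{n}$ that does not depend on $t_{1}$, it is almost periodic on $\mathbb{R}^{n}$; by Proposition \ref{kontinuitetap}(v) the difference $H_{t_{i}}$ is then almost periodic. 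So every partial derivative of $H$ is almost periodic, and $H$ itself is continuous.

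Finally, I would apply Corollary \ref{idiot-london} to $H$: under either of the two standing assumptions (C1)$'$ or (C2)$'$, the continuity of $H$ together with the almost periodicity of all its first-order partials forces $H$ to be almost periodic, which is the desired conclusion. The main obstacle is not any single computation above but the invocation of Corollary \ref{idiot-london}: it is there that the Banach-space hypothesis (no copy of $c_{0}$, or weak relative compactness of the range of $H$) is essential, since without such a condition the Loomis-type step can fail even in the one-dimensional setting (as Bochner's classical counterexample shows). Everything else is a straightforward combination of Leibniz's rule, the Bochner criterion, and the linear structure of $AP(\mathbb{R}^{n}:Y)$ recorded in Proposition \ref{kontinuitetap}(v).
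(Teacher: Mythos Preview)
Your proposal is correct and matches the paper's approach: Corollary \ref{london-gentac123} is obtained in the paper simply as the specialization $X=\{0\}$, ${\mathcal B}=\{X\}$ of Theorem \ref{london-gent}, and the paper's (omitted) proof of Theorem \ref{london-gent} is exactly the Loomis-type argument you sketch --- compute $H_{t_1}=F$, use the hypothesis $F_{t_i}=(G_i)_{t_1}$ to get $H_{t_i}=G_i(t_1,\ldots,t_n)-G_i(0,t_2,\ldots,t_n)$ almost periodic, and then invoke Corollary \ref{idiot-london} under (C1)$'$ or (C2)$'$. Your standalone argument is precisely what the paper alludes to when it says the proof ``has already been clarified in the introductory part for the scalar-valued functions.''
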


The interested reader may try to extend the results of \cite[Theorem 4.1, Theorem 4.2]{integral-als}, regarding the  almost periodicity of function
$$
{\bf t} \mapsto \int^{t_{1}}_{0}\int^{t_{2}}_{0}\cdot \cdot \cdot \int^{t_{n}}_{0}F(s_{1},s_{2},\cdot \cdot \cdot,s_{n})\, ds_{1} \, ds_{2}\cdot \cdot \cdot ds_{n},\quad {\bf t}=(t_{1},t_{2},\cdot \cdot \cdot ,t_{n})\in {\mathbb R}^{n},
$$ 
in the above manner. The results about integration of multi-dimensional asymptotically almost periodic functions and related connections with the weak asymptotic
almost periodicity, obtained in \cite[Section 4]{RUESS-2}, will be reconsidered somewhere else.

\subsection{Composition theorems for $({\mathrm R},{\mathcal B})$-multi-almost periodic type functions}\label{marekzero}

Suppose that $F : I \times X \rightarrow Y$ and $G : I \times Y \rightarrow Z$ are given functions. The main aim of this subsection is to analyze the almost periodic properties of the multi-dimensional Nemytskii operator
$W : I  \times X \rightarrow Z$ given by
$$
W({\bf t}; x):=G\bigl({\bf t} ; F({\bf t}; x)\bigr),\quad {\bf t} \in I,\ x\in X.
$$

We will first state the following generalization of \cite[Theorem 4.16]{diagana}; the proof is similar to the proof of the above-mentioned theorem but we will present it for the sake
of completeness:

\begin{thm}\label{eovakoonakoap}
Suppose that $F : I \times X \rightarrow Y$ is $({\mathrm R},{\mathcal B})$-multi-almost periodic and $G : I \times Y \rightarrow Z$ is $({\mathrm R}',{\mathcal B}')$-multi-almost periodic,
where ${\mathrm R}'$ is a collection of all sequences $b : {\mathbb N} \rightarrow {\mathbb R}^{n}$ from ${\mathrm R}$ and all their subsequences, as well as
\begin{align}\label{ljuvenautoap}
{\mathcal B}':=\Biggl\{\bigcup_{{\bf t}\in I}F(t; B) : B\in {\mathcal B}\Biggr\}.
\end{align}
If there exists a finite constant $L>0$ such that
\begin{align}\label{zigzverinijeap}
\bigl\| G({\bf t} ;x) -G({\bf t} ;y)\bigr\|_{Z}\leq L\|x-y\|_{Y},\quad {\bf t}\in I,\ x,\ y\in Y,
\end{align}
then the function $W(\cdot; \cdot)$ is $({\mathrm R},{\mathcal B})$-multi-almost periodic.
\end{thm}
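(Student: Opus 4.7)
Fix $B \in {\mathcal B}$ and a sequence $({\bf b}_k) \in {\mathrm R}$; the aim is to extract a subsequence along which $W({\bf t} + \cdot\,; x)$ converges uniformly in ${\bf t} \in I$ and $x \in B$. Continuity of $W$ is automatic, since $F$ and $G$ are continuous (by their respective multi-almost periodicity) and $W$ is their composition.

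The argument proceeds by a double extraction. First, by the $({\mathrm R},{\mathcal B})$-multi-almost periodicity of $F$, there exist a subsequence $({\bf b}_{k_l})$ of $({\bf b}_k)$ and a function $F^{\ast} : I \times X \to Y$ with $F({\bf t} + {\bf b}_{k_l}; x) \to F^{\ast}({\bf t}; x)$ uniformly for ${\bf t} \in I$ and $x \in B$. Because ${\mathrm R}'$ is by construction closed under taking subsequences of its elements, $({\bf b}_{k_l}) \in {\mathrm R}'$; and the set $B' := \bigcup_{{\bf s} \in I} F({\bf s}; B)$ lies in ${\mathcal B}'$ by \eqref{ljuvenautoap}. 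Applying the $({\mathrm R}',{\mathcal B}')$-multi-almost periodicity of $G$ to the sequence $({\bf b}_{k_l})$ and the set $B'$ then yields a further subsequence $({\bf b}_{k_{l_m}})$ and a function $G^{\ast} : I \times Y \to Z$ with $G({\bf t} + {\bf b}_{k_{l_m}}; y) \to G^{\ast}({\bf t}; y)$ uniformly for ${\bf t} \in I$ and $y \in B'$.

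The Lipschitz hypothesis \eqref{zigzverinijeap} makes each $G({\bf s}; \cdot)$ $L$-Lipschitz uniformly in ${\bf s}$, hence $G^{\ast}({\bf t}; \cdot)$ inherits the same Lipschitz constant on $B'$ and admits a unique $L$-Lipschitz extension to $\overline{B'}$; since $F^{\ast}({\bf t}; x)$ is a uniform limit of elements of $B'$, it lies in $\overline{B'}$, so $W^{\ast}({\bf t}; x) := G^{\ast}({\bf t}; F^{\ast}({\bf t}; x))$ is well defined. Setting $y_m := F({\bf t} + {\bf b}_{k_{l_m}}; x)$, which belongs to $B'$ because ${\bf t} + {\bf b}_{k_{l_m}} \in I$ by \eqref{lepolepo} and $x \in B$, the triangle inequality gives
\[
\bigl\| W({\bf t} + {\bf b}_{k_{l_m}}; x) - W^{\ast}({\bf t}; x) \bigr\|_Z \leq \bigl\| G({\bf t} + {\bf b}_{k_{l_m}}; y_m) - G^{\ast}({\bf t}; y_m) \bigr\|_Z + L \bigl\| y_m - F^{\ast}({\bf t}; x) \bigr\|_Y.
\]
The first summand vanishes uniformly in $({\bf t}, x)$ by the second extraction (since $y_m \in B'$), and the second by the first extraction; this yields the required uniform convergence.

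The main obstacle is that $F^{\ast}({\bf t}; x)$ need not lie in $B'$ itself, only in $\overline{B'}$, so the uniform convergence delivered by the second extraction cannot be invoked directly at the limit point. The Lipschitz assumption \eqref{zigzverinijeap} is precisely the ingredient that bridges this gap, both by extending $G^{\ast}({\bf t}; \cdot)$ to $\overline{B'}$ and by allowing the second term of the estimate to be controlled; a weaker uniform-continuity control on $G$ in the second variable would suffice in principle, but the Lipschitz formulation is the cleanest.
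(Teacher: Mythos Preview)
Your proof is correct and follows essentially the same double-extraction strategy as the paper: first pass to a subsequence via the $({\mathrm R},{\mathcal B})$-multi-almost periodicity of $F$, then to a further subsequence via the $({\mathrm R}',{\mathcal B}')$-multi-almost periodicity of $G$, and finish with a triangle-inequality estimate using the Lipschitz bound \eqref{zigzverinijeap}. The only minor difference is that the paper splits at the intermediate point $G({\bf t}+{\bf \tau}_m;F^{\ast}({\bf t};x))$ rather than your $G^{\ast}({\bf t};y_m)$; your choice is slightly cleaner in that it places the ``moving'' argument $y_m$ squarely inside $B'$ and thereby sidesteps the closure issue you identified, whereas the paper tacitly uses $F^{\ast}({\bf t};x)\in B'$ (strictly, only $\overline{B'}$ is guaranteed, though the Lipschitz hypothesis makes this harmless).
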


\begin{proof}
Let the set $B\in {\mathcal B}$ and the sequence $({\bf b}_{k}=(b_{k}^{1},b_{k}^{2},\cdot \cdot\cdot ,b_{k}^{n})) \in {\mathrm R}$ be given.
By definition, there exist a subsequence $({\bf b}_{k_{l}}=(b_{k_{l}}^{1},b_{k_{l}}^{2},\cdot \cdot\cdot , b_{k_{l}}^{n}))$ of $({\bf b}_{k})$ and a function
$F^{\ast} : I \times X \rightarrow Y$ such that \eqref{love12345678ap} holds. Set $B':=\bigcup_{{\bf t}\in I}F(t; B)$ and $b':=({\bf b}_{k_{l}}).$
Then there exist a subsequence $({\bf b}_{k_{l_{m}}}=(b_{k_{l_{m}}}^{1},b_{k_{l_{m}}}^{2},\cdot \cdot\cdot , b_{k_{l_{m}}}^{n}))$ of $({\bf b}_{k_{l}})$ and a function
$G^{\ast} : I \times Y \rightarrow Z$ such that
\begin{align*}
\lim_{m\rightarrow +\infty}\Bigl\| G\bigl({\bf t} +(b_{k_{l_{m}}}^{1},\cdot \cdot\cdot, b_{k_{l_{m}}}^{n});y\bigr)-G^{\ast}({\bf t};y) \Bigr\|_{Z}=0,
\end{align*}
uniformly for $y\in B'$ and ${\bf t}\in I.$
It suffices to show that
\begin{align}\label{emanuelap}
\lim_{m\rightarrow +\infty}\Bigl\| G\bigl({\bf t} +(b_{k_{l_{m}}}^{1},\cdot \cdot\cdot, b_{k_{l_{m}}}^{n});F\bigl({\bf t} +(b_{k_{l_{m}}}^{1},\cdot \cdot\cdot, b_{k_{l_{m}}}^{n});x\bigr)\bigr)-G^{\ast}({\bf t};F^{\ast}({\bf t};x)) \Bigr\|_{Z}=0,
\end{align}
uniformly for $x\in B$ and ${\bf t}\in I.$
Denote ${\bf \tau_{m}}:=(b_{k_{l_{m}}}^{1},\cdot \cdot\cdot, b_{k_{l_{m}}}^{n})$ for all $m\in {\mathbb N}$. We have (${\bf t}\in I,$ $x\in B,$ $m\in {\mathbb N}$):
\begin{align*}
&\Bigl\| G\bigl({\bf t} +{\bf \tau_{m}};F\bigl({\bf t} +{\bf \tau_{m}};x\bigr)\bigr)-G^{\ast}({\bf t};F^{\ast}({\bf t};x)) \Bigr\|_{Z}
\\& \leq  \Bigl\| G\bigl({\bf t} +{\bf \tau_{m}};F\bigl({\bf t} +{\bf \tau_{m}};x\bigr)\bigr)-G({\bf t}+{\bf \tau_{m}};F^{\ast}({\bf t};x)) \Bigr\|_{Z}
\\& + \Bigl\| G({\bf t}+{\bf \tau_{m}};F^{\ast}({\bf t};x)) -G^{\ast}({\bf t};F^{\ast}({\bf t};x)) \Bigr\|_{Z}
\\& \leq L\Bigl\|F\bigl({\bf t} +{\bf \tau_{m}};x\bigr)-F^{\ast}({\bf t};x) \Bigr\|_{Y}+\Bigl\| G({\bf t}+{\bf \tau_{m}};F^{\ast}({\bf t};x)) -G^{\ast}({\bf t};F^{\ast}({\bf t};x)) \Bigr\|_{Z}.
\end{align*}
Since $x\in B$ and $F^{\ast}({\bf t};x)\in B'$ for all ${\bf t}\in I$, the limit equality \eqref{emanuelap} holds,
uniformly for $x\in B$ and ${\bf t}\in I,$
which completes the proof of the theorem.
\end{proof}

Keeping in mind Proposition \ref{bounded-pazi}, Theorem \ref{Bochner123456}, Theorem \ref{eovakoonakoap} and the fact that a continuous function
$F : I \times X \rightarrow Y$ is $({\mathrm R},{\mathcal B})$-multi-almost periodic (Bohr ${\mathcal B}$-almost periodic) if and only if it is $({\mathrm R},\overline{{\mathcal B}})$-multi-almost periodic (Bohr $\overline{{\mathcal B}}$-almost periodic), where $\overline{{\mathcal B}}:=\{\overline{B} : B\in {\mathcal B}\},$
we can immediately clarify the following:

\begin{cor}
\label{ovakorajko}
Suppose that ${\mathcal B}$ is any collection of compact subsets of $X,$ $F : {\mathbb R}^{n} \times X \rightarrow Y$ is Bohr ${\mathcal B}$-almost periodic and $G : {\mathbb R}^{n} \times Y \rightarrow Z$ is Bohr ${\mathcal B}'$-almost periodic,
where ${\mathcal B}'$ is given by \eqref{ljuvenautoap}.
If there exists a finite constant $L>0$ such that \eqref{zigzverinijeap} holds with $I={\mathbb R}^{n},$ then 
the function $W(\cdot; \cdot)$ is Bohr ${\mathcal B}$-almost periodic.
\end{cor}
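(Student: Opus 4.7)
The plan is to reduce Corollary~\ref{ovakorajko} directly to Theorem~\ref{eovakoonakoap} by passing through the Bochner criterion of Theorem~\ref{Bochner123456}. Let ${\mathrm R}$ denote the collection of all sequences in ${\mathbb R}^{n}$, which is obviously closed under taking subsequences (so ${\mathrm R}'={\mathrm R}$ in the notation of Theorem~\ref{eovakoonakoap}).

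First I would transfer the Bohr hypothesis on $F$ into the multi-sequence framework: since ${\mathcal B}$ is a family of compact subsets of $X$, Theorem~\ref{Bochner123456} applies directly and yields that $F$ is $({\mathrm R},{\mathcal B})$-multi-almost periodic. The analogous step for $G$ is the only delicate point. The family ${\mathcal B}'$ defined in \eqref{ljuvenautoap} need not consist of compact subsets of $Y$ a priori; however, Proposition~\ref{bounded-pazi} (applied with $I={\mathbb R}^{n}$, which clearly fulfils the condition \eqref{profice-zeron}) ensures that, for every $B\in{\mathcal B}$, the set $F({\mathbb R}^{n}\times B)$ is relatively compact in $Y$. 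Hence $\overline{{\mathcal B}'}:=\{\overline{B'}:B'\in{\mathcal B}'\}$ is a family of compact subsets of $Y$. Using the observation, explicitly recorded in the paragraph preceding the corollary, that Bohr ${\mathcal B}'$-almost periodicity is equivalent to Bohr $\overline{{\mathcal B}'}$-almost periodicity, Theorem~\ref{Bochner123456} now gives that $G$ is $({\mathrm R},\overline{{\mathcal B}'})$-multi-almost periodic, and the same equivalence (applied in the other direction for the multi-sequence notion) yields that $G$ is $({\mathrm R},{\mathcal B}')$-multi-almost periodic.

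With both $F$ and $G$ expressed as $({\mathrm R},\cdot)$-multi-almost periodic functions with the collections matched as in Theorem~\ref{eovakoonakoap}, and since the uniform Lipschitz estimate \eqref{zigzverinijeap} is assumed to hold on $I={\mathbb R}^{n}$, Theorem~\ref{eovakoonakoap} applies and yields that the Nemytskii composition $W({\bf t};x):=G({\bf t};F({\bf t};x))$ is $({\mathrm R},{\mathcal B})$-multi-almost periodic. Finally, a single further invocation of Theorem~\ref{Bochner123456} (justified once more by the fact that ${\mathcal B}$ consists of compact subsets of $X$, and by the continuity of $W$ which follows from the continuity of $F$ and $G$ together with \eqref{zigzverinijeap}) converts this back into the statement that $W$ is Bohr ${\mathcal B}$-almost periodic, as required.

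The only nontrivial obstacle is the verification that ${\mathcal B}'$ can be replaced by a family of compact subsets of $Y$ without losing the hypothesis on $G$; this is precisely what Proposition~\ref{bounded-pazi} supplies, and it is the reason the corollary requires ${\mathcal B}$ to consist of \emph{compact} (rather than merely bounded) sets. All remaining manipulations are bookkeeping with the equivalences $({\mathcal B}\leftrightarrow\overline{{\mathcal B}})$ and the Bochner criterion.
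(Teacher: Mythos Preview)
Your proposal is correct and follows essentially the same approach as the paper: the paper's justification invokes exactly the same four ingredients---Proposition~\ref{bounded-pazi}, Theorem~\ref{Bochner123456}, Theorem~\ref{eovakoonakoap}, and the equivalence between ${\mathcal B}$- and $\overline{{\mathcal B}}$-almost periodicity---and you have simply spelled out in detail how they fit together.
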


A slight modification of the proof of Theorem \ref{eovakoonakoap} (cf. also the proof of \cite[Theorem 3.31]{diagana}) shows that the following result holds true:

\begin{thm}\label{eovako12345onakoap}
Suppose that $F : I\times X \rightarrow Y$ is $({\mathrm R},{\mathcal B})$-multi-almost periodic and $G : I \times Y \rightarrow Z$ is $({\mathrm R}',{\mathcal B}')$-multi-almost periodic,
where ${\mathrm R}'$ is a collection of all sequences $b : {\mathbb N} \rightarrow {\mathbb R}^{n}$ from ${\mathrm R}$ and all their subsequences, as well as
$
{\mathcal B}'$ is given by \eqref{ljuvenautoap}.
Set
$$
{\mathcal B}^{'*}:=\bigcup_{({\bf b}_{k})\in {\mathrm R} ; B\in {\mathcal B}}\Biggl\{F^{*}(t; B) : {\bf t}\in I \Biggr\},
$$
with the meaning clear.
If 
\begin{align*}
& (\forall  B \in {\mathcal B}) \ (\forall \epsilon >0) \ (\exists \delta >0) 
\\ & \Bigl( x,\ y \in  {\mathcal B}' \cup {\mathcal B}^{'*}\mbox{ and }\ \bigl\| x-y\bigr\|_{Y} <\delta \Rightarrow \bigl\| G({\bf t};x)-G({\bf t};y)\bigr\|_{Z}<\epsilon,\ {\bf t} \in I \Bigr),
\end{align*}
then the function $W(\cdot; \cdot)$ is $({\mathrm R},{\mathcal B})$-multi-almost periodic.
\end{thm}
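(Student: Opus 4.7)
The plan is to mimic the proof of Theorem \ref{eovakoonakoap}, replacing the global Lipschitz hypothesis on $G$ by the weaker uniform (in ${\bf t}$) continuity of $G({\bf t};\cdot)$ on $\mathcal{B}'\cup\mathcal{B}^{'*}$. Fix $B\in\mathcal{B}$ and a sequence $({\bf b}_k)\in\mathrm{R}$. Using $(\mathrm{R},\mathcal{B})$-multi-almost periodicity of $F$, I would extract a subsequence $({\bf b}_{k_l})$ and a function $F^*:I\times X\to Y$ with $F(\cdot+{\bf b}_{k_l};x)\to F^*(\cdot;x)$ uniformly on $I\times B$. Setting $B':=\bigcup_{{\bf t}\in I}F({\bf t};B)\in\mathcal{B}'$ and observing that $({\bf b}_{k_l})\in\mathrm{R}'$ by the closure hypothesis on $\mathrm{R}'$, the $(\mathrm{R}',\mathcal{B}')$-multi-almost periodicity of $G$ supplies a further subsequence $(\tau_m):=({\bf b}_{k_{l_m}})$ and $G^*:I\times Y\to Z$ with $G(\cdot+\tau_m;y)\to G^*(\cdot;y)$ uniformly on $I\times B'$. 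I then define $W^*({\bf t};x):=G^*({\bf t};F^*({\bf t};x))$; the goal is to show $W(\cdot+\tau_m;\cdot)\to W^*$ uniformly on $I\times B$.

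For this I would begin with the split used in the proof of Theorem \ref{eovakoonakoap}, namely
\begin{align*}
\bigl\|W({\bf t}+\tau_m;x)&-W^*({\bf t};x)\bigr\|_Z \\
&\leq \bigl\|G({\bf t}+\tau_m;F({\bf t}+\tau_m;x))-G({\bf t}+\tau_m;F^*({\bf t};x))\bigr\|_Z\\
&\quad+\bigl\|G({\bf t}+\tau_m;F^*({\bf t};x))-G^*({\bf t};F^*({\bf t};x))\bigr\|_Z.
\end{align*}
The first summand tends to $0$ uniformly on $I\times B$: indeed, $F({\bf t}+\tau_m;x)\in B'\in\mathcal{B}'$ and $F^*({\bf t};x)\in F^*(I;B)\in\mathcal{B}^{'*}$ both belong to sets in $\mathcal{B}'\cup\mathcal{B}^{'*}$, and $\|F({\bf t}+\tau_m;x)-F^*({\bf t};x)\|_Y\to 0$ uniformly, so the uniform continuity hypothesis applies directly.

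The main obstacle will be the second summand, because the $(\mathrm{R}',\mathcal{B}')$-multi-almost periodicity delivers uniform convergence $G(\cdot+\tau_m;y)\to G^*(\cdot;y)$ only for $y\in B'$, whereas $F^*({\bf t};x)$ typically lies in $\overline{B'}\setminus B'\subseteq\bigcup\mathcal{B}^{'*}$. My strategy is to insert, for a sufficiently large fixed $L$, the intermediate point $F({\bf t}+\tau_L;x)\in B'$ and split further
\begin{align*}
\bigl\|G({\bf t}+\tau_m;F^*({\bf t};x))&-G^*({\bf t};F^*({\bf t};x))\bigr\|_Z\\
&\leq \bigl\|G({\bf t}+\tau_m;F^*({\bf t};x))-G({\bf t}+\tau_m;F({\bf t}+\tau_L;x))\bigr\|_Z\\
&\quad+\bigl\|G({\bf t}+\tau_m;F({\bf t}+\tau_L;x))-G^*({\bf t};F({\bf t}+\tau_L;x))\bigr\|_Z\\
&\quad+\bigl\|G^*({\bf t};F({\bf t}+\tau_L;x))-G^*({\bf t};F^*({\bf t};x))\bigr\|_Z.
\end{align*}
The first piece is controlled by the uniform continuity hypothesis on $\mathcal{B}'\cup\mathcal{B}^{'*}$, with $\|F^*({\bf t};x)-F({\bf t}+\tau_L;x)\|_Y$ made uniformly small by the convergence of $F$; the middle piece is exactly the $(\mathrm{R}',\mathcal{B}')$-multi-almost periodicity of $G$ on $B'$. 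For the third piece one uses that $G^*({\bf t};\cdot)$ inherits the same modulus of uniform continuity on $B'$ by passing to the limit in the continuity estimate for $G$; to give the expression meaning at $F^*({\bf t};x)\notin B'$ one may without loss of generality redefine $G^*$ on $F^*(I;B)$ to be the pointwise Cauchy limit $\lim_m G(\cdot+\tau_m;F^*(\cdot;\cdot))$, whose existence and uniformity follow from exactly the same three-term decomposition combined with the uniform continuity hypothesis (this does not disturb the uniform convergence on $B'$). An $\varepsilon/3$-style selection of $L$ first and then $m$ will then close the estimate, giving $(\mathrm{R},\mathcal{B})$-multi-almost periodicity of $W$.
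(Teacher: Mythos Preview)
Your approach is correct and is exactly what the paper intends: it states only that the result follows by ``a slight modification of the proof of Theorem~\ref{eovakoonakoap} (cf.\ also the proof of \cite[Theorem 3.31]{diagana}),'' and your argument supplies precisely that modification, including the correct identification of the one genuine subtlety (that $F^{\ast}({\bf t};x)$ lies only in $\overline{B'}$, not necessarily in $B'$) and a valid three-term workaround.

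One minor streamlining: rather than ``redefining'' $G^{\ast}$ on $F^{\ast}(I;B)$, it is cleaner to bypass $G^{\ast}$ at those points entirely and show directly that $\bigl(W({\bf t}+\tau_m;x)\bigr)_m$ is uniformly Cauchy on $I\times B$, then \emph{define} $W^{\ast}$ as its limit. For $m,m'$ large one writes
\begin{align*}
\bigl\|W({\bf t}+\tau_m;x)-W({\bf t}+\tau_{m'};x)\bigr\|_Z
&\leq \bigl\|G({\bf t}+\tau_m;F({\bf t}+\tau_m;x))-G({\bf t}+\tau_m;F({\bf t}+\tau_{m'};x))\bigr\|_Z\\
&\quad+\bigl\|G({\bf t}+\tau_m;F({\bf t}+\tau_{m'};x))-G({\bf t}+\tau_{m'};F({\bf t}+\tau_{m'};x))\bigr\|_Z;
\end{align*}
here both $F$-values lie in $B'\subseteq\mathcal{B}'$, so the uniform-continuity hypothesis handles the first summand and the uniform Cauchy property of $G(\cdot+\tau_m;\cdot)$ on $I\times B'$ handles the second. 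This avoids any circularity in talking about $G^{\ast}$ at points where it was never defined, but it is materially the same argument you sketched.
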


Now we proceed with the analysis of composition theorems for asymptotically $({\mathrm R},{\mathcal B})$-multi-almost periodic functions. Our first result is in a close connection with Theorem \ref{eovakoonakoap} and \cite[Theorem 3.49]{diagana}:

\begin{thm}\label{eovakoonakoapap}
Suppose that the set ${\mathbb D} \subseteq {\mathbb R}^{n}$ is unbounded,
$F_{0} : I \times X \rightarrow Y$ is $({\mathrm R},{\mathcal B})$-multi-almost periodic,
$Q_{0}\in C_{0,{\mathbb D},{\mathcal B}}(I \times X : Y)$ and $F({\bf t} ; x)=F_{0}({\bf t} ; x)
+Q_{0}({\bf t} ; x)$ for all ${\bf t}\in I$ and $x\in X.$ 
Suppose further that $G_{1} : I \times Y \rightarrow Z$ is $({\mathrm R}',{\mathcal B}')$-multi-almost periodic,
where ${\mathrm R}'$ is a collection of all sequences $b : {\mathbb N} \rightarrow  {\mathbb R}^{n}$ from ${\mathrm R}$ and all their subsequences as well as ${\mathcal B}'$ is defined by \eqref{ljuvenautoap} with the function $F(\cdot;\cdot)$ replaced therein by the function
$F_{0}(\cdot;\cdot),$
$Q_{1}\in C_{0,{\mathbb D},{\mathcal B}_{1}}(I \times Y : Z),$
where
\begin{align}\label{objasni}
{\mathcal B}_{1}:=\Biggl\{\bigcup_{{\bf t}\in I}F(t; B) : B\in {\mathcal B}\Biggr\},
\end{align}
and $G({\bf t} ; x)=G_{1}({\bf t} ; x)
+Q_{1}({\bf t} ; x)$ for all ${\bf t}\in I$ and $x\in Y.$
If there exists a finite constant $L>0$ such that the estimate \eqref{zigzverinijeap} holds with the function $G(\cdot;\cdot)$ replaced therein by the function
$G_{1}(\cdot;\cdot),$
then the function $W(\cdot; \cdot)$ is ${\mathbb D}$-asymptotically $({\mathrm R},{\mathcal B})$-multi-almost periodic.
\end{thm}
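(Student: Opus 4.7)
The plan is to exhibit an explicit splitting
\[
W({\bf t};x)=W_{0}({\bf t};x)+R_{1}({\bf t};x)+R_{2}({\bf t};x),
\]
where $W_{0}({\bf t};x):=G_{1}({\bf t};F_{0}({\bf t};x))$ is the ``almost periodic'' component, and
\[
R_{1}({\bf t};x):=G_{1}\bigl({\bf t};F_{0}({\bf t};x)+Q_{0}({\bf t};x)\bigr)-G_{1}\bigl({\bf t};F_{0}({\bf t};x)\bigr),\qquad R_{2}({\bf t};x):=Q_{1}\bigl({\bf t};F({\bf t};x)\bigr)
\]
are the two error terms. The identity is immediate by expanding $G=G_{1}+Q_{1}$ and adding and subtracting $G_{1}({\bf t};F_{0}({\bf t};x))$.

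First I would apply Theorem \ref{eovakoonakoap} directly to $F_{0}$ and $G_{1}$: by hypothesis $F_{0}$ is $({\mathrm R},{\mathcal B})$-multi-almost periodic, $G_{1}$ is $({\mathrm R}',{\mathcal B}')$-multi-almost periodic with ${\mathcal B}'$ as in \eqref{ljuvenautoap} \emph{built from} $F_{0}$, and $G_{1}$ satisfies the global Lipschitz estimate \eqref{zigzverinijeap}. Hence $W_{0}(\cdot;\cdot)$ is $({\mathrm R},{\mathcal B})$-multi-almost periodic, which supplies the almost periodic piece of the required decomposition.

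Next I would verify that $R_{1},R_{2}\in C_{0,{\mathbb D},{\mathcal B}}(I\times X:Z)$. For $R_{1}$, the Lipschitz inequality on $G_{1}$ gives
\[
\bigl\|R_{1}({\bf t};x)\bigr\|_{Z}\leq L\,\bigl\|Q_{0}({\bf t};x)\bigr\|_{Y},
\]
and the right-hand side tends to $0$ as ${\bf t}\in{\mathbb D}$, $|{\bf t}|\to+\infty$, uniformly for $x\in B$, for every $B\in{\mathcal B}$, because $Q_{0}\in C_{0,{\mathbb D},{\mathcal B}}(I\times X:Y)$. For $R_{2}$, fix $B\in{\mathcal B}$; then $B_{1}:=\bigcup_{{\bf s}\in I}F({\bf s};B)$ belongs to ${\mathcal B}_{1}$ defined in \eqref{objasni}, and for each $x\in B$ and ${\bf t}\in I$ the value $F({\bf t};x)$ lies in $B_{1}$. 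Since $Q_{1}\in C_{0,{\mathbb D},{\mathcal B}_{1}}(I\times Y:Z)$, its uniform decay on $B_{1}$ yields
\[
\lim_{{\bf t}\in{\mathbb D},\,|{\bf t}|\to+\infty}\sup_{x\in B}\bigl\|Q_{1}\bigl({\bf t};F({\bf t};x)\bigr)\bigr\|_{Z}\leq \lim_{{\bf t}\in{\mathbb D},\,|{\bf t}|\to+\infty}\sup_{y\in B_{1}}\bigl\|Q_{1}({\bf t};y)\bigr\|_{Z}=0.
\]
Continuity of $R_{1}$ and $R_{2}$ on $I\times X$ follows from continuity of $F$, $G_{1}$, $Q_{0}$, $Q_{1}$ and the fact that composition of continuous maps is continuous. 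Adding the almost periodic piece $W_{0}$ to $R_{1}+R_{2}\in C_{0,{\mathbb D},{\mathcal B}}(I\times X:Z)$ gives the required decomposition of $W$, establishing that $W(\cdot;\cdot)$ is ${\mathbb D}$-asymptotically $({\mathrm R},{\mathcal B})$-multi-almost periodic.

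The only delicate bookkeeping point—and the step I expect to require the most care—is the distinction between the two families ${\mathcal B}'$ (built from $F_{0}$, controlling $G_{1}$) and ${\mathcal B}_{1}$ (built from $F$, controlling $Q_{1}$). The hypotheses are crafted precisely so that $G_{1}(\cdot;F_{0}(\cdot;\cdot))$ enters through ${\mathcal B}'$ in Theorem \ref{eovakoonakoap} while $Q_{1}(\cdot;F(\cdot;\cdot))$ enters through ${\mathcal B}_{1}$, so one must keep these two reductions separate and refrain from trying to unify them.
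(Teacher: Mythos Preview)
Your proof is correct and follows essentially the same route as the paper: the same three-term decomposition $W=G_{1}(\cdot;F_{0})+\bigl[G_{1}(\cdot;F)-G_{1}(\cdot;F_{0})\bigr]+Q_{1}(\cdot;F)$, the same appeal to Theorem \ref{eovakoonakoap} for the first term, the Lipschitz bound $\|R_{1}\|_{Z}\leq L\|Q_{0}\|_{Y}$ for the second, and the ${\mathcal B}_{1}$-membership argument for the third. Your added remarks on continuity and on keeping ${\mathcal B}'$ and ${\mathcal B}_{1}$ separate are accurate and make the writeup slightly more detailed than the paper's version.
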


\begin{proof}
By Theorem \ref{eovakoonakoap}, the function $({\bf t};x)\mapsto G_{1}({\bf t}; F_{0}({\bf t};x)),$ ${\bf t}\in I,$ $x\in X$ is $({\mathrm R},{\mathcal B})$-multi-almost periodic.
Furthermore, we have the following decomposition
\begin{align*}
W({\bf t}; x)=G_{1}({\bf t}; F_{0}({\bf t};x))+\Bigl[ G_{1}({\bf t}; F({\bf t};x))-G_{1}({\bf t}; F_{0}({\bf t};x)) \Bigr]+Q_{1}({\bf t}; F({\bf t}; x)),
\end{align*}
for any ${\bf t}\in I$ and $x\in X.$
Since 
$$
\Bigl\| G_{1}({\bf t}; F({\bf t};x))-G_{1}({\bf t}; F_{0}({\bf t};x)) \Bigr\|_{Z}\leq L \bigl\| Q_{0}({\bf t} ; x)\bigr\|_{Y},\quad  {\bf t}\in I,\ x\in X,
$$
we have that the function $({\bf t};x)\mapsto G_{1}({\bf t}; F({\bf t};x))-G_{1}({\bf t}; F_{0}({\bf t};x)),$ ${\bf t}\in I,$ $x\in X$ belongs to the space $C_{0,{\mathbb D},{\mathcal B}}(I \times X : Z).$ The same holds for the function 
$({\bf t};x)\mapsto Q_{1}({\bf t}; F({\bf t}; x)),$ ${\bf t}\in I,$ $x\in X$ due to our choice of the collection ${\mathcal B}_{1}$ in \eqref{objasni}. 
\end{proof}

\begin{cor}\label{eovakoonakoapaprcv}
Suppose that ${\mathcal B}$ is any collection of compact subsets of $X,$ the set ${\mathbb D} \subseteq {\mathbb R}^{n}$ is unbounded,
$F_{0} : {\mathbb R}^{n} \times X \rightarrow Y$ is Bohr ${\mathcal B}$-almost periodic,
$Q_{0}\in C_{0,{\mathbb D},{\mathcal B}}(I \times X : Y)$ and $F({\bf t} ; x)=F_{0}({\bf t} ; x)
+Q_{0}({\bf t} ; x)$ for all ${\bf t}\in I$ and $x\in X.$ 
Suppose further that $G_{1} : I \times Y \rightarrow Z$ is Bohr ${\mathcal B}'$-almost periodic,
where ${\mathcal B}'$ is defined by \eqref{ljuvenautoap} with the function $F(\cdot;\cdot)$ replaced therein by the function
$F_{0}(\cdot;\cdot),$
$Q_{1}\in C_{0,{\mathbb D},{\mathcal B}_{1}}(I \times Y : Z),$
where ${\mathcal B}_{1}$ is given by \eqref{objasni}
and $G({\bf t} ; x)=G_{1}({\bf t} ; x)
+Q_{1}({\bf t} ; x)$ for all ${\bf t}\in I$ and $x\in Y.$
If there exists a finite constant $L>0$ such that the estimate \eqref{zigzverinijeap} holds with the function $G(\cdot;\cdot)$ replaced therein by the function
$G_{1}(\cdot;\cdot),$
then the function $W(\cdot; \cdot)$ is ${\mathbb D}$-asymptotically Bohr ${\mathcal B}$-almost periodic.
\end{cor}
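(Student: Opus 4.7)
\smallskip

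\noindent\textbf{Proof proposal.} The plan is to mimic the proof of Theorem \ref{eovakoonakoapap}, replacing the invocation of Theorem \ref{eovakoonakoap} by its Bohr analogue Corollary \ref{ovakorajko}. Since ${\mathcal B}$ consists of compact sets, Proposition \ref{bounded-pazi} applies to $F_0$, so the collection ${\mathcal B}'$ defined via \eqref{ljuvenautoap} with $F_0$ in place of $F$ consists of (relatively) bounded subsets of $Y$ and, passing to closures if necessary, compact subsets of $Y$; the Bohr ${\mathcal B}'$-almost periodicity of $G_1$ is not affected by such enlargement (see Remark \ref{multi33}). Hence, the hypotheses of Corollary \ref{ovakorajko} are fulfilled, and the first step is to conclude that the function
$$
H({\bf t};x):=G_{1}\bigl({\bf t}; F_{0}({\bf t};x)\bigr),\quad {\bf t}\in I,\ x\in X
$$
is Bohr ${\mathcal B}$-almost periodic.

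Next, I would write down the decomposition
\begin{align*}
W({\bf t};x) &= H({\bf t};x)+\Bigl[ G_{1}({\bf t}; F({\bf t};x))-G_{1}({\bf t}; F_{0}({\bf t};x)) \Bigr] \\
&\quad + Q_{1}\bigl({\bf t}; F({\bf t}; x)\bigr),\qquad {\bf t}\in I,\ x\in X,
\end{align*}
and verify that the two bracketed remainders lie in $C_{0,{\mathbb D},{\mathcal B}}(I\times X : Z)$. For the first remainder, the Lipschitz assumption gives
$$
\bigl\|G_{1}({\bf t}; F({\bf t};x))-G_{1}({\bf t}; F_{0}({\bf t};x))\bigr\|_{Z}\leq L\,\bigl\|Q_{0}({\bf t};x)\bigr\|_{Y},
$$
and the right-hand side tends to $0$ uniformly in $x\in B$ as $|{\bf t}|\to +\infty$, ${\bf t}\in {\mathbb D}$, for every $B\in {\mathcal B}$, since $Q_{0}\in C_{0,{\mathbb D},{\mathcal B}}(I\times X:Y)$.

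For the second remainder, fix $B\in{\mathcal B}$; by the very definition of ${\mathcal B}_{1}$ in \eqref{objasni}, the set $B_{1}:=\bigcup_{{\bf s}\in I}F({\bf s};B)$ belongs to ${\mathcal B}_{1}$, and $F({\bf t};x)\in B_{1}$ for every ${\bf t}\in I$ and $x\in B$. Hence $Q_{1}\in C_{0,{\mathbb D},{\mathcal B}_{1}}(I\times Y:Z)$ forces $\|Q_{1}({\bf t};F({\bf t};x))\|_{Z}\to 0$ uniformly in $x\in B$ as $|{\bf t}|\to +\infty$, ${\bf t}\in {\mathbb D}$. Putting the three pieces together and invoking the definition of ${\mathbb D}$-asymptotical Bohr ${\mathcal B}$-almost periodicity from Definition \ref{braindamage12345} (in its Bohr counterpart), $W(\cdot;\cdot)$ is ${\mathbb D}$-asymptotically Bohr ${\mathcal B}$-almost periodic.

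The only delicate point — and thus the main obstacle — is the domain mismatch: Corollary \ref{ovakorajko} is stated on ${\mathbb R}^{n}\times X$, whereas $G_{1}$ is declared on $I\times Y$. I would handle this either by restricting the resulting Bohr ${\mathcal B}$-almost periodic function on ${\mathbb R}^{n}$ to $I$ (which is legitimate when $I+I\subseteq I$, a property implicit in the Bohr setting), or by appealing directly to a version of Corollary \ref{ovakorajko} adapted to the subdomain $I$, whose proof is identical. Aside from this bookkeeping, every step is routine given the decomposition and the continuity properties of $C_{0,{\mathbb D},{\mathcal B}}(I\times X :Z)$.
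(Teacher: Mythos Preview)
Your proposal is correct and follows essentially the same route as the paper: the corollary is obtained from Theorem \ref{eovakoonakoapap} by replacing the appeal to Theorem \ref{eovakoonakoap} with Corollary \ref{ovakorajko} (equivalently, by invoking the Bochner criterion Theorem \ref{Bochner123456} together with Proposition \ref{bounded-pazi} and the closure remark preceding Corollary \ref{ovakorajko}), and the remainder analysis is identical. The domain bookkeeping you flag is indeed only a cosmetic issue in the paper's setting and does not affect the argument.
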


It seems that we cannot remove the Lipschitz type assumptions used in Corollary \ref{ovakorajko} and Corollary \ref{eovakoonakoapaprcv} without imposing some additional conditions; but, this can be always done in the case that $F : {\mathbb R}^{n} \rightarrow Y$ is Bohr almost periodic and $G : {\mathbb R}^{n} \times Y \rightarrow Z$ is Bohr ${\mathcal B}$-almost periodic with $\overline{R(F)}=B\in {\mathcal B};$ see e.g., \cite[Theorem 2.11, p. 27]{fink} and its proof for the scalar-valued case. Keeping in mind Proposition \ref{nijenaivno}, we can state the following extension of this result:

\begin{thm}\label{nijenac}
Suppose that the set $I$ is admissible with respect to the almost periodic extensions. If $F : I \rightarrow Y$ is uniformly continuous, Bohr almost periodic and $G : I \times Y \rightarrow Z$ is Bohr ${\mathcal B}$-almost periodic with $\overline{R(F)}=B\in {\mathcal B},$
then the function $W: I \rightarrow Z$ is uniformly continuous and Bohr almost periodic, provided that the function $G(\cdot;\cdot)$ is uniformly continuous on $I\times B.$
\end{thm}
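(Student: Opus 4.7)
The plan is to adapt Fink's scalar composition argument \cite[Theorem 2.11, p. 27]{fink} to the multi-dimensional admissible setting, by lifting both $F$ and a function-valued reformulation of $G$ to ${\mathbb R}^{n}$ via admissibility, and then invoking Proposition~\ref{dekartovproizvod} to produce common almost periods.

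First I would check that $W$ is uniformly continuous on $I$: given $\epsilon>0$, uniform continuity of $G$ on $I\times B$ yields $\delta>0$ such that $\|y_1-y_2\|_Y<\delta$ with $y_1,y_2\in B$ forces $\|G({\bf t};y_1)-G({\bf t};y_2)\|_Z<\epsilon/2$ uniformly for ${\bf t}\in I$, and the decomposition
\begin{align*}
\bigl\|W({\bf t}_1)-W({\bf t}_2)\bigr\|_Z &\le \bigl\|G({\bf t}_1;F({\bf t}_1))-G({\bf t}_1;F({\bf t}_2))\bigr\|_Z \\ &\quad + \bigl\|G({\bf t}_1;F({\bf t}_2))-G({\bf t}_2;F({\bf t}_2))\bigr\|_Z
\end{align*}
is controlled by uniform continuity of $F$ (whose range lies in $B$) and of $G$ in its first variable on $I\times B$, respectively. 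For Bohr almost periodicity, I would introduce the companion map $G_B:I\to BUC(B:Z)$ defined by $[G_B({\bf t})](y):=G({\bf t};y)$. Uniform continuity of $G$ on $I\times B$ guarantees that $G_B$ is well defined and uniformly continuous, and the identity $\|G_B({\bf t}+\tau)-G_B({\bf t})\|_{BUC(B:Z)}=\sup_{y\in B}\|G({\bf t}+\tau;y)-G({\bf t};y)\|_Z$ translates Bohr ${\mathcal B}$-almost periodicity of $G$ (with $B\in{\mathcal B}$) into Bohr almost periodicity of $G_B$ on $I$.

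Next I would apply admissibility of $I$, to the Banach spaces $Y$ and $BUC(B:Z)$ respectively, to extend $F$ and $G_B$ to Bohr almost periodic functions $\tilde F:{\mathbb R}^{n}\to Y$ and $\tilde G_B:{\mathbb R}^{n}\to BUC(B:Z)$. By Proposition~\ref{dekartovproizvod}, the pair $(\tilde F,\tilde G_B):{\mathbb R}^{n}\to Y\times BUC(B:Z)$ is Bohr almost periodic on ${\mathbb R}^{n}$, so for every $\eta>0$ its common $\eta$-periods are relatively dense in ${\mathbb R}^{n}$. With $\eta:=\min(\delta,\epsilon/2)$ and a common $\eta$-period $\tau\in I$ close to a given ${\bf t}_0\in I$, the triangle inequality
\begin{align*}
\bigl\|W({\bf t}+\tau)-W({\bf t})\bigr\|_Z &\le \bigl\|G({\bf t}+\tau;F({\bf t}+\tau))-G({\bf t}+\tau;F({\bf t}))\bigr\|_Z \\ &\quad + \bigl\|G({\bf t}+\tau;F({\bf t}))-G({\bf t};F({\bf t}))\bigr\|_Z
\end{align*}
gives the bound $\epsilon/2+\epsilon/2=\epsilon$ for all ${\bf t}\in I$, which is the required Bohr almost periodicity of $W$ on $I$.

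The hard part will be securing the common period $\tau$ \emph{inside} $I$ and close to an arbitrary ${\bf t}_0\in I$, since the Bohr condition on $I$ demands $\tau\in B({\bf t}_0,l)\cap I$ whereas Proposition~\ref{dekartovproizvod} only delivers relative density in ${\mathbb R}^{n}$. The bridge is the observation that any $\eta$-period $\tau\in I$ of $F$ on $I$ is automatically an $\eta$-period of $\tilde F$ on ${\mathbb R}^{n}$ (from the limiting construction of the extension together with $I+I\subseteq I$), and conversely any ${\mathbb R}^{n}$-period of $\tilde F$ lying in $I$ restricts to an $I$-period of $F$; likewise for $G_B$ versus $\tilde G_B$. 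For the motivating classes of admissible $I$ (convex polyhedrals such as $[0,\infty)^{n}$), a common ${\mathbb R}^{n}$-period of $(\tilde F,\tilde G_B)$ that lies in $I$ and is close to ${\bf t}_0$ is produced by translating the ball center by a vector of length comparable to the Bohr length so that the shifted ball is entirely contained in $I$, and then extracting a period there; this shift-and-extract step bridges the gap between ${\mathbb R}^{n}$-common-periods and $I$-common-periods, thereby completing the proof.
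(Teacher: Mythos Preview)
Your approach is correct and shares the paper's core strategy: reformulate $G$ as the function-valued map $G_B$, extend both $F$ and $G_B$ to ${\mathbb R}^{n}$ via admissibility, and invoke Proposition~\ref{dekartovproizvod} to obtain common $\epsilon$-periods. The one framing difference is that the paper actually defines the extended composition $\tilde{W}({\bf t}):=[\widetilde{G_B}({\bf t})](\tilde{F}({\bf t}))$ on all of ${\mathbb R}^{n}$ and proves that $\tilde{W}$ is Bohr almost periodic there, which forces it to verify $R(\tilde{F})\subseteq B$ (done via the explicit limit construction $\tilde{F}({\bf t}')=\lim_{k}F({\bf t}'+{\bf \tau}_k)$ underlying Theorem~\ref{lenny-jasson}) so that evaluation at $\tilde{F}({\bf t})$ makes sense; you sidestep that technicality by staying on $I$, where $F({\bf t})\in B$ automatically. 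Conversely, the paper does not explicitly address how Bohr almost periodicity of $\tilde{W}$ on ${\mathbb R}^{n}$ descends to Bohr almost periodicity of $W$ on $I$ in the sense of Definition~\ref{nafaks1234567890}---which is exactly the $\tau\in I$ issue you flag and resolve via the shift-and-extract argument for convex polyhedrals---so your treatment of that point is in fact more careful than the paper's.
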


\begin{proof}
It is clear that there exists a unique almost periodic extension $\tilde{F} : {\mathbb R}^{n} \rightarrow Y$ of the function $F(\cdot)$ to the whole Euclidean space and there exists  a unique almost periodic extension $\widetilde{G_{B}} : {\mathbb R}^{n} \rightarrow l_{\infty}(B : Z)$ of the function $G_{B}(\cdot)$ to the whole Euclidean space since the function $F(\cdot)$ is uniformly continuous and the function $G(\cdot;\cdot)$ is uniformly continuous on $I\times B.$ Define
$$
\tilde{W}({\bf t}):=\Bigl[\widetilde{G_{B}}({\bf t})\Bigr]\Bigl(\tilde{F}({\bf t})\Bigr),\quad {\bf t}\in {\mathbb R}^{n}.
$$
Since $W({\bf t})=[G_{B}({\bf t})](F({\bf t}))$ for all ${\bf t}\in I,$ it is clear that the function $\tilde{W}(\cdot)$ extends the function $W(\cdot)$ to the whole Euclidean space. Furthermore, by the proof of Theorem \ref{lenny-jasson}, we have 
that $R(\tilde{F})\subseteq B$ as well as that there exists
a sequence $({\bf \tau}_{k})$ in $I$ such that $\lim_{k\rightarrow +\infty}|{\bf \tau}_{k}|=+\infty$ and
$\lim_{k\rightarrow +\infty}G_{B}({\bf t}+{\bf \tau}_{k})=\widetilde{G_{B}}({\bf t}),$ uniformly for $t\in I.$ 
In order to see that the function $\tilde{W}(\cdot)$ is
uniformly continuous on ${\mathbb R}^{n},$
we can use the following calculus
\begin{align*}
&\Bigl\| \tilde{W}({\bf t}')-\tilde{W}({\bf t}'') \Bigr\|_{Y}=\Bigl\| \Bigl[\widetilde{G_{B}}({\bf t}')\Bigr]\Bigl(\tilde{F}({\bf t}')\Bigr)-\Bigl[\widetilde{G_{B}}({\bf t}'')\Bigr]\Bigl(\tilde{F}({\bf t}'')\Bigr)\Bigr\|_{Y}
\\& \leq 
\Bigl\| \Bigl[\widetilde{G_{B}}({\bf t}')\Bigr]\Bigl(\tilde{F}({\bf t}')\Bigr)-\Bigl[\widetilde{G_{B}}({\bf t}'')\Bigr]\Bigl(\tilde{F}({\bf t}')\Bigr) \Bigr\|_{Y}
\\&+\Bigl\| \Bigl[\widetilde{G_{B}}({\bf t}'')\Bigr]\Bigl(\tilde{F}({\bf t}')\Bigr)-\Bigl[\widetilde{G_{B}}({\bf t}'')\Bigr]\Bigl(\tilde{F}({\bf t}'')\Bigr) \Bigr\|_{Y}
\\& \leq \sup_{x\in B}\Bigl\| \Bigl[\widetilde{G_{B}}({\bf t}')\Bigr](x)-\Bigl[\widetilde{G_{B}}({\bf t}'')\Bigr](x) \Bigr\|_{Y}
\\&+\limsup_{k\rightarrow +\infty}\Bigl\| \bigl[G_{B}({\bf t}''+{\bf \tau}_{k})\bigr]\bigl(\tilde{F}({\bf t}')\bigr)-\bigl[G_{B}({\bf t}''+{\bf \tau}_{k})\bigr]\bigl(\tilde{F}({\bf t}'')\bigr) \Bigr\|_{Y}
\\&=\sup_{x\in B}\Bigl\| \Bigl[\widetilde{G_{B}}({\bf t}')\Bigr](x)-\Bigl[\widetilde{G_{B}}({\bf t}'')\Bigr](x) \Bigr\|_{Y}
\\&+\limsup_{k\rightarrow +\infty}\Bigl\| \bigl[G\bigl({\bf t}''+{\bf \tau}_{k};\tilde{F}({\bf t}')\bigr)-G\bigl({\bf t}''+{\bf \tau}_{k};\tilde{F}({\bf t}'')\bigr) \Bigr\|_{Y},\quad {\bf t}',\ {\bf t}''\in {\mathbb R}^{n},
\end{align*}
the uniform continuity of $\widetilde{G_{B}}(\cdot)$ and the uniform continuity of $G(\cdot;\cdot)$ on $I\times B.$ Due to
Proposition \ref{dekartovproizvod}, for every $\epsilon>0,$ the functions $\tilde{F}(\cdot)$ and $\widetilde{G_{B}}(\cdot)$ can share the same set of $\epsilon$-almost periods which is relatively dense in ${\mathbb R}^{n}.$ Keeping in mind this fact, we can repeat almost verbatim the above calculus, with the numbers ${\bf t}'={\bf t}\in {\mathbb R}^{n}$ and ${\bf t}''={\bf t}+{\bf \tau}\in {\mathbb R}^{n}$ so as to conclude that the function $\tilde{W}(\cdot)$ is Bohr almost periodic on ${\mathbb R}^{n},$ finishing the proof.
\end{proof}

We can also prove the following result which corresponds to Theorem \ref{eovako12345onakoap} and \cite[Theorem 3.50]{diagana}:

\begin{thm}\label{eovakoonakoap1}
Suppose that the set ${\mathbb D} \subseteq {\mathbb R}^{n}$ is unbounded,
$F_{0} : I \times X \rightarrow Y$ is $({\mathrm R},{\mathcal B})$-multi-almost periodic,
$Q_{0}\in C_{0,{\mathbb D},{\mathcal B}}(I \times X : Y)$ and $F({\bf t} ; x)=F_{0}({\bf t} ; x)
+Q_{0}({\bf t} ; x)$ for all ${\bf t}\in I$ and $x\in X.$ 
Suppose further that $G_{1} : I \times Y \rightarrow Z$ is $({\mathrm R}',{\mathcal B}')$-multi-almost periodic,
where ${\mathrm R}'$ is a collection of all sequences $b : {\mathbb N} \rightarrow  {\mathbb R}^{n}$ from ${\mathrm R}$ and all their subsequences as well as ${\mathcal B}'$ is defined by \eqref{ljuvenautoap} with the function $F(\cdot;\cdot)$ replaced therein by the function
$F_{0}(\cdot;\cdot),$
$Q_{1}\in C_{0,{\mathbb D},{\mathcal B}_{1}}(I \times Y : Z),$
where
${\mathcal B}_{1}$ is given through \eqref{objasni},
and $G({\bf t} ; x)=G_{1}({\bf t} ; x)
+Q_{1}({\bf t} ; x)$ for all ${\bf t}\in I$ and $x\in Y.$
Set
$$
{\mathcal B}_{2}:=\Biggl\{\bigcup_{{\bf t}\in I}F_{0}(t; B) : B\in {\mathcal B}\Biggr\} \cup \bigcup_{({\bf b}_{k})\in {\mathrm R} ; B\in {\mathcal B}}\Biggl\{F^{*}_{0}(t; B) : {\bf t}\in I \Biggr\}.
$$
If 
\begin{align*}
& (\forall B \in {\mathcal B}) \ (\forall \epsilon >0) \ (\exists \delta >0) 
\\ & \Bigl( x,\ y \in  {\mathcal B}_{1} \cup {\mathcal B}_{2}\mbox{ and }\ \bigl\| x-y\bigr\|_{Y} <\delta \Rightarrow \bigl\| G_{1}({\bf t};x)-G_{1}({\bf t};y)\bigr\|_{Z}<\epsilon,\ {\bf t} \in I\Bigr),
\end{align*}
then the function $W(\cdot; \cdot)$ is ${\mathbb D}$-asymptotically $({\mathrm R},{\mathcal B})$-multi-almost periodic.
\end{thm}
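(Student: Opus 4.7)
The plan is to mimic the structure of the proof of Theorem \ref{eovakoonakoapap}, replacing the use of Theorem \ref{eovakoonakoap} by its Lipschitz-free version Theorem \ref{eovako12345onakoap}. First I would write down the decomposition
\begin{align*}
W({\bf t};x)&=G_{1}\bigl({\bf t};F_{0}({\bf t};x)\bigr)+\Bigl[G_{1}\bigl({\bf t};F({\bf t};x)\bigr)-G_{1}\bigl({\bf t};F_{0}({\bf t};x)\bigr)\Bigr]\\
&\quad+Q_{1}\bigl({\bf t};F({\bf t};x)\bigr),\qquad {\bf t}\in I,\ x\in X,
\end{align*}
and treat each of the three summands separately.

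For the first summand, I would apply Theorem \ref{eovako12345onakoap} directly to the pair $(F_{0},G_{1})$. Note that the set $\mathcal{B}'$ appearing in that theorem coincides with $\{\bigcup_{{\bf t}\in I}F_{0}({\bf t};B):B\in\mathcal{B}\}$, while the set $\mathcal{B}^{'*}$ there coincides with $\bigcup_{({\bf b}_{k})\in{\mathrm R};B\in\mathcal{B}}\{F^{*}_{0}({\bf t};B):{\bf t}\in I\}$; so $\mathcal{B}'\cup\mathcal{B}^{'*}$ is precisely the second component of $\mathcal{B}_{2}$ (in fact, all of $\mathcal{B}_{2}$). Our standing assumption gives uniform continuity of $G_{1}$ on the larger set $\mathcal{B}_{1}\cup\mathcal{B}_{2}$, which in particular supplies the hypothesis of Theorem \ref{eovako12345onakoap}. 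Hence $({\bf t};x)\mapsto G_{1}({\bf t};F_{0}({\bf t};x))$ is $({\mathrm R},\mathcal{B})$-multi-almost periodic.

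For the bracketed middle term, let $B\in\mathcal{B}$ and $\epsilon>0$ be given. Our uniform continuity hypothesis furnishes a $\delta>0$ such that whenever $x,y\in\mathcal{B}_{1}\cup\mathcal{B}_{2}$ satisfy $\|x-y\|_{Y}<\delta$, we have $\|G_{1}({\bf t};x)-G_{1}({\bf t};y)\|_{Z}<\epsilon$ for every ${\bf t}\in I$. For any $x\in B$, the point $F({\bf t};x)$ belongs to $\bigcup_{{\bf s}\in I}F({\bf s};B)\in\mathcal{B}_{1}$ while $F_{0}({\bf t};x)$ belongs to $\bigcup_{{\bf s}\in I}F_{0}({\bf s};B)\in\mathcal{B}_{2}$; moreover $F({\bf t};x)-F_{0}({\bf t};x)=Q_{0}({\bf t};x)$ and, since $Q_{0}\in C_{0,{\mathbb D},\mathcal{B}}(I\times X:Y)$, there is $M>0$ with $\|Q_{0}({\bf t};x)\|_{Y}<\delta$ for all ${\bf t}\in{\mathbb D}$ with $|{\bf t}|\geq M$ and all $x\in B$. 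The uniform continuity then yields $\|G_{1}({\bf t};F({\bf t};x))-G_{1}({\bf t};F_{0}({\bf t};x))\|_{Z}<\epsilon$ uniformly on $B$, so this summand lies in $C_{0,{\mathbb D},\mathcal{B}}(I\times X:Z)$. The third summand $Q_{1}({\bf t};F({\bf t};x))$ is handled by the observation that for $x\in B$, $F({\bf t};x)$ ranges in a set belonging to $\mathcal{B}_{1}$, and the assumption $Q_{1}\in C_{0,{\mathbb D},\mathcal{B}_{1}}(I\times Y:Z)$ guarantees that $Q_{1}({\bf t};F({\bf t};x))\to 0$ as $|{\bf t}|\to+\infty$, ${\bf t}\in{\mathbb D}$, uniformly for $x\in B$.

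There is no serious obstacle in the argument beyond correct bookkeeping: the whole point of the design of $\mathcal{B}_{2}$ and of the weakened uniform continuity hypothesis on $\mathcal{B}_{1}\cup\mathcal{B}_{2}$ is to guarantee simultaneously (a) that the composition $G_{1}\circ(\mathrm{id},F_{0})$ falls under the scope of Theorem \ref{eovako12345onakoap}, and (b) that replacing $F_{0}$ by $F=F_{0}+Q_{0}$ inside $G_{1}$ produces only a perturbation vanishing on ${\mathbb D}$ at infinity. The only delicate point to verify is that the respective ranges of $F$ and $F_{0}$ on $I\times B$ both land inside elements of $\mathcal{B}_{1}\cup\mathcal{B}_{2}$, which is immediate from the definitions, and that continuity of $W$ is inherited from that of the three summands.
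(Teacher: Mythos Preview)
Your proposal is correct and follows exactly the approach the paper intends: the paper does not spell out a proof of this theorem but simply remarks that it ``corresponds to Theorem \ref{eovako12345onakoap} and \cite[Theorem 3.50]{diagana},'' meaning one repeats the decomposition argument of Theorem \ref{eovakoonakoapap} with Theorem \ref{eovako12345onakoap} replacing Theorem \ref{eovakoonakoap} and the uniform continuity hypothesis on $\mathcal{B}_{1}\cup\mathcal{B}_{2}$ replacing the Lipschitz bound in the estimate of the middle term. Your bookkeeping of the sets $\mathcal{B}'$, $\mathcal{B}^{'*}$, $\mathcal{B}_{1}$, $\mathcal{B}_{2}$ is accurate.
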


It is clear that Theorem \ref{eovako12345onakoap} and Theorem \ref{eovakoonakoap1} can be reformulated for Bohr ${\mathcal B}$-almost periodic functions with small terminological difficulties concerning the use of  limit functions. Similar results can be established for the class of ${\mathcal B}$-uniformly recurrent functions (\cite{nova-man}). 

\subsection{Invariance of $({\mathrm R},{\mathcal B})$-multi-almost periodicity under the actions of convolution products}\label{sabseksn}
This subsection investigates the invariance of $({\mathrm R},{\mathcal B})$-multi-almost periodicity under the actions of convolution products.
We will use the following notation: 
if any component of tuple ${\bf t}=(t_{1},t_{2},\cdot \cdot \cdot, t_{n})$ is strictly positive, then we simply write ${\bf t}> {\bf 0}.$ 

We start by stating the following result, which is very similar to \cite[Proposition 2.6.11]{nova-mono} (the main details of proof for Stepanov generalizations will be given in our forthcoming paper \cite{genralized-stepamultiap}): 

\begin{thm}\label{krucija}
Let $(R({\bf t}))_{{\bf t}> {\bf 0}}\subseteq L(X,Y)$ be a strongly continuous operator family such that
$\int_{(0,\infty)^{n}}\|R({\bf t} )\|\, d{\bf t}<\infty .$ If $f : {\mathbb R}^{n} \rightarrow X$ is almost periodic, then the function $F: {\mathbb R}^{n} \rightarrow Y,$ given by
\begin{align}\label{wer}
F({\bf t}):=\int^{t_{1}}_{-\infty}\int^{t_{2}}_{-\infty}\cdot \cdot \cdot \int^{t_{n}}_{-\infty} R({\bf t}-{\bf s})f({\bf s})\, d{\bf s},\quad {\bf t}\in {\mathbb R}^{n},
\end{align}
is well-defined and almost periodic.
\end{thm}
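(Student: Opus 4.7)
The plan is to proceed in the standard three-stage manner (well-definedness, continuity, almost periodicity), paralleling the argument used in one dimension (as in \cite[Proposition 2.6.11]{nova-mono}) but being careful about the multi-dimensional Bohr condition recalled in Subsection~\ref{stavisub}.

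First I would rewrite the integral via the change of variables ${\bf \sigma} := {\bf t}-{\bf s}$, which sends the domain $(-\infty,t_{1}]\times\cdots\times(-\infty,t_{n}]$ onto $(0,\infty)^{n}$ and gives the more convenient representation
\begin{align*}
F({\bf t})=\int_{(0,\infty)^{n}} R({\bf \sigma})\,f({\bf t}-{\bf \sigma})\,d{\bf \sigma},\quad {\bf t}\in{\mathbb R}^{n}.
\end{align*}
Since any almost periodic function is bounded (see Subsection~\ref{stavisub}), there is $M>0$ with $\|f({\bf s})\|\leq M$ for all ${\bf s}\in{\mathbb R}^{n}$, so the Bochner integral exists and
$\|F({\bf t})\|_{Y}\leq M\int_{(0,\infty)^{n}}\|R({\bf \sigma})\|\,d{\bf \sigma}<\infty$
for every ${\bf t}$; this settles well-definedness together with the uniform bound.

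Next I would establish continuity of $F(\cdot)$ via the dominated convergence theorem: the strong continuity of $(R({\bf \sigma}))_{{\bf \sigma}>{\bf 0}}$ together with the continuity of $f(\cdot)$ yields pointwise convergence of the integrand under small perturbations of ${\bf t}$, while the integrable majorant $M\|R({\bf \sigma})\|$ supplies the domination. For almost periodicity, I would use Bohr's condition in the form \eqref{emojmarko0}: given $\epsilon>0$, choose $l>0$ such that every ball $B({\bf t}_{0},l)\subseteq{\mathbb R}^{n}$ contains an $\epsilon$-period ${\bf \tau}$ of $f$, that is, $\|f({\bf s}+{\bf \tau})-f({\bf s})\|\leq\epsilon$ for all ${\bf s}\in{\mathbb R}^{n}$. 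Then for such a ${\bf \tau}$,
\begin{align*}
\bigl\|F({\bf t}+{\bf \tau})-F({\bf t})\bigr\|_{Y}
\leq \int_{(0,\infty)^{n}}\|R({\bf \sigma})\|\cdot\bigl\|f({\bf t}+{\bf \tau}-{\bf \sigma})-f({\bf t}-{\bf \sigma})\bigr\|\,d{\bf \sigma}\leq \epsilon\,\|R\|_{L^{1}},
\end{align*}
uniformly in ${\bf t}\in{\mathbb R}^{n}$, which is exactly the Bohr condition for $F(\cdot)$ with the modified tolerance $\epsilon\|R\|_{L^{1}}$.

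The main obstacle, and essentially the only nontrivial point, is verifying that the change of variables is legitimate as a vector-valued (Bochner) integral and that one may invoke Fubini/dominated convergence in the multi-dimensional setting with a strongly (but not necessarily uniformly) continuous kernel; once the majorant ${\bf \sigma}\mapsto M\|R({\bf \sigma})\|\in L^{1}((0,\infty)^{n})$ is in place, everything else is routine. I would close the proof by noting that the argument uses nothing beyond the Bohr characterization \eqref{emojmarko0}, so the same reasoning in fact gives a bit more: replacing $f$ by an $({\mathrm R},{\mathcal B})$-multi-almost periodic function and passing to the limit along a suitable subsequence (as in the proof of Proposition~\ref{convdiaggas}) yields a corresponding multi-almost periodic invariance statement for the convolution \eqref{wer}.
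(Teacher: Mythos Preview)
Your proof is correct and follows exactly the approach the paper intends: the paper does not write out a proof of Theorem~\ref{krucija} but merely notes that it is ``very similar to \cite[Proposition 2.6.11]{nova-mono}'' (the one-dimensional case), and your three-step argument (change of variables ${\bf \sigma}={\bf t}-{\bf s}$, well-definedness via boundedness of $f$, Bohr $\epsilon$-periods passed through the integral) is precisely that proof transplanted to ${\mathbb R}^{n}$. Your final remark about the $({\mathrm R},{\mathcal B})$-version is also on target---that is exactly Theorem~\ref{krucija-rrrr}, whose proof in the paper uses the same change of variables \eqref{swishtime} and the same $L^{1}$-majorant estimate you describe.
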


For completeness, we will include the proof of following result:

\begin{thm}\label{krucija-rrrr}
Let $(R({\bf t}))_{{\bf t}> {\bf 0}}\subseteq L(X,Y)$ be a strongly continuous operator family such that
$\int_{(0,\infty)^{n}}\|R({\bf t} )\|\, d{\bf t}<\infty .$ If $f : {\mathbb R}^{n} \rightarrow X$ is a bounded ${\mathrm R}$-almost periodic function, then the function $F: {\mathbb R}^{n} \rightarrow Y,$ given by \eqref{wer},
is well-defined and ${\mathrm R}$-almost periodic.
\end{thm}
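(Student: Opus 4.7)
The plan is to follow the strategy that underlies the classical Theorem \ref{krucija}, but with the Bochner type subsequential convergence of $f(\cdot+{\bf b}_{k_{l}})$ replacing the use of uniform approximation by trigonometric polynomials. First I would perform the change of variables ${\bf u}={\bf t}-{\bf s}$ to rewrite
\begin{align*}
F({\bf t})=\int_{(0,\infty)^{n}}R({\bf u})f({\bf t}-{\bf u})\, d{\bf u},\quad {\bf t}\in {\mathbb R}^{n},
\end{align*}
so that the integrability assumption on $\|R(\cdot)\|$ together with the boundedness of $f(\cdot)$ immediately yield $\|F({\bf t})\|_{Y}\leq \|f\|_{\infty}\int_{(0,\infty)^{n}}\|R({\bf u})\|\, d{\bf u}<\infty$, showing that $F(\cdot)$ is well defined and bounded. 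The continuity of $F(\cdot)$ follows from the strong continuity of $(R({\bf t}))_{{\bf t}>{\bf 0}}$, the continuity of $f(\cdot)$, and the dominated convergence theorem applied with the dominating scalar function ${\bf u}\mapsto 2\|f\|_{\infty}\|R({\bf u})\|$.

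Next, I would verify the ${\mathrm R}$-multi-almost periodicity. Let $({\bf b}_{k})\in {\mathrm R}$ be arbitrary. Since $f(\cdot)$ is ${\mathrm R}$-multi-almost periodic, one can extract a subsequence $({\bf b}_{k_{l}})$ of $({\bf b}_{k})$ and a function $f^{\ast}:{\mathbb R}^{n}\rightarrow X$ such that
\begin{align*}
\lim_{l\rightarrow +\infty}f\bigl({\bf s}+{\bf b}_{k_{l}}\bigr)=f^{\ast}({\bf s})
\end{align*}
uniformly for ${\bf s}\in {\mathbb R}^{n}$. In particular $\|f^{\ast}\|_{\infty}\leq \|f\|_{\infty}$, so that the candidate limit
\begin{align*}
F^{\ast}({\bf t}):=\int_{(0,\infty)^{n}}R({\bf u})f^{\ast}({\bf t}-{\bf u})\, d{\bf u},\quad {\bf t}\in {\mathbb R}^{n},
\end{align*}
is itself well defined and bounded. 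Then, by the very same change of variables, for every ${\bf t}\in {\mathbb R}^{n}$ and $l\in {\mathbb N}$ I would estimate
\begin{align*}
\bigl\|F\bigl({\bf t}+{\bf b}_{k_{l}}\bigr)-F^{\ast}({\bf t})\bigr\|_{Y}\leq \int_{(0,\infty)^{n}}\|R({\bf u})\|\cdot \bigl\|f\bigl({\bf t}-{\bf u}+{\bf b}_{k_{l}}\bigr)-f^{\ast}({\bf t}-{\bf u})\bigr\|_{X}\, d{\bf u}.
\end{align*}
Because the integrand $\|f({\bf t}-{\bf u}+{\bf b}_{k_{l}})-f^{\ast}({\bf t}-{\bf u})\|_{X}$ tends to zero as $l\to +\infty$ uniformly in $({\bf t},{\bf u})\in {\mathbb R}^{n}\times (0,\infty)^{n}$ (by the uniform convergence in the Bochner transfer condition with ${\bf s}={\bf t}-{\bf u}$), and is uniformly bounded by $2\|f\|_{\infty}$, I may split the integration region into a large cube where uniform smallness applies and its complement where the $L^{1}$-tail of $\|R\|$ is small; this gives the required uniform convergence $F({\bf t}+{\bf b}_{k_{l}})\to F^{\ast}({\bf t})$ in ${\bf t}\in {\mathbb R}^{n}$, establishing ${\mathrm R}$-multi-almost periodicity of $F(\cdot)$.

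The main obstacle is a subtle one rather than a technical one: one has to be careful that the boundedness hypothesis on $f(\cdot)$ is truly used to legitimise both the absolute convergence of the defining integral and the dominated convergence step in the Bochner type argument, since without it the limit function $f^{\ast}(\cdot)$ (which is only pointwise/uniformly limit of translates) might not even produce a meaningful convolution. I would also note that one should implicitly assume, as is done throughout Section~\ref{maremare}, that any subsequence of a sequence in ${\mathrm R}$ again belongs to ${\mathrm R}$, so that the extraction is legitimate; under this standing convention, the argument above closes the proof without any further difficulty.
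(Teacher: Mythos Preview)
Your proof is correct and follows essentially the same route as the paper: rewrite $F$ via the substitution ${\bf u}={\bf t}-{\bf s}$, extract the Bochner subsequence for $f$, define $F^{\ast}$ as the convolution with $f^{\ast}$, and control $\|F({\bf t}+{\bf b}_{k_{l}})-F^{\ast}({\bf t})\|_{Y}$ by $\int\|R({\bf u})\|\cdot\|f({\bf t}-{\bf u}+{\bf b}_{k_{l}})-f^{\ast}({\bf t}-{\bf u})\|\,d{\bf u}$. Two small remarks: the cube/tail splitting is unnecessary since the uniform convergence of $f(\cdot+{\bf b}_{k_{l}})$ lets you pull the supremum outside the integral directly, and closure of ${\mathrm R}$ under subsequences is not required here, since Definition~\ref{eovakoap} only asks that some subsequence of each $({\bf b}_{k})\in{\mathrm R}$ work, not that the subsequence itself lie in ${\mathrm R}$.
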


\begin{proof}
Let $({\bf b}_{k}=(b_{k}^{1},b_{k}^{2},\cdot \cdot\cdot ,b_{k}^{n})) \in {\mathrm R}$ be given. Then there exist a subsequence $({\bf b}_{k_{l}}=(b_{k_{l}}^{1},b_{k_{l}}^{2},\cdot \cdot\cdot , b_{k_{l}}^{n}))$ of $({\bf b}_{k})$ and a function
$f^{\ast} : {\mathbb R}^{n} \rightarrow X$ such that
$
\lim_{l\rightarrow +\infty}f({\bf t} +(b_{k_{l}}^{1},\cdot \cdot\cdot, b_{k_{l}}^{n}))=f^{\ast}({\bf t}) 
$
uniformly for ${\bf t}\in {\mathbb R}^{n}.$ Hence, the function $f^{\ast} : {\mathbb R}^{n} \rightarrow X$ is bounded and measurable. Clearly, 
\begin{align}\label{swishtime}
F({\bf t})=\int_{[0,\infty)^{n}}R({\bf s}) f({\bf t}-{\bf s})\, d{\bf s}\mbox{ for all }{\bf t}\in {\mathbb R}^{n}
\end{align}
and the integral $ \int_{[0,\infty)^{n}}R({\bf s}) f^{\ast}({\bf t}-{\bf s})\, d{\bf s}$
is well defined for all ${\bf t}\in {\mathbb R}^{n}.$ 
Furthermore,  
$$
\lim_{l\rightarrow \infty} \int_{[0,\infty)^{n}}R({\bf s}) f({\bf t}+{\bf b}_{k_{l}}-{\bf s})\, d{\bf s}= \int_{[0,\infty)^{n}}R({\bf s}) f^{\ast}({\bf t}-{\bf s})\, d{\bf s}
$$
uniformly for ${\bf t}\in {\mathbb R}^{n},$ because 
\begin{align*}
\Biggl\|\int_{[0,\infty)^{n}}R({\bf s}) & f({\bf t}+{\bf b}_{k_{l}}-{\bf s})\, d{\bf s}- \int_{[0,\infty)^{n}}R({\bf s}) f^{\ast}({\bf t}-{\bf s})\, d{\bf s}\Biggr\|_{Y}
\\& \leq \int_{[0,\infty)^{n}}\|R({\bf s})\| \cdot \bigl\| f({\bf t}+{\bf b}_{k_{l}}-{\bf s})- f^{\ast}({\bf t}-{\bf s}) \bigr\|\, d{\bf s},\quad {\bf t}\in {\mathbb R}^{n},\ l\in {\mathbb N},
\end{align*}
which simply yields the required conclusion. 
\end{proof}

Under certain extra conditions, we can also reformulate the above results for uniformly recurrent functions defined on ${\mathbb R}^{n}$.
On the other hand, it seems that we must slightly strengthen the notion introduced in Definition \ref{braindamage12345} in order to investigate the invariance of ${\mathbb D}$-asymptotical multi-almost periodicity under the actions of ``finite'' convolution products:

\begin{defn}\label{braindamage12345678}
Suppose that the set ${\mathbb D} \subseteq {\mathbb R}^{n}$ is unbounded, and
$F : I \times X \rightarrow Y$ is a continuous function. Then we say that $F(\cdot ;\cdot)$ is 
strongly ${\mathbb D}$-asymptotically $({\mathrm R},{\mathcal B})$-multi-almost periodic, resp. strongly ${\mathbb D}$-asymptotically $({\mathrm R}_{\mathrm X},{\mathcal B})$-multi-almost periodic,
if and only if there exist an $({\mathrm R},{\mathcal B})$-multi-almost periodic function $G : {\mathbb R}^{n} \times X \rightarrow Y$, resp. an $({\mathrm R}_{\mathrm X},{\mathcal B})$-multi-almost periodic function $G : {\mathbb R}^{n} \times X \rightarrow Y$, and a function
$Q\in C_{0,{\mathbb D},{\mathcal B}}(I\times X :Y)$ such that
$F({\bf t} ; x)=G({\bf t} ; x)+Q({\bf t} ; x)$ for all ${\bf t}\in I$ and $x\in X.$

Let $I = {\mathbb R}^{n}.$ Then it is said that $F(\cdot ;\cdot)$ is strongly
asymptotically $({\mathrm R},{\mathcal B})$-multi-almost periodic, resp. strongly asymptotically $({\mathrm R}_{\mathrm X},{\mathcal B})$-multi-almost periodic, if and only if $F(\cdot ;\cdot)$ is 
strongly ${\mathbb R}^{n}$-asymptotically $({\mathrm R},{\mathcal B})$-multi-almost periodic, resp. strongly ${\mathbb R}^{n}$-asymptotically $({\mathrm R}_{\mathrm X},{\mathcal B})$-multi-almost periodic. Finally, if $X=\{0\},$ then we also say that the function 
$F(\cdot )$ is strongly
asymptotically ${\mathrm R}$-multi-almost periodic, and so on and so forth.
\index{function!strongly ${\mathbb D}$-asymptotically $({\mathrm R},{\mathcal B})$-multi-almost periodic}
\index{function!strongly ${\mathbb D}$-asymptotically $({\mathrm R}_{\mathrm X},{\mathcal B})$-multi-almost periodic}
\end{defn}
\index{function!strongly ${\mathbb D}$-asymptotically Bohr ${\mathcal B}$-almost periodic} \index{function!strongly ${\mathbb D}$-asymptotically uniformly recurrent}

Set, for brevity, $I_{{\bf t}}:=(-\infty,t_{1}] \times (-\infty,t_{2}]\times \cdot \cdot \cdot \times (-\infty,t_{n}]$ and 
${\mathbb D}_{{\bf t}}:=I_{{\bf t}} \cap {\mathbb D}$
for any ${\bf t}=(t_{1},t_{2},\cdot \cdot \cdot, t_{n})\in {\mathbb R}^{n}.$  Now we are ready to formulate the following result:

\begin{prop}\label{idio-mult}
Suppose that $(R({\bf t}))_{{\bf t}> {\bf 0}}\subseteq L(X,Y)$ is a strongly continuous operator family such that
$\int_{(0,\infty)^{n}}\|R({\bf t} )\|\, d{\bf t}<\infty .$ If $f : I \rightarrow X$ is strongly ${\mathbb D}$-asymptotically almost periodic (bounded strongly  ${\mathbb D}$-asymptotically ${\mathrm R}$-multi-almost periodic),
\begin{align}\label{bprefi}
\lim_{|{\bf t}|\rightarrow \infty,  {\bf t} \in {\mathbb D}}\int_{I_{{\bf t}}\cap {\mathbb D}^{c}}\| R({\bf t}-{\bf s})\|\, d{\bf s}=0
\end{align}
and for each $r>0$ we have
\begin{align}\label{bprefi1}
\lim_{|{\bf t}|\rightarrow \infty, {\bf t} \in {\mathbb D}}\int_{{\mathbb D}_{{\bf t}}\cap B(0,r)}\| R({\bf t}-{\bf s})\|\, d{\bf s}=0,
\end{align}
then the function 
\begin{align*}
F({\bf t}):=\int_{{\mathbb D}_{{\bf t}}}R({\bf t}-{\bf s})f({\bf s})\, ds,\quad {\bf t}\in I
\end{align*}
is strongly ${\mathbb D}$-asymptotically almost periodic (bounded strongly  ${\mathbb D}$-asymptotically ${\mathrm R}$-multi-almost periodic).
\end{prop}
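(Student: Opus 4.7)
My plan is to reduce the statement to the invariance results for almost periodic functions already established in Theorem \ref{krucija} and Theorem \ref{krucija-rrrr}, with the decay conditions \eqref{bprefi}-\eqref{bprefi1} handling the ``asymptotic error.'' First, I would invoke Definition \ref{braindamage12345678} to write $f = g + q$, where $g : {\mathbb R}^{n} \rightarrow X$ is almost periodic (respectively, ${\mathrm R}$-multi-almost periodic) and $q \in C_{0,{\mathbb D}}(I : X)$. Note that $g$ is necessarily bounded in either case (almost periodic functions are bounded; in the second case, one uses boundedness of $f$ together with $\lim_{|{\bf t}|\to\infty,{\bf t}\in{\mathbb D}}q({\bf t}) = 0$ to bound $q$ outside a compact and hence also $g = f - q$).

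Next, define the ``almost periodic companion'' on all of ${\mathbb R}^{n}$ by
\begin{align*}
G({\bf t}) := \int_{I_{{\bf t}}} R({\bf t} - {\bf s}) g({\bf s})\, d{\bf s} = \int_{(0,\infty)^{n}} R({\bf u}) g({\bf t} - {\bf u})\, d{\bf u}, \quad {\bf t} \in {\mathbb R}^{n}.
\end{align*}
Theorem \ref{krucija} (respectively, Theorem \ref{krucija-rrrr}) immediately yields that $G(\cdot)$ is almost periodic (respectively, bounded and ${\mathrm R}$-multi-almost periodic) on ${\mathbb R}^{n}$.

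The main task, then, is to verify that the remainder $Q({\bf t}) := F({\bf t}) - G({\bf t})$, ${\bf t}\in I$, belongs to $C_{0,{\mathbb D}}(I : Y)$. Using $f = g + q$ on ${\mathbb D}_{\bf t}\subseteq I$, one computes
\begin{align*}
Q({\bf t}) = -\int_{I_{\bf t}\cap {\mathbb D}^{c}} R({\bf t} - {\bf s}) g({\bf s})\, d{\bf s} + \int_{{\mathbb D}_{\bf t}} R({\bf t} - {\bf s}) q({\bf s})\, d{\bf s}, \quad {\bf t}\in {\mathbb D}.
\end{align*}
The first addend is bounded by $\|g\|_{\infty}\int_{I_{\bf t}\cap {\mathbb D}^{c}}\|R({\bf t}-{\bf s})\|\, d{\bf s}$, which tends to zero as $|{\bf t}|\to \infty$ along ${\mathbb D}$ by hypothesis \eqref{bprefi}. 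For the second addend, given $\epsilon > 0$, pick $r>0$ so large that $\|q({\bf s})\|\leq \epsilon$ for all ${\bf s}\in {\mathbb D}$ with $|{\bf s}|\geq r$; then split the integral over ${\mathbb D}_{\bf t}$ into the parts on ${\mathbb D}_{\bf t}\setminus B(0,r)$ and ${\mathbb D}_{\bf t}\cap B(0,r)$. The tail part is dominated by $\epsilon \int_{(0,\infty)^{n}}\|R({\bf u})\|\, d{\bf u}$, while the compact part is bounded by $\sup_{{\bf s}\in \overline{B(0,r)}\cap I}\|q({\bf s})\|\cdot \int_{{\mathbb D}_{\bf t}\cap B(0,r)}\|R({\bf t}-{\bf s})\|\, d{\bf s}$, which vanishes as $|{\bf t}|\to \infty$ along ${\mathbb D}$ by hypothesis \eqref{bprefi1} (here we use the continuity of $q$ on the compact $\overline{B(0,r)}\cap I$).

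The main obstacle is the splitting argument for the second addend, where one must carefully exploit the two complementary decay hypotheses \eqref{bprefi} and \eqref{bprefi1}: the first controls the loss incurred by replacing $I$ by ${\mathbb D}$ in the integration domain, and the second handles the ``short-memory'' interaction between the kernel $R$ and the bounded piece of $q$ near the origin. A secondary technical point is verifying that $F(\cdot)$ is genuinely continuous on $I$; this follows from the strong continuity of $(R({\bf t}))_{{\bf t}>{\bf 0}}$ together with integrability and the continuity of $f$, by the dominated convergence theorem.
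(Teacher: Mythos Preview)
Your proposal is correct and follows essentially the same approach as the paper's own proof: decompose $f=g+q$ via Definition \ref{braindamage12345678}, take $G({\bf t})=\int_{I_{\bf t}}R({\bf t}-{\bf s})g({\bf s})\,d{\bf s}$ as the almost periodic part via Theorem \ref{krucija} (resp.\ Theorem \ref{krucija-rrrr}), and show $Q=F-G\in C_{0,{\mathbb D}}(I:Y)$ by estimating the $g$-piece over $I_{\bf t}\cap{\mathbb D}^{c}$ with \eqref{bprefi} and the $q$-piece by the same $B(0,r)$-splitting using \eqref{bprefi1}. The only cosmetic difference is that the paper bounds the compact part by the global constant $M:=\sup_{{\bf s}\in{\mathbb D}}\|q({\bf s})\|$ rather than invoking compactness of $\overline{B(0,r)}\cap I$; otherwise the arguments coincide.
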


\begin{proof}
We will consider only strong ${\mathbb D}$-asymptotical almost periodicity.
By definition, we have the existence of  an almost periodic function $G : {\mathbb R}^{n} \rightarrow X$ and a function
$Q\in C_{0,{\mathbb D}}(I :X)$ such that
$f({\bf t})=g({\bf t})+q({\bf t} )$ for all ${\bf t}\in I$ and $x\in X.$
Clearly, we have the decomposition
$$
F({\bf t})= \int_{I_{{\bf t}}}R({\bf t}-{\bf s})g({\bf s})\, d{\bf s} +\Biggl[ \int_{{\mathbb D}_{{\bf t}}}R({\bf t}-{\bf s})q({\bf s})\, d{\bf s}-\int_{I_{{\bf t}}\cap {\mathbb D}^{c}}R({\bf t}-{\bf s})g({\bf s})\, d{\bf s} \Biggr],\quad {\bf t}\in I.
$$
Keeping in mind Theorem \ref{krucija}, it suffices to show that the function 
$$
{\bf t} \mapsto \int_{{\mathbb D}_{{\bf t}}}R({\bf t}-{\bf s})q({\bf s})\, d{\bf s}-\int_{I_{{\bf t}}\cap {\mathbb D}^{c}}R({\bf t}-{\bf s})g({\bf s})\, d{\bf s},\quad  {\bf t}\in I
$$
belongs to the class $ C_{0,{\mathbb D}}(I :X).$ For the second addend, this immediately follows from the boundedness of function $g(\cdot)$ and condition \eqref{bprefi}. In order to show this for the first addend, fix a number $\epsilon>0$. Then there exists $r>0$ such that, for every ${\bf t}\in {\mathbb D}$ with $|{\bf t}|>r,$
we have $\|q({\bf t})\|<\epsilon.$ Furthermore, we have
$$
\int_{{\mathbb D}_{{\bf t}}}R({\bf t}-{\bf s})q({\bf s})\, d{\bf s}=\int_{{\mathbb D}_{{\bf t}}\cap B(0,r)}R({\bf t}-{\bf s})q({\bf s})\, d{\bf s}+\int_{{\mathbb D}_{{\bf t}}\cap B(0,r)^{c}}R({\bf t}-{\bf s})q({\bf s})\, d{\bf s},\quad {\bf t}\in I.
$$
Clearly, $M:=\sup_{{\bf t}\in {\mathbb D}}\|q({\bf t})\|<\infty$ and
$$
\Biggl\|\int_{{\mathbb D}_{{\bf t}}\cap B(0,r)}R({\bf t}-{\bf s})q({\bf s})\, d{\bf s}\Biggr\|_{Y} \leq M\int_{{\mathbb D}_{{\bf t}}\cap B(0,r)}\| R({\bf t}-{\bf s})\|\, d{\bf s},\quad {\bf t}\in I,
$$
so that the first addend in the above sum belongs to the class $ C_{0,{\mathbb D}}(I :X)$ due to condition \eqref{bprefi1}. This is also clear for the second addend since
$$
\Biggl\|\int_{{\mathbb D}_{{\bf t}}\cap B(0,r)}R({\bf t}-{\bf s})q({\bf s})\, d{\bf s}\Biggr\|_{Y} \leq \epsilon \int_{(0,\infty)^{n}}\|R({\bf s} )\|\, d{\bf s},\quad {\bf t}\in I.
$$
\end{proof}

If ${\mathbb D}=[\alpha_{1},\infty) \times [\alpha_{2},\infty) \times \cdot \cdot \cdot \times [\alpha_{n},\infty)$ for some real numbers $\alpha_{1},\ \alpha_{2},\cdot \cdot \cdot,\ \alpha_{n},$ then ${\mathbb D}_{{\bf t}}=[\alpha_{1},t_{1}]\times [\alpha_{2},t_{2}] \times \cdot \cdot \cdot \times [\alpha_{n},t_{n}]$ and conditions \eqref{bprefi}-\eqref{bprefi1} hold, as easily shown, which implies that
the function
$
F({\bf t})=\int^{{\bf \alpha}}_{{\bf t}}R({\bf t}-{\bf s})f({\bf s})\, ds,$ $ {\bf t}\in I
$ is strongly ${\mathbb D}$-asymptotically almost periodic, where we accept the notation
$$
\int^{{\bf \alpha}}_{{\bf t}}\cdot =\int_{\alpha_{1}}^{t_{1}}\int_{\alpha_{2}}^{t_{2}}\cdot \cdot \cdot \int_{\alpha_{n}}^{t_{n}}.
$$

\section{Examples and applications to the abstract Volterra integro-differential equations}\label{some12345}

In this section, we apply our results established so far in the analysis of existence and uniqueness of the multi-almost periodic type solutions for various classes of abstract Volterra integro-differential equations. 

We start with some illustrative examples and applications:\vspace{0.1cm}

1. Let $Y$ be one of the spaces $L^{p}({\mathbb R}^{n}),$ $C_{0}({\mathbb R}^{n})$ or $BUC({\mathbb R}^{n}),$ where $1\leq p<\infty.$ It is well known that the Gaussian semigroup\index{Gaussian semigroup}
$$
(G(t)F)(x):=\bigl( 4\pi t \bigr)^{-(n/2)}\int_{{\mathbb R}^{n}}F(x-y)e^{-\frac{|y|^{2}}{4t}}\, dy,\quad t>0,\ f\in Y,\ x\in {\mathbb R}^{n},
$$
can be extended to a bounded analytic $C_{0}$-semigroup of angle $\pi/2,$ generated by the Laplacian $\Delta_{Y}$ acting with its maximal distributional domain in $Y;$ see \cite[Example 3.7.6]{a43} for more details (recall that the semigroup $(G(t))_{t>0}$ is not strongly continuous at zero on $L^{\infty}({\mathbb R}^{n})$). Suppose now
that 
$\emptyset  \neq I'\subseteq I= {\mathbb R}^{n}$ and
$F(\cdot)$ is bounded Bohr $({\mathcal B},I')$-almost periodic, resp. bounded $({\mathcal B},I')$-uniformly recurrent. Then for each $t_{0}>0$ the function ${\mathbb R}^{n}\ni x\mapsto u(x,t_{0})\equiv (G(t_{0})F)(x) \in {\mathbb C}$
is likewise bounded Bohr $({\mathcal B},I')$-almost periodic, resp. bounded $({\mathcal B},I')$-uniformly recurrent. Towards see this, it suffices to recall the corresponding definitions and observe that, for every $x,\,\ \tau \in {\mathbb R}^{n},$ we have:
$$
\Bigl|u\bigl(x+\tau,t_{0}\bigr)-u\bigl(x,t_{0}\bigr)\Bigr| \leq \bigl( 4\pi t_{0} \bigr)^{-(n/2)}\int_{{\mathbb R}^{n}}|F(x-y+\tau)-F(x-y)|e^{-\frac{|y|^{2}}{4t_{0}}}\, dy;
$$
see also Proposition \ref{convdiaggas} which shows that for each $t_{0}>0$ the function ${\mathbb R}^{n}\ni x\mapsto u(x,t_{0})\equiv (G(t_{0})F)(x) \in {\mathbb C}$
is bounded, $({\mathrm R},{\mathcal B})$-multi-almost periodic provided that ${\mathrm R}$ is a certain collection of subsets in ${\mathbb R}^{n}$ and the function
$F(\cdot)$ is bounded, $({\mathrm R},{\mathcal B})$-multi-almost periodic
(in such a way, we have extended the conclusions obtained by S. Zaidman  \cite[Example 4, p. 32]{30} to the multi-dimensional case). Concerning this example, it should be recalled that F. Yang and C. Zhang 
have analyzed, in \cite[Proposition 2.4-Proposition 2.6]{fyang1}, the existence and uniqueness of remotely almost periodic solutions of multi-dimensional heat equations following a similar approach; we will further consider the class of multi-dimensional remotely 
almost periodic functions somewhere else.

We can  similarly clarify the corresponding results for the Poisson semigroup, which is given by
$$
(T(t)F)(x):=\frac{\Gamma((n + 1)/2)}{\pi^{(n+1)/2}}
\int_{{\mathbb R}^{n}}F(x-y)\frac{t \cdot dy}{(t^{2}+|y|^{2})^{(n+1)/2}},\quad t>0,\ f\in Y,\ x\in {\mathbb R}^{n}.
$$
Let us recall that the Fourier transform of the function
$$
x\mapsto \frac{\Gamma((n + 1)/2)}{\pi^{(n+1)/2}}
\frac{t }{(t^{2}+|x|^{2})^{(n+1)/2}},\quad x\in {\mathbb R}^{n},
$$
is given by $e^{-t|\cdot|}$ for all $t>0$ (see
\cite[Example 3.7.9]{a43} for more details).

2. Set
$$
E_{1}(x,t):=\bigl( \pi t\bigr)^{-1/2}\int^{x}_{0}e^{-y^{2}/4t}\, dy,\quad x\in {\mathbb R},\ t>0.
$$
Concerning the homogeneous solutions of
the heat equation on domain $I:=\{(x,t): x>0,\ t>0\}$, we would like to recall that F. Tr\`eves \cite[p. 433]{treves} has proposed the following formula:
\begin{align}\label{out-zxc}
u(x,t)=\frac{1}{2}\int^{x}_{-x}\frac{\partial E_{1}}{\partial y}(y,t)u_{0}(x-y)\, dy-\int^{t}_{0}\frac{\partial E_{1}}{\partial t}(x,t-s)g(s)\, ds,\quad x>0,\ t>0,
\end{align}
for the solution of the following mixed initial value problem:
\begin{align}\label{heat-prvi}
\begin{split}
& u_{t}(x,t)=u_{xx}(x,t),\ \ x>0,\ t>0; \\
& u(x,0)=u_{0}(x), \ x>0,\ \  u(0,t)=g(t), \  t>0
\end{split}
\end{align}
(for simplicity, we will not consider here the evolution analogues of \eqref{out-zxc} and the generation of various classes of operator semigroups with the help of this formula). 
Concerning the existence and uniqueness of multi-dimensional almost periodic type solution of \eqref{heat-prvi}, we will present only one result which exploits the formula \eqref{out-zxc} with $g(t)\equiv 0.$ Suppose that $0<T<\infty$ and the function $u_{0} : [0,\infty) \rightarrow {\mathbb C}$ is bounded Bohr $I_{0}$-almost periodic, resp. bounded $I_{0}$-uniformly recurrent, for a certain non-empty subset $I_{0}$ of $[0,\infty).$
Set $I':=I_{0} \times (0,T).$ If ${\mathbb D}$ is any unbounded subset of $I$ which has the property that 
$$
\lim_{|(x,t)| \rightarrow +\infty, (x,t)\in {\mathbb D}}\min\Biggl(\frac{x^{2}}{4(t+T)},t\Biggr)=+\infty,  
$$
then the solution $u(x,t)$ of \eqref{heat-prvi} is ${\mathbb D}$-asymptotically $I'$-almost periodic of type $1$, resp. ${\mathbb D}$-asymptotically $I'$-uniformly recurrent of type $1$ (see Definition \ref{nafaks123456789012345123}). In order to see that,
observe that the formula \eqref{out-zxc}, in our concrete situation, reads as follows
$$
u(x,t)=\frac{1}{2}\int^{x}_{-x}\bigl( \pi t\bigr)^{-1/2}e^{-y^{2}/4t}u_{0}(x-y)\, dy,\quad x>0,\ t>0
$$
as well as that for any $(x,t) \in I$ and $(\tau_{1},\tau_{2})\in I$ we have:
\begin{align}\label{heat-prviqw}
\begin{split}
\bigl| & u\bigl(x+\tau_{1},t+\tau_{2}\bigr)-u(x,t) \bigr|\leq \frac{\|u_{0}\|_{\infty}}{2}\int^{x+\tau_{1}}_{x}\bigl( \pi (t+\tau_{2})\bigr)^{-1/2}e^{-y^{2}/4( t+\tau_{2})}\, dy
\\&+\frac{\|u_{0}\|_{\infty}}{2}\int_{-(x+\tau_{1})}^{-x}\bigl( \pi (t+\tau_{2})\bigr)^{-1/2}e^{-y^{2}/4( t+\tau_{2})}\, dy
\\&+\frac{1}{2}\int^{x}_{-x}\Biggl|\bigl( \pi (t+\tau_{2})\bigr)^{-1/2}e^{-y^{2}/4(t+\tau_{2})}u_{0}\bigl(x+\tau_{1}-y\bigr)-\bigl( \pi t\bigr)^{-1/2}e^{-y^{2}/4t}u_{0}(x-y)\Biggr|\, dy.
\end{split}
\end{align}
The consideration for both classes is similar and we will analyze the class of ${\mathbb D}$-asymptotically $I'$-almost periodic functions of type $1$ below, only.
Let $\epsilon>0$ be given. Then we know that 
there exists $l>0$ such that for each $x_{0} \in I_{0}$ there exists $\tau_{1} \in (x_{0}-l,x_{0}+l) \cap I_{0}$ such that
\begin{align}\label{emojmarko1454321}
\bigl|u_{0}(x+\tau_{1})-u_{0}(x)\bigr|\leq \epsilon, \quad x\geq 0.
\end{align}
Furthermore, there exists a finite real number $M_{0}>0$ such that $\int_{v}^{+\infty}e^{-x^{2}}\, dx <\epsilon$ for all $v \geq M_{0}.$
Let $M>0$ be such that 
\begin{align}\label{biojefaul}
\min\Biggl(\frac{x^{2}}{4(t+T)},t\Biggr)>M_{0}^{2}+\frac{1}{\epsilon},\mbox{ provided }(x,t)\in {\mathbb D}\mbox{ and }|(x,t)|>M.
\end{align}
So, let $(x,t)\in {\mathbb D}$ and $|(x,t)|>M.$ For the first addend in \eqref{heat-prviqw}, we can use the estimates
\begin{align*}
\frac{\|u_{0}\|_{\infty}}{2}&\int^{x+\tau_{1}}_{x}\bigl( \pi (t+\tau_{2})\bigr)^{-1/2}e^{-y^{2}/4( t+\tau_{2})}\, dy 
\\ & =\pi^{-1/2}\|u_{0}\|_{\infty}\int^{(x+\tau_{1})/2\sqrt{t+\tau_{2}}}_{x/2\sqrt{t+\tau_{2}}}e^{-v^{2}}\, dv
\\& \leq \pi^{-1/2}\|u_{0}\|_{\infty}\int^{+\infty}_{x/2\sqrt{t+\tau_{2}}}e^{-v^{2}}\, dv
\\& \leq \pi^{-1/2}\|u_{0}\|_{\infty}\int^{+\infty}_{x/2\sqrt{t+T}}e^{-v^{2}}\, dv\leq \epsilon \pi^{-1/2}\|u_{0}\|_{\infty};
\end{align*}
the same estimate can be used for the second addend in \eqref{heat-prviqw}. For the third addend in \eqref{heat-prviqw}, we can use the decomposition (see \eqref{emojmarko1454321})
\begin{align*}
&\frac{1}{2}\int^{x}_{-x}\Biggl|\bigl( \pi (t+\tau_{2})\bigr)^{-1/2}e^{-y^{2}/4(t+\tau_{2})}u_{0}\bigl(x+\tau_{1}-y\bigr)-\bigl( \pi t\bigr)^{-1/2}e^{-y^{2}/4t}u_{0}(x-y)\Biggr|\, dy
\\& \leq \frac{1}{2}\int^{x}_{-x}\bigl( \pi (t+\tau_{2})\bigr)^{-1/2}e^{-y^{2}/4(t+\tau_{2})}\Bigl| u_{0}\bigl(x+\tau_{1}-y\bigr)-
u_{0}(x-y)\Bigr|\, dy,
\\& + \frac{1}{2}\int^{x}_{-x}\Biggl|\bigl( \pi (t+\tau_{2})\bigr)^{-1/2}e^{-y^{2}/4(t+\tau_{2})}u_{0}\bigl(x-y\bigr)-\bigl( \pi t\bigr)^{-1/2}e^{-y^{2}/4t}u_{0}(x-y)\Biggr|\, dy,
\end{align*}
which enables one to further continue the computation as follows:
\begin{align*}
& \leq \frac{\epsilon}{2}\int^{x}_{-x}\bigl( \pi (t+\tau_{2})\bigr)^{-1/2}e^{-y^{2}/4(t+\tau_{2})}\, dy
\\& +\frac{\|u_{0}\|_{\infty}}{2}\int^{x}_{-x}\Biggl|\bigl( \pi (t+\tau_{2})\bigr)^{-1/2}e^{-y^{2}/4(t+\tau_{2})}-\bigl( \pi t\bigr)^{-1/2}e^{-y^{2}/4t}\Biggr|\, dy
\\& \leq \epsilon \pi^{-1/2}\int_{-\infty}^{+\infty}e^{-v^{2}}\, dv
\\& +\frac{\|u_{0}\|_{\infty}}{2}\int^{x}_{-x}\Biggl|\bigl( \pi (t+\tau_{2})\bigr)^{-1/2}e^{-y^{2}/4(t+\tau_{2})}-\bigl( \pi t\bigr)^{-1/2}e^{-y^{2}/4t}\Biggr|\, dy.
\end{align*}
Applying the substitution $v^{2}=y^{2}/4t,$ we get that
\begin{align*}
&\frac{\|u_{0}\|_{\infty}}{2}\int^{x}_{-x}\Biggl|\bigl( \pi (t+\tau_{2})\bigr)^{-1/2}e^{-y^{2}/4(t+\tau_{2})}-\bigl( \pi t\bigr)^{-1/2}e^{-y^{2}/4t}\Biggr|\, dy
\\& \leq \pi^{-1/2}\|u_{0}\|_{\infty}\int^{+\infty}_{-\infty}\Biggl| \sqrt{\frac{t}{t+\tau_{2}}}e^{-v^{2}\cdot \frac{t}{t+\tau_{2}}} -e^{-v^{2}}\Biggr|\, dv.
\end{align*}
Applying the Lagrange mean value theorem for the function $x\mapsto xe^{-v^{2}x^{2}}$, $x\in [\sqrt{\frac{t}{t+\tau_{2}}},1]$ ($v\in {\mathbb R}$ is fixed), we obtain
\begin{align*}
& \pi^{-1/2}\|u_{0}\|_{\infty}\int^{+\infty}_{-\infty}\Biggl| \sqrt{\frac{t}{t+\tau_{2}}}e^{-v^{2}\cdot \frac{t}{t+\tau_{2}}} -e^{-v^{2}}\Biggr|\, dv
\\& \leq \pi^{-1/2}\|u_{0}\|_{\infty}\int^{+\infty}_{-\infty}\Biggl| \sqrt{\frac{t}{t+\tau_{2}}}-1\Biggr| \max_{\zeta \in [\sqrt{\frac{t}{t+\tau_{2}}},1]}e^{-v^{2}\zeta^{2}}\bigl(1+2\zeta^{2}v^{2}\bigr)\, dv
\\& \leq \pi^{-1/2}\|u_{0}\|_{\infty}\int^{+\infty}_{-\infty}\Biggl| \sqrt{\frac{t}{t+\tau_{2}}}-1\Biggr|e^{-\frac{t}{t+\tau_{2}}v^{2}}\bigl( 1+2v^{2}\bigr)\, dv
\\& \leq \pi^{-1/2}\|u_{0}\|_{\infty}\Biggl| \sqrt{\frac{t}{t+\tau_{2}}}-1\Biggr| \int^{+\infty}_{-\infty}e^{-\frac{M_{0}^{2}}{M_{0}^{2}+T}v^{2}}\bigl( 1+2v^{2}\bigr)\, dv.
\end{align*}
The final conclusion now follows from the estimate \eqref{biojefaul}, by observing that
\begin{align*}
\Biggl| \sqrt{\frac{t}{t+\tau_{2}}}-1\Biggr|=\frac{\tau_{2}}{t+\tau_{2}+\sqrt{t^{2}+t \tau_{2}}}\leq \frac{T}{t}.
\end{align*}

The following observation should be also made: If $u_{0} : [0,\infty) \rightarrow {\mathbb C}$ is an essentially bounded function,
then it can be easily shown that for each $x>0$ the function $t\mapsto u(x,t),$ $t\geq 0$ is bounded and continuous. Furthermore,
the calculus established above enables one to see that for each $x>0$ the function $t\mapsto u(x,t),$ $t\geq 0$ is S-asymptotically $\omega$-periodic
for any positive real number $\omega>0$ (see 
H. R. Henr\'iquez, M. Pierri, P. T\' aboas \cite{pierro} for the notion).

3. Let $\Omega=(0,\infty) \times {\mathbb R}^{n}.$ Consider the following Hamilton-Jacobi equation 
\begin{align}\label{hamilton-jacobi}
\begin{split}
& u_{t}+H(Du)=0\ \ \text{ in }\Omega, \\
& u(0,\cdot)=u_{0}(\cdot) \ \ \text{ in }{\mathbb R}^{n},
\end{split}
\end{align}
where $D$ is the gradient operator in space variable and $H$ is the Hamiltonian. If we
assume that $H\in C(\Omega)$ and $u_{0}\in BUC({\mathbb R}^{n}),$ then the Hamilton-Jacobi equation (\ref{hamilton-jacobi}) has a unique viscosity solution. This result has been proved by M. G. Crandall and P.-L. Lions in \cite[Theorem VI.2]{crandall-lions}:

\begin{thm}\label{crandalko}
Suppose that $H\in C(\Omega)$ and $u_{0}\in BUC({\mathbb R}^{n})$. Then for each finite real number $T>0$ there exists a unique function $u\in
C(\overline{\Omega}) \cap C_{b}([0,T]\times {\mathbb R}^{n} ) $ which is a viscosity solution of (\ref{hamilton-jacobi}) and satisfies
$$
\lim_{t\downarrow 0+}\bigl\| u(\cdot,t)-u_{0}(t)\bigr\|_{L^{\infty}({\mathbb R}^{n})}=0.
$$
Moreover,
\begin{align}\label{lionsinjo}
|u(t,x)-u(t,y)|\leq \sup_{\xi \in {\mathbb R}^{n}}\bigl| u_{0}(\xi)-u_{0}(\xi +y-x) \bigr|,\quad x,\ y\in {\mathbb R}^{n},\ t\geq 0.
\end{align}
\end{thm}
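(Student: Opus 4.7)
The plan is to work entirely within the viscosity solutions framework of Crandall--Lions. The central technical tool will be a comparison principle: if $u$ is a bounded upper-semicontinuous viscosity subsolution and $v$ is a bounded lower-semicontinuous viscosity supersolution of $w_{t}+H(Dw)=0$ on $\Omega$, with $u(0,\cdot)\leq v(0,\cdot)$ on ${\mathbb R}^{n}$, then $u\leq v$ on $\overline{\Omega}$. Uniqueness in the stated class is then automatic by applying this in both directions to two candidate solutions. For existence, I would use Perron's method: set
\[
u(t,x):=\sup\bigl\{w(t,x):w\text{ is a bounded viscosity subsolution of \eqref{hamilton-jacobi} with }w(0,\cdot)\leq u_{0}\bigr\},
\]
and verify that its upper- and lower-semicontinuous envelopes coincide, yielding a continuous viscosity solution. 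The uniform continuity of $u_{0}$ supplies natural barriers of the form $u_{0}(x_{0})\pm\omega_{u_{0}}(|x-x_{0}|)\pm Ct$, which give the initial trace in $L^{\infty}$, the global boundedness on $[0,T]\times{\mathbb R}^{n}$, and continuity down to $t=0$.

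The main obstacle will be the comparison principle itself, which rests on the doubling-of-variables trick. For $\alpha>0$ large and $\varepsilon>0$ small, one maximizes
\[
\Phi_{\alpha,\varepsilon}(t,s,x,y):=u(t,x)-v(s,y)-\tfrac{\alpha}{2}|x-y|^{2}-\tfrac{\alpha}{2}(t-s)^{2}-\varepsilon\bigl(|x|^{2}+|y|^{2}\bigr)
\]
over $[0,T]\times[0,T]\times{\mathbb R}^{n}\times{\mathbb R}^{n}$; the quadratic penalty in $\varepsilon$ guarantees that a maximum is attained despite the unbounded domain. At a maximum point one inserts the test functions into the viscosity sub- and super-solution inequalities, so that the Hamiltonian terms cancel by virtue of the shared gradient variable $\alpha(x-y)$, and the continuity of $H$ forces a contradiction with $\sup(u-v)>0$ after sending $\alpha\to\infty$ and then $\varepsilon\to 0^{+}$. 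The delicate points are the initial-layer argument (using the boundary term $u_{0}(x)-u_{0}(y)$ and uniform continuity of $u_{0}$) and the correct order of limits; this is where the bulk of the technical work lies.

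The Lipschitz-type estimate \eqref{lionsinjo} is essentially free once the comparison machinery is available, by translation invariance in the spatial variable. Since $H$ depends only on $Du$ and not explicitly on $x$, for any fixed $h\in{\mathbb R}^{n}$ the function $w(t,x):=u(t,x+h)$ is again the unique viscosity solution of \eqref{hamilton-jacobi} with initial datum $x\mapsto u_{0}(x+h)$. Set $M:=\sup_{\xi}|u_{0}(\xi)-u_{0}(\xi+h)|$. Then the constants $w\pm M$ are, respectively, a supersolution majorizing and a subsolution minorizing the initial trace of $u$; applying the comparison principle to $(u,w+M)$ and $(w-M,u)$ yields
\[
|u(t,x)-u(t,x+h)|\leq M=\sup_{\xi\in{\mathbb R}^{n}}\bigl|u_{0}(\xi)-u_{0}(\xi+h)\bigr|,\quad t\geq 0,\ x\in{\mathbb R}^{n}.
\]
Taking $h=y-x$ recovers \eqref{lionsinjo}. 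Thus the entire theorem reduces to (i) executing Perron's construction and (ii) proving the comparison principle, with the closing estimate being an immediate corollary.
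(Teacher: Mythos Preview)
The paper does not prove this theorem at all: it is quoted verbatim as \cite[Theorem VI.2]{crandall-lions} and used as a black box, the only application being that the estimate \eqref{lionsinjo} immediately transfers Bohr $I'$-almost periodicity (or $I'$-uniform recurrence) from $u_{0}$ to $x\mapsto u(t,x)$ for each fixed $t\geq 0$. So there is no ``paper's own proof'' to compare against; your proposal goes well beyond what the authors do.

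That said, your outline is essentially the standard Crandall--Lions argument and is correct in spirit. The comparison principle via doubling of variables plus the $\varepsilon(|x|^{2}+|y|^{2})$ localizer is exactly how uniqueness is obtained on the whole space, Perron's method is the standard existence machine, and your derivation of \eqref{lionsinjo} from spatial translation invariance and comparison with $w\pm M$ is clean and correct. One point to be careful about: the barriers $u_{0}(x_{0})\pm\omega_{u_{0}}(|x-x_{0}|)\pm Ct$ are only sub/supersolutions once you control $H$ on the range of their gradients, and a general modulus $\omega_{u_{0}}$ need not be Lipschitz; the usual fix is to first approximate $u_{0}$ uniformly by Lipschitz data, or to regularize the modulus. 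This is routine but should be flagged if you intend to write out the details.
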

As a direct consequence of this result (cf. the estimate \eqref{lionsinjo}), we have that the Bohr $I'$-almost periodicity ($I'$-uniform recurrence) of function $u_{0}(\cdot)$ implies the Bohr $I'$-almost periodicity ($I'$-uniform recurrence) of viscosity solution $x\mapsto u(t,x),$ $x\in {\mathbb R}^{n}$ for every fixed real number $t\geq 0$ ($\emptyset \neq I'\subseteq {\mathbb R}^{n}$).

4. Consider
the following Hammerstein integral equation of convolution type on ${\mathbb R}^{n}$ (see e.g., \cite[Section 4.3, pp. 170-180]{cord-int} and references cited therein for more details on the subject):
\begin{align}\label{multiintegral}
y({\bf t})=g({\bf t})+ \int_{{\mathbb R}^{n}}k({\bf t}-{\bf s})F({\bf s},y({\bf s}))\, d{\bf s},\quad {\bf t}\in {\mathbb R}^{n},
\end{align}
where $g  : {\mathbb R}^{n}  \rightarrow X$ is almost periodic, $F : {\mathbb R}^{n} \times X  \rightarrow X$ is Bohr ${\mathcal B}$-almost periodic with ${\mathcal B}$ being the collection of all compact subsets of $X$, \eqref{zigzverinijeap} holds with $F=G$ and $X=Y=Z,$ 
$ k \in L^{1}({\mathbb R}^{n})$
and $ L\| k\|_{L^{1}({\mathbb R}^{n})}<1.$
Then \eqref{multiintegral} has a unique Bohr almost periodic solution. In actual fact, it suffices to apply the Banach contraction principle 
since the mapping
$$
AP\bigl({\mathbb R}^{n} :X\bigr) \ni y \mapsto g(\cdot)+ \int_{{\mathbb R}^{n}}k(\cdot-{\bf s})F({\bf s},y({\bf s}))\, d{\bf s} \in AP\bigl({\mathbb R}^{n} :X\bigr) 
$$ 
is a well defined $(L\| k\|_{L^{1}({\mathbb R}^{n}}))$-contraction, as can be easily shown by using Proposition \ref{convdiaggas}, Proposition \ref{kontinuitetap}(v), Corollary \ref{ovakorajko}
and a simple calculation.

Suppose now that ${\mathrm R}$ is a certain collection of sequences in ${\mathbb R}^{n}$ 
which satisfies that, for every sequence from ${\mathrm R},$ any its subsequence also belongs to ${\mathrm R}.$ Let ${\mathcal B}'$ be the collection of all bounded subsets of $X,$
let $F : {\mathbb R}^{n} \times X  \rightarrow X$ be $({\mathrm R},{\mathcal B}')$-multi-almost periodic, \eqref{zigzverinijeap} holds with $F=G$ and $X=Y=Z,$ 
$ k \in L^{1}({\mathbb R}^{n})$
and $ L\| k\|_{L^{1}({\mathbb R}^{n})}<1.$ 
Consider the integral equation \eqref{multiintegral}, where $g  : {\mathbb R}^{n}  \rightarrow X$ is a bounded ${\mathrm R}$-multi-almost periodic function. Denote by ${\mathrm R}_{b}({\mathbb R}^{n} : X)$ the vector space consisting of all such functions; applying Proposition \ref{2.1.10ap1}, we get that ${\mathrm R}_{b}({\mathbb R}^{n} : X)$ is a Banach space equipped with the sup-norm. Taking into account Proposition \ref{convdiaggas} and Theorem \ref{eovakoonakoap} (with ${\mathrm R}'={\mathrm R}$), the use of Banach contraction principle
enables one to see that the integral equation
\eqref{multiintegral} has a unique bounded ${\mathrm R}$-multi-almost periodic solution since
the mapping
$$
{\mathrm R}_{b}\bigl({\mathbb R}^{n} :X\bigr) \ni y \mapsto g(\cdot)+ \int_{{\mathbb R}^{n}}k(\cdot-{\bf s})F({\bf s},y({\bf s}))\, d{\bf s} \in {\mathrm R}_{b}\bigl({\mathbb R}^{n} :X\bigr) 
$$ 
is a well defined $(L\| k\|_{L^{1}({\mathbb R}^{n}}))$-contraction.\index{space!${\mathrm R}_{b}({\mathbb R}^{n} : X)$}

We can similarly analyze the existence and uniqueness of Bohr almost periodic solutions (bounded ${\mathrm R}$-multi-almost periodic solutions) of the following integral equation
\begin{align*}
y({\bf t})=g({\bf t})+ \int_{{\mathbb R}^{n}}F({\bf t},{\bf s},y({\bf s}))\, d{\bf s},\quad {\bf t}\in {\mathbb R}^{n},
\end{align*}
provided that $F : {\mathbb R}^{2n} \times X  \rightarrow X$ is Bohr ${\mathcal B}$-almost periodic with ${\mathcal B}$ being the collection of all compact subsets of $X$ (${\mathrm R}$ is a certain collection of sequences in ${\mathbb R}^{2n}$ 
which satisfies that, for every sequence from ${\mathrm R},$ any its subsequence also belongs to ${\mathrm R}$)
and there exists a costant $L\in (0,1)$ such that
$$
\| F({\bf t},{\bf s},x)-F({\bf t},{\bf s},y)\| \leq L\|x-y\|,\quad {\bf t}, \ {\bf s}\in {\mathbb R}^{n}; \ x, \ y\in X.
$$
Details can be left to the interested readers.

5. It is clear that Theorem \ref{krucija} and Theorem \ref{krucija-rrrr} can be applied in the analysis of existence of almost periodic solutions for an essentially large class of abstract
partial differential equations in Banach spaces, constructed in a little bit artificial way (even with fractional derivatives and multivalued linear operators; see \cite{nova-mono} for more details).
For example, let $A_{i}$ be the infinitesimal generator of a uniformly integrable, strongly continuous semigroup $(T_{i}(t))_{t\geq 0}$ on $X$ ($i=1,2$), and let $F : {\mathbb R}^{2}\rightarrow X$ be an almost periodic function. Define $T(t_{1},t_{2}):=T_{1}(t_{1})T_{2}(t_{2}),$ $t_{1},\ t_{2} \geq 0$ and
$$
u\bigl( t_{1},t_{2}  \bigr):=\int_{[0,\infty)^{2}}T_{1}\bigl( s_{1} \bigr)T_{2}\bigl( s_{2} \bigr)F\bigl( t_{1}-s_{1},t_{2}-s_{2} \bigr)\, ds_{1}\, ds_{2},\quad t_{1},\ t_{2}\in {\mathbb R}.
$$
Due to Theorem \ref{krucija} (see also the equation \eqref{swishtime}), we have that $u : {\mathbb R}^{2} \rightarrow X$ is almost periodic; furthermore, under certain conditions, we can use the Fubini theorem, interchange the order of integration and partial derivation, and use a well known result from the one-dimensional case, in order to see that:
\begin{align*}
u_{t_{2}}&\bigl( t_{1},t_{2}  \bigr)=\frac{\partial}{\partial t_{2}}\int_{[0,\infty)}T_{1}\bigl( s_{1} \bigr)\Biggl( \int^{\infty}_{0}T_{2} \bigl( s_{2}\bigr)F\bigl( t_{1}-s_{1},t_{2}-s_{2} \bigr)\, ds_{2}\Biggr)\, ds_{1} 
\\& =\int_{[0,\infty)}T_{1}\bigl( s_{1} \bigr)\frac{\partial}{\partial t_{2}}\Biggl( \int^{\infty}_{0}T_{2} \bigl( s_{2}\bigr)F\bigl( t_{1}-s_{1},t_{2}-s_{2} \bigr)\, ds_{2}\Biggr)\, ds_{1} 
\\&=\int_{[0,\infty)}T_{1}\bigl( s_{1} \bigr)\Biggl( A_{2}\int^{\infty}_{0}T_{2} \bigl( s_{2}\bigr)F\bigl( t_{1}-s_{1},t_{2}-s_{2} \bigr)\, ds_{2}+F\bigl( t_{1}-s_{1},t_{2} \bigr) \Biggr)\, ds_{1} 
\end{align*}
and 
\begin{align*}
u_{t_{2}t_{1}}&\bigl( t_{1},t_{2}  \bigr)=\frac{\partial}{\partial t_{1}}\Biggl(A_{2}u\bigl( t_{1},t_{2}  \bigr)+\int_{[0,\infty)}T_{1}\bigl( s_{1} \bigr)F\bigl( t_{1}-s_{1},t_{2}\bigr)\, ds_{1} \Biggr)
\\&=A_{2}u_{t_{1}}\bigl( t_{1},t_{2}  \bigr)+A_{1}\int^{\infty}_{0}T_{1}\bigl( s_{1} \bigr)F\bigl( t_{1}-s_{1},t_{2}\bigr)\, ds_{1}+F\bigl( t_{1},t_{2}  \bigr),\quad t_{1},\ t_{2} \in {\mathbb R}. 
\end{align*}

6. Consider the system of abstract partial differential equations \eqref{bounded-almost} for $(s,t)\in [0,\infty)^{2},$ accompanied with the initial condition $u(0,0)=x$ (since there is no risk for confusion, we will also refer to this problem as \eqref{bounded-almost}). In this part, we would like to note that some partial results on the existence  and uniqueness of ${\mathbb D}$-asymptotically almost periodic type solutions of this problem can be obtained by using the results from \cite[Section 2.1]{integral-dis} and some additional analyses. For simplicity, let us assume that $A$ and $B$ are two complex matrices of format $n\times n,$ $AB=BA,$ and $A,$ resp. $B,$ generate an exponentially decaying, strongly continuous semigroup 
$(T_{1}(s))_{s\geq 0},$ resp.
$(T_{2}(t))_{t\geq 0}$.
Let the functions $f_{1}(s,t)$ and 
$f_{2}(s,t)$ be continuously differentiable, let the compatibility condition $(f_{2})_{s}-Af_{2}=(f_{1})_{t}-Bf_{1}$ hold ($s,\ t\geq 0$), ${\mathbb D}:=\{(s,t)\in [0,\infty)^{2} : c_{1}s\leq t\leq c_{2}s\mbox{ for some positive real numbers }c_{1}\mbox{ and }c_{2}\},$ and let the following conditions hold true:
\begin{itemize}
\item[(i)] There exists a finite real constant $M>0$ such that $|f_{1}(v,0)|+|f_{2}(0,\omega)|\leq M,$ provided that $v,\ \omega \geq 0$ (here and hereafter, $|(z_{1},\cdot \cdot \cdot,z_{n})|:=(|z_{1}|^{2}+\cdot \cdot \cdot +|z_{n}|^{2})^{1/2}$ if $z_{i}\in {\mathbb C}$ for all $i\in {\mathbb N}_{n}$); 
\item[(ii)] The mappings $g_{i} :{\mathbb R}^{2}\rightarrow {\mathbb C}^{n}$ are continuous, bounded ($i=1,2$) and satisfy that, for every $\epsilon>0,$ there exists $l>0$ such that any subinterval $I$ of ${\mathbb R}$ of length $l>0$ contains a number $\tau \in I$ such that, for every $s,\ t\geq 0,$ we have $|g_{1}(s+\tau,t)-g_{1}(s,t)|\leq \epsilon$ and $|g_{2}(s,t+\tau)-g_{2}(s,t)|\leq \epsilon ;$
\item[(iii)]  We have that the function $q_{i}: [0,\infty)^{2} \rightarrow {\mathbb C}^{n}$ is bounded, $q_{i}\in C_{0,{\mathbb D}}([0,\infty)^{2} :{\mathbb C}^{n})$
and $f_{i}(s,t)=g_{i}(s,t)+q_{i}(s,t)$ for $(s,t)\in [0,\infty)^{2}$ and $i=1,2.$
\end{itemize}
Then there exists a unique classical solution $u(s,t)$ of \eqref{bounded-almost} (see \cite[Definition 2.13]{integral-dis}), and moreover, there exist a continuous function
$u_{ap}(s,t)$ on $[0,\infty)^{2}$ and a function 
$u_{0}\in C_{0,{\mathbb D}}([0,\infty)^{2} :{\mathbb C}^{n})$ such that $u(s,t)=u_{ap}(s,t)+u_{0}(s,t)$ for all $(s,t)\in [0,\infty)^{2}$, as well as
for every $\epsilon>0,$ there exists $l>0$ such that any subinterval $I$ of $[0,\infty)$ of length $l>0$ contains a number $\tau \in I$ such that, for every $s,\ t\geq 0,$ we have $|u_{ap}(s+\tau,t)-u_{ap}(s,t)|\leq \epsilon$
and $|u_{ap}(s,t+\tau)-u_{ap}(s,t)|\leq \epsilon.$
Keeping in mind \cite[Theorem 2.6, Theorem 2.16]{integral-dis}, all that we need is to prove that the above conclusion holds for the function
\begin{align*}
u(s,t)&=T_{1}(s)T_{2}(t)x+
T_{1}(s)\int^{t}_{0}T_{2}(t-\omega)f_{2}(0,\omega)\, d\omega
\\& +
\int^{s}_{0}T_{1}(s-v)f_{1}(v,t)\, dv
\\&=T_{1}(s)T_{2}(t)x+
T_{2}(t)\int^{s}_{0}T_{1}(s-v)f_{1}(v,0)\, dv
\\& +
\int^{t}_{0}T_{2}(t-\omega)f_{2}(s,\omega)\, d\omega
,\quad s,\ t\geq 0.
\end{align*}
Since the quantities $s,\ t$ and $|(s,t)|$ are equivalent on ${\mathbb D},$ with the meaning clear, our assumption (i) and the exponential decaying of $(T_{1}(s))_{s\geq 0}$
($(T_{2}(t))_{t\geq 0}$) together imply that:
\begin{align*}
&\lim_{(s,t)\in {\mathbb D},|(s,t)|\rightarrow \infty}\Biggl[T_{1}(s)T_{2}(t)x+
T_{1}(s)\int^{t}_{0}T_{2}(t-\omega)f_{2}(0,\omega)\, d\omega\Biggr]
\\& =\lim_{(s,t)\in {\mathbb D},|(s,t)|\rightarrow \infty}\Biggl[T_{1}(s)T_{2}(t)x+
T_{2}(t)\int^{s}_{0}T_{1}(s-v)f_{1}(v,0)\, dv\Biggr]=0.
\end{align*}
Using the decomposition ($s,\ t\geq 0$)
\begin{align*}
&\int^{s}_{0}T_{1}(s-v)f_{1}(v,t)\, dv
\\&=\int_{-\infty}^{s}T_{1}(s-v)g_{1}(v,t)\, dv
+\Biggl[\int^{s}_{0}T_{1}(s-v)q(v,t)\, dv-\int_{-\infty}^{0}T_{1}(s-v)g_{1}(v,t)\, dv\Biggr]
,
\end{align*}
the corresponding decomposition for the term $t\mapsto \int^{t}_{0}T_{2}(t-\omega)f_{2}(s,\omega)\, d\omega,$ $t\geq 0,$
our assumptions (ii)-(iii) and the argumentation contained in the proofs of \cite[Proposition 2.6.11, Proposition 2.6.13; Remark 2.6.14]{nova-mono}, the 
required conclusion simply follows. Let us note, finally, that there exists a great number of concrete situations where the above assumptions are really satisfied. Suppose, for example, that $n=1,$ $A=B=[-1],$ 
$$
f_{1}(s,t)=\sin s +\cos s+\int^{t}_{0}\frac{e^{\xi-t}}{1+\xi^{2}}d\, \xi,\quad s,\ t\geq 0
$$
and
$$
f_{2}(s,t)=\sin s +\frac{1}{1+t^{2}},\quad s,\ t\geq 0;
$$
see also \cite[Proposition 1.3.5(d)]{a43}. Then the above requirements hold.

7. Concerning the big quantity of applications and technics in the existing literature which are devoted to the study of bi-almost periodic functions and  bi-almost automorphic functions (see the references cited below and the references cited in \cite{nova-man}), we would like to note first that Z. Hu and Z. Jin \cite{hujin} have analyzed
almost automorphic mild solutions to the following nonautonomous evolution equation
\begin{align}\label{profesor}
\frac{d}{dt}\bigl[u(t)+f(t,u(t))\bigr]=A(t)\bigl[u(t)+f(t,u(t))\bigr]+g(t,u(t)),\quad t\in {\mathbb R}
\end{align}
and its generalization
\begin{align}\label{profesor1}
\frac{d}{dt}\bigl[u(t)+f(t,Bu(t))\bigr]=A(t)\bigl[u(t)+f(t,Bu(t))\bigr]+g(t,Cu(t)),\quad t\in {\mathbb R},
\end{align}
where the domains of operators $A(t)$ are not necessarily densely defined and satisfy the well known Acquistapace-Terreni conditions, the functions $f,\ g : {\mathbb R} \times X \rightarrow X$ are almost automorphic in the first argument and Lipschitzian in the second argument as well as $B$ and $C$ are bounded linear operators on $X.$ We would like to note that the statements of \cite[Lemma 17, Theorem 18]{hujin}, concerning the existence and uniqueness of almost automorphic solutions of the problem \eqref{profesor}, can be straightforwardly reformulated for almost periodicity by replacing the assumptions (H4) and (H5) with the corresponding almost periodicity assumptions as well as by assuming that the function $\Gamma(t,s)$ from the condition (H3) of this paper is $({\mathrm R},{\mathcal B})$-almost periodic with ${\mathrm R}$ being the collection of all sequuences in ${\mathbb A}:=\{(a,a) : a\in {\mathbb R}\}$ and $X\in {\mathcal B}.$ Similarly, the statements of \cite[Lemma 20, Theorem 21]{hujin}, concerning the existence and uniqueness of almost automorphic solutions of the problem \eqref{profesor1}, can be straightforwardly reformulated for almost periodicity; see also \cite[Theorem 26, Theorem 27]{zxia}, where the same comment can be given and the recent result of J. Cao, Z. Huang and G. M. N'Gu\' er\' ekata \cite[Theorem 3.6]{jcao}, where a similar modification of condition (H3) for bi-almost periodicity on bounded subsets can be made. 

We also stimulate the interested reader to reformulate the recent results of A. Ch\'avez, M. Pinto and U. Zavaleta established in the third section and the fourth section of the paper \cite{chavez3}, as well as the recent results of Y.-K. Chang, S. Zheng \cite[Theorem 4.4]{jkchang} and Z. Xia, D. Wang \cite[Theorem 3.1, Theorem 3.2]{zxia1} for almost periodicity. It seems very plausible that all these results can be reformulated for almost periodicity by replacing the notion of bi-almost automorphy (on bounded subsets) in their formulations and proofs with the notion of bi-almost periodicity (on bounded subsets). 

\subsection{Application to nonautonomous retarded functional evolution equations}\label{kamlebravo}

In this subsection, we study the asymptotic behavior of bounded solutions to the following classes of time-delay function evolution equations:
\begin{equation}
     u'(t)= A(t)u(t) + f(t, u(t-r))\quad \text{for}\; t \in \mathbb{R},  \label{Eq_1}    
\end{equation}
%\begin{equation}
%\left\{
%    \begin{aligned}
%     u'(t)&= A(t)u(t) + g(t, u(t-r))\quad \text{for}\; t \geq 0, \\
%     u_0 &=\varphi,
%     \end{aligned} 
%  \right.  \label{Eq_2}    
%\end{equation}
where $ r>0 $ is the constant time delay, $ (A(t),D(A(t))) $, $t\in \mathbb{R}$ is a family of linear closed operators defined on a Banach space $ X $. The nonlinear terms $ f: \mathbb{R}\times X \rightarrow X $ is assumed to be bounded and continuous with respect to $t$ and satisfying suitable conditions with respect to the second variable. Our aim here is to prove the existence and uniqueness of almost periodic solutions to the equation \eqref{Eq_1}. 
%In this section, we study the asymptotic behavior of bounded solutions to the following classes of time-delay function evolution equations. 
%\begin{equation}
%     u'(t)= A(t)u(t) + f(t, u(t-r))\quad \text{for}\; t \in \mathbb{R},  \label{Eq_1}    
%\end{equation}
%\begin{equation}
%\left\{
%    \begin{aligned}
%     u'(t)&= A(t)u(t) + g(t, u(t-r))\quad \text{for}\; t \geq 0, \\
%     u_0 &=\varphi,
%     \end{aligned} 
%  \right.  \label{Eq_2}    
%\end{equation}
%where $ r>0 $ is the constant time delay, $ (A(t),D(A(t))) $, $t\in J$ (where $J= \mathbb{R}$ or $ [0,\infty)$)  is a family of linear closed operators defined on a Banach space $ X $. The nonlinear terms $ f: \mathbb{R}\times X \rightarrow X $ and $g: \mathbb{R}^{+}\times E \rightarrow X $ are assumed to be bounded and continuous with respect to $t$ and satisfying suitable conditions with respect to the second variable, where $E:=C([-r,0],X)$ is the phase space, $\varphi \in E$ and $u_t(\cdot):=u(t+\cdot)$. Our aim here is to prove the existence and uniqueness of almost periodic solutions (resp. asymptotically almost periodic solutions) to the equation \eqref{Eq_1} (resp. the equation \eqref{Eq_2}). \\

Let $(A(t),D(A(t))), \; t\in \mathbb{R}$ be the generators of a strongly continuous evolution family, i.e.,  $(U(t,s))_{t\geq s}  \subseteq L(X) $ such that for $ t\geq s $ the map $ (t,s)\mapsto U(t,s)$ is strongly continuous, $ U(t,s)U(s,r)=U(t,r) \quad \mbox{and}\quad U(t,t)=I \quad \mbox{for} \quad t\geq s\geq r$ such that the following linear Cauchy problem
\begin{align*}
 \left\{
    \begin{array}{ll}
        u'(t)=A(t)u(t), & t\geq s,\  t,\ s \in \mathbb{R}, \\
       u(s)=x \in X, & 
    \end{array} 
\right.  
\end{align*}
has a unique solution (at least in the mild sense) given by $ u(t):=U(t,s)x $. For more details, we refer to \cite{AquTer1,Lun} and references therein. 

Let $ {\mathrm R} $ be the collection of sequences defined in [L1].  %Assume that $f$ is Bohr $\mathcal{B}$-almost periodic.
Let us define the mapping $ F: \mathbb{R}^{2}\times X\longrightarrow X $ by 
$$   F(t,s;x):=U(t,s)f(s,x), \quad  t,\ s \in \mathbb{R}, \ x\in X. $$
%Let $ B \in \mathcal{B}$ be any bounded set. It is clear, from \textbf{(H2)}, that $$\sup_{x\in B}\|F(t,s;x) \| \leq M e^{-\delta (t-s)}, \; t\geq s,$$ 
%where $M:=M(N,f)>0$.

{\bf Hypotheses.} Here, we list our main hypotheses:\\
%\textbf{(H1)} The operators  $A(t) $, $t\in \mathbb{R}$ generates a strongly continuous evolution family. \\
%\textbf{(H2)} The evolution family $(U(t,s))_{t\geq s}$ generated by $A(t) $, $ t \in \mathbb{R} $ is exponentially stable on $\mathbb{R}$ with constants $N, \delta > 0$, i.e., $ \| U(t,s)\| \leq N e^{-\delta (t-s)} $.\\
(H1) There exists $x_0 \in X$ such that $$  \sup_{t\in \mathbb{R}} \int_{-\infty}^{t} \| F(t,s;x_0) \| \, ds <\infty .$$
(H2)  There exists a function $L: \mathbb{R}^{2} \rightarrow (0,\infty)$ satisfying $ \sup_{t\in \mathbb{R}} \int_{\mathbb{R}} L(t,s)\, ds <\infty $ and
$$ \| F(t,s;x)-F(t,s;y)\| \leq L(t,s)\|x-y \|, \quad x,\ y\in X, \ t,\ s\in \mathbb{R} .$$
(H3) The mapping $(t,s;x) \in \mathbb{R}\times\mathbb{R}\times X \mapsto F(t,s;x)$ is $({\mathrm R},\mathcal{B})$-almost periodic (in the sense of [L1] above).\\

Hence, a mild solution of the equation \eqref{Eq_1} is a continuous function $ u:\mathbb{R}\longrightarrow X $ such that
\begin{equation}
u(t)=\int_{-\infty}^{t}F (t,s,u(s-r))\,  ds, \quad  t\in\mathbb{R}; \label{Mild solution form}
\end{equation}
see \cite{chavez3} for more details. Notice that, in view of (H1)-(H2), the integral formula \eqref{Mild solution form} is well-defined.  

\begin{prop}\label{Invariance result}
Assume that \emph{(H1)}-\emph{(H3)} are satisfied. Then the mapping $\Gamma: AP(\mathbb{R} : X) \rightarrow C_{b}(\mathbb{R} : X),$ given by 
\begin{equation}\label{obracun}
(\Gamma u)(t):=\int_{-\infty}^{t}F (t,s,u(s-r)) \, ds, \quad  t\in\mathbb{R},
\end{equation}
is well defined and maps $AP(\mathbb{R} : X) $ into itself.
\end{prop}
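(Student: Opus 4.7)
The plan is to verify first that $\Gamma u\in C_{b}(\mathbb{R}:X)$ whenever $u\in AP(\mathbb{R}:X)$, and then to establish the almost periodicity of $\Gamma u$ via Bochner's criterion. For the former, $u$ is bounded and the Lipschitz condition (H2) applied with the reference point $x_{0}$ gives
\begin{align*}
\|F(t,s;u(s-r))\|\leq \|F(t,s;x_{0})\|+L(t,s)\bigl(\|u\|_{\infty}+\|x_{0}\|\bigr),\quad t,\ s\in\mathbb{R}.
\end{align*}
Integrating over $s\in(-\infty,t]$ and using (H1) together with the $L^{1}$-type bound from (H2) yields $\sup_{t\in\mathbb{R}}\|(\Gamma u)(t)\|<\infty$, while the dominated convergence theorem, applied with the same majorant and the joint continuity of $F$ in $(t,s)$, secures the continuity of $\Gamma u$.

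For almost periodicity, let $(h_{k})\subseteq\mathbb{R}$ be arbitrary. Since $u\in AP(\mathbb{R}:X)$, its range is relatively compact, so one may pass to a subsequence (still denoted by $(h_{k})$) along which $u(\cdot+h_{k})$ converges uniformly on $\mathbb{R}$ to some $u^{\ast}\in AP(\mathbb{R}:X)$ with $\overline{R(u^{\ast})}\subseteq K:=\overline{R(u)}$ compact in $X$. The associated diagonal sequence $((h_{k},h_{k}))_{k}$ belongs to the collection ${\mathrm R}$ of case [L1] for $n=2$, so taking $B=K\in\mathcal{B}$ in hypothesis (H3) allows one to extract a further subsequence $(h_{k_{l}})$ along which
\begin{align*}
F\bigl(t+h_{k_{l}},s+h_{k_{l}};x\bigr)\To F^{\ast}(t,s;x)\quad\text{uniformly for }(t,s)\in\mathbb{R}^{2},\ x\in K.
\end{align*}
It suffices to verify that $((\Gamma u)(\cdot+h_{k_{l}}))$ is Cauchy in $C_{b}(\mathbb{R}:X)$. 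After the substitution $s=\sigma+h_{k_{l}}$ one has
\begin{align*}
(\Gamma u)(t+h_{k_{l}})=\int_{-\infty}^{t}F\bigl(t+h_{k_{l}},\sigma+h_{k_{l}};u(\sigma+h_{k_{l}}-r)\bigr)\,d\sigma,
\end{align*}
and inserting the intermediate value $u^{\ast}(\sigma-r)$ together with (H2) decomposes the integrand difference between indices $l$ and $m$ into two \emph{Lipschitz} contributions, of the form $L(t+h_{k_{\cdot}},\sigma+h_{k_{\cdot}})\|u(\sigma+h_{k_{\cdot}}-r)-u^{\ast}(\sigma-r)\|$, and one \emph{oscillatory} contribution $\|F(t+h_{k_{l}},\sigma+h_{k_{l}};u^{\ast}(\sigma-r))-F(t+h_{k_{m}},\sigma+h_{k_{m}};u^{\ast}(\sigma-r))\|$. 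The Lipschitz contributions, after unwinding the substitution, integrate to quantities uniformly bounded in $t$ by $M_{2}\|u(\cdot+h_{k_{l}})-u^{\ast}\|_{\infty}$ and $M_{2}\|u(\cdot+h_{k_{m}})-u^{\ast}\|_{\infty}$, which vanish as $l,m\to\infty$.

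The principal obstacle is to promote the uniform-in-$(t,\sigma,x)$ smallness of the oscillatory contribution (supplied by the extraction above on $\mathbb{R}^{2}\times K$) into uniform-in-$t$ smallness of its integral over the unbounded range $(-\infty,t]$. My plan is to split the integration at some level $\sigma=t-N$: on the bounded slice $[t-N,t]$ the uniform convergence yields a bound of order $N\delta_{l,m}$ with $\delta_{l,m}\to 0$, while on the tail $(-\infty,t-N]$ the pointwise domination
\begin{align*}
\bigl\|F(t+h_{k_{\cdot}},\sigma+h_{k_{\cdot}};u^{\ast}(\sigma-r))\bigr\|\leq \bigl\|F(t+h_{k_{\cdot}},\sigma+h_{k_{\cdot}};x_{0})\bigr\|+L(t+h_{k_{\cdot}},\sigma+h_{k_{\cdot}})\bigl(\|u^{\ast}\|_{\infty}+\|x_{0}\|\bigr)
\end{align*}
provided by (H1)-(H2), together with the resulting equi-integrability in $\sigma$ (obtained via the same substitution), will make the tail arbitrarily small uniformly in $t,l,m$ provided $N$ is chosen large. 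Assembling the Lipschitz and oscillatory pieces yields the required Cauchy property, whence Bochner's criterion gives $\Gamma u\in AP(\mathbb{R}:X)$.
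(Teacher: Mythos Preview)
Your strategy is close in spirit to the paper's but differs in one respect: the paper invokes its composition theorem to conclude directly that $(t,s)\mapsto F(t,s;u(s-r))$ is $(\mathrm{R},\mathcal{B})$-multi-almost periodic, and then appeals to dominated convergence for the post-substitution integral over $[0,\infty)$; you instead insert the intermediate value $u^{\ast}(\sigma-r)$ by hand, separate Lipschitz from oscillatory contributions, and handle the unbounded integration range by a cutoff at $\sigma=t-N$. The Lipschitz pieces are treated correctly and the cutoff idea is the natural one.

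The gap is in the tail estimate for the oscillatory piece. You assert that (H1)--(H2) furnish ``equi-integrability in $\sigma$'', i.e.\ that
\[
\sup_{t'\in\mathbb{R}}\int_{-\infty}^{t'-N}\Bigl(\bigl\|F(t',\sigma';x_{0})\bigr\|+L(t',\sigma')\Bigr)\,d\sigma'\longrightarrow 0\qquad (N\to\infty),
\]
but (H1) and (H2) only give uniform \emph{boundedness} of $\int_{-\infty}^{t'}\|F(t',\sigma';x_{0})\|\,d\sigma'$ and of $\int_{\mathbb{R}}L(t',\sigma')\,d\sigma'$; uniform smallness of the tails does not follow. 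For instance, $L(t,s)=\chi_{[\,t-|t|-1,\;t-|t|\,]}(s)$ satisfies $\int_{\mathbb{R}}L(t,s)\,ds\equiv 1$, yet for every $N$ and every $|t|>N$ the whole mass lies below $t-N$. Without this equi-integrability your oscillatory term, known only to be uniformly small pointwise on an unbounded $\sigma$-range, cannot be integrated to something uniformly small in $t$. In fairness, the paper's own argument has the parallel weakness: it asserts the domination $\leq 2L(t,s)$ for the post-substitution integrand and invokes dominated convergence, but the integrand involves $L$ at the translated points $(t+a_{n},t-s+a_{n})$, not at $(t,s)$, and dominated convergence alone would not deliver uniformity in $t$ anyway. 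Both arguments become rigorous under a mild strengthening such as $L(t,s)=\ell(t-s)$ with $\ell\in L^{1}[0,\infty)$ together with an analogous convolution-type bound on $\|F(\cdot,\cdot;x_{0})\|$---exactly what holds in the paper's application, where $L(t,s)=Le^{-\omega(t-s)}$.
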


\begin{proof}
Let $ u\in AP(\mathbb{R} : X) $. Firstly, we check that the mapping $\Gamma (\cdot)$ is well-defined. In fact, from (H1)  and (H2), we have 
\begin{align*}
\| (\Gamma u)(t)\|&\leq \int_{-\infty}^{t} \| F (t,s,u(s-r)) \| \, ds \\
&\leq  \int_{-\infty}^{t} \| F (t,s,x_0) \| \, ds +  \int_{-\infty}^{t} L (t,s) \| u(s-r)-x_0 \| \, ds \\
&\leq \sup_{t\in \mathbb{R}} \int_{-\infty}^{t} \| F (t,s,x_0) \| \, ds +\left( \|u \|_{\infty}+\|x_0 \|\right)  \sup_{t\in \mathbb{R}}  \int_{-\infty}^{t} L (t,s) \, ds.
\end{align*}
 Let $ (b_n,b_n)_n \subseteq R $ be defined as in [L1], where $(b_n)_n \subseteq \mathbb{R}$ is any scalar sequence. Since $ u\in AP(\mathbb{R} : X) $, there exist a subsequence $(a_n)_n \subseteq (b_n)_n $ and a function $ u^{*}(\cdot) $ such that
 $$\lim_{n\rightarrow \infty }  u(t+a_n)=u^* (t)  \text{ uniformly in } t\in \mathbb{R}.$$
Moreover, by (H3), for all bounded subset $ B $, there exists a function $ F^* (\cdot,\cdot;\cdot)$ such that
 $$\lim_{n\rightarrow \infty }  F\bigl(t+a_n,t+a_n;x\bigr)=F^* (t,s;x) \text{ uniformly in } t,\ s\in \mathbb{R},\ x\ \in B. $$
Define the mapping $$ (\Gamma u)^{*}(t):=\int_{-\infty}^{t}F^{*} (t,s,u^{*}(s-r)) \, ds, \; t\in \mathbb{R}.$$
By Theorem \ref{eovakoonakoap},  the function $ (t,s) \mapsto F(t,s;u(s-r)) ,$ $(t,s)\in {\mathbb R}^{2}$  is $(R,\mathcal{B})$-almost periodic. That is 
 $$
\lim_{n\rightarrow \infty }  F\bigl(t+a_n,t+a_n;u(s+a_n -r)\bigr)=F^* \bigl(t,s;u^{*}(s-r)\bigr) \text{ uniformly in } t,\ s\in \mathbb{R}. 
$$
A straightforward calculation yields
  \begin{align*}
&  \|u(t+a_n)-u^* (t) \| \\ 
&=\Biggl\| \int_{-\infty}^{t+a_n}F (t+a_n ,s,u(s-r))\, ds-  \int_{-\infty}^{t}F^{*} (t,s,u^{*}(s-r)) \, ds \Biggr\| \\
& \leq \int_{0}^{+\infty} \Bigl\|  F \bigl(t+a_n ,t-s+a_n,u(t-s+a_n-r)\bigr)-  F^{*} \bigl(t,t-s,u^{*}(t-s-r)\bigr)  \Bigr\| \, ds.
  \end{align*}
  Moreover, we have
  $$  \Bigl\|  F \bigl(t+a_n ,t-s+a_n,u(t-s+a_n-r)\bigr)-  F^{*} \bigl(t,t-s,u^{*}(t-s-r)\bigr)  \Bigr\| \leq 2L(t,s),  $$ 
provided $t \geq s$ and $t,\ s \in \mathbb{R}.$
Using the dominated convergence theorem, we obtain that 
  $$  \bigl\|u(t+a_n)-u^* (t) \bigr\| \rightarrow 0 \text{ as } n\rightarrow \infty, \text{ uniformly in } t \in \mathbb{R}.$$
\end{proof}

\begin{thm} \label{Main Theorem}
Suppose that \emph{(H1)}-\emph{(H3)} hold. Then, the equation \eqref{Eq_1} has a unique mild almost periodic solution $u(\cdot),$ given by the integral formula  \eqref{Mild solution form}, provided that $   \sup_{t\in \mathbb{R}} \int_{-\infty}^{t} L(t,s)\, ds <1 $.
\end{thm}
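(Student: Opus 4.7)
The plan is to apply the Banach contraction principle to the operator $\Gamma$ defined in \eqref{obracun} on the Banach space $AP(\mathbb{R} : X)$ equipped with the sup-norm. Proposition \ref{Invariance result} already handles the delicate part of the argument, namely that $\Gamma$ is well defined as a self-map of $AP(\mathbb{R} : X)$; what remains is to verify that $\Gamma$ is a strict contraction under the hypothesis $\kappa := \sup_{t \in \mathbb{R}} \int_{-\infty}^{t} L(t,s)\, ds < 1$, and then to interpret the unique fixed point as the desired mild solution.

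First I would recall that $AP(\mathbb{R} : X)$, endowed with the sup-norm, is a Banach space (this is classical). By Proposition \ref{Invariance result}, for every $u \in AP(\mathbb{R} : X)$ we have $\Gamma u \in AP(\mathbb{R} : X)$. Next I would establish the contraction estimate as follows: for any $u,v \in AP(\mathbb{R} : X)$ and $t \in \mathbb{R}$, using (H2) and the change of variable that shifts by the delay $r$,
\begin{align*}
\bigl\|(\Gamma u)(t) - (\Gamma v)(t)\bigr\|
& \leq \int_{-\infty}^{t} \bigl\| F(t,s;u(s-r)) - F(t,s;v(s-r)) \bigr\|\, ds \\
& \leq \int_{-\infty}^{t} L(t,s) \bigl\| u(s-r) - v(s-r) \bigr\|\, ds \\
& \leq \|u - v\|_{\infty} \cdot \sup_{t \in \mathbb{R}} \int_{-\infty}^{t} L(t,s)\, ds = \kappa \|u - v\|_{\infty}.
\end{align*}
Taking the supremum over $t \in \mathbb{R}$ yields $\|\Gamma u - \Gamma v\|_{\infty} \leq \kappa \|u - v\|_{\infty}$, and since $\kappa < 1$ by hypothesis, $\Gamma$ is a strict contraction on $AP(\mathbb{R} : X)$.

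The Banach contraction principle then produces a unique fixed point $u \in AP(\mathbb{R} : X)$ with $u = \Gamma u$; by the very definition of $\Gamma$ in \eqref{obracun} and \eqref{Mild solution form}, such a $u$ is precisely a mild almost periodic solution of \eqref{Eq_1}. Uniqueness within $AP(\mathbb{R} : X)$ is immediate from the fixed-point statement. Since the theorem asserts only existence and uniqueness of an almost periodic mild solution, no further argument is needed. In this proof there is no genuine obstacle to overcome: all of the substantive work—checking invariance of $AP(\mathbb{R} : X)$ under $\Gamma$, which requires applying the $({\mathrm R},{\mathcal B})$-multi-almost periodicity of $F(\cdot,\cdot;\cdot)$ through Theorem \ref{eovakoonakoap} and passing to the limit via dominated convergence—has already been carried out in Proposition \ref{Invariance result}. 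The only subtlety to flag is that uniqueness is asserted within the class $AP(\mathbb{R} : X)$; bounded mild solutions outside this class are not excluded by the argument, but the statement does not claim that.
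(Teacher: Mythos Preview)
Your proof is correct and follows essentially the same route as the paper: invoke Proposition \ref{Invariance result} to see that $\Gamma$ maps $AP(\mathbb{R}:X)$ into itself, verify the contraction estimate via (H2), and apply the Banach contraction principle. The paper's own proof is virtually identical, including the same chain of inequalities for $\|(\Gamma u)(t)-(\Gamma v)(t)\|$.
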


\begin{proof}
Consider the mapping $\Gamma :AP(\mathbb{R} : X) \rightarrow C_{b}(\mathbb{R} : X)$ defined by \eqref{obracun}.
By Proposition \ref{Invariance result}, we have that  $\Gamma(AP(\mathbb{R} : X))\subseteq AP(\mathbb{R}: X)$. Moreover, for $p>1$, we have 
\begin{align*}
\Vert (\Gamma u)(t)-(\Gamma v)(t)\Vert &\leq \int^{t}_{-\infty}\Vert F(t,s;u(s-r))-F(t,s;v(s-r))\Vert \, ds
\\ &\leq \int^{t}_{-\infty}L(t,s)\Vert u(s-r)-v(s-r)\Vert \, ds
\\ & \leq  \sup_{t\in \mathbb{R}} \int_{-\infty}^{t} L(t,s)\, ds \, \cdot  \| u-v\|_{\infty}, \quad t\in \mathbb{R}.
\end{align*}
Therefore, by the Banach contraction principle, the mapping $\Gamma(\cdot)$ has a unique fixed point $u\in AP(\mathbb{R} : X)$. This proves the result.
\end{proof}

Now we will provide an illustrative application:

\begin{example}
Consider the following reaction-diffusion model with time-dependent diffusion and finite delay coefficients given by:
 \begin{align} \label{Equ App}
 \dfrac{\partial u(t,x)}{\partial t}= \delta(t)\dfrac{\partial^{2} u(t,x)}{\partial x^{2}}+ \alpha(t) u(t,x)+h(t,u(t-r,x)), \quad  t\in\mathbb{R},\;\ x\in \mathbb{R} ,
 \end{align}
where $\delta ,\ \alpha : \mathbb{R}\longrightarrow \mathbb{R}$ are almost periodic functions such that $ \alpha(t) \leq  - \tilde{\omega}   < 0$ and there exists $\delta _{0}>0$ such that $\inf_{t\in \mathbb{R}} \delta (t)\geq \delta _{0}$. The nonlinear term $h:\mathbb{R}\times \mathbb{R}\longrightarrow \mathbb{R}$  is assumed to be almost periodic with respect to $t$ and $L$-Lipschitzian with respect to the second variable with $h(t,0)\neq 0$ for all $t\in \mathbb{R}$. \\

% \nu  sin\left(\frac{1}{2+cos t+cos \sqrt{2}t}\right)
Let  $X:=L^{2}(\mathbb{R}) $ and
$A:=\Delta $ with its maximal distributional domain.
It is well known that $ (A,D(A))  $ generates a contraction strongly continuous analytic semigroup $ (T(t))_{t\geq 0} $ on $X,$ i.e., $\|T(t) \| \leq 1$ for all $t\geq 0.$ Clearly, the operators
$$  A(t):=\delta(t)A+\alpha(t) \quad \text{with} \quad  D(A(t)):=D(A ), \; t\in \mathbb{R} $$
generate a strongly continuous evolution family given by
$$   U(t,s) :=e^{\displaystyle \int^{t}_{s}\alpha(\tau)\, d\tau} T\left( \int^{t}_{s }\delta(\tau)\, d\tau \right)  , \quad t\geq s .$$
Notice that the formula $ T\left( \int^{t}_{s }\delta(\tau)\, d\tau \right)$ for $ t\geq s  $, corresponds to the mild solution for equation \eqref{Equ App} with $\alpha,f=0$. This follows by applying the Fourier transform and the diffusion semigroup explicite formula; see e.g., \cite{a43}. Set $\omega:= \tilde{\omega}+\lambda \delta_0 >0.$

It is well known that $\sigma(A)=(-\infty,0] $. Therefore, using the spectral mapping theorem $\sigma(T(t))\setminus \{0 \}=e^{t \sigma (A)} $, $t\geq 0,$ we get that
$$   \Biggl\| T\left( \int^{t}_{s }\delta(\tau)\, d\tau \right) \varphi \Biggr\| \leq e^{-\lambda \displaystyle  \int^{t}_{s }\delta(\tau)\, d\tau} \| \varphi \|, \quad \varphi \in X   \text{ for some } \lambda \geq 0 .$$
Hence, 
\begin{align*}
  \|U(t,s) \varphi \| & \leq  e^{\displaystyle \int^{t}_{s}\alpha(\tau)\, d\tau -\lambda \displaystyle  \int^{t}_{s }\delta(\tau)\, d\tau} \|\phi \|  \\
   & \leq e^{-\left( \tilde{\omega}+\lambda \delta_0 \right)  (t-s)} \| \varphi \|=e^{-\omega  (t-s)} \| \varphi \|, \quad t\geq s, \; \varphi \in X.
\end{align*}
Furthermore, we define $f:\mathbb{R}\times X\longrightarrow X $ through
$$  f(t,\varphi)(x):=h(t,\varphi(x)), \; t, \ x \in \mathbb{R},\ \varphi \in X.  $$
It is clear that $f(\cdot,\cdot)$ is $\mathcal{B}$-almost periodic. We also have:

\begin{lem}
Hypotheses \emph{(H1)} and \emph{(H2)} are satisfied with $$L(t,s):=Le^{-\omega (t-s)}, \quad t \geq s .$$
\end{lem}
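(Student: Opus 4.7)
The plan is to verify the two hypotheses separately, in each case pulling the evolution family estimate $\|U(t,s)\| \leq e^{-\omega(t-s)}$ out of the front and reducing the problem to integrating an exponential kernel against a bounded factor.

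First I would address (H2). The pointwise $L$-Lipschitz property of $h$ in its second argument gives, for all $\varphi,\psi \in X = L^2(\mathbb{R})$ and $s \in \mathbb{R}$,
$$\|f(s,\varphi)-f(s,\psi)\|_X^2 = \int_\mathbb{R}\bigl|h(s,\varphi(x))-h(s,\psi(x))\bigr|^2\, dx \leq L^2\|\varphi-\psi\|_X^2,$$
so $f(s,\cdot)$ is $L$-Lipschitz on $X$. Combining with the semigroup bound established above yields
$$\|F(t,s;\varphi)-F(t,s;\psi)\|_X \leq e^{-\omega(t-s)}\|f(s,\varphi)-f(s,\psi)\|_X \leq Le^{-\omega(t-s)}\|\varphi-\psi\|_X, \quad t\geq s,$$
which identifies $L(t,s) := Le^{-\omega(t-s)}$. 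The integrability condition then follows from the direct computation $\int_{-\infty}^t Le^{-\omega(t-s)}\, ds = L/\omega$, uniformly in $t$.

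Next I would verify (H1) by selecting a pivot $x_0 \in X$ for which $M := \sup_{s \in \mathbb{R}}\|f(s,x_0)\|_X$ is finite. The almost periodicity of $h(\cdot,y)$ in $t$ (and hence its boundedness in that variable) is what should deliver such an $x_0$; once it is in hand, the semigroup estimate gives $\|F(t,s;x_0)\|_X \leq M e^{-\omega(t-s)}$ for $t \geq s$, and integrating produces $\sup_{t \in \mathbb{R}} \int_{-\infty}^t \|F(t,s;x_0)\|\, ds \leq M/\omega < \infty$, as required.

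The delicate step — and the one I expect to be the main obstacle — is the choice of admissible pivot $x_0$ in (H1), together with the implicit requirement that the Nemytskii operator $f(s,\cdot)$ actually maps $X$ into itself. Since $h(s,0)$ need not vanish, the naive choice $x_0 = 0$ produces the constant spatial profile $h(s,0)$, which is not in $L^2(\mathbb{R})$; so one must either strengthen the hypotheses on $h$ (for instance by assuming that $s \mapsto h(s,0)$ takes values in $X$, or by imposing a joint growth condition on $h$) or else exhibit another admissible pivot. Once this well-definedness issue is addressed, both hypotheses follow at once from the Lipschitz-plus-exponential template above, with $L(t,s) = Le^{-\omega(t-s)}$ as stated.
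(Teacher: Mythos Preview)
Your approach is essentially identical to the paper's: for (H2) you use the pointwise Lipschitz bound on $h$ to get the $L$-Lipschitz property of $f(s,\cdot)$ on $X$, then combine with $\|U(t,s)\|\leq e^{-\omega(t-s)}$; for (H1) you look for a pivot $x_0$ with $\sup_s\|f(s,x_0)\|_X<\infty$ and integrate the exponential kernel. The paper does exactly this, taking $x_0=0$ and bounding $\int_{-\infty}^t \|F(t,s;0)\|\,ds$ by $\omega^{-1}\|f(\cdot,0)\|_\infty$.

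Your caution about the pivot is well placed and in fact sharper than the paper's treatment: the paper simply writes $\|f(\cdot,0)\|_\infty$ without commenting on the fact that, under the stated hypotheses ($h(t,0)\neq 0$, $X=L^2(\mathbb{R})$), the function $x\mapsto h(s,0)$ is a nonzero constant and hence not in $L^2(\mathbb{R})$. So the well-definedness concern you raise is real; the paper glosses over it rather than resolving it. Your proposed remedies (assume $s\mapsto h(s,0)$ takes values in $X$, or impose a growth/decay condition) are the natural fixes.
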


\begin{proof}
Define $F(t,s;\varphi):=U(t,s)f(s,\varphi)$ for all $ \varphi \in X, $ $ t \geq s $. Then, 
\begin{align*}
\int_{-\infty}^{t}  \| F(t,s;0) \| \, ds & \leq \int_{-\infty}^{t} \| U(t,s)f(s,0) \| \, ds  \\
& \leq  \int_{-\infty}^{t} e^{-\omega (t-s)}\| f(s,0) \| \, ds \\
&\leq \dfrac{1}{\omega} \|f(\cdot,0) \|_{\infty}, \quad t\in \mathbb{R}.
\end{align*}
Let $\varphi,\ \psi \in X$. Then the above calculation yields 
\begin{align*}
\| F(t,s;\varphi )-F(t,s;\psi) \| & \leq  L e^{-\omega (t-s)} \|\varphi -\psi\|, \quad t\in \mathbb{R}.
\end{align*}
This proves the result.
\end{proof}
To show the hypothesis (H3), it suffices to prove that $ U (\cdot,\cdot)$ is $({\mathrm R},\mathcal{B})$-almost periodic. 

\begin{prop}\label{Proposition 1 App DEK19}
The mapping $(t,s)\mapsto U(t,s)$ is $({\mathrm R},\mathcal{B})$-almost periodic. Moreover, $ F(\cdot,\cdot;\cdot) $ is $({\mathrm R},\mathcal{B})$-almost periodic.
\end{prop}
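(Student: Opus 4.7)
My plan is to work case [L1] specifically, where $\mathrm{R}$ consists of diagonal sequences $(b_n,b_n)$ in $\mathbb{R}^{2}$. The key observation is that $U(t+a,s+a)=\exp\!\bigl(\int_s^t \alpha(\tau+a)\,d\tau\bigr)\,T\bigl(\int_s^t \delta(\tau+a)\,d\tau\bigr)$, so a diagonal translation transfers to the scalar almost periodic functions $\alpha,\delta$. Given $(b_n,b_n)\in\mathrm{R}$, I would first apply the classical Bochner criterion together with a diagonal extraction to obtain a common subsequence $(a_n)\subseteq(b_n)$ and functions $\alpha^{*},\delta^{*}\in AP(\mathbb{R}:\mathbb{R})$ with $\alpha(\cdot+a_n)\to\alpha^{*}$ and $\delta(\cdot+a_n)\to\delta^{*}$ uniformly on $\mathbb{R}$. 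The inequalities $\alpha^{*}\leq-\tilde{\omega}$ and $\delta^{*}\geq\delta_{0}$ pass to the limit, so the candidate limit
\begin{equation*}
U^{*}(t,s):=\exp\!\Bigl(\int_{s}^{t}\alpha^{*}(\tau)\,d\tau\Bigr)\,T\!\Bigl(\int_{s}^{t}\delta^{*}(\tau)\,d\tau\Bigr),\qquad t\geq s,
\end{equation*}
satisfies the same exponential bound $\|U^{*}(t,s)\|\leq e^{-\omega(t-s)}$ as $U(t,s)$.

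Next, to prove the uniform convergence $U(t+a_n,s+a_n)\varphi\to U^{*}(t,s)\varphi$ over $\varphi\in B$ (bounded) and $t\geq s$, I would split according to the length $t-s$. Choose $M>0$ so large that $2 e^{-\omega M}\sup_{\varphi\in B}\|\varphi\|<\varepsilon/2$; then for $t-s>M$ the triangle inequality and the exponential bound dispose of the difference. On the slab $\{0\leq t-s\leq M\}$, the uniform convergence of $\alpha(\cdot+a_n)$ and $\delta(\cdot+a_n)$ yields
\begin{equation*}
\sup_{0\leq t-s\leq M}\Bigl|\!\int_{s}^{t}\!\bigl[\alpha(\tau+a_n)-\alpha^{*}(\tau)\bigr]d\tau\Bigr|\longrightarrow 0,
\end{equation*}
and likewise for $\delta$; the scalar factor then converges uniformly via $|e^{x}-e^{y}|\leq e^{\max(x,y)}|x-y|$. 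For the semigroup factor, setting $\sigma_n:=\int_s^t\delta(\tau+a_n)\,d\tau$ and $\sigma^{*}:=\int_s^t\delta^{*}(\tau)\,d\tau$, one has $|\sigma_n-\sigma^{*}|\to 0$ uniformly, and the required uniform smallness of $\|T(\sigma_n)\varphi-T(\sigma^{*})\varphi\|$ on $B$ follows from the analyticity of $(T(t))_{t\geq 0}$, which forces $t\mapsto T(t)$ to be norm-continuous on $[\sigma,\infty)$ for every $\sigma>0$. The only delicate point is the diagonal $t=s$, where $\sigma_n=\sigma^{*}=0$; this is handled by noting $U(s+a_n,s+a_n)=U^{*}(s,s)=I$ and using continuity to extend the estimate into a small strip around the diagonal.

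The main obstacle is precisely this behaviour of $T(\cdot)$ near $0$: for a general $C_0$-semigroup, $T(\sigma)\varphi\to\varphi$ is pointwise but not uniform on bounded sets, and the proof leans heavily on the analyticity (equivalently, the diffusion representation) to obtain operator-norm continuity of $T$ away from $0$ — if $\mathcal{B}$ were allowed to be any bounded family without analyticity, this step would fail. Once the first part is established, the ``moreover'' statement follows by writing
\begin{equation*}
F(t+a_n,s+a_n;\varphi)-U^{*}(t,s)f^{*}(s,\varphi)=U(t+a_n,s+a_n)\bigl[f(s+a_n,\varphi)-f^{*}(s,\varphi)\bigr]+\bigl[U(t+a_n,s+a_n)-U^{*}(t,s)\bigr]f^{*}(s,\varphi),
\end{equation*}
where $f^{*}$ is the uniform limit provided by almost periodicity of $f$. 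The first summand is controlled by $\|U(t+a_n,s+a_n)\|\leq 1$ together with uniform convergence of $f(\cdot+a_n,\varphi)\to f^{*}(\cdot,\varphi)$ on $\mathbb{R}\times B$; the second by applying the convergence already proved for $U$ to the bounded set $B':=\{f^{*}(s,\varphi):s\in\mathbb{R},\,\varphi\in B\}\subset X$ (bounded since $f$ is bounded uniformly on $B$). This gives $F(t+a_n,s+a_n;\varphi)\to F^{*}(t,s;\varphi):=U^{*}(t,s)f^{*}(s,\varphi)$ uniformly for $\varphi\in B$ and $(t,s)$ in the domain, which is the desired $(\mathrm{R},\mathcal{B})$-multi-almost periodicity of $F$.
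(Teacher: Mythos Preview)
Your approach is more careful than the paper's in making the uniformity in $(t,s)$ explicit: the paper uses the same two-term algebraic split (scalar exponential factor versus semigroup factor) but simply invokes ``strong continuity of the semigroup'' for the convergence $T(\sigma_n)\varphi\to T(\sigma^{*})\varphi$ and asserts uniformity in $t,s$ without the domain splitting you introduce. For the ``moreover'' clause the paper invokes the composition theorem (Theorem~\ref{eovakoonakoap}) in place of your direct two-term decomposition; the two routes are equivalent.

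There is, however, a genuine gap in your treatment of the strip near the diagonal when $B$ is merely bounded. The phrase ``using continuity to extend the estimate into a small strip around the diagonal'' amounts to requiring $\sup_{\varphi\in B}\|T(h)\varphi-\varphi\|\to 0$ as $h\to 0^{+}$, i.e., norm-continuity of the heat semigroup at $0$, which fails since $\Delta$ is unbounded; analyticity gives norm-continuity only on $(0,\infty)$. (The paper's bare appeal to strong continuity has the same defect.) Two repairs are available. If $\mathcal{B}$ consists of compact sets---which is all that is actually needed in Proposition~\ref{Invariance result}, where the relevant $B$ is the relatively compact range of an almost periodic function---then strong continuity already yields the required uniformity and analyticity is not needed at all. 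Alternatively, for bounded $B$, exploit analyticity quantitatively: from $\|AT(r)\|\le C/r$ and the lower bound $\delta\ge\delta_{0}>0$ one has, for $t>s$, $\min(\sigma_n,\sigma^{*})\ge\delta_0(t-s)$ and $|\sigma_n-\sigma^{*}|\le(t-s)\|\delta(\cdot+a_n)-\delta^{*}\|_\infty$, whence
\[
\bigl\|T(\sigma_n)-T(\sigma^{*})\bigr\|\;\le\;\int_{\min(\sigma_n,\sigma^{*})}^{\max(\sigma_n,\sigma^{*})}\frac{C}{r}\,dr\;\le\;C\ln\Bigl(1+\frac{\|\delta(\cdot+a_n)-\delta^{*}\|_\infty}{\delta_{0}}\Bigr)\longrightarrow 0
\]
uniformly in $t>s$; the case $t=s$ is trivial. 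This closes the gap and in fact renders the splitting by $t-s$ unnecessary for the semigroup term.
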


\begin{proof}
Let $ B \subseteq X $ be bounded and $(b_{k},b_{k})_{k\geq 0} \subseteq R$ be any sequence, where $(b_{k})_{k\geq 0}$ is any scalar sequence. Since $\delta \in AP(\mathbb{R})$ and $ \alpha\in AP(\mathbb{R}),$ it follows that there exists a subsequence $(a_{k})_{k\geq 0}\subseteq (b_{k})_{k\geq 0}$ and measurable functions $ \tilde{\delta} $  and $\tilde{\alpha}$ such that
\[\lim_{k} \delta(t +a_{k})=\tilde{\delta}(t)  \mbox{ uniformly in } t\in\mathbb{R},\]
and
\[\lim_{k} \alpha(t+a_{k})=\tilde{\alpha}(t)  \mbox{ uniformly in } t\in\mathbb{R}.\]
Let  $\varphi\in B.$ Define 
$$\tilde{U}(t,s)\varphi :=e^{\displaystyle \int^{t}_{s}\tilde{\alpha}(\tau)\, d\tau} T\left(\displaystyle \int^{t}_{s }\tilde{\delta}(\tau)\, d\tau \right) \varphi \mbox{ for all } t\geq s. 
$$ Thus, by the semigroup property of $  (T(t))_{t\geq 0}  $, we have 
\begin{align*}
& \Vert U\bigl(t+a_{k},s+a_{k}\bigr)\varphi-\tilde{U}(t,s)\varphi \Vert  \\
&= \Biggl\| e^{\displaystyle \int^{t+a_{k}}_{s+a_{k}}\alpha(\tau)\,  d\tau} T\left( \int^{t+a_{k}}_{s+a_{k}}\delta(\tau)\,  d\tau \right) \varphi-e^{\displaystyle \int^{t}_{s}\tilde{\alpha}(\tau)\,  d\tau} T\left(\displaystyle \int^{t}_{s }\tilde{\delta}(\tau)\, d\tau \right) \varphi \Biggr\|\\
&= \Biggl\| e^{\displaystyle \int^{t}_{s}\alpha(\tau+a_{k})\,  d\tau} T\left( \int^{t}_{s}\delta(\tau +a_{k})\, d\tau \right) \varphi-e^{\displaystyle \int^{t}_{s}\tilde{\alpha}(\tau)\,  d\tau} T\left(\displaystyle \int^{t}_{s }\tilde{\delta}(\tau)\, d\tau \right) \varphi \Biggr\| \\ 
%& \leq e^{\displaystyle\int^{t}_{s}\tilde{\alpha}(\tau)d\tau}   e^{\displaystyle \int^{t}_{s}|\alpha(\tau-a_{k})-\tilde{\alpha}(\tau )| d\tau} \Vert T\left( \int^{t}_{s} \delta(\tau-a_{k})d\tau \right) \varphi- T\left(\displaystyle \int^{t}_{s }\tilde{\delta}(\tau)d\tau \right) \varphi \Vert \\
& \leq  e^{\displaystyle \int^{t}_{s}\alpha(\tau+a_{k})\,  d\tau} \Biggl\|T\left( \int^{t}_{s} \delta(\tau+a_{k})\,  d\tau \right) \varphi- T\left(\displaystyle \int^{t}_{s }\tilde{\delta}(\tau)\,  d\tau \right) \varphi \Biggr\| \\
&+ \left( e^{\displaystyle \int^{t}_{s}\alpha(\tau+a_{k})\,  d\tau} - e^{\displaystyle\int^{t}_{s}\tilde{\alpha}(\tau)\,  d\tau}\right)  \Biggl\| T\left(\displaystyle \int^{t}_{s }\tilde{\delta}(\tau)\,  d\tau \right) \varphi \Biggr\|.
\end{align*}
%& \leq  e^{\displaystyle \int^{t}_{s}\tilde{\alpha}(\tau)d\tau}  \Vert e^{\displaystyle \int^{t}_{s}|\alpha(\tau-s_{k})-\tilde{\alpha}(\tau ) |d\tau} T\left( \int^{t}_{s}| \delta(\tau-s_{k}) - \tilde{\delta}(\tau) |d\tau \right) \varphi-\varphi \Vert \\
%%&\leq \Vert T(t-s) \Vert \left| e^{\int^{t+s_{k}}_{s+s_{k}}\alpha(\tau)d\tau}-e^{\int^{t}_{s}\tilde{\alpha}(\tau)d\tau} \right| \Vert \varphi \Vert
%\\&\leq e^{-(t-s)} \left| e^{\int^{t}_{s}\alpha(\tau+s_{k})d\tau}-e^{\int^{t}_{s}\tilde{\alpha}(\tau)d\tau} \right| \Vert \varphi \Vert
%\\&\leq e^{-(t-s)}e^{\int^{t}_{s}\tilde{\alpha}(\tau)d\tau}\left| e^{\int^{t}_{s}|\alpha(\tau+s_{k})-\tilde{\alpha}(\tau)|d\tau}-1\right| \Vert \varphi \Vert
%\\&\leq  e^{-(1+\nu)(t-s)}\left| e^{\int^{t}_{s}|\alpha(\tau+s_{k})-\tilde{\alpha}(\tau)|d\tau}-1\right| \Vert \varphi \Vert.
%&=e^{\int^{t}_{s}\tilde{\alpha}(\tau)d\tau}  \| T\left(\displaystyle \int^{t}_{s }\tilde{\delta}(\tau)d\tau \right) \|  \Vert e^{\int^{t}_{s}\alpha(\tau-s_{k})d\tau} T\left( \int^{t}_{s}\delta(\tau-s_{k})d\tau \right) \varphi-\varphi \Vert \\
Therefore, by the strong continuity of the semigroup, we obtain that 
$$ e^{\displaystyle \int^{t}_{s}\alpha(\tau+a_{k}) \,  d\tau} \Biggl\| T\left( \int^{t}_{s} \delta(\tau+a_{k})\,  d\tau \right) \varphi- T\left(\displaystyle \int^{t}_{s }\tilde{\delta}(\tau)\,  d\tau \right) \varphi \Biggr\| \rightarrow 0  \; \text{ as } k\rightarrow \infty, 
$$ 
uniformly in $t,\ s. $
Furthermore,  we have that 
\begin{align*}
& \left( e^{\displaystyle \int^{t}_{s}\alpha(\tau+a_{k})\,  d\tau} - e^{\displaystyle\int^{t}_{s}\tilde{\alpha}(\tau)\, d\tau}\right)  \Biggl\| T\left(\displaystyle \int^{t}_{s }\tilde{\delta}(\tau)\, d\tau \right) \varphi \Biggr\| \\ 
& \leq  \left( e^{\displaystyle \int^{t}_{s}\alpha(\tau+a_{k})\,  d\tau} - e^{\displaystyle\int^{t}_{s}\tilde{\alpha}(\tau)\,  d\tau}\right) \| \varphi \|  \\  &\rightarrow 0  \; \text{ as } k\rightarrow \infty, \; \text{ uniformly in } t,\ s.
\end{align*} 
So,  from Theorem 2.45, we obtain that $F(\cdot,\cdot;\cdot) $ satisfies (H3).
\end{proof}

Hence, the following result can be deduced by applying Theorem \ref{Main Theorem}:

\begin{thm}
Assume that $ L<\omega $. Then the model \eqref{Equ App} admits a unique almost periodic solution.
\end{thm}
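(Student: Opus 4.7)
The plan is to assemble, rather than prove anew, the ingredients already provided earlier in the subsection and then invoke Theorem \ref{Main Theorem}. All the genuinely non-trivial work has been done: the reduction of the reaction-diffusion model \eqref{Equ App} to the abstract equation \eqref{Eq_1} on $X=L^{2}(\mathbb{R})$, with $A(t)=\delta(t)A+\alpha(t)$ generating the evolution family
$$U(t,s)\varphi=e^{\int_{s}^{t}\alpha(\tau)\,d\tau}T\Bigl(\int_{s}^{t}\delta(\tau)\,d\tau\Bigr)\varphi,\quad t\geq s,$$
satisfies the exponential bound $\|U(t,s)\|\leq e^{-\omega(t-s)}$ with $\omega=\tilde{\omega}+\lambda\delta_{0}>0$; and the Nemytskii-type nonlinearity $f(t,\varphi)(x):=h(t,\varphi(x))$ inherits the Lipschitz constant $L$ of $h$ on $X$.

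Next, I would point out that the Lemma preceding Proposition \ref{Proposition 1 App DEK19} already verified hypotheses \emph{(H1)} and \emph{(H2)} for the mapping $F(t,s;\varphi):=U(t,s)f(s,\varphi)$, with the explicit choice of kernel
$$L(t,s):=Le^{-\omega(t-s)},\quad t\geq s,$$
and that Proposition \ref{Proposition 1 App DEK19} itself establishes hypothesis \emph{(H3)}, i.e., the $(\mathrm{R},\mathcal{B})$-almost periodicity of $F(\cdot,\cdot;\cdot)$ in the sense of [L1]. Hence all three structural hypotheses required to apply Theorem \ref{Main Theorem} are in place and need not be re-examined.

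The only remaining point is to verify that the contractivity condition $\sup_{t\in\mathbb{R}}\int_{-\infty}^{t}L(t,s)\,ds<1$ holds. This is the step where the assumption $L<\omega$ enters: a direct computation gives
$$\sup_{t\in\mathbb{R}}\int_{-\infty}^{t}Le^{-\omega(t-s)}\,ds=\frac{L}{\omega}<1.$$
With this inequality established, Theorem \ref{Main Theorem} applies and produces a unique mild almost periodic solution $u(\cdot)$ represented through the integral formula \eqref{Mild solution form}; the correspondence with the PDE \eqref{Equ App} is then the standard one coming from the definition of $U(t,s)$ and $f$. There is no real obstacle in this final step — the only point to be careful about is that the constant $\omega$ used in the exponential estimate is exactly the one that appears through Lipschitz continuity, so the strict inequality $L<\omega$ transfers verbatim into the contraction constant $L/\omega<1$ required by the Banach fixed point argument in Theorem \ref{Main Theorem}.
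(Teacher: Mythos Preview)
Your proposal is correct and follows exactly the approach of the paper, which simply states that the result is deduced by applying Theorem \ref{Main Theorem}. You have in fact supplied more detail than the paper does, by explicitly computing $\sup_{t\in\mathbb{R}}\int_{-\infty}^{t}Le^{-\omega(t-s)}\,ds=L/\omega<1$ to check the contraction hypothesis.
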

\end{example}

\section{Appendix}\label{append}

In the appendix section, we will separately consider two intriguing topics: $n$-parameter strongly continuous semigroups and 
applications of multivariate trigonometric polynomials in approximations of periodic functions of several real variables.

\subsection{$n$-Parameter strongly continuous semigroups}  
The notion of a semigroup over topological monoid naturally generalizes the notion of 
usually considered 
one-parameter strongly continuous semigroup 
of bounded linear operators. 
This broad class of semigroups includs the semigroups defined on the set $[0,\infty)^{n},$ which are oftenly called 
multiparameter semigroups (this class of semigroups was introduced by E. Hille in 1944; see \cite{hill} and \cite{bberens}).\index{semigroup over topological monoid}\index{multiparameter semigroup}\index{multiparameter semigroup!generator}

So, let $(M,+)$ be a topological monoid with the neutral element $0$. By a semigroup over a Banach space $X$ defined over a monoid $M$ we mean any
operator-valued function $T : M \rightarrow L(X)$ such that $T(0)=I$ and $T(t+s)=T(t)T(s)$ for all $t,\ s\in M.$ A semigroup  $T : M \rightarrow L(X),$ which we also denote by $(T(t))_{t\in M},$ is called strongly continuous if and only if the mapping $t\mapsto T(t)x,$ $t\in M$ is strongly continuous at $t=0.$ In \cite{dahya}, R. Dahya has extended a well known result saying that every weakly continuous semigroup $(T(t))_{t\geq 0}$ is strongly continuous to the semigroups over topological monoids.

Further on,
if $M=[0,\infty)^{n}$ and $(T({\bf t}))_{{\bf t}\in M}$ is strongly continuous, then we denote by $T_{i}(t):=T(te_{i}),$ $t\geq 0$ the corresponding one-parameter strongly continuous semigroup ($i\in {\mathbb N}_{n}$).
Let $T_{i}(\cdot)$ be generated by $A_{i}$ ($i\in {\mathbb N}_{n}$). Then 
the tuple
$(A_{1},A_{2},\cdot \cdot \cdot,A_{n})$ is said to be the infinitesimal generator of $(T({\bf t}))_{{\bf t}\in [0,\infty)^{n}}.$
We can simply prove that $(T({\bf t}))_{{\bf t}\in [0,\infty)^{n}}$ is 
strongly continuous (uniformly
continuous) if and only if for each $i\in {\mathbb N}_{n}$ the one-parameter semigroup $(T_{i}(t))_{t\geq 0}$  is strongly continuous (uniformly continuous). A strongly continuous semigroup $(T({\bf t}))_{{\bf t}\in [0,\infty)^{n}}$ is 
always exponentially bounded in the sense that there exist two finite real constants $M\geq 1$ and $\omega>0$ such that $\|T({\bf t})\|\leq Me^{\omega |{\bf t}|}$ for all ${\bf t}\in [0,\infty)^{n};$ see e.g., Theorem 1 in the paper \cite{babalola} by V. A. Babalola, where the author has considered  generalizations of the Hille-Yosida-Phillips theorem
for abstract-parameter semigroups. Furthermore, the
following holds (\cite{bberens}, \cite{hill}):
\begin{itemize}
\item[(i)] If $i\in {\mathbb N}_{n}$ and $x\in D(A_{i}),$ then $T({\bf t})x\in D(A_{i})$ for all ${\bf t}\in [0,\infty)^{n}$ and $T({\bf t})A_{i}x=A_{i}T({\bf t})x,$ ${\bf t}\in [0,\infty)^{n};$
\item[(ii)] $\bigcap_{i\in {\mathbb N}_{n}}D(A_{i})$ is dense in $X;$
\item[(iii)] If $i,\ j\in {\mathbb N}_{n},$ then $D(A_{i}) \cap D(A_{i}A_{j}) \subseteq D(A_{j}A_{i})$ and for each $x\in D(A_{i}) \cap D(A_{i}A_{j})$ we have
$A_{i}A_{j}x=A_{j}A_{i}x.$
\end{itemize}

Set $I:=[0,T_{1}]   \times [0,T_{2}] \times \cdot \cdot \cdot \times [0,T_{n}]$ for some $(T_{1},T_{2},\cdot \cdot \cdot, T_{n})\in (0,\infty)^{n}.$ The well-posedness of the following homogeneous $n$-parameter abstract Cauchy problem\index{abstract Cauchy problem!$(ACP)$}
\[(ACP):\left\{
\begin{array}{l}
u\in C(I : X)\cap C^1(I^{\circ} :X),\\
u_{t_{i}}({\bf t})=A_{i}u({\bf t)}+F_{i}({\bf t}),\;{\bf t}\in I,\ 1\leq i\leq n,\\
u(0)=x,\quad x\in \bigcap_{i\in {\mathbb N}_{n}}D(A_{i}),
\end{array}
\right.
\]
has been analyzed by M. Janfada and A. Niknam in \cite[Theorem 2.1]{janfada}, who proved that, if $(A_{1},A_{2},\cdot \cdot \cdot,A_{n})$ is the infinitesimal generator of a strongly continuous semigroup $(T({\bf t}))_{{\bf t}\in [0,\infty)^{n}},$
then (ACP) has a unique solution $u({\bf t})=T({\bf t})x,$ $t\in I$ for all initial values $x\in \bigcap_{i\in {\mathbb N}_{n}}D(A_{i});$ a converse of this statement has been analyzed in \cite[Theorem 2.2]{janfada} (see also Theorem 2.5 in this paper, where the authors have shown a negative result about the uniqueness of solutions of the abstract Cauchy problem (ACP) as well as the paper \cite{Khanehgir0} where the authors have considered the special case $n=2$ by using the notion of a two-parameter integrated semigroup; the special case $n=2$ has been also analyzed in \cite{janfada00} and \cite{Khanehgir123}, where the authors have introduced the notion of a two-parameter $C$-regularized semigroup and the notion of a two-parameter $N$-times integrated semigroup, respectively, where the operator $C\in L(X)$ is injective and $N\in {\mathbb N}$). A Hille-Yosida type theorem for multiparameter semigroups has been analyzed by Yu. S. Mishura and A. S. Lavr\'entev in \cite{mishura}, while 
the different notions of generators of two parameter semigroups have been analyzed by Sh. Al-Sharif, R. Khalil \cite{al-sherif} and
S. Arora, S. Sharda \cite{arora}. See also an interesting generalization of R. Datko's result \cite[Theorem 1.3]{ichikawa} to nonlinear two parameter semigroups established by A. Ichikawa in \cite[Theorem 2.1]{ichikawa}. 
\index{multiparameter integrated semigroup}\index{multiparameter $C$-regularized semigroup}

To the best knowledge of the authors, the notion of an $n$-parameter $\alpha$-times integrated semigroup,  the notion of an $n$-parameter $C$-regularized semigroup and the notion of an $n$-parameter $\alpha$-times integrated $C$-regularized semigroup have not been introduced so far. The degenerate case also remains still very unexplored, even for degenerate two-parameter strongly continuous semigroups (in the existing literature, we have not been able to locate any research paper regarding this problematic).

Concerning some applications of multiparameter semigroups in the analysis of multi-dimensional almost periodic type solutions of the abstract partial differential equations and their systems, the situation is similar: there is only a few research papers devoted to the study of 
variation of parameters formulae for multiparameter semigroups (fractional multiparameter resolvent families have not been analyzed elsewhere, as well) but the existence and uniqueness of almost periodic type solutions of the abstract partial differential equations and their systems have not still been analyzed with the help of the theory of multiparameter semigroups. With regards to this intriguing topic, we want to mention only the investigation of M. Khanehgir, M. Janfada and A. Niknam
\cite{Khanehgir}, where the authors have examined the well-posedness of the following inhomogeneous abstract Cauchy problem
\[(ACP)_{2}:\left\{
\begin{array}{l}
u\in C(I : X)\cap C^1(I^{\cdot} :X),\\
u_{t_{i}}({\bf t})=A_{i}u({\bf t)}+F({\bf t}),\;{\bf t}\in I,\ i=1,2,\\
u(0)=x,\quad x\in \bigcap_{i\in {\mathbb N}_{2}}D(A_{i}),
\end{array}
\right.
\]\index{abstract Cauchy problem!$(ACP)_{2}$}
where the pair $(A_{1},A_{2})$ generates a strongly continuous semigroup\\ $(T(t_{1},t_{2}))_{t_{1}\geq 0,t_{2}\geq 0}$
on $X$.
In their analysis, the same inhomogeneity has appeared for $i=1$ and $i=2$, which forces a very unpleasant condition
$$
F_{t_{1}}\bigl( t_{1},t_{2} \bigr)-F_{t_{2}}\bigl( t_{1},t_{2} \bigr)=\bigl(  A_{1}-A_{2}\bigr)F\bigl( t_{1},t_{2} \bigr),\quad t_{1},\ t_{2}>0.
$$
Despite of this, the following formula for a solution of $(ACP)_{2}$ has been proposed:
$$
u\bigl( t_{1},t_{2} \bigr)=T\bigl( t_{1},t_{2} \bigr)x+\int^{t_{1}}_{0}T\bigl(t_{1}-t,t_{2} \bigr)F(t,0)\, dt+\int^{t_{2}}_{0}T\bigl(0, t- t_{2} \bigr)F\bigl(t_{1},0\bigr)\, dt,
$$
for any $t_{1}, \ t_{2}>0.$
The interested readers may try to formulate some results about the asymptotically almost periodic solutions of this solution provided that 
the semigroup 
$(T(t_{1},t_{2}))_{t_{1}\geq 0,t_{2}\geq 0}$
is exponentially decaying and the function
$F(\cdot;\cdot)$ is asymptotically almost periodic in a certain sense.

The multiparameter semigroups play an important role in the study of the approximations of periodic functions of several real variables (A. P. Terehin \cite{terehin}) and the study of diffusion equations in space-time dynamics (S. V. Zelik \cite{zelik}).

\subsection{Multivariate trigonometric polynomials and approximations of periodic functions of several real variables} Without any doubt, trigonometric polynomials of several real variables, sometimes also called multivariate trigonometric polynomials, presents the best explored class of almost periodic functions 
of several real variables. Multivariate trigonometric polynomials have an invaluable importance in the theory of approximations of periodic functions of several real variables, especially in the two-dimensional case. For the basic source of information about this subject, the reader may consult the research monographs \cite{dumitrescu} by B. Dumitrescu, \cite{ddung} by D.
Dung, V. Temlyakov, T. Ullrich, \cite{temlyakov} and \cite{temlyakov1} by V. Temlyakov (see also the list of references given in \cite{nova-man}; the appendix to the third chapter).

Here we will briefly explain the main resuls and ideas of papers \cite{babayev} by A. M.-B. Babayev, \cite{pfister} by L. Pfister, Y. Bresler and \cite{potts1} by L. K\"ammerer, D. Potts, T. Volkmer. If $f : {\mathbb R} \rightarrow {\mathbb R}$ belongs to the space $C_{2\pi}$ of all real continuous functions of period $2\pi,$ then it is well known that the Vallee-Poussin singular integral $V_{k}(\cdot),$ defined by\index{Vallee-Poussin singular integral}
$$
V_{k}(x):=\frac{1}{2\pi}\frac{(2k)!!}{(2k-1)!!}\int^{\pi}_{-\pi}f(t)\cos^{2k}\frac{t-x}{2}\, dt,\quad x\in {\mathbb R} \ \ (k\in {\mathbb N}),
$$
has the property that $\lim_{k\rightarrow +\infty}V_{k}(x)=f(x),$ uniformly for $x\in {\mathbb R}.$ This result of Vallee-Poussin improves the classical Weierstrass second theorem on the density of trigonometric polynomials in the spaces of continuous functions.
Two-dimensional Vallee-Poussin singular integral $V_{k,m}(\cdot),$ defined for each $x\in {\mathbb R}$ by ($k,\ m\in {\mathbb N}$):
$$
V_{k,m}(x,y):=\frac{1}{(2\pi)^{2}}\frac{(2k)!!}{(2k-1)!!}\frac{(2m)!!}{(2m-1)!!}\int^{\pi}_{-\pi}f(t,\tau)\cos^{2k}\frac{t-x}{2}\cos^{2k}\frac{\tau-y}{2}\, d\tau,
$$\index{Vallee-Poussin singular integral!two-dimensional}
has been introduced in \cite[Definition 2]{babayev}. In the same paper, the author has shown that $\lim_{k\rightarrow +\infty}\lim_{m\rightarrow +\infty}V_{k,m}(x,y)=f(x,y),$ uniformly for $(x, y)\in {\mathbb R}^{2}$ as well as that 
$V_{k,m}(x,y)$ is a 
trigonometric polynomial in variables $x$ and $y,$ for all $k,\ m\in {\mathbb N}$ (see \cite[Theorem 2]{babayev}). For proving the last fact, the author has used a lemma clarifying that the product of two trigonometric polynomials of two variables is
also the trigonometric polynomial of two variables whose order equals the sum of
order of cofactors as well as that any even
trigonometric polynomial $T (x, y),$ i.e,. a trigonometric polynomial $T (x, y)$ which satisfies that
$T (-x,-y) = T (x, y),$ $ T (-x,y) = T (x, y)$ and $T (x,-y) = T (x; y) $ identically for  $(x, y)\in {\mathbb R}^{2}$,
may be represented in the form
$$
T (x, y) = A +
\sum^{m}_{k=1}\sum^{n}_{l=1}
\bigl(a_{kl} \cos kx \cos ly + b_{kl}\cos kx + c_{kl }\cos ly\bigr),\quad (x, y)\in {\mathbb R}^{2},
$$
which does not contain the sines of multiple arcs
(see \cite[Lemma 3, Lemma 4]{babayev}). We would like to note that the obtained results continue to hold in the vector-valued case.

In \cite{pfister}, L. Pfister and Y. Bresler have investigated bounding multivariate trigonometric polynomials
and given some applications to the problems of filter bank design. Denote
$$
T_{l}^{n}:=span\Bigl\{ e^{i \langle {\bf k}, \lambda\rangle}  :  \lambda \in [0,2\pi]^{n},\ {\bf k} \in {\mathbb Z}^{n}, \|{\bf k}\|:=\sup_{1\leq i\leq n}\bigl|k_{i}\bigr| \leq l \Bigr\}\ \ (l\in {\mathbb N})
$$
and
$$
\Theta_{N}:=\Bigl \{  2\pi k/N : k=0,1,\cdot \cdot \cdot, N-1\Bigr\} \ \ (N\in {\mathbb N}).
$$

For any $N\in {\mathbb N}$ and for any real-valued trigonometric polynomial 
$$
P(   \lambda ):=\sum_{k_{1}=-l}^{l}\sum_{k_{2}=-l}^{l}\cdot \cdot \cdot \sum_{k_{n}=-l}^{l}c_{k_{1}k_{2}\cdot \cdot \cdot k_{n}}e^{i\langle k,\lambda \rangle} \in T_{l}^{n},
$$
i.e., the multivariate trigonometric polynomial $P(\cdot)$ for which\\ $c_{k_{1},k_{2},\cdot \cdot \cdot, k_{n}}=c^{\ast}_{-k_{1},-k_{2},\cdot \cdot \cdot ,-k_{n}}$ ($\|{\bf k}\|\leq l;$ the star denotes complex conjugation), we define
$$
\|P\|_{\infty}:=\max_{\lambda \in [0,2\pi]^{n}}|P(   \lambda )| \mbox{  and  }\|P\|_{N^{n},\infty}:=\max_{\lambda \in \Theta_{N}^{n}}|P(   \lambda )|.
$$
Then two well known results of  the approximation theory state that
$$
\|P\|_{\infty}\leq \|P\|_{(2l+1)^{n},\infty}\Bigl(  1+4\pi^{-1}+2\pi^{-1}\ln (2l+1)\Bigr)^{n} 
$$
and, in the one-dimensional case, 
$$
\|P\|_{\infty}\leq \frac{\|P\|_{N,\infty}}{\sqrt{1-(2l/N)}}.
$$
In \cite[Theorem 1]{pfister}, the authors have shown that the assumptions $N\geq 2l+1$ and $\alpha=2l/N$ yield the existence of a positive real constant $C_{N,l}^{n}\in [0,(1-\alpha)^{-(n/2)}]$ such that 
$$
\|P\|_{\infty}\leq C_{N,l}^{n}\|P\|_{N^{n},\infty},\quad P\in T_{l}^{n}
$$
and $C_{N,l}^{n}\|P\|_{N^{n},\infty}-\|P\|_{\infty} =O(ln/N),$ $ P\in T_{l}^{n}.$ In order to acheive their aims, the authors have used the de la Vall\'ee-Poussin kernels and the tensor products of one-dimensional Dirichlet kernels.

In \cite{potts1}, the authors have investigated certain algorithms for the approximation of multivariate periodic
functions by trigonometric polynomials, which are based on the use of a single one-dimensional
fast Fourier transform and the so-called method of
sampling of multivariate functions on rank-$1$ lattices. In their analysis, the authors have used
periodic Sobolev spaces of generalized mixed smoothness and presented some advantages of their method compared to 
the method based on the trigonometric interpolations on generalized sparse
grids. Some numerical results and tests are presented up to dimension $n = 10,$ as well.

\end{document}